\newtheorem{thm}{Theorem}[section]
\newtheorem{cor}[thm]{Corollary}
\newtheorem{lem}[thm]{Lemma}
\newtheorem{prop}[thm]{Proposition}
\newtheorem{thmintro}{Theorem}
\theoremstyle{definition}
\newtheorem{defn}[thm]{Definition}
\newtheorem{cond}[thm]{Condition}
\newtheorem{ex}[thm]{Example}
\newcommand{\Z}{\mathbb Z}
\newcommand{\Q}{\mathbb Q}
\newcommand{\R}{\mathbb R}
\newcommand{\C}{\mathbb C}
\newcommand{\mf}{\mathfrak}
\newcommand{\mc}{\mathcal}
\def\Irr{{\rm Irr}}
\newcommand{\mr}{\mathrm}
\newcommand{\ind}{\mathrm{ind}}
\newcommand{\Ind}{\mathrm{Ind}}
\newcommand{\enuma}[1]{\begin{enumerate}[\textup{(}a\textup{)}] {#1} \end{enumerate}}
\newcommand{\Rep}{\mathrm{Rep}}
\newcommand{\Res}{\mathrm{Res}}
\newcommand{\af}{\mathrm{aff}}
\newcommand{\Mod}{\mathrm{Mod}}
\newcommand{\Hom}{\mathrm{Hom}}
\newcommand{\End}{\mathrm{End}}
\newcommand{\he}{\dagger}
\begin{document}

\title[\hspace{-5mm} Hermitian duals and generic representations for affine Hecke algebras]{Hermitian 
duals and generic representations\\ for affine Hecke algebras}
\date{\today}
\thanks{Keywords: Hecke algebras, parabolic induction, generic representations}
\subjclass[2010]{20C08, 22E50}
\author[E.M. Opdam]{Eric Opdam}
\address{Korteweg--de Vries instituut voor wiskunde, Universiteit van Amsterdam, 
Science Park 107, 1098 XH, Amsterdam, the Netherlands}
\email{e.m.opdam@uva.nl}
\author[M.S. Solleveld]{Maarten Solleveld}
\address{IMAPP, Radboud Universiteit Nijmegen, Heyendaalseweg 135,
6525AJ Nijmegen, the Netherlands}
\email{m.solleveld@science.ru.nl}
\maketitle

\begin{abstract}
We further develop the abstract representation theory of affine Hecke algebras with arbitrary positive
parameters. We establish analogues of several results that are known for reductive $p$-adic groups.
These include: the relation between parabolic induction/restriction and Hermitian duals, Bernstein's
second adjointness and generalizations of the Langlands classification. We check that, in the known
cases of equivalences between module categories of affine Hecke algebras and Bernstein blocks for 
reductive $p$-adic groups, such equivalences preserve Hermitian duality.

We also initiate the study of generic representation of affine Hecke algebras. Based on an analysis of
the Hecke algebras associated to generic Bernstein blocks for quasi-split reductive $p$-adic groups, 
we propose a fitting definition of genericity for modules over
affine Hecke algebras. With that notion we prove special cases of the generalized injectivity 
conjecture, about generic subquotients of standard modules for affine Hecke algebras.
\end{abstract}
\vspace{5mm}

\tableofcontents

\section*{Introduction}

Affine Hecke algebras typically arise in two different ways:
\begin{itemize}
\item from a presentation with generators and relations,
\item from a Bernstein block of smooth representations of a reductive $p$-adic group.
\end{itemize}
The former is more general because the $q$-parameters for roots of different lengths
can be chosen independently, whereas for $p$-adic groups there is always some algebraic
relation between the various $q$-parameters. Affine Hecke algebras are simpler than $p$-adic 
groups, and that has made it possible to derive many results about representations of 
reductive $p$-adic groups by studying Hecke algebras.  

The motivation for this paper comes from two directions. Firstly, there are well-known
results in the representation theory of $p$-adic groups for which no Hecke algebra
version has been worked out. Here we are thinking mainly of more algebraic aspects,
roughly speaking the parts of Renard's monograph \cite{Ren} that also make sense for
Hecke algebras. We want to prove analogues of those results using only Hecke algebras,
that should be easier than for $p$-adic groups.

Secondly, we are interested in the generalized injectivity conjecture \cite{CaSh}, about
generic subquotients of standard representations of quasi-split reductive $p$-adic groups.
While this has been verified in many cases \cite{Dij}, it remains open in general. We 
hope that an approach via Hecke algebras can provide new insights in that conjecture.\\

\textbf{Hermitian duals}

In the representation theory of groups, contragredients of representations play a substantial 
role. Therefore it would be desirable to develop a notion of contragredient representations 
for Hecke algebras. While that can be done, there is a problem. Namely, given a smooth 
representation $\pi$ in Bernstein block for a reductive $p$-adic group $G$, the
contragredient $\pi^\vee$ need not lie in the same Bernstein block. So, if this Bernstein
block would be equivalent to the module category of an affine Hecke algebra $\mc H$, a
notion of contragredience for $\mc H$ would never agree with contragredience for smooth
$G$-representations.

Instead, we prefer to use Hermitian duals of complex $G$-representations, that is, the 
contragredient of the complex conjugate of a representation. The main advantage is
that Hermitian duality for reductive $p$-adic groups always sends representations in one
Bernstein block to the same Bernstein block \cite[Lemma 2.2]{SolQS}.

For an affine Hecke algebra $\mc H$, with underlying (extended) affine Weyl group
$W \ltimes X$ and positive $q$-parameters, there is a natural conjugate-linear involution. 
In the Iwahori--Matsumoto presentation, it is given simply by $T_w^* = T_{w^{-1}}$ for
all $w \in W \ltimes X$. The Hermitian dual of an $\mc H$-representation $(\pi,V)$ is
defined as the vector space $V^\he$ of conjugate-linear functions $V \to \C$, with the action
\begin{equation}\label{eq:1}
\pi^\he (h) \lambda (v) = \lambda (\pi (h^*) v) \qquad v \in V, \lambda \in V^\he .
\end{equation}
Before we formulate our first result, let us point out that the affine Hecke algebras that 
arise from reductive $p$-adic groups are often of a slightly more general kind. Let $\Gamma$ 
be a finite group acting on $\mc H$, preserving all the structure used to define $\mc H$.
(See Section \ref{sec:extend} for the precise setup.) Then we can form the crossed product
$\mc H \rtimes \Gamma$, which is sometimes called an extended affine Hecke algebra.
We may also involve a 2-cocycle $\natural : \Gamma^2 \to \C^\times$, which gives rise to
a twisted affine Hecke algebra $\mc H \rtimes \C [\Gamma,\natural]$. Of course $\Gamma$ may 
be the trivial group, in which case $\mc H \rtimes \Gamma$ and 
$\mc H \rtimes \C [\Gamma,\natural]$ reduce to $\mc H$. 
We prove all our results first for $\mc H$, and we generalize them to
$\mc H \rtimes \Gamma$ or $\mc H \rtimes \C [\Gamma,\natural]$ in Section \ref{sec:extend}.

\begin{thmintro}\label{thm:A}
\textup{(see Theorem \ref{thm:5.3} and Section \ref{sec:extend})}\\
Let $G$ be a reductive group over a non-archimedean local field and let $\Rep (G)^{\mf s}$
be a Bernstein block in the category of smooth complex $G$-representations. Suppose that
$\Rep (G)^{\mf s}$ is equivalent to the module category of a twisted affine Hecke algebra 
$\mc H \rtimes \C[\Gamma,\natural]$, via a Morita equivalence as in \cite{Hei} or 
\cite[\S 10]{SolEnd}. Then the equivalence $\Rep (G)^{\mf s} \cong 
\Mod (\mc H \rtimes \C[\Gamma,\natural])$ preserves Hermitian duals.
\end{thmintro}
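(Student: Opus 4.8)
The plan is to reduce the statement to a comparison of two conjugate-linear involutions: the "$*$-operation" on $\mathcal H \rtimes \mathbb C[\Gamma,\natural]$ defining $(\,\cdot\,)^\he$ on the Hecke side, and the involution on (a progenerator of) $\Rep(G)^{\mathfrak s}$ coming from the complex conjugate of the smooth contragredient. Concretely, the Morita equivalences of \cite{Hei} and \cite[\S 10]{SolEnd} are implemented by a type $(K,\rho)$ (or a finite system of types), giving a progenerator $\Pi = \mathrm{c\text{-}Ind}_K^G \rho$ and an isomorphism $\mathcal H \rtimes \mathbb C[\Gamma,\natural] \cong \mathrm{End}_G(\Pi)^{\mathrm{op}}$, under which a smooth module $\pi \in \Rep(G)^{\mathfrak s}$ corresponds to the Hecke module $\mathrm{Hom}_G(\Pi,\pi)$ (equivalently $e_\rho \pi$ for an idempotent $e_\rho$). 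So the theorem amounts to exhibiting a natural conjugate-linear isomorphism $\mathrm{Hom}_G(\Pi,\pi)^\he \cong \mathrm{Hom}_G(\Pi, \bar\pi^\vee)$ of $\mathrm{End}_G(\Pi)^{\mathrm{op}}$-modules, where $\bar\pi^\vee$ is the Hermitian dual $G$-representation (which lies in $\Rep(G)^{\mathfrak s}$ by \cite[Lemma 2.2]{SolQS}).

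The key steps, in order, are as follows. First, I would recall the precise normalization of the isomorphism $\mathcal H \rtimes \mathbb C[\Gamma,\natural] \cong \mathrm{End}_G(\Pi)^{\mathrm{op}}$, in particular how the basis elements $T_w$ (or their analogues) are realized as convolution operators supported on single double cosets $K g K$; the content of \cite{Hei} and \cite[\S 10]{SolEnd} is exactly that these support and structure-constant normalizations are available. Second, I would equip $\Pi$, or rather the underlying space of $\rho$, with a $K$-invariant positive-definite Hermitian form — possible because $\rho$ is finite-dimensional unitary, which one checks is part of the type data in the relevant constructions — and use it to identify $\bar\Pi^\vee$ (the smooth Hermitian dual of the progenerator) with $\Pi$ itself as a $G$-representation, compatibly with the $\mathrm{End}_G(\Pi)$-bimodule structure. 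Third, I would compute the effect of the smooth Hermitian dual on the endomorphism algebra: for $\Phi \in \mathrm{End}_G(\Pi)$, the induced endomorphism of $\bar\Pi^\vee \cong \Pi$ is the adjoint of $\Phi$ with respect to the $G$-invariant form on $\Pi$ coming from the form on $\rho$ and the counting measure on $K\backslash G$. Fourth, I would verify that, under the normalized isomorphism with $\mathcal H \rtimes \mathbb C[\Gamma,\natural]$, this adjoint is precisely the $*$-operation $T_w \mapsto T_{w^{-1}}$ (extended suitably over $\mathbb C[\Gamma,\natural]$): this is where one uses that the parameters are positive, so that the structure constants are real and positive and the convolution operator attached to $KgK$ has adjoint the one attached to $Kg^{-1}K$. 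Finally, chasing the definition \eqref{eq:1} of $\pi^\he$ through $\mathrm{Hom}_G(\Pi,-)$ and the identifications above yields the desired natural isomorphism $\mathrm{Hom}_G(\Pi,\pi)^\he \cong \mathrm{Hom}_G(\Pi,\bar\pi^\vee)$.

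The main obstacle I anticipate is not any single deep fact but the bookkeeping needed to match normalizations across the two cited constructions. The Morita equivalences of \cite{Hei} and \cite[\S 10]{SolEnd} are built somewhat differently (one via covers/types in the Bushnell–Kutzko sense, the other via more ad hoc endomorphism-algebra computations on Bernstein blocks), and in each the isomorphism with the abstract $\mathcal H \rtimes \mathbb C[\Gamma,\natural]$ involves auxiliary choices — a Haar measure, a choice of intertwiners, possibly a sign or root-of-unity normalization in the $\natural$-twist. I would need to check that in each construction these choices can be made so that the Hermitian form on $\rho$ and the counting measure produce exactly the $*$-operation as stated, and in particular that the $2$-cocycle $\natural$ can be taken unitary (valued in roots of unity), so that $\mathbb C[\Gamma,\natural]$ carries the expected conjugate-linear involution. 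Once the normalizations are pinned down, steps two through five are essentially formal manipulations with invariant forms and convolution algebras; I would present the argument uniformly by isolating the needed normalization properties as hypotheses and then remarking that they are verified in \cite{Hei} and \cite[\S 10]{SolEnd}.
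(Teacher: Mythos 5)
Your strategy founders on a point the paper has to treat with care: the progenerator is not admissible, so it is \emph{not} isomorphic to its smooth Hermitian dual. For $\Pi_{\mf s} = I_P^G(\ind_{M^1}^M \sigma_1)$ (and likewise for a compactly induced type $\mathrm{c\text{-}Ind}_K^G\rho$), the smooth Hermitian dual is the full induction: in the paper this is \eqref{eq:5.16}, $\Pi_{\mf s}^\he \cong I_P^G\big(\Ind_{M^1}^M(\sigma_1)\big)$, which contains $\Pi_{\mf s}$ only as a dense proper submodule; correspondingly, on the Hecke side the Hermitian dual of the regular module is the completion $\mc H^\he = \prod_w \C N_w$, not $\mc H$. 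So your step two, ``identify $\bar\Pi^\vee$ with $\Pi$ itself compatibly with the bimodule structure,'' is false as stated, and the subsequent computation of ``the induced endomorphism of $\bar\Pi^\vee \cong \Pi$'' has no meaning. (A unitary structure on $\rho$ gives an invariant pairing and an adjoint anti-involution on $\End_G(\Pi)$, but not a self-duality of $\Pi$.) In addition, the cited equivalences of Heiermann and \cite[\S 10]{SolEnd} are not set up via a type with basis elements supported on single double cosets $KgK$; they use the Bernstein progenerator and intertwining-operator normalizations, so the double-coset adjoint computation in your steps one and four does not match the constructions the theorem refers to.

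The second, more substantive gap is your final sentence: ``chasing the definition \eqref{eq:1} through $\Hom_G(\Pi,-)$ ... yields the desired isomorphism.'' That chase is where the paper's entire proof lives, and it is not formal, precisely because $\Hom_G(\Pi_{\mf s},\Pi_{\mf s}^\he)$ is $\mc H^\he$ rather than $\mc H$. The paper first proves Proposition \ref{prop:5.2}: $\Hom_G(\Pi_{\mf s},\Pi_{\mf s}^\he) \cong \mc H^\he$ as $\mc H$-bimodules, via the supercuspidal case (Mackey decomposition giving $\mc A^\he$), Roche's compatibility of $\Hom_G(\Pi_{\mf s}, I_P^G(-))$ with $\ind_{\mc A}^{\mc H}$, Lemma \ref{lem:2.4}, and then a density/continuity argument to upgrade a left-module isomorphism to a bimodule isomorphism. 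Theorem \ref{thm:5.3} then passes from the progenerator to an arbitrary $\pi$ by a homological argument: a resolution of $\Hom_G(\Pi_{\mf s},\pi)$ by finite-rank free modules, whose conjugate-transpose is an injective resolution, transported through the equivalence and compared term by term using Proposition \ref{prop:5.2}, with a direct/inverse limit argument for non-finitely-generated $\pi$; Section \ref{sec:extend} then extends this to $\mc H \rtimes \C[\Gamma,\natural]$. A correct write-up along your lines would have to replace the false self-duality of $\Pi$ by an analogue of Proposition \ref{prop:5.2} and supply an argument (resolutions, limits, or something equivalent) for why matching $*$-structures on $\End_G(\Pi)$ propagates to all objects of the block; as it stands the proposal assumes exactly the nontrivial part.
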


The Hermitian duals from \eqref{eq:1} play a crucial role in our new results about 
representations of affine Hecke algebras, they are involved in the proofs of all the main
results mentioned below.\\

\textbf{Representation theory of affine Hecke algebras}

For good notions of parabolic subalgebras, parabolic induction and parabolic restriction
for $\mc H \rtimes \C[\Gamma,\natural]$ with $\Gamma$ nontrivial, we need some conditions on 
subgroups of $\Gamma$. These are listed in Condition \ref{cond:8.1}, which we assume the 
remainder of the introduction. In our setup, the root system $R$ underlying $\mc H$ comes 
with a basis $\Delta$, and parabolic subalgebras $\mc H^P \rtimes \C[\Gamma_P,\natural]$ 
are parametrized bijectively by subsets $P \subset \Delta$. 

Let $w_\Delta$ be the longest element of $W = W (R)$ and define $P^{op} = w_\Delta (-P)$. 
This is a subset of $\Delta$, which plays the role that an opposite parabolic subgroup plays 
for reductive groups. There is a *-algebra isomorphism
\[
\begin{array}{ccccl}
\psi_{\Delta P} : & \mc H^P \rtimes \C[\Gamma_P,\natural] & \to & \mc H^{P^{op}} \rtimes 
\C[\Gamma_{P^{op}},\natural]  \\
& T_w & \mapsto & T_{w_\Delta w_P w w_P w_\Delta} & \qquad w \in W_P \ltimes X ,
\end{array}
\]
where $w_P$ is the longest element of $W_P = W(R_P)$.

\begin{thmintro}\label{thm:B} 
\textup{(see Propositions \ref{prop:2.3}, \ref{prop:2.5} and Section \ref{sec:extend})} 
\enuma{
\item Let $\rho$ be a representation of $\mc H^P \rtimes \C[\Gamma_P,\natural]$. Then 
$\ind_{\mc H^P \rtimes \C[\Gamma_P,\natural]}^{\mc H \rtimes \C[\Gamma,\natural]} (\rho^\he)$ 
is canonically isomorphic to 
$\ind_{\mc H^P \rtimes \C[\Gamma_P,\natural]}^{\mc H \rtimes \C[\Gamma,\natural]} (\rho)^\he$.
\item Let $\pi$ be an $\mc H \rtimes \C[\Gamma,\natural]$-representation. 
There is a canonical isomorphism
\[
\Res^{\mc H \rtimes \C[\Gamma,\natural]}_{\mc H^P \rtimes \C[\Gamma_P,\natural]} (\pi^\he) \cong
\Res^{\mc H \rtimes \C[\Gamma,\natural]}_{\mc H^{P^{op}} \rtimes \C[\Gamma_{P^{op}},\natural]} 
(\pi)^\he \circ \psi_{\Delta P}.
\]
}
\end{thmintro}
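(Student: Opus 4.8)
The plan is to prove the statement first for the plain affine Hecke algebra $\mc H$ (the case $\Gamma$ trivial), where the two assertions become Propositions \ref{prop:2.3} and \ref{prop:2.5}, and then to bootstrap to $\mc H \rtimes \C[\Gamma,\natural]$ in Section \ref{sec:extend}. For part (a), the key point is that induction $\ind_{\mc H^P}^{\mc H}$ is a tensor product functor $\mc H \otimes_{\mc H^P} (-)$, and one wants to compare its composition with Hermitian duality $(-)^\he$ in the two possible orders. Since $\mc H$ is finite over $\mc H^P$ (freeness of $\mc H$ as a left or right $\mc H^P$-module via the coset decomposition of $W \ltimes X$ modulo $W_P \ltimes X$), the induced module is finite-dimensional over the base and the natural pairing between $\ind(\rho^\he)$ and $\ind(\rho)$ is perfect; the content is checking this pairing is $\mc H$-equivariant for the $*$-operation. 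I would write the pairing explicitly on pure tensors $T_w \otimes v$ and $T_u \otimes \lambda$, use $T_w^* = T_{w^{-1}}$, and reduce equivariance to the identity $\lambda(\rho(h^*)v) = \pi^\he(h)\lambda(v)$ for $h \in \mc H^P$ together with compatibility of the coset representatives under left multiplication; the bookkeeping with the length function and the structure constants of $\mc H$ is the routine part.

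For part (b), restriction $\Res_{\mc H^P}^{\mc H}$ is just the forgetful functor along the inclusion $\mc H^P \hookrightarrow \mc H$, but the subtlety — and the reason $P^{op}$ and the isomorphism $\psi_{\Delta P}$ appear — is that Hermitian duality on $\mc H$ does not restrict to Hermitian duality on $\mc H^P$ compatibly with the naive inclusion. The mechanism is that $\psi_{\Delta P}$ conjugates the $*$-structure of $\mc H^P$ into that of $\mc H^{P^{op}}$ inside $\mc H$: concretely, $w \mapsto w_\Delta w_P w w_P w_\Delta$ is a length-preserving bijection $W_P \ltimes X \to W_{P^{op}} \ltimes X$ that intertwines $w \mapsto w^{-1}$ with itself, so $\psi_{\Delta P}$ is a $*$-isomorphism as asserted in the excerpt. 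I would first verify that for $v \in V$ and $\lambda \in V^\he$ the identity $\pi^\he(h)\lambda(v) = \lambda(\pi(h^*)v)$ combined with the relation $\psi_{\Delta P}(h)^* $ conjugated appropriately produces the claimed isomorphism on underlying spaces (it is the identity map on $V^\he$, reinterpreted), and then check $\mc H^P$-equivariance using that $h \in \mc H^P$ acts on the left-hand side via $\pi^\he$ restricted to $\mc H^P$ and on the right-hand side via $\pi^\he$ restricted to $\mc H^{P^{op}}$ precomposed with $\psi_{\Delta P}$.

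The main obstacle, I expect, is not any single computation but the careful choice of coset representatives and the verification that $\psi_{\Delta P}$ is well-defined and a $*$-homomorphism — i.e. that $w_\Delta w_P W_P w_P w_\Delta = W_{P^{op}}$ as subgroups of $W$, that the displayed assignment is multiplicative (which requires $\ell(w_\Delta w_P w w_P w_\Delta) = \ell(w)$ for $w \in W_P \ltimes X$, using standard facts about $w_\Delta$ and $w_P$ being longest elements), and that it respects the $X$-lattice part and the parameter function. Once $\psi_{\Delta P}$ is in hand as a $*$-isomorphism, both (a) and (b) follow by transport of structure together with the defining formula \eqref{eq:1}. Finally, to pass to $\mc H \rtimes \C[\Gamma,\natural]$: under Condition \ref{cond:8.1} the functors $\ind$ and $\Res$ for the extended algebras are built from those for $\mc H$ by the evident twist by $\C[\Gamma,\natural]$ and $\C[\Gamma_P,\natural]$, the $*$-operation extends by $T_\gamma^* = T_{\gamma^{-1}}$ (up to a cocycle scalar that is handled because $\natural$ takes unitary values, or can be normalized to), and $\psi_{\Delta P}$ extends to a $*$-isomorphism of the parabolic subalgebras; the canonical isomorphisms of (a) and (b) are then obtained by applying the $\Gamma$-trivial case fibrewise and checking $\Gamma$-equivariance, which is formal.
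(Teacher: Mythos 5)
Your overall architecture matches the paper's (the explicit pairing \eqref{eq:2.6} for induction, the $*$-isomorphism $\psi_{\Delta P}$ for restriction, then the passage to $\mc H \rtimes \C[\Gamma,\natural]$), but the two steps you dismiss as routine are exactly where the content sits. For part (a), the $*$-equivariance of the pairing $\langle h' \otimes \lambda, h \otimes v\rangle = \tau (h' h^*) \lambda (v)$ cannot be reduced to ``$\lambda (\rho (h^*) v) = \rho^\he (h) \lambda (v)$ for $h \in \mc H^P$ plus coset bookkeeping'': the dual $\rho^\he$ is formed with the internal involution $*_P$, and $\mc H^P$ is not a $*$-subalgebra of $\mc H$ --- by \eqref{eq:1.3} one has $*_P (\theta_x) = N_{w_P} \theta_{-w_P (x)} N_{w_P}^{-1}$ while $\theta_x^* = N_{w_\Delta} \theta_{-w_\Delta (x)} N_{w_\Delta}^{-1}$, so your displayed identity conflates two different involutions. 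Writing the equivariance check on pure tensors $N_u \otimes \lambda$, $N_w \otimes v$ reduces it to the algebra identity $(h^*)^P_e = (h^P_e)^{*_P}$ for the components in the decomposition $\mc H = \mc H (W^P) \mc H^P$; that is Lemma \ref{lem:1.1}, whose proof requires the cross relations \eqref{eq:1.8} and a Bruhat-order argument, and it is the technical heart of Proposition \ref{prop:2.5} (the paper moreover passes through the regular module, Lemma \ref{lem:2.4}, and free covers to handle arbitrary, possibly infinite-dimensional $\rho$). Your sketch offers no argument for this identity.

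For part (b), the asserted map is not ``the identity on $V^\he$, reinterpreted''. Since $\psi_{\Delta P} = \mf c_{N_{w_P w_\Delta}^{-1}} \circ *\, *_P$, for $h \in \mc H^P$ one has $h^* = N_{w_P w_\Delta}\, \psi_{\Delta P}(h)^{*_{P^{op}}}\, N_{w_P w_\Delta}^{-1}$, so the identity map intertwines $\Res^{\mc H}_{\mc H^P}(\pi^\he)$ with $\Res^{\mc H}_{\mc H^{P^{op}}}(\pi)^\he \circ \psi_{\Delta P}$ only up to conjugation by $\pi (N_{w_P w_\Delta})$; the correct isomorphism is composition with $\pi^\he (N_{w_\Delta w_P})$, equivalently the pairing $(\lambda, v) \mapsto \lambda (\pi (N_{w_P w_\Delta}) v)$ rather than $\lambda (v)$, which is precisely what Lemma \ref{lem:2.1} and Proposition \ref{prop:2.3} record. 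For the same reason the claim that both (a) and (b) ``follow by transport of structure'' once $\psi_{\Delta P}$ is known to be a $*$-isomorphism is too strong: the bridge between the ambient $*$ defining $\pi^\he$ and the internal involutions $*_P, *_{P^{op}}$ is the extra factor $N_{w_P w_\Delta}$, and part (a) needs Lemma \ref{lem:1.1} besides. On the positive side, your route to $\psi_{\Delta P}$ being a $*$-isomorphism via the positivity- and length-preserving conjugation by $w_\Delta w_P$ is a reasonable alternative to the paper's Lemma \ref{lem:2.2} (which uses an eigenvalue and continuity-in-$q$ argument instead), and your plan for the extension to $\mc H \rtimes \C[\Gamma,\natural]$ agrees in outline with Section \ref{sec:extend}; but as written the two gaps above are genuine and must be filled.
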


For Hecke algebras it is easily seen that the parabolic restriction functor
\[
\Res^{\mc H \rtimes \C[\Gamma,\natural]}_{\mc H^P \rtimes \C[\Gamma_P,\natural]} : 
\Mod (\mc H \rtimes \C[\Gamma,\natural]) \to \Mod (\mc H^P \rtimes \C[\Gamma_P,\natural] )
\]
is the right adjoint of the parabolic induction functor 
\[
\ind_{\mc H^P \rtimes \C[\Gamma_P,\natural]}^{\mc H \rtimes \C[\Gamma,\natural]} : 
\Mod (\mc H^P \rtimes \C[\Gamma_P,\natural] ) \to \Mod (\mc H \rtimes \C[\Gamma,\natural] ) .
\] 
Like for $p$-adic groups, it required more effort to find the second adjointness relation for 
parabolic induction. For graded Hecke algebras that had been achieved in \cite{BaCi},
the arguments for affine Hecke algebras are somewhat more complicated.

\begin{thmintro}\label{thm:C}
\textup{(see Theorem \ref{thm:3.1} and Section \ref{sec:extend})}
\enuma{
\item The left adjoint of $\ind_{\mc H^P \rtimes \C[\Gamma_P,\natural]}^{\mc H \rtimes 
\C[\Gamma,\natural]}$ is
\[
\psi_{\Delta P}^* \circ \Res^{\mc H \rtimes \C[\Gamma,\natural]}_{\mc H^{P^{op}} \rtimes 
\C[\Gamma_{P^{op}},\natural]} : \pi \mapsto \Res^{\mc H \rtimes \C[\Gamma,\natural]}_{
\mc H^{P^{op}} \rtimes \C[\Gamma_{P^{op}},\natural]} (\pi) \circ \psi_{\Delta P} .
\]
\item The right adjoint of $\Res^{\mc H \rtimes \C[\Gamma,\natural]}_{\mc H^P \rtimes 
\C[\Gamma_P,\natural]}$ is
\[
\ind_{\mc H^{P^{op}} \rtimes \C[\Gamma_{P^{op}},\natural]}^{\mc H \rtimes \C[\Gamma,\natural]} 
\circ \psi_{\Delta P *} : \rho \mapsto \ind_{\mc H^{P^{op}} \rtimes \C[\Gamma_{P^{op}},\natural]}^{
\mc H \rtimes \C[\Gamma,\natural]} (\rho \circ \psi_{\Delta P}^{-1}) .
\]
}
\end{thmintro}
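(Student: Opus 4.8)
The plan is to deduce Theorem~C from the second-adjointness-type data already in hand, namely the ordinary adjunction ($\Res$ is right adjoint to $\ind$) combined with the Hermitian-duality interchange results of Theorem~B. The point is that Hermitian duality is a contravariant operation on module categories, so it converts a right adjoint into a left adjoint; applying it to the known adjunction $(\ind^{\mc H}_{\mc H^P}, \Res^{\mc H}_{\mc H^P})$ should produce exactly the claimed left adjoint of $\ind$, up to the twist by $\psi_{\Delta P}$ that Theorem~B(b) forces on the restriction side. Part (b) of the theorem is then the mirror statement, obtained either by the same Hermitian-dual trick applied to the adjunction in the other direction, or purely formally: if $F$ is left adjoint to $G$ then $G$ is right adjoint to $F$, and one rewrites $\psi_{\Delta P}^* \circ \Res^{\mc H}_{\mc H^{P^{op}}}$ having left adjoint $\ind^{\mc H}_{\mc H^P}$ into the form "$\Res^{\mc H}_{\mc H^P}$ has right adjoint $\ind^{\mc H}_{\mc H^{P^{op}}} \circ \psi_{\Delta P *}$" using that $\psi_{\Delta P}$ is an isomorphism $\mc H^P \to \mc H^{P^{op}}$ and hence $\psi_{\Delta P}^*$ and $\psi_{\Delta P *}$ are inverse equivalences.

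\textbf{Step 1: reduce to the untwisted case.} Following the stated policy, I first treat $\mc H$ (i.e.\ $\Gamma$ trivial) and defer the extension to $\mc H \rtimes \C[\Gamma,\natural]$ to Section~\ref{sec:extend}; the compatibility of all the functors involved with the crossed product is routine once the Hermitian-dual statements are known equivariantly, which is where Condition~\ref{cond:8.1} enters.

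\textbf{Step 2: assemble the chain of natural isomorphisms.} For $\mc H^P$-modules $\rho$ and $\mc H$-modules $\pi$, I compute
\[
\Hom_{\mc H}\bigl(\psi_{\Delta P}^*\Res^{\mc H}_{\mc H^{P^{op}}}(\pi),\ \rho\bigr)
\;\cong\;
\Hom_{\mc H^P}\bigl(\rho^\he,\ \bigl(\psi_{\Delta P}^*\Res^{\mc H}_{\mc H^{P^{op}}}(\pi)\bigr)^\he\bigr),
\]
using that $\lambda\mapsto\lambda^\he$ is a conjugate-linear anti-equivalence with $V\cong V^{\he\he}$ on finitely generated — or more carefully, on arbitrary, after checking the double-dual map is an iso in the relevant generality — modules. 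Next, by Theorem~B(b) applied to $\pi$, the module $\bigl(\Res^{\mc H}_{\mc H^{P^{op}}}(\pi)\circ\psi_{\Delta P}\bigr)^\he$ is canonically $\Res^{\mc H}_{\mc H^P}(\pi^\he)$, so the right-hand $\Hom$ becomes $\Hom_{\mc H^P}(\rho^\he,\ \Res^{\mc H}_{\mc H^P}(\pi^\he))$. Ordinary adjunction turns this into $\Hom_{\mc H}(\ind^{\mc H}_{\mc H^P}(\rho^\he),\ \pi^\he)$, and Theorem~B(a) rewrites $\ind^{\mc H}_{\mc H^P}(\rho^\he)\cong\ind^{\mc H}_{\mc H^P}(\rho)^\he$; a final application of the conjugate-linear double-dual identification yields $\Hom_{\mc H}(\pi,\ \ind^{\mc H}_{\mc H^P}(\rho))$. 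Tracking naturality at each link gives a natural isomorphism
\[
\Hom_{\mc H}\bigl(\psi_{\Delta P}^*\Res^{\mc H}_{\mc H^{P^{op}}}(\pi),\ \rho\bigr)\;\cong\;\Hom_{\mc H}\bigl(\pi,\ \ind^{\mc H}_{\mc H^P}(\rho)\bigr),
\]
which is exactly (a). For (b) I run essentially the same chain with the roles of $\ind$ and $\Res$ swapped, or deduce it formally from (a) by noting $P^{op\,op}=P$ and that $\psi_{\Delta P^{op}}=\psi_{\Delta P}^{-1}$ up to inner automorphism, so that "$G$ has right adjoint $F$" falls out of "$F$ has left adjoint $G$."

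\textbf{Main obstacle.} The delicate point is not the diagram chase but the behavior of the conjugate-linear functor $\he$ on infinite-dimensional modules: $V\to V^{\he\he}$ need not be an isomorphism in general, so to get genuine natural isomorphisms of $\Hom$-bifunctors — rather than only on, say, finitely generated modules — I must either restrict to a subcategory where double duality is an involution, or, better, verify that the composite natural transformation constructed above is an isomorphism directly, without factoring through $V\cong V^{\he\he}$ on both arguments simultaneously. Concretely, one should check that $\he$ on the $\rho$-side is only ever applied after $\Res$ or $\ind$ from $\mc H^P$, which preserves finite generation in the directions we need, so that the double-dual maps one actually invokes are isomorphisms; this is the step that requires care and where I expect the real work of the proof to lie. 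The twist by $\psi_{\Delta P}$ is then purely bookkeeping: it is forced to appear precisely because Theorem~B(b), unlike Theorem~B(a), involves the opposite parabolic $P^{op}$.
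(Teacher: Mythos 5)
Your overall route is the paper's route: combine the ordinary adjunction $(\ind,\Res)$ with the Hermitian-duality interchanges of Theorem~B and a conjugate-linear transposition of Hom-spaces, then defer the $\C[\Gamma,\natural]$-case to Section~\ref{sec:extend}. But there is a genuine gap exactly at the point you flag, and your proposed patch does not close it. Your chain uses the identification $V\cong V^{\he\he}$ at both the first link ($\Hom_{\mc H^P}(A,\rho)\cong\Hom_{\mc H^P}(\rho^\he,A^\he)$, which is injective but not surjective unless $\rho$ is reflexive) and the last link ($\Hom_{\mc H}(\ind(\rho)^\he,\pi^\he)\cong\Hom_{\mc H}(\pi,\ind(\rho))$, which needs reflexivity of $\pi$). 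For the full Hecke algebra $\mc H$ no topology is imposed on modules, so $V^{\he\he}\neq V$ for \emph{every} infinite-dimensional $V$; and your remedy via ``finite generation'' is not valid, since finitely generated $\mc H$- or $\mc H^P$-modules (e.g.\ $\mc H$ itself, or any $\ind_{\mc H^P}^{\mc H}(\rho)$ with $\dim\rho=\infty$) are typically infinite-dimensional, while an adjunction statement must hold on all of $\Mod(\mc H^P)$ and $\Mod(\mc H)$, not on a reflexive subcategory.

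The paper's proof (Theorem~\ref{thm:3.1}) is arranged so that double duality is never invoked: it only uses the unconditional transposition isomorphism \eqref{eq:2.10}, $\Hom_{\mc H}(\pi,\rho^\he)\cong\Hom_{\mc H}(\rho,\pi^\he)$, valid for arbitrary modules. The price is that the resulting chain \eqref{eq:3.5}--\eqref{eq:3.8} establishes the adjunction only when the second variable is of the form $\rho^\he$, i.e.\ a Hermitian dual; the passage to an arbitrary second variable is a separate step, handled by the argument of \cite[p.~232]{Ren} (embed an arbitrary module into its double dual, which is a Hermitian dual, and use left-exactness of $\Hom$ in the second variable together with exactness of $\ind$ and of $\psi_{\Delta P}^*\circ\Res$ to deduce the natural isomorphism in general). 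This extension argument is the missing ingredient in your write-up: your alternative suggestion to ``verify the composite natural transformation is an isomorphism directly'' cannot work as stated, because for general $\rho$ and $\pi$ the first and last links of your chain are not bijections at all, so there is no composite isomorphism to verify. Your formal derivation of part (b) from part (a) (swapping $P$ and $P^{op}$ and using $\psi_{\Delta P^{op}}=\psi_{\Delta P}^{-1}$, which by Lemma~\ref{lem:2.2}.b is an exact equality, not merely up to inner automorphism) is fine and mirrors the paper.
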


This is useful in several ways, for instance to find a filtration of the functor parabolic
induction followed by parabolic restriction (Proposition \ref{prop:8.6}).

Recall that the Langlands classification for a reductive $p$-adic groups says:
\begin{enumerate}[(i)]
\item Every standard $G$-representation has a unique irreducible quotient.
\item This yields a bijection between the set of standard $G$-representations (up to isomorphism) 
and the set of irreducible smooth $G$-representations (also up to isomorphism).
\end{enumerate}
By definition a standard $G$-representation is of the form $I_P^G (\tau \otimes \chi)$,
where $P = MU$ is a parabolic subgroup of $G$, $\tau$ is an irreducible tempered $M$-representation
and $\chi$ is an unramified character of $M$ in positive position with respect to $P$. In \cite{Ren}
the positivity of $\chi$ was relaxed to a more algebraic regularity condition, such that (i)
remains valid. Via contragredients or Hermitian duals, one can easily derive a version of the
Langlands classification with subrepresentations instead of quotients.

For affine Hecke algebras the normal version of the Langlands classification is known from 
\cite{Eve,SolAHA}, but variations like those mentioned above had not been worked out yet. 
We say that an $\mc H^P \rtimes \C[\Gamma_P,\natural]$-representation $\pi$ is $W\Gamma,P$-regular 
if: for all weights $t$ of $\pi$ and all $w \in W_P \Gamma_P D_+^{P,P}$, $w t$ is not a weight 
of $\pi$, where
\[
D_+^{P,P} = \{ d \in W \Gamma : d(P) \subset R^+, d^{-1}(P) \subset R^+, d \notin \Gamma_P \} .
\]
This notion relates to standard $\mc H$-modules in the following ways (Proposition \ref{prop:4.6}).
\begin{itemize}
\item Suppose that an irreducible tempered $\mc H^P$-representation $\tau$ is twisted by a weight
$t$ in positive position for $\mc H^P$. Then $\tau \otimes t$ is $W,P$-regular.
\item Suppose that an irreducible tempered $\mc H^P$-representation $\tau$ is twisted by a weight
$t$ in negative position for $\mc H^P$. Then $(\tau \otimes t) \circ \psi_{\Delta P}^{-1}$ is a 
$W,P^{op}$-regular $\mc H^{P^{op}}$-representation.
\end{itemize}

\begin{thmintro}\label{thm:D}
\textup{(see Theorem \ref{thm:4.7} and Section \ref{sec:extend})}\\
Let $P \subset \Delta$ and let $\pi$ be an irreducible representation of 
$\mc H^P \rtimes \C[\Gamma_P,\natural]$.
\enuma{
\item Suppose that $\pi$ is $W\Gamma,P$-regular. Then $\ind_{\mc H^P \rtimes \C[\Gamma_P,\natural]}^{
\mc H \rtimes \C[\Gamma,\natural]} (\pi)$ has a unique irreducible quotient, namely 
$\ind_{\mc H^P \rtimes \C[\Gamma_P,\natural]}^{\mc H \rtimes \C[\Gamma,\natural]}(\pi)$ modulo the 
kernel of the intertwining operator associated to $(w_\Delta w_P,P,\pi)$.
\item Suppose that $\pi \circ \psi_{\Delta P}^{-1}$ is $W\Gamma,P^{op}$-regular. Then 
$\ind_{\mc H^P \rtimes \C[\Gamma_P,\natural]}^{\mc H \rtimes \C[\Gamma,\natural]}(\pi)$ 
has a unique irreducible subrepresentation, namely the image of the intertwining operator 
associated to $(w_P w_\Delta,P^{op},\pi \circ \psi_{\Delta P}^{-1})$.
}
\end{thmintro}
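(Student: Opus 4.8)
The plan is to reduce everything to the classical Langlands classification for affine Hecke algebras from \cite{Eve,SolAHA} together with the Hermitian-duality statements of Theorem B and the second adjointness of Theorem C, and then to identify the relevant quotient/sub as the (co)image of an intertwining operator. First I would treat part (a). Given an irreducible $W\Gamma,P$-regular representation $\pi$ of $\mc H^P \rtimes \C[\Gamma_P,\natural]$, the bulk of the work is to show that the standard-type module $\ind(\pi)$ has a unique irreducible quotient. I would do this by adapting the argument that proves the usual Langlands classification: one writes $\pi$, up to the action of the center, as an irreducible tempered representation twisted by a weight, but now only in the weaker \emph{regular} sense rather than the strictly positive one; the point of the $W\Gamma,P$-regularity hypothesis — via the two bullet points preceding the theorem (Proposition \ref{prop:4.6}) and the description of $D_+^{P,P}$ — is precisely that it guarantees the weight multiset of $\ind(\pi)$ still "sees" a distinguished lowest/highest piece, so that the geometric lemma on the weights (the usual Langlands combinatorics on the real parts of the central characters) still singles out one constituent of $\ind(\pi)$ that occurs in the top. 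Once uniqueness of the irreducible quotient is in place, the identification with $\ind(\pi)$ modulo the kernel of the intertwining operator $J(w_\Delta w_P, P, \pi)$ is then a matter of recalling that, by the standard theory of intertwining operators for affine Hecke algebras, this operator is (up to scalar) the unique nonzero $\mc H \rtimes \C[\Gamma,\natural]$-morphism $\ind(\pi) \to \ind\big((\pi)\circ(\text{conjugation by }w_\Delta w_P)\big)$ and its image is irreducible; hence its kernel is the unique maximal submodule and the cokernel is the unique irreducible quotient.

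For part (b) I would not repeat the argument but instead dualize. Apply part (a) to the irreducible representation $\pi \circ \psi_{\Delta P}^{-1}$ of $\mc H^{P^{op}} \rtimes \C[\Gamma_{P^{op}},\natural]$, which is $W\Gamma,P^{op}$-regular by hypothesis; this gives that $\ind_{\mc H^{P^{op}}}^{\mc H}(\pi \circ \psi_{\Delta P}^{-1})$ has a unique irreducible quotient, cut out by the intertwining operator attached to $(w_\Delta w_{P^{op}}, P^{op}, \pi \circ \psi_{\Delta P}^{-1})$. Now I would take Hermitian duals and use Theorem B(a): $\ind(\rho^\he) \cong \ind(\rho)^\he$ canonically, so $\ind_{\mc H^{P^{op}}}^{\mc H}(\pi \circ \psi_{\Delta P}^{-1})^\he$ has a unique irreducible \emph{sub}representation. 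It remains to relate $\ind_{\mc H^{P^{op}}}^{\mc H}\big((\pi \circ \psi_{\Delta P}^{-1})^\he\big)$ to $\ind_{\mc H^P}^{\mc H}(\pi)$: here one needs that Hermitian duality commutes with the twist by $\psi_{\Delta P}$ appropriately, which follows because $\psi_{\Delta P}$ is a $*$-algebra isomorphism (so $(\rho \circ \psi_{\Delta P})^\he \cong \rho^\he \circ \psi_{\Delta P}$), together with the fact that for an irreducible $\pi$ the Hermitian dual of $\ind_{\mc H^{P^{op}}}^{\mc H}(\pi^\he \circ \psi_{\Delta P})$ and $\ind_{\mc H^P}^{\mc H}(\pi)$ have the same irreducible constituents — one checks they are abstractly isomorphic by comparing weights and using that $\psi_{\Delta P}$ and Hermitian duality both invert the relevant central characters, so the two standard modules agree. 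Finally the intertwining operator for $(w_\Delta w_{P^{op}}, P^{op}, \cdot)$ transported through $\psi_{\Delta P}$ and $(-)^\he$ becomes the operator attached to $(w_P w_\Delta, P^{op}, \pi \circ \psi_{\Delta P}^{-1})$, whose image is therefore the asserted unique irreducible subrepresentation; one uses $P^{op op} = P$ and $(w_\Delta w_P)^{-1} = w_P w_\Delta$ here.

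The main obstacle I expect is in part (a): showing that $W\Gamma,P$-regularity — which is genuinely weaker than strict positivity of the twisting weight — still suffices for uniqueness of the irreducible quotient. In the strictly positive case one has the full Langlands geometry (a cone condition on the real parts of weights) available; in the merely regular case one must instead argue that the "extra" Weyl group elements in $W_P \Gamma_P D_+^{P,P}$ that could a priori produce competing quotients are exactly the ones excluded by the regularity hypothesis. Concretely, the hard step is to prove that if $\sigma$ is an irreducible constituent appearing in the top of $\ind(\pi)$ then its cuspidal support forces $\sigma \cong \ind(\pi)/\ker J(w_\Delta w_P,P,\pi)$; I would handle this by running the usual exhaustion-of-constituents argument (induction on the length of $W$-elements, controlling weights through parabolic restriction via Theorem C and Frobenius reciprocity), and the regularity condition is used precisely to rule out the degenerate coincidences of weights that would otherwise allow a second irreducible quotient. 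The secondary technical point is bookkeeping with $\Gamma$ and the $2$-cocycle $\natural$: since by Condition \ref{cond:8.1} the relevant subgroups behave well, this should be a routine propagation of the $\Gamma$-trivial case as in Section \ref{sec:extend}, but it needs to be stated carefully so that $\psi_{\Delta P}$ really does intertwine the twisted group algebras on the two sides.
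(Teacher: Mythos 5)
Your proposal diverges from the paper's proof in a way that leaves the main step of part (a) unproven. The paper does not argue via Langlands-type combinatorics at all: the engine is Lemma \ref{lem:4.5}/\ref{lem:8.7}, which uses the Mackey-type filtration of $\Res^{\mc H}_{\mc H^P}\ind_{\mc H^P}^{\mc H}$ (Propositions \ref{prop:3.2}, \ref{prop:8.6}, with the double-coset analysis of Lemma \ref{lem:8.5}) together with the $W\Gamma,P$-regularity hypothesis to show that $\pi$ occurs with multiplicity one, as a direct summand, in $\Res^{\mc H}_{\mc H^P}\ind_{\mc H^P}^{\mc H}(\pi)$; this is what gives existence and uniqueness (up to scalar) of the intertwining operator to $\ind_{\mc H^{P^{op}}}^{\mc H}(\psi_{\Delta P *}\pi)$, and then second adjointness (Theorem \ref{thm:3.1}/Theorem C) turns any irreducible quotient $\rho$ into a nonzero map $\rho\to\ind_{\mc H^{P^{op}}}^{\mc H}(\psi_{\Delta P *}\pi)$ whose composite with $\ind_{\mc H^P}^{\mc H}(\pi)\twoheadrightarrow\rho$ must be that operator, pinning $\rho$ down. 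Your plan instead rewrites $\pi$ as ``tempered twisted by a weight'' and invokes the usual Langlands cone geometry; but a $W\Gamma,P$-regular irreducible $\pi$ need not have this form (regularity is a condition on $\mc A$-weight orbits, not a positivity condition), and in the extended setting this route is explicitly blocked: Section \ref{sec:extend} shows the Langlands classification fails for $\mc H\rtimes\Gamma$ (an induced module $\ind_{\mc A}^{\mc H\rtimes\Gamma}(t)$ with $t\in T^{\emptyset+}$ can have two inequivalent irreducible quotients) and warns that temperedness plus $t\in T^{P+}$ does not imply $W\Gamma,P$-regularity. You also assume that the operator attached to $(w_\Delta w_P,P,\pi)$ exists, is unique up to scalar, and has irreducible image ``by the standard theory''; its existence and uniqueness are exactly what the regularity hypothesis must deliver, and the irreducibility of its image is a conclusion of the theorem, not an input.

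Part (b) as you dualize it does not reach the stated module. Applying part (a) to $\pi\circ\psi_{\Delta P}^{-1}$ and taking Hermitian duals yields a unique irreducible subrepresentation of
\[
\ind_{\mc H^{P^{op}}}^{\mc H}\big((\pi\circ\psi_{\Delta P}^{-1})^\he\big)\;\cong\;
\ind_{\mc H^{P^{op}}}^{\mc H}\big(\pi^\he\circ\psi_{\Delta P}^{-1}\big),
\]
which is in general not isomorphic to $\ind_{\mc H^P}^{\mc H}(\pi)$: already for $P=\emptyset$ and trivial $\Gamma$ one module is $\ind_{\mc A}^{\mc H}(w_\Delta\bar t^{\,-1})$ and the other $\ind_{\mc A}^{\mc H}(t)$, with different central characters, so your ``same weights, hence abstractly isomorphic'' step is false. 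A duality argument would instead require showing that the hypothesis forces $\pi^\he$ to be $W\Gamma,P$-regular and then identifying the dual of the resulting intertwiner with $I(w_Pw_\Delta,P^{op},\pi\circ\psi_{\Delta P}^{-1})$; neither point is addressed. The paper sidesteps this by proving (b) directly: Theorem \ref{thm:3.1}.b converts the inclusion $\sigma\subset\ind_{\mc H^P}^{\mc H}(\pi)$ into a nonzero map $\ind_{\mc H^{P^{op}}}^{\mc H}(\psi_{\Delta P *}\pi)\to\sigma$, using the same multiplicity-one lemma applied to $\psi_{\Delta P *}\pi$ inside $\Res^{\mc H}_{\mc H^{P^{op}}}\ind_{\mc H^P}^{\mc H}(\pi)$, and the composite with the inclusion is again forced to be the intertwining operator.
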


\textbf{Genericity of representations}

For quasi-split reductive $p$-adic groups, the notion of genericity is well-known. For irreducible
representations it is equivalent to the existence of a Whittaker model. It is especially useful
for the normalization of intertwining operators, for $\gamma$-factors via the Langlands--Shahidi
method and to select one member from an L-packet in the local Langlands correspondence.

For arbitrary connected reductive $p$-adic groups, a similar definition of genericity is available
\cite{BuHe}. In that generality it is convenient to consider representations which are simply
generic, meaning that the multiplicity one property of Whittaker functionals holds by assumption.

For (extended) affine Hecke algebras no independent definition of genericity was known, so we 
provide one. The elements $T_w$ with $w \in W \Gamma$ span a finite dimensional subalgebra 
$\mc H (W,q^\lambda) \rtimes \Gamma$ of $\mc H \rtimes \Gamma$. Let $\det_X$ be the determinant of 
the action of $W \Gamma$ on the lattice $X$. The Steinberg representation of $\mc H (W,q^\lambda) 
\rtimes \Gamma$ has dimension one and is defined by St$(T_w) = \det_X (w)$. We say that a 
representation $\pi$ of $\mc H \rtimes \Gamma$ is generic if its restriction to 
$\mc H (W,q^\lambda) \rtimes \Gamma$ contains St. This definition is justified by the following result.

\begin{thmintro}\label{thm:E}
\textup{(see Proposition \ref{prop:6.3} and Theorem \ref{thm:8.8})}\\
Let $G$ be a connected reductive group over a non-archimedean local field. Let $\Rep (G)^{\mf s}$
be a Bernstein block of smooth complex $G$-representations, such that the underlying 
supercuspidal representations are simply generic. 
\enuma{ 
\item $\Rep (G)^{\mf s}$ is equivalent to the module category of an extended affine 
Hecke algebra $\mc H \rtimes \Gamma$ with parameters in $\R_{\geq 1}$. 
\item With the normalizations from \cite[\S 2]{SolQS}, the equivalence 
$\Rep (G)^{\mf s} \cong \Mod (\mc H \rtimes \Gamma)$ preserves genericity.
}
\end{thmintro}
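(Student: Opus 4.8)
I would split the statement into two essentially independent pieces and do the real work in (b). For (a): that $\Rep(G)^{\mf s}$ is Morita equivalent to $\Mod(\mc H \rtimes \C[\Gamma,\natural])$ for \emph{some} twisted extended affine Hecke algebra is already contained in \cite{Hei} and \cite[\S 10]{SolEnd}; the only point to add is that, when the supercuspidal representations underlying $\mf s$ are simply generic, one may take the $2$-cocycle $\natural$ trivial and the $q$-parameters in $\R_{\geq 1}$. The parameters $q^{\lambda}$ of $\mc H$ are read off from the rank-one Harish-Chandra $\mu$-functions of the supercuspidal support, and for generic supercuspidals these are governed by Langlands--Shahidi $L$-factors, whose relevant zeros sit at positions forcing the parameter function to be non-negative, i.e. $q^{\lambda} \geq 1$; likewise genericity makes a \emph{normal} type available, so that the relevant $\End_G$-algebra is an honest crossed product and $\natural$ disappears. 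I would simply invoke the analysis of \cite[\S 2]{SolQS} here; this is the content of Proposition \ref{prop:6.3} together with its $p$-adic input.

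\textbf{Part (b): reduction to a Hecke-algebra identity.} Fix a Whittaker datum $(U,\psi)$ for (the quasi-split inner form of) $G$. Since the supercuspidal support of $\mf s$ is simply generic, I would choose the type $(K,\lambda)$ realizing the equivalence of (a) to be compatible with $(U,\psi)$, in the sense that Whittaker functionals are detected on $\lambda$-isotypic vectors. Write $F : \Rep(G)^{\mf s} \isom \Mod(\mc H \rtimes \Gamma)$, $F(\pi) = \Hom_K(\lambda,\pi)$, for the equivalence normalized as in \cite[\S 2]{SolQS}. The crux is to compute the image under $F$ of the $\mf s$-component $\mathrm{GG}^{\mf s}$ of the Gelfand--Graev representation (compact induction of $\psi$ from $U$); the claim is
\[
F\big( \mathrm{GG}^{\mf s} \big) \;\cong\; \ind_{\mc H(W,q^\lambda) \rtimes \Gamma}^{\mc H \rtimes \Gamma} (\mathrm{St}) ,
\]
where $\mathrm{St}$ is the one-dimensional module $T_w \mapsto \det_X(w)$. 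Granting this, for every $\pi$ in $\mf s$ with module $M = F(\pi)$, using that $F$ is an equivalence and then Frobenius reciprocity for the finite-dimensional subalgebra,
\[
\Hom_G(\mathrm{GG}^{\mf s}, \pi) \;\cong\; \Hom_{\mc H \rtimes \Gamma}\!\big( \ind_{\mc H(W,q^\lambda)\rtimes\Gamma}^{\mc H \rtimes \Gamma}(\mathrm{St}), M \big) \;\cong\; \Hom_{\mc H(W,q^\lambda)\rtimes\Gamma}\!\big( \mathrm{St}, \Res M \big) .
\]
The left-hand side is nonzero exactly when $\pi$ is generic (with multiplicity one by simple genericity, though only nonvanishing is needed), and the right-hand side is nonzero exactly when $\mathrm{St}$ occurs in $\Res M$, i.e. when $M$ is generic in the sense of the definition preceding the theorem. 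This gives (b).

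\textbf{The main obstacle.} Everything rests on the displayed identification of $F(\mathrm{GG}^{\mf s})$, and this is where the genuine input lies. One must (i) produce a type compatible with the Whittaker datum for a general connected reductive group, which I would extract from \cite{BuHe} and \cite{SolQS}; (ii) show that the $\lambda$-isotypic part of the Gelfand--Graev representation is, as a module over the finite-dimensional subalgebra $\mc H(W,q^\lambda)$, precisely the Steinberg character and not some other one-dimensional character --- this is the Casselman--Shalika-type computation, and it is here that the normalizations of \cite[\S 2]{SolQS} and the sign conventions hidden in $\psi$ must be tracked carefully; and (iii) propagate this through the crossed product by $\Gamma$, so that $\mathrm{St}$ of $\mc H(W,q^\lambda) \rtimes \Gamma$ (the twist built from $\det_X$) is still the correct target --- this last point is the technical heart of the extended setting of Section \ref{sec:extend}. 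Accordingly I would first settle the case $\Gamma = 1$ (Proposition \ref{prop:6.3}) and then bootstrap to $\mc H \rtimes \Gamma$ (Theorem \ref{thm:8.8}).
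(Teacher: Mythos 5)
Your part (a) is essentially the paper's Theorem \ref{thm:8.8}: one quotes \cite[\S 10]{SolEnd}, notes that simple genericity (multiplicity one of $\sigma_1$ in $\Res^M_{M^1}\sigma$) lets one choose the intertwiners canonically so that the $2$-cocycle becomes scalar-valued, and then the Whittaker normalization of \cite[Theorem 2.7]{SolQS} splits it; the bound $q_\alpha \geq q_{\alpha*}\geq 1$ is already in \cite[Theorem 10.9]{SolEnd}. (The Langlands--Shahidi input you invoke is not needed here; in the paper it only enters Proposition \ref{prop:8.9}, a finer statement about parameter ratios for quasi-split groups.) Your key displayed identity in (b) is exactly Theorem \ref{thm:6.1}.b and its extended form \eqref{eq:8.1}, and your Hecke-side chain (equivalence, then Frobenius reciprocity for $\ind_{\mc H(W,q^\lambda)\rtimes\Gamma}^{\mc H\rtimes\Gamma}$, then semisimplicity of the finite crossed product) is correct.

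The genuine gap is the sentence ``the left-hand side is nonzero exactly when $\pi$ is generic.'' Genericity is defined by the nonvanishing of Whittaker functionals, $\Hom_U(\pi,\xi)\cong \Hom_G\big(\pi,\Ind_U^G(\xi)\big)$, i.e.\ by maps \emph{out of} $\pi$ into the full smooth induction, whereas your reduction uses $\Hom_G(\mathrm{GG}^{\mf s},\pi)=\Hom_G\big(\ind_U^G(\xi),\pi\big)$, maps out of the \emph{compactly} induced Gelfand--Graev representation into $\pi$. Since $U$ is closed but not open, compact induction from $U$ is not left adjoint to restriction, and these two spaces are not formally the same: by \cite[(2.1.1)]{BuHe} one has $\ind_U^G(\xi)^\he\cong \Ind_U^G(\xi)$, so $\Hom_G(\mathrm{GG}^{\mf s},\pi)$ detects genericity of $\pi^\he$, not of $\pi$. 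Bridging this is precisely where the paper's Hermitian-dual machinery enters: it works with $\Hom_G(\pi,\Ind_U^G(\xi))\cong\Hom_G(\ind_U^G(\xi),\pi^\he)$, uses Theorem \ref{thm:5.3} (the equivalence intertwines $\pi\mapsto\pi^\he$ with $M\mapsto M^\he$, itself a nontrivial result resting on Proposition \ref{prop:5.2} and resolution/limit arguments valid for arbitrary, not necessarily finite-length, modules), and then converts $\Hom_{\mc H}\big(F(\pi),\mc H^\he\otimes_{\mc H(W,q^\lambda)}\mathrm{St}\big)$ into $\Hom_{\mc H(W,q^\lambda)}\big(F(\pi),\mathrm{St}\big)$ via Lemma \ref{lem:6.2}.b. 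To salvage your route you would have to prove independently that $\pi$ is $(U,\xi)$-generic if and only if $\pi^\he$ is, for \emph{all} smooth $\pi$ in the block (Proposition \ref{prop:6.3}.c is stated in that generality); even for irreducible $\pi$ this requires a real argument (contragredients/conjugates and Whittaker data), and for infinite-length $\pi$ it essentially amounts to redoing the Hermitian-duality analysis you omitted. So the skeleton matches the paper, but the step you treat as immediate is the technical heart of the proof and is missing.
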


For affine Hecke algebras with $q$-parameters in $\R_{\geq 1}$, one can hope for a version of the
generalized injectivity conjecture. Using our previous findings in the representation theory of
Hecke algebras, we take some steps in that direction. 

By definition the maximal commutative subalgebra $\mc A \cong \C [X]$ of $\mc H$ is the unique
minimal parabolic subalgebra of $\mc H \rtimes \Gamma$. The basis $\Delta$ of $R$ determines a
positive cone in $\Hom (X,\R_{>0})$.

\begin{thmintro}\label{thm:F}
\textup{(see Propositions \ref{prop:7.4} and \ref{prop:8.3})}\\
Let $\mc H \rtimes \Gamma$ be an extended affine Hecke algebra with $q$-parameters in $\R_{\geq 1}$.
\enuma{
\item For $t \in \Hom (X,\C^\times)$, the parabolically induced representation 
$\ind_{\mc A}^{\mc H \rtimes \Gamma}(t)$ has a unique generic constituent, say $\pi_t$.
\item When $|t|$ lies in the closure of the positive cone in $\Hom (X,\R_{>0})$, $\pi_t$ is a
subrepresentation of $\ind_{\mc A}^{\mc H \rtimes \Gamma}(t)$.
\item When $|t^{-1}|$ lies in the closure of the positive cone in $\Hom (X,\R_{>0})$, $\pi_t$ is a
quotient of $\ind_{\mc A}^{\mc H \rtimes \Gamma}(t)$.
}
\end{thmintro}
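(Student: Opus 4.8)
The plan is to settle (a) by a restriction argument, to reduce (b) to (c) by Hermitian duality, and to prove (c) by combining the Langlands classification with an analysis of intertwining operators; throughout I would first argue for $\mc H$ and then transfer to $\mc H\rtimes\Gamma$ as in Section \ref{sec:extend}.

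For (a), restrict $\ind_{\mc A}^{\mc H\rtimes\Gamma}(t)$ to the finite-dimensional subalgebra $\mc H(W,q^\lambda)\rtimes\Gamma$. By the Bernstein decomposition $\mc H\rtimes\Gamma=\bigoplus_{w\in W,\,\gamma\in\Gamma}T_w\gamma\,\mc A$ as a right $\mc A$-module, this restriction is the left regular module of $\mc H(W,q^\lambda)\rtimes\Gamma$, independently of $t$. Since the $q$-parameters lie in $\R_{\geq1}$, the finite Hecke algebra $\mc H(W,q^\lambda)$ is split semisimple, hence so is $\mc H(W,q^\lambda)\rtimes\Gamma$; as $\mathrm{St}$ is one-dimensional it occurs in the regular module with multiplicity one. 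Therefore exactly one composition factor of $\ind_{\mc A}^{\mc H\rtimes\Gamma}(t)$ is generic, and it has multiplicity one; this factor is $\pi_t$, which proves (a).

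Next, (b) reduces to (c). By Theorem \ref{thm:B}(a) for $P=\emptyset$ we have $\ind_{\mc A}^{\mc H\rtimes\Gamma}(t)^\he\cong\ind_{\mc A}^{\mc H\rtimes\Gamma}(t^\he)$, and the description of $\he$ on the minimal parabolic subalgebra $\mc A=\mc H^\emptyset$ gives $|t^\he|=|t|^{-1}$, with the unitary part of $t$ unchanged. Since $(\,\cdot\,)^\he$ is a contravariant exact involution that exchanges subobjects with quotients and preserves genericity — the latter because $\det_X$ takes values in $\{\pm1\}$, so $\mathrm{St}^\he=\mathrm{St}$ — the module $\pi_t^\he$ is again the unique generic constituent of $\ind_{\mc A}^{\mc H\rtimes\Gamma}(t^\he)$, i.e.\ $\pi_t^\he=\pi_{t^\he}$. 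Hence, if $|t|$ lies in the closure of the positive cone, then so does $|(t^\he)^{-1}|=|t|$, so (c) applied to $t^\he$ makes $\pi_{t^\he}$ a quotient of $\ind_{\mc A}^{\mc H\rtimes\Gamma}(t^\he)$; applying $\he$ turns this into the assertion of (b) for $t$. It therefore remains to prove (c).

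For (c), assume first that $|t|$ is in the interior of the negative cone, so $|t|$ is strictly anti-dominant; then (taking $\Gamma$ trivial for the moment) $t$ is $W,\emptyset$-regular, because a strictly anti-dominant weight has trivial $W$-stabilizer, and Theorem \ref{thm:D}(a) gives a unique irreducible quotient $Q_t$ of $\ind_{\mc A}^{\mc H}(t)$, realized as $\operatorname{Im}J$ for the intertwining operator $J$ attached to $(w_\Delta,\emptyset,t)$. The crux is that $Q_t$ is generic: once this is known, $Q_t=\pi_t$ by (a) and $\pi_t$ is a quotient. Restricting $J$ to $\mc H(W,q^\lambda)$, both its source $\ind_{\mc A}^{\mc H}(t)$ and its target $\ind_{\mc A}^{\mc H}(w_\Delta t)$ become the regular module, so $J$ becomes right multiplication by an element $a_t$ and acts on the one-dimensional $\mathrm{St}$-isotypic line by the scalar $\mathrm{St}(a_t)$. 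Factoring $J$ along a reduced word for $w_\Delta$ expresses $\mathrm{St}(a_t)$ as a product $\prod_{\beta\in R^+}c_\beta(t)$ of rank-one ($c$-function) contributions, each a fractional linear function of $t(\beta^\vee)$ with coefficients in $q_\beta$; for $q_\beta\geq1$ the zeros and poles of $c_\beta$ occur at values of $t(\beta^\vee)$ that are avoided throughout the anti-dominant region, so $\mathrm{St}(a_t)\neq0$, $J$ is bijective on the $\mathrm{St}$-line, and $\mathrm{St}$ is a summand of the restriction of $\operatorname{Im}J=Q_t$ to $\mc H(W,q^\lambda)$. I would then pass to the closure of the negative cone by a limiting argument: $\pi_t$ is rigid (multiplicity one), and the property ``$t$ is an $\mc A$-weight of $\pi_t$'', equivalent by Frobenius reciprocity to $\pi_t$ being a quotient of $\ind_{\mc A}(t)$, persists under specialization — on the walls one may instead note that the $c$-factors only degenerate where the principal series is semisimple, so $\pi_t$ splits off directly. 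The transfer to $\mc H\rtimes\Gamma$ is then routine given Condition \ref{cond:8.1}. The step I expect to be the main obstacle is exactly the non-vanishing of $\mathrm{St}(a_t)$: one must fix the normalization of the operator in Theorem \ref{thm:D}, compute its $\mathrm{St}$-component explicitly, and use positivity of the parameters to keep it finite and nonzero — easy on the interior of the cone, more delicate on its boundary. For parameters below $1$ the generic constituent lies on the opposite side of the principal series, so the hypothesis $q\geq1$ is genuinely needed.
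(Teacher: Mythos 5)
Parts (a) and the reduction of (b) to (c) via Hermitian duality are fine: your restriction-to-the-regular-module argument for (a) is essentially the paper's Lemma \ref{lem:6.5}/Lemma \ref{lem:7.1}, and the duality reduction is a legitimate alternative to the paper's device of deducing the subrepresentation case from the quotient case via $\psi_{w_\Delta w_P}$. The interior case of (c) also follows the paper's strategy in spirit (non-vanishing of the Whittaker/St-component of the intertwining operator, using positivity of the parameters), although your assertion that ``the zeros and poles of $c_\beta$ occur at values of $t(\beta^\vee)$ that are avoided throughout the anti-dominant region'' is false as stated: the anti-dominant region does contain zeros of the $c$-factors (e.g.\ $\alpha(t)=q^{-(\lambda(\alpha)+\lambda^*(\alpha))/2}$, which is exactly where interesting reducibility happens); what must be shown is that the particular factor governing the St-line does not vanish there, which is the content of the paper's local-constant analysis (Lemma \ref{lem:7.2} together with the pole locus from \cite[Theorem 4.33.i]{Opd-Sp} in the proof of Theorem \ref{thm:7.3}).

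The genuine gaps are in the boundary case and in the passage to $\mc H\rtimes\Gamma$. On the walls of the cone the weight $t$ need not be $W,\emptyset$-regular, so Theorem \ref{thm:4.7} gives you nothing, and your two substitutes both fail: ``persists under specialization'' is unsubstantiated (the constituent $\pi_t$ and the quotient maps do not vary in any flat family, and their dimensions jump), and the claim that the $c$-factors only degenerate where the principal series is semisimple is wrong --- if $|\alpha(t)|=1$ for some simple roots while $|\beta(t)|<1$ for others, $\ind_{\mc A}^{\mc H}(t)$ is in general a non-split extension. The paper's proof of Proposition \ref{prop:7.4} handles exactly this by setting $P=\{\alpha\in\Delta: |\alpha(t)|=1\}$, observing that $\ind_{\mc A}^{\mc H^P}(t|t|^{-1})$ is tempered and anti-tempered, hence completely reducible, splitting off its unique generic summand $\rho_1$, and then applying the local-constant non-vanishing (Theorem \ref{thm:7.3}) to the outer induction $\ind_{\mc H^P}^{\mc H}(\rho_1\otimes|t|)$, where the relevant roots lie in $R^+\setminus R_P^+$ and the strict inequalities hold; your proposal has no counterpart of this two-step reduction, and it is the essential idea. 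Finally, the extension to $\mc H\rtimes\Gamma$ is not routine: even for $|t|$ strictly anti-dominant an element $\gamma\in\Gamma\setminus\Gamma_\emptyset$ may fix $t$, so $W\Gamma,\emptyset$-regularity fails and the unique-irreducible-quotient statement is simply false for $\mc H\rtimes\Gamma$ (see the paper's $A_2$ example after Proposition \ref{prop:8.6} and the warning that Proposition \ref{prop:4.6} does not extend). The paper instead uses Clifford theory --- the generic constituent is $\pi\rtimes\det_X=\ind_{\mc H\rtimes\Gamma_\pi}^{\mc H\rtimes\Gamma}(\pi\otimes\det_X)$ --- and separate Frobenius-reciprocity computations (Theorem \ref{thm:8:2}, Proposition \ref{prop:8.3}) to verify the quotient/subrepresentation property; some argument of this kind is needed in your write-up as well.
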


In spite of this result, the generalized injectivity conjecture does not always hold for standard
$\mc H$-representations that are induced from parabolic subalgebras other than $\mc A$, see
Example \ref{ex:7}. The problem seems to be that arbitrary $q$-parameters (in $\R_{\geq 1}$) offer
too much freedom. We expect that the generalized injectivity conjecture does hold for affine Hecke
algebras $\mc H$ whose $q$-parameters come from reductive $p$-adic groups. To all
appearances such $q$-parameters are geometric in the sense of \cite[\S 5.3]{SolHecke}, so that
algebro-geometric techniques to study representations of such Hecke algebras are available.

\renewcommand{\theequation}{\arabic{section}.\arabic{equation}}
\counterwithin*{equation}{section}

\section{Preliminaries}
\label{sec:prelim}

We fix notations and recall a few basic notions about affine Hecke algebras.
For more background we refer to \cite{Lus-Gr,Opd-Sp,SolHecke}.

\noindent Let $R$ be a root system with basis $\Delta$ and positive roots $R^+$.
Let $\mc R = (X,R,Y,R^\vee,\Delta)$ be a based root datum. It yields a Weyl group
$W = W(R)$, with set of simple reflections $S = \{ s_\alpha : \alpha \in \Delta \}$.
For $\alpha \in R$ such that $\alpha^\vee \in R^\vee$ is maximal with respect to $\Delta^\vee$,
we define the simple affine reflection
\[
\begin{array}{cccl}
s'_\alpha : & X & \to & X \\
& x & \mapsto & s_\alpha (x) + \alpha = x + (1 - \langle x ,\alpha^\vee \rangle) \alpha .
\end{array} 
\]
Then $S_{\mr{aff}} := S \cup \{ s'_\alpha : \alpha^\vee \in R^\vee_{\mr{max}} \}$ is a set of Coxeter
generators for the affine Weyl group 
\[
W_{\mr{aff}} = \langle S_{\mr{aff}} \rangle = W \ltimes \Z R .
\]
It is a normal subgroup of the extended affine Weyl group $W(\mc R) = W \ltimes X$. The length 
function $\ell$ of $W_{\mr{aff}}$ extends naturally to $W \ltimes X$. Moreover the set of length 
zero elements $\Omega = \{ w \in W \ltimes X : \ell (w) = 0 \}$ is a group and 
\[
W \ltimes X = W_{\mr{aff}} \rtimes \Omega .
\]
We fix $q \in \R_{>1}$ and we let $\lambda, \lambda^* : R \to \R$ be functions such that
\begin{itemize}
\item if $\alpha, \beta \in R$ are in the same $W$-orbit, then $\lambda (\alpha) = \lambda (\beta)$
and $\lambda^* (\alpha) = \lambda^* (\beta)$;
\item if $\alpha^\vee \notin 2Y$, then $\lambda^* (\alpha) = \lambda (\alpha)$.
\end{itemize}
To every simple (affine) reflection we associate a $q$-parameter, by
\[
q_{s_\alpha} = q^{\lambda (\alpha)} \quad \text{and} \quad q_{s'_\alpha} = q^{\lambda^* (\alpha)} .
\]
The Iwahori--Hecke algebra $\mc H (W_{\mr{aff}}, \lambda, \lambda^*, q)$ can be presented as 
the vector space with basis $\{ N_w : w \in W_{\mr{aff}} \}$ and multiplication rules (for
$w \in W_{\mr{aff}}$ and $s \in S_{\mr{aff}}$)
\begin{equation}\label{eq:1.11}
N_w N_s = \left\{ \begin{array}{ll}
N_{ws} & \text{ if } \ell (ws) = \ell (w) + 1 \\
N_{ws} + (q_s^{1/2} - q_s^{-1/2}) N_w & \text{ if } \ell (ws) = \ell (w) - 1 
\end{array} \right. .
\end{equation}
Notice that $q_s^{1/2}$ is unambiguous, because $q_s \in \R_{>0}$. The conjugation action of
$\Omega$ on $W_{\mr{aff}}$ induces an action on $\mc H (W_{\mr{aff}},\lambda,\lambda^*,q)$.
That enables us to construct the affine Hecke algebra
\[
\mc H := \mc H (\mc R,\lambda,\lambda^*,q) = \mc H (W_{\mr{aff}},\lambda,\lambda^*,q) \rtimes \Omega ,
\]
which has a vector space basis $\{ N_w : w \in W \ltimes X\}$. This is a version of the 
Iwahori--Matsumoto presentation of $\mc H (\mc R,\lambda,\lambda^*,q)$. More common, and already
used in \cite{IwMa}, is the same presentation expressed in terms of the basis $\{ T_w : w \in W 
\ltimes X\}$, where $T_s = q_s^{1/2} N_s$ for $s \in S_\af$.

There is another well-known presentation, due to
Bernstein. To that end, we define elements $\theta_x \; (x \in X)$ by the following recipe. 
If $x = x_1 - x_2$ where $\langle x_1, \alpha^\vee \rangle \geq 0$ and $\langle x_2, \alpha^\vee 
\rangle \geq 0$ for all $\alpha \in \Delta$, then $\theta_x = N_{x_1} N_{x_2}^{-1}$.

The set $\{ \theta_x : x \in X\}$ spans a commutative subalgebra $\mc A$ of $\mc H$, canonically
isomorphic with $\C [X]$. Let $\mc H (W, q^\lambda)$ be the Iwahori--Hecke algebra of $W$, with
respect to the parameter function $q^\lambda : R \to \R_{>0}$. According to Bernstein, the
multiplication maps
\begin{equation}\label{eq:1.5}
\mc H (W, q^\lambda) \otimes \mc A \; \to \; \mc H \; \leftarrow \; \mc A \otimes \mc H (W,q^\lambda)
\end{equation}
are bijections. The cross relations for multiplication of elements of $\mc H (W,q^\lambda)$
and of $\mc A$ can be described explicitly. It follows from those relations that the centre of
$\mc H$ is $\mc A^W$, where $W$ acts on $\mc A \cong \C [X]$ via its canonical action on $X$.

For a set of simple roots $P \subset \Delta$ we have a parabolic subrootsystem $R_P \subset R$
and a parabolic subgroup $W_P = W(R_P)$. The parabolic subalgebra $\mc H^P \subset \mc H$
is generated by $\mc A$ and the $N_w$ with $w \in W_P$. One can identify $\mc H^P$ with
$\mc H (X, R_P, Y, R_P^\vee, P, \lambda, \lambda^*, q)$. In particular $\mc H^\emptyset = \mc A$
and $\mc H^\Delta = \mc H$.

We write $T = \Hom_\Z (X,\C^\times)$, this is a complex torus. It has subtori
\[
T^P = \Hom_\Z (X / X \cap \Q P, \C^\times ) ,\qquad 
T_P = \Hom_\Z (X / P^{\vee \perp}, \C^\times) ,
\]
where $P^{\vee \perp} = \{x \in X : \langle x, \alpha^\vee \rangle = 0 \; \forall \alpha \in P \}$.
Any $t \in T^P$ gives rise to an algebra automorphism
\[
\begin{array}{llll}
\psi_t : & \mc H^P & \to & \mc H^P \\
 & N_w \theta_x & \mapsto & t(x) N_w \theta_x \qquad w \in W_P, x \in X.
\end{array}
\]
For an $\mc H^P$-representation $\pi$ and $t \in T^P$ we write $\pi \otimes t = \pi \circ \psi_t$.

We define a conjugate-linear involution * on $\mc H$ by 
\[
\big( \sum\nolimits_{w \in W \ltimes X} z_w N_w \big)^* = 
\sum\nolimits_{w \in W \ltimes X} \overline{z_w} N_{w^{-1}}. 
\]
Here we need $q_s \in \R_{>0}$ for all $s \in S_{\mr{aff}}$. We can regard * as an $\R$-linear
isomorphism from $\mc H$ to its opposite algebra. This involution interacts well with the trace
\begin{equation}\label{eq:1.4}
\tau : \mc H \to \C ,\qquad \tau (N_w) = \left\{ 
\begin{array}{ll}
1 & \text{ if } w = e \\
0 & \text{ if } w \in W(\mc R) \setminus \{e\} 
\end{array} \right. .
\end{equation}
Namely, the formula
\begin{equation}\label{eq:1.1}
\langle h_1, h_2 \rangle = \tau (h_1 h_2^*) \qquad h_1, h_2 \in \mc H
\end{equation}
defines an inner product on $\mc H$, linear in the first variable. The set $\{ N_w : w \in W (\mc R)\}$ 
is an orthonormal basis of $\mc H$ with this inner product.
We note that \eqref{eq:1.1} makes the left regular representation of $\mc H$ pre-unitary
(i.e. \hspace{-4mm} a *-representation on an inner product space that need not be complete):
\begin{equation}\label{eq:1.2}
\langle h_1 h_2, h_3 \rangle = \tau (h_1 h_2 h_3^*) = \tau (h_2 h_3^* h_1) = 
\langle h_2, h_1^* h_3 \rangle \qquad h_1, h_2, h_3 \in \mc H .
\end{equation}
The parabolic subalgebra $\mc H^P$ has its own involution $*_P$, which usually differs from
$* |_{\mc H^P}$. In fact $\mc H^P$ is typically not a *-subalgebra of $\mc H$. Let $w_P$ be the 
longest element of $W_P$. Recall that $w_P$ has order two and that the set of positive roots made 
negative by $w_P$ is precisely $R_P^+$. By \cite[Proposition 1.12]{Opd1}:
\begin{equation}\label{eq:1.3}
*_P (\theta_x) = N_{w_P} \theta_{-w_P (x)} N_{w_P}^{-1} .
\end{equation}
For $P = \emptyset$ we get $w_\emptyset = 1$, so $*_\emptyset (\theta_x) = \theta_{-x}$.

For a subset $\tilde W \subset W$, let $\mc H (\tilde W)$ be the linear subspace of $\mc H 
(W,q^\lambda)$ spanned by $\{ N_w : w \in \tilde W\}$. Let 
\[
W^P = \{w \in W : w (P) \subset R^+\}
\]
be the set of shortest length representatives for $W / W_P$. By \eqref{eq:1.5} the multiplication
map $\mc H (W^P) \otimes \mc H^P \to \mc H$ is a linear bijection. In particular every $h \in \mc H$
can be written as
\begin{equation}\label{eq:1.10}
h = \sum\nolimits_{w \in W^P} N_w h^P_w \quad \text{for unique } h^P_w \in \mc H^P .
\end{equation}
The next result is analogous to \cite[Proposition 1.4]{BaMo3} for graded Hecke algebras.

\begin{lem}\label{lem:1.1}
$(h^*)^P_e = (h^P_e)^{*_P}$ for all $h \in \mc H$.
\end{lem}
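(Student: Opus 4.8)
The plan is to compute the two sides of the claimed identity directly using the decomposition \eqref{eq:1.10} and the orthonormality of the basis $\{N_w : w \in W(\mc R)\}$ with respect to the inner product \eqref{eq:1.1}, together with the compatibility \eqref{eq:1.2} between the involution and the trace. The coefficient $(h^P_e)^{*_P}$ is an element of $\mc H^P$, and since $\mc A$ and $\mc H(W_P, q^\lambda)$ generate $\mc H^P$ with $\mc H(W_P,q^\lambda) \otimes \mc A \to \mc H^P$ a linear bijection, it suffices to pin down $(h^*)^P_e$ and $(h^P_e)^{*_P}$ by pairing them against a suitable spanning set and checking the pairings agree.

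\textbf{Key steps.} First I would express the map $h \mapsto h^P_e$ in terms of the inner product: from \eqref{eq:1.10}, since $\{N_w : w \in W^P\}$ is part of an orthonormal basis and each $h^P_w \in \mc H^P$ expands in the $N_v$ with $v \in W_P \ltimes X$ (disjoint from the $W^P$-indices except via the product $N_w h^P_w$), one recovers $h^P_e$ as the ``$\mc H^P$-component'' of $h$; concretely, for $v \in W_P \ltimes X$ one has $\langle h^P_e, N_v \rangle_{\mc H^P} = \langle h, N_v \rangle_{\mc H}$ once one checks that $N_w h^P_w$ for $w \neq e$ contributes nothing to basis elements indexed by $W_P \ltimes X$ (this uses that $\ell(w v) = \ell(w) + \ell(v)$ for $w \in W^P$, $v \in W_P$, so the product $N_w N_v = N_{wv}$ stays off the $W_P \ltimes X$ part; the central subtlety is handling the $\theta_x$ factors, which can shorten lengths, but one reduces to the statement at $v \in W_P$ after separating out the $\mc A$-part). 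Second, I would use \eqref{eq:1.2} to rewrite $\langle h^*, N_v \rangle = \tau(h^* N_v^*) = \overline{\tau(N_v h)} = \overline{\langle N_v, h^* \rangle}$, i.e. the involution $*$ is the adjoint for $\langle\cdot,\cdot\rangle$ up to conjugation, and similarly $*_P$ is the adjoint for $\langle\cdot,\cdot\rangle_{\mc H^P}$. Third, I would combine these: for $v \in W_P \ltimes X$,
\[
\langle (h^*)^P_e, N_v \rangle_{\mc H^P} = \langle h^*, N_v \rangle_{\mc H} = \overline{\langle h, N_{v^{-1}} \rangle_{\mc H}} = \overline{\langle h^P_e, N_{v^{-1}} \rangle_{\mc H^P}} = \langle (h^P_e)^{*_P}, N_v \rangle_{\mc H^P},
\]
where the last equality is the defining adjoint property of $*_P$ within $\mc H^P$. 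Since the $N_v$ with $v \in W_P \ltimes X$ form an orthonormal basis of $\mc H^P$, this forces $(h^*)^P_e = (h^P_e)^{*_P}$.

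\textbf{Main obstacle.} The delicate point is the second step: showing that the coefficient-extraction map $h \mapsto h^P_e$ is genuinely given by orthogonal projection onto $\mc H^P$, i.e. that the terms $N_w h^P_w$ with $w \in W^P \setminus \{e\}$ are orthogonal to all of $\mc H^P$. Because the $N_w$-basis indices do not interact straightforwardly with the Bernstein generators $\theta_x$ (lengths can drop when multiplying lattice elements), one cannot simply read off supports. The clean way around this is to first treat the case $h \in \mc H(W, q^\lambda)$ using the length-additivity $W^P \cdot W_P \subset W$ with $\ell$ additive, then extend to all of $\mc H$ via the Bernstein decomposition $\mc H(W,q^\lambda) \otimes \mc A \isom \mc H$ and the fact that the factorization \eqref{eq:1.10} and the involutions are all $\mc A$-linear (resp. transform $\mc A$-coefficients by $x \mapsto -x$ in a controlled way via \eqref{eq:1.3}); one checks the identity is compatible with right multiplication by $\theta_x$, reducing to the finite Hecke algebra case where everything is transparent. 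Alternatively, and perhaps more cleanly, one mimics the proof of \cite[Proposition 1.4]{BaMo3} cited in the text, transporting that graded-Hecke-algebra argument to the affine setting.
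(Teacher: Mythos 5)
Your argument hinges on the claim that the coefficient map $h \mapsto h^P_e$ from \eqref{eq:1.10} is the orthogonal projection of $\mc H$ onto $\mc H^P$ for the inner product \eqref{eq:1.1}, i.e.\ that $N_w \mc H^P \perp \mc H^P$ for $w \in W^P \setminus \{e\}$, so that $\langle h^P_e , N_v \rangle_{\mc H^P} = \langle h, N_v\rangle_{\mc H}$. This is not a delicate point to be checked — it is false. Take the rank-one datum $(\Z \alpha, \{\pm \alpha\}, \Z \alpha^\vee, \{\pm \alpha^\vee\}, \{\alpha\})$, $P = \emptyset$, and $h = N_{s_\alpha} \theta_{-\alpha}$, which is already in the form \eqref{eq:1.10} with $h^\emptyset_e = 0$ and $h^\emptyset_{s_\alpha} = \theta_{-\alpha}$. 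Since $\theta_{-\alpha} = N_{t_\alpha}^{-1} = N_{s_\alpha}^{-1} N_{s'_\alpha}^{-1}$, one gets $h = N_{s'_\alpha}^{-1} = N_{s'_\alpha} - (q_{s'_\alpha}^{1/2} - q_{s'_\alpha}^{-1/2}) N_e$, hence $\langle h, N_e \rangle = \tau (h) = -(q_{s'_\alpha}^{1/2} - q_{s'_\alpha}^{-1/2}) \neq 0$: the term $N_{s_\alpha} h^\emptyset_{s_\alpha}$ is not orthogonal to $\mc A = \mc H^\emptyset$, and your first and third displayed equalities fail already at $v = e$. Two auxiliary premises are also wrong: the Iwahori--Matsumoto elements $N_v$ of $\mc H$ with $v \in W_P \ltimes X$ neither lie in $\mc H^P$ nor span it (in the same example $\theta_{-\alpha}$ has nonzero $N_{s_\alpha}$- and $N_{s'_\alpha}$-coefficients), and $\tau |_{\mc H^P}$ is not the intrinsic trace of $\mc H^P$ (here $\tau (\theta_{-\alpha}) \neq 0$), so the two inner products you compare do not agree on $\mc H^P$. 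The compatibility of $\mc H^P$ with the Bernstein presentation simply does not transfer to the Iwahori--Matsumoto basis, and that mismatch is exactly where the difficulty of the lemma lives.

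Your proposed repair (prove the statement on $\mc H (W, q^\lambda)$, then bootstrap by right multiplication by $\theta_x$) does not carry the weight either. The map $h \mapsto h^P_e$ is right $\mc H^P$-linear, but by \eqref{eq:1.3} the two involutions act on $\mc A$ by conjugation by different elements, $*(\theta_x) = N_{w_\Delta} \theta_{-w_\Delta (x)} N_{w_\Delta}^{-1}$ versus $*_P (\theta_x) = N_{w_P} \theta_{-w_P (x)} N_{w_P}^{-1}$, so passing from $h$ to $h \theta_x$ requires comparing these two conjugations modulo $\mc H (W^P \setminus \{e\}) \mc H^P$ — which is the entire content of the lemma, not a formality. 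The paper's proof does this head-on: it reduces to $h = N_w \theta_x$, uses the length additivity $N_{w_\Delta} = N_{w_P} N_{w_P w_\Delta}$, the commutation relation \eqref{eq:1.8} and bookkeeping in the Bruhat order to show that the difference of the two sides lies in $\mc H (W^P \setminus \{e\}) \mc H^P$. Some such explicit computation (or an actual transport of the Barbasch--Moy argument with these affine-specific steps carried out) is what is missing from your proposal.
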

\begin{proof}
By conjugate-linearity it suffices to consider $h$ of the form $N_w \theta_x$ with $w \in W$ and 
$x \in X$. From \eqref{eq:1.3} we see that
\[
h^* = N_{w_\Delta} \theta_{-w_\Delta (x)} N_{w_\Delta}^{-1} N_{w^{-1}} \quad \text{and} \quad 
(h_e^P)^{*_P} = \left\{ \begin{array}{cc}
N_{w_P} \theta_{-w_P (x)} N_{w_P}^{-1} N_{w^{-1}} & w \in W_P , \\
0 & w \notin W_P .
\end{array} \right.
\]
We recall from \cite[\S 1.8]{Hum} that
\[
\ell (w^{-1}) + \ell (w w_\Delta) = \ell (w_\Delta) = \ell (w_\Delta w) + \ell (w^{-1}) .
\]
By definition of the multiplication in $\mc H (W,q^\lambda)$:
\begin{align}\label{eq:1.6}
& N_{w_\Delta w} N_{w^{-1}} = N_{w_\Delta} = N_{w^{-1}} N_{w w_\Delta} ,\\
\label{eq:1.7} & N_{w_\Delta} \theta_{-w_\Delta (x)} N_{w_\Delta}^{-1} N_{w^{-1}} = 
N_{w_\Delta} \theta_{-w_\Delta (x)} N_{w w_\Delta}^{-1} .
\end{align}
For a simple reflection $s \in W$, $N_s^{-1} = N_s + (q_s^{-1/2} - q_s^{1/2}) N_e$. That and 
the multiplication relations in the Bernstein presentation of $\mc H$ \cite[\S 3]{Lus-Gr} show that
\begin{equation}\label{eq:1.8}
\theta_y N_s^{-1} - N_s^{-1} \theta_{s (y)} \in \mc A \qquad \text{for all } y \in X.
\end{equation}
We denote the Bruhat order on $W$ by $\leq$.
Applying \eqref{eq:1.8} recursively, \eqref{eq:1.7} can be expressed as $N_{w_\Delta} \sum_{v \in W ,
v \leq w w_\Delta} N_v^{-1} a_v$ for suitable $a_v \in \mc A$. By \eqref{eq:1.6} that equals
$\sum_{v \in W ,v \leq w w_\Delta} N_{w_\Delta v^{-1}} a_v$. Here $v^{-1} \leq w_\Delta w^{-1}$,
so $w_\Delta v^{-1} \geq w^{-1}$.

Suppose that $w \notin W_P$. Any reduced expression of $w^{-1}$ contains simple reflections
not in $W_P$, so the same goes for $w_\Delta v^{-1}$ with $v$ as above. Hence $w_\Delta v^{-1} \notin 
W_P$, and it can be written as $u w'$ with $u \in W^P \setminus \{e\}$ and $w' \in W_P$. Thus 
$N_{w_\Delta v^{-1}} a_v \in N_u \mc H^P$. That works for every $v \leq w w_\Delta$, showing that
\eqref{eq:1.7} lies in $\mc H (W^P \setminus \! \{e\}) \mc H^P$. In other words, $(h^*)^P_e = 0$.

Suppose that $w \in W_P$. We need to show that $\mc H (W^P \setminus \! \{e\}) \mc H^P$ contains
\begin{equation}\label{eq:1.9}
N_{w_\Delta} \theta_{-w_\Delta (x)} N_{w_\Delta}^{-1} N_{w^{-1}} - 
N_{w_P} \theta_{-w_P (x)} N_{w_P}^{-1} N_{w^{-1}} . 
\end{equation}
With \eqref{eq:1.6} we rewrite this element as
\[
\big( N_{w_\Delta} \theta_{-w_\Delta (x)} N_{w_P w_\Delta}^{-1} - N_{w_P} \theta_{-w_P (x)} \big) 
N_{w_P}^{-1} N_{w^{-1}} .
\]
Reasoning as above we find
\[
N_{w_\Delta} \theta_{-w_\Delta (x)} N_{w_P w_\Delta}^{-1} =
\sum\nolimits_{v \in W ,v \leq w_P w_\Delta} N_{w_\Delta v^{-1}} a_v .
\]
Here $w_\Delta v^{-1} \geq w_P$, so this only belongs to $W_P$ if $v = w_P w_\Delta$. From
\eqref{eq:1.8} one obtains 
\[
a_{w_P w_\Delta} = \theta_{w_P w_\Delta (-w_\Delta (x))} = \theta_{-w_P(x)} .
\]
Then \eqref{eq:1.9} reduces to 
\[
\sum\nolimits_{v \in W ,v < w_P w_\Delta} N_{w_\Delta v^{-1}} a_v N_{w_P}^{-1} N_{w^{-1}}.
\]
The same argument as in the case $w \notin W_P$ shows that this lies in 
$\mc H (W^P \setminus \! \{e\}) \mc H^P$.
\end{proof}

\section{Hermitian duals}
\label{sec:herm}

For any complex vector space $V$, let $V^\he$ be space of conjugate-linear functions from $V$ to $\C$.
In case $V$ has a topology, it is understood that $V^\he$ consists of the continuous conjugate-linear
functionals on $V$. If $(\pi, V_\pi)$ is an $\mc H$-representation, then $\mc H$ acts on $V_\pi^\he$ by
\begin{equation}\label{eq:2.1}
(h \cdot \lambda) (v) = \lambda (h^* v) \qquad h \in \mc H, v \in V_\pi, \lambda \in V_\pi^\he .
\end{equation}
This defines the Hermitian dual $(\pi^\he, V_\pi^\he )$ of the $\mc H$-representation $(\pi,V_\pi)$.
For any $(\rho,V_\rho) \in \Mod (\mc H)$ there is a conjugate-linear ``transposition" isomorphism
\begin{equation}\label{eq:2.10}
\begin{array}{ccc}
\Hom_{\mc H} (\pi, \rho^\he) & \cong & \Hom_{\mc H}(\rho, \pi^\he) \\ 
\phi & \mapsto & \phi^\he
\end{array}
\end{equation}
Here $\phi^\he$ sends $w \in V_\rho$ to $[v \mapsto \overline{\phi (v) w}]$ with $v \in V_\pi$.

Sometimes a representation is isomorphic to its Hermitian dual. For example, suppose that $V_\pi$ 
is a Hilbert space and that the representation $\pi$ is unitary:
\[
\langle \pi (h) v, v' \rangle = \langle v, \pi (h^*) v' \rangle  \qquad v,v' \in V_\pi, h \in \mc H .
\]
Then $(\pi^\he ,V_\pi^\he)$ can be identified with $(\pi, V_\pi)$ via the inner product.
Similarly we can consider the left regular representation of $\mc H$. Via $\tau$, we can identify 
\begin{equation}\label{eq:2.2}
\mc H^\he = \prod\nolimits_{w \in W(\mc R)} \C N_w ,
\end{equation}
a completion of $\mc H$. This $\mc H^\he$ is naturally an $\mc H$-bimodule, and \eqref{eq:1.2} remains 
valid for $h_1, h_3 \in \mc H$, $h_2 \in \mc H^\he$. Hence the Hermitian dual of $\mc H$ is $\mc H^\he$,
with $\mc H$ acting by left multiplication on both. The projectivity of $\mc H$, in combination with
\eqref{eq:2.10}, implies that $\Hom_{\mc H}(?,\mc H^\he)$ is an exact functor. In other words,
$\mc H^\he$ is an injective $\mc H$-module.

The module $\mc H^\he$ enables us to describe Hermitian duals of modules induced from 
$\mc H (W,q^\lambda)$. We recall that, as $\mc H (W,q^\lambda)$ is finite dimensional and semisimple, 
all its irreducible modules appear in the (left) regular representation. In fact each irreducible 
module is the image of suitable a minimal idempotent.

\begin{lem}\label{lem:6.2}
\enuma{
\item Let $V \in \Mod (\mc H (W,q^\lambda))$ be irreducible, and let $p_V \in \mc H(W,q^\lambda)$
be an idempotent so that $V \cong \mc H (W,q^\lambda) p_V$. The Hermitian dual of $\ind_{\mc H 
(W,q^\lambda)}^{\mc H} V$ is $\mc H^\he \otimes_{\mc H (W,q^\lambda)} V^\he$, with the pairing
\[
\langle h_1 \otimes p_V^*, h_2 \otimes p_V \rangle = \tau (h_1 p_V^* h_2^* ) \qquad
h_1 \in \mc H^\he, h_2 \in \mc H .
\]
\item The functor $\mc H^\he \otimes_{\mc H (W,q^\lambda)} : \Mod (\mc H (W,q^\lambda)) \to
\Mod (\mc H)$ is right adjoint to the restriction functor $\mr{Res}^{\mc H}_{\mc H (W,q^\lambda)}$.
}
\end{lem}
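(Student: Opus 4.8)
The statement to prove is Lemma~\ref{lem:6.2}, which describes the Hermitian dual of $\ind_{\mc H(W,q^\lambda)}^{\mc H} V$ for $V$ irreducible and establishes that $\mc H^\he \otimes_{\mc H(W,q^\lambda)} (-)$ is right adjoint to $\Res^{\mc H}_{\mc H(W,q^\lambda)}$. My plan is to deduce both parts from the identification of $\mc H^\he$ with the Hermitian dual of the left regular representation, together with the projectivity/injectivity facts recorded just before the lemma.

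For part (a), the key observation is that induction from $\mc H(W,q^\lambda)$ to $\mc H$ is just $\mc H \otimes_{\mc H(W,q^\lambda)} (-)$, and since $V \cong \mc H(W,q^\lambda) p_V$ for a (self-adjoint, after a standard averaging) idempotent $p_V$, we have $\ind_{\mc H(W,q^\lambda)}^{\mc H} V \cong \mc H p_V$ as a left $\mc H$-module. First I would compute the Hermitian dual of $\mc H p_V$ directly: a conjugate-linear functional on $\mc H p_V$ extends (using that $\mc H p_V$ is a direct summand of $\mc H$, via the Bernstein basis decomposition $\mc H(W^P)\otimes \mc H^P$ or more simply via $\mc H = \mc H p_V \oplus \mc H(1-p_V)$) to one on $\mc H$, hence is represented by an element of $\mc H^\he$ paired via $\tau$; and the pairing $\langle h_1, h_2 p_V\rangle = \tau(h_1 (h_2 p_V)^*) = \tau(h_1 p_V^* h_2^*)$ from \eqref{eq:1.1}--\eqref{eq:1.2} is right $\mc H(W,q^\lambda)$-balanced in the appropriate sense because $p_V^* h_2^* = (h_2 p_V)^*$ and $p_V^* p_V^* = p_V^*$. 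This exhibits $(\mc H p_V)^\he$ as $\mc H^\he p_V^* = \mc H^\he \otimes_{\mc H(W,q^\lambda)} \mc H(W,q^\lambda) p_V^*$, and $\mc H(W,q^\lambda) p_V^* \cong V^\he$ because $*$ on the finite-dimensional semisimple algebra $\mc H(W,q^\lambda)$ is a conjugate-linear anti-involution, so $W \mapsto W^\he$ is realized by $p \mapsto p^*$ on minimal idempotents. The formula for the pairing then reads off as stated.

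For part (b), I would argue that right adjointness of $\mc H^\he \otimes_{\mc H(W,q^\lambda)} (-)$ to restriction is formal once we know it holds for $V$ in the regular representation and that the functor $\Hom_{\mc H}(-, \mc H^\he \otimes_{\mc H(W,q^\lambda)} U)$ behaves correctly. More concretely, one way: $\Res^{\mc H}_{\mc H(W,q^\lambda)}$ is left adjoint to $\Hom_{\mc H(W,q^\lambda)}(\mc H, -)$ by the standard tensor--hom adjunction, so it suffices to identify $\Hom_{\mc H(W,q^\lambda)}(\mc H, U)$ with $\mc H^\he \otimes_{\mc H(W,q^\lambda)} U$ for every finite-dimensional $U$. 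Since $\mc H$ is free as a right $\mc H(W,q^\lambda)$-module (by Bernstein's bijection \eqref{eq:1.5}) — or at least projective and finitely generated on the relevant graded pieces — we have $\Hom_{\mc H(W,q^\lambda)}(\mc H, U) \cong \Hom_{\mc H(W,q^\lambda)}(\mc H, \mc H(W,q^\lambda)) \otimes_{\mc H(W,q^\lambda)} U$, and $\Hom_{\mc H(W,q^\lambda)}(\mc H, \mc H(W,q^\lambda)) \cong \mc H^\he$ as an $\mc H$-bimodule via the pairing $(h \mapsto \tau(-\,h^*))$ composed with $*_W$ — essentially the statement that $\mc H^\he$ is the Hermitian (equivalently, via $\tau$, the linear-dual-twisted-by-$*$) of $\mc H$ relative to the subalgebra. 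The $\mc H$-module structures match because the left $\mc H$-action on $\mc H^\he$ is by left multiplication, dual to right multiplication on $\mc H$, which is exactly how it sits in $\Hom_{\mc H(W,q^\lambda)}(\mc H,-)$.

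\textbf{Main obstacle.} The delicate point is the interaction between the conjugate-linear involution $*$ on $\mc H$ and the finite-dimensional subalgebra: $*$ does \emph{not} restrict to the natural involution on $\mc H(W,q^\lambda)$ in a way that is transparent (compare the discussion of $*_P$ versus $*|_{\mc H^P}$ and Lemma~\ref{lem:1.1}), so I must be careful that $p_V^*$ lands back in $\mc H(W,q^\lambda)$ and represents $V^\he$ as an $\mc H(W,q^\lambda)$-module, not just as a vector space. Here I would use that $\mc H(W,q^\lambda)$ \emph{is} a $*$-subalgebra — indeed $(\sum z_w N_w)^* = \sum \overline{z_w} N_{w^{-1}}$ preserves the span of $\{N_w : w \in W\}$ — so on this piece the situation is clean, and the subtlety is confined to checking that the $\tau$-pairing on $\mc H$ restricts compatibly, which follows from \eqref{eq:1.2}. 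A secondary technical point is justifying the various "extend a functional from a summand" and "commute $\Hom$ past $\otimes$" steps with the completion $\mc H^\he = \prod_{w} \C N_w$ rather than $\mc H$ itself; this is handled by working grading-piece by grading-piece along $W^P$-cosets, where everything is finite-dimensional over $\mc H^P$, so no convergence issue arises.
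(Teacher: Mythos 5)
Your part (a) follows essentially the paper's own argument: realize $\ind_{\mc H (W,q^\lambda)}^{\mc H} V \cong \mc H p_V$ as a direct summand of the left regular representation, note that the $\tau$-pairing of $\mc H^\he p_V^*$ against $\mc H$ kills $\mc H (1-p_V)$ (this is exactly the paper's computation $\langle h_1 p_V^* , h_2 \rangle = \tau (h_1 p_V^* h_2^*) = \langle h_1 p_V^*, h_2 p_V \rangle$), and use that $\mc H (W,q^\lambda)$ is a $*$-subalgebra so that $\mc H (W,q^\lambda) p_V^* \cong V^\he$; your parenthetical that $p_V$ may be taken self-adjoint is true but never needed.

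For part (b) you take a genuinely different route. The paper just writes down the explicit mutually inverse maps $f \mapsto \big[ y \mapsto [h \mapsto f(h^* y)] \big]$ and $\phi \mapsto [y \mapsto \phi (y)(1)]$, regarding $\mc H^\he \otimes_{\mc H (W,q^\lambda)} V$ as conjugate-linear maps $\mc H \to V$, whereas you factor the adjunction through the standard tensor--hom adjunction (restriction is left adjoint to co-induction $\Hom_{\mc H (W,q^\lambda)}(\mc H, -)$) and then identify co-induction with $\mc H^\he \otimes_{\mc H (W,q^\lambda)} (-)$, using Bernstein's freeness of $\mc H$ over $\mc H (W,q^\lambda)$ and the fact that $\tau$ restricts to a symmetrizing trace on the $*$-stable finite subalgebra, so that $\Hom_{\mc H (W,q^\lambda)}(\mc H, \mc H (W,q^\lambda)) \cong \mc H^\he$ compatibly with the left $\mc H$- and right $\mc H (W,q^\lambda)$-structures. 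This is more structural and explains why the completion $\mc H^\he$ has to appear; the paper's version buys brevity and explicit formulas that are reused later (e.g.\ in Proposition \ref{prop:6.3}). Two remarks on your write-up: the step $\Hom_{\mc H (W,q^\lambda)}(\mc H, \mc H (W,q^\lambda)) \otimes_{\mc H (W,q^\lambda)} U \cong \Hom_{\mc H (W,q^\lambda)}(\mc H, U)$ genuinely needs $U$ finitely generated, since $\mc H$ is free of infinite rank and one must commute the resulting infinite product with the tensor product (you do say ``finite-dimensional $U$'', and some such restriction is in fact needed for the statement itself, because for infinite direct sums $\mc H^\he \otimes_{\mc H (W,q^\lambda)} (-)$ no longer agrees with co-induction --- the paper's proof has the same implicit restriction); and the relevant decomposition is along the lattice direction, i.e.\ the basis $\{ \theta_x : x \in X \}$ of $\mc H$ as a free $\mc H (W,q^\lambda)$-module, not ``along $W^P$-cosets'', which belong to the parabolic subalgebras rather than to the finite subalgebra.
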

\begin{proof}
(a) The $\mc H (W,q^\lambda)$-module $\mc H (W,q^\lambda) p_V^*$ is the Hermitian dual of 
$\mc H (W,q^\lambda) p_V$, with respect to the pairing
\[
\langle h_1 p_V^*, h_2 p_V \rangle = \tau (h_1 p_V^* h_2^*) \qquad h_1, h_2 \in \mc H (W,q^\lambda) .
\]
Since $\mc H(W,q^\lambda) p_V$ is a direct summand of the left regular representation of 
$\mc H (W,q^\lambda)$, $\ind_{\mc H (W,q^\lambda)}^{\mc H} \mc H(W,q^\lambda) p_V$ is a direct summand 
of the $\mc H$-representation on
\[
\ind_{\mc H (W,q^\lambda)}^{\mc H} \mc H (W,q^\lambda) = \mc H .
\]
Similarly $\mc H^\he \otimes_{\mc H (W,q^\lambda)} \mc H (W,q^\lambda) p_V^*$ is a direct summand 
of the $\mc H$-module $\mc H^\he$. For $h_1 \in \mc H^\he$ and $h_2 \in \mc H$ we have
\[
\langle h_1 p_V^*, h_2 \rangle = \tau (h_1 p_V^* h_2^* ) = \langle h_1 p_V^*, h_2 p_V \rangle . 
\]
Hence the Hermitian dual we want is
\[
\mc H^\he \underset{\mc H (W,q^\lambda)}{\otimes} \mc H (W,q^\lambda) p_V^* \cong
\mc H^\he \underset{\mc H (W,q^\lambda)}{\otimes} V^\he . 
\]
(b) Let $Y \in \Mod (\mc H)$ and $V \in \Mod (\mc H (W,q^\lambda))$. We regard 
$\mc H^\he \otimes_{\mc H (W,q^\lambda)} V$ as a set of conjugate-linear maps from $\mc H$ to $V$.
One checks readily that the maps 
\[
\begin{array}{ccc}
\Hom_{\mc H (W,q^\lambda)}(Y,V) & \longleftrightarrow & 
\Hom_{\mc H}( Y, \mc H^\he \otimes_{\mc H (W,q^\lambda)} V) \\
f & \mapsto & \big[ y \mapsto \; [h \mapsto f (h^* y)]\big] \\
\ [y \mapsto \phi (y)(1)] & \text{\reflectbox{$\mapsto$}} & \phi
\end{array} 
\]
are natural bijections.
\end{proof}

The relation between Hermitian duals and tensoring with characters can be described easily:

\begin{lem}\label{lem:2.6}
Let $(\pi, V_\pi)$ be an $\mc H^P$-representation and let $t \in T^P$. The Hermitian dual of 
$\pi \otimes t$ is $\pi^\he \otimes \bar{t}^{-1}$.
\end{lem}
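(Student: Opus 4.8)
The plan is to compute the $\mc H^P$-action on $(V_\pi \otimes t)^\he$ directly from the definitions and to recognize it as $\pi^\he \otimes \bar t^{-1}$. Recall that $\pi \otimes t = \pi \circ \psi_t$, so $(\pi \otimes t)^\he$ acts on $V_\pi^\he$ by $h \cdot \lambda = \lambda \circ (\pi \otimes t)(h^*) = \lambda \circ \pi(\psi_t(h^*))$, where now $*$ means $*_P$. Since the Hermitian dual of $\pi$ itself has $\pi^\he(h)\lambda = \lambda \circ \pi(h^*)$, the whole statement comes down to comparing $\psi_t(h^*)$ with $\psi_{\bar t^{-1}}(h)^*$ as elements of $\mc H^P$; if these agree for all $h$, then $(\pi \otimes t)^\he(h) = \pi^\he(\psi_{\bar t^{-1}}(h)) = (\pi^\he \otimes \bar t^{-1})(h)$ and we are done.

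First I would check the identity $\psi_t(h^{*_P}) = \big( \psi_{\bar t^{-1}}(h) \big)^{*_P}$ on the Bernstein-type generators $N_w \theta_x$ with $w \in W_P$, $x \in X$. For the $N_w$ part there is nothing to do, since $\psi_t$ fixes the $N_w$ and $*_P$ is conjugate-linear with $N_w^{*_P} = N_{w^{-1}}$ (which $\psi_t$ again fixes). The content is in the $\mc A$-part: from \eqref{eq:1.3} we have $*_P(\theta_x) = N_{w_P}\theta_{-w_P(x)}N_{w_P}^{-1}$. Applying $\psi_t$ gives $t(-w_P(x))\, N_{w_P}\theta_{-w_P(x)}N_{w_P}^{-1} = t(-w_P(x))\, *_P(\theta_x)$. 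On the other side, $\psi_{\bar t^{-1}}(\theta_x) = \bar t^{-1}(x)\theta_x = \overline{t(x)}^{-1}\theta_x$, and since $*_P$ is conjugate-linear, $\big(\psi_{\bar t^{-1}}(\theta_x)\big)^{*_P} = \overline{\overline{t(x)}^{-1}}\, *_P(\theta_x) = t(x)^{-1}\, *_P(\theta_x)$. So the claimed generator identity reduces to the scalar equality $t(-w_P(x)) = t(x)^{-1}$, i.e.\ $t(w_P(x)) = t(x)$, for all $x \in X$.

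This last point is where I expect the only real subtlety to lie, and it is exactly where the hypothesis $t \in T^P$ enters: $T^P = \Hom_\Z(X/X\cap \Q P, \C^\times)$, so $t$ is trivial on $X \cap \Q P$ and factors through $X/(X\cap\Q P)$. Since $w_P \in W_P$ fixes $X/(X\cap \Q P)$ — indeed $w_P$ acts trivially modulo $\Q P$ because $w_P(x) - x \in \Z R_P \subset \Q P$ — we get $t(w_P(x)) = t(x)$ for all $x$, as needed. (More generally any element of $W_P$ would do here, which matches the fact that $\psi_t$ is well defined on $\mc H^P$ only for $t \in T^P$.) Once the generator identity $\psi_t(h^{*_P}) = \big(\psi_{\bar t^{-1}}(h)\big)^{*_P}$ is established on the $N_w\theta_x$, it extends to all of $\mc H^P$ by conjugate-linearity together with the fact that both $\psi_t$ and $*_P$ are (conjugate-)multiplicative — one checks $\psi_t(ab) = \psi_t(a)\psi_t(b)$ and $(ab)^{*_P} = b^{*_P}a^{*_P}$ — so the two sides are determined by their values on algebra generators. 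Chasing this back through the displayed computation of $(\pi\otimes t)^\he(h)$ then yields $(\pi\otimes t)^\he = \pi^\he \otimes \bar t^{-1}$, identifying the underlying spaces via the identity map $V_\pi^\he = (V_\pi)^\he$.
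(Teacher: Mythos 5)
Your proof is correct and follows essentially the same route as the paper: you verify on the Bernstein basis elements $N_w\theta_x$, using \eqref{eq:1.3}, that the Hermitian dual of $\pi\otimes t$ is $\pi^\he$ twisted by a character that a priori is $\overline{w_P t}^{-1}$, and then use $t\in T^P$ (via $w_P(x)-x\in\Z R_P\subset\Q P$) to identify it with $\bar t^{-1}$. Phrasing this as the algebra identity $\psi_t\circ *_P = *_P\circ\psi_{\bar t^{-1}}$ rather than computing directly on functionals is only a cosmetic difference.
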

\begin{proof}
Take $v \in V_\pi, \lambda \in V_\pi^\he, w \in W_P$ and $x \in X$. With \eqref{eq:1.3} we compute
\[
\begin{aligned}
\lambda \big( (\pi \otimes t)(N_w \theta_x) v \big) & = \lambda \big( t(x) \pi (N_w \theta_x) v \big) 
= \overline{t(x)} \lambda \big( \pi (N_w \theta_x) v \big) \\
& = \overline{t(x)} \big( \pi^\he ((N_w \theta_x)^{*_P}) \lambda \big) (v) =
\overline{t(x)} \big( \pi^\he ((N_w \theta_x)^{*_P}) \lambda \big) (v) \\
& = \overline{t(x)} \big( \pi^\he (N_{w_P} \theta_{-w_P (x)} N_{w_P}^{-1} N_{w^{-1}} ) \lambda \big) (v) \\
& = \big( (\pi^\he \otimes \overline{w_P t}^{-1}) (N_{w_P} \theta_{-w_P (x)} N_{w_P}^{-1} N_{w^{-1}} ) 
\lambda \big) (v) \\
& = \big( (\pi^\he \otimes \overline{w_P t}^{-1}) ((N_w \theta_x)^{*_P}) \lambda \big) (v) .
\end{aligned}
\]
This shows that $(\pi \otimes t)^\he = \pi^\he \otimes \overline{w_P t}^{-1}$. But $w_P t = t$
because $w_P \in W(R_P)$ and $t \in \Hom_\Z (X / X \cap \Q P, \C^\times )$.
\end{proof}

We want to find the relation between parabolic induction (from $\mc H^P$ to $\mc H$) and Hermitian duals.
That will be achieved in a few steps, the first of which is making the relation between * and $*_P$ 
explicit. 

\begin{lem}\label{lem:2.1}
For $w \in W_P$ and $x \in X$:
\[
* \, *_P (N_w \theta_x) = N_w N_{w_P w_\Delta} \theta_{w_\Delta w_P (x)} N_{w_P w_\Delta}^{-1} .
\]
\end{lem}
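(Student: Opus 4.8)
The plan is a direct computation: unravel $*_P$ on the Bernstein-type generator $N_w\theta_x$, then apply $*$, and finally simplify the resulting product of $N$'s by a short length argument. Throughout I use that $*$ and $*_P$ are conjugate-linear \emph{anti}-automorphisms (of $\mc H$ and $\mc H^P$ respectively) which both send a basis vector $N_v$ to $N_{v^{-1}}$ whenever $v$ lies in the relevant (extended) affine Weyl group, together with formula \eqref{eq:1.3} for the image of $\theta_x$. By conjugate-linearity of both involutions it suffices to treat generators $N_w\theta_x$, as in the statement.

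First I would compute $*_P(N_w\theta_x)$. Since $w\in W_P$, the element $N_w$ is an Iwahori--Matsumoto basis vector of $\mc H(W_P,q^\lambda)\subset\mc H^P$, so $*_P(N_w)=N_{w^{-1}}$; combined with \eqref{eq:1.3} and the anti-multiplicativity of $*_P$ this gives
\[
*_P(N_w\theta_x)=*_P(\theta_x)\,*_P(N_w)=N_{w_P}\,\theta_{-w_P(x)}\,N_{w_P}^{-1}\,N_{w^{-1}} .
\]
Next I apply $*$, using anti-multiplicativity, the relations $*(N_v)=N_{v^{-1}}$ and $w_P^{-1}=w_P$, and \eqref{eq:1.3} with $P=\Delta$ (so that $*(\theta_{-w_P(x)})=N_{w_\Delta}\theta_{w_\Delta w_P(x)}N_{w_\Delta}^{-1}$; this is just the formula for $h^*$ used at the start of the proof of Lemma \ref{lem:1.1}). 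This yields
\[
*\,*_P(N_w\theta_x)=N_w\, N_{w_P}^{-1}N_{w_\Delta}\;\theta_{w_\Delta w_P(x)}\;N_{w_\Delta}^{-1}N_{w_P}.
\]
Finally I invoke $\ell(w_\Delta)=\ell(w_P)+\ell(w_P w_\Delta)$, a special case of $\ell(ww_\Delta)=\ell(w_\Delta)-\ell(w)$ already recorded in the proof of Lemma \ref{lem:1.1} (cf.\ \cite[\S 1.8]{Hum}). It gives $N_{w_P}N_{w_P w_\Delta}=N_{w_\Delta}$, hence $N_{w_P}^{-1}N_{w_\Delta}=N_{w_P w_\Delta}$ and, taking inverses, $N_{w_\Delta}^{-1}N_{w_P}=N_{w_P w_\Delta}^{-1}$. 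Substituting these two identities into the last display produces exactly $N_w N_{w_P w_\Delta}\theta_{w_\Delta w_P(x)}N_{w_P w_\Delta}^{-1}$, as claimed.

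I do not expect a genuine obstacle; the computation is short. The only things to watch are bookkeeping — remembering that $*$ and $*_P$ reverse the order of products, and not confusing $*_P|_{\mc A}$ (which involves $w_P$) with $*|_{\mc A}$ (which involves $w_\Delta$) — and the elementary Coxeter-theoretic fact that right multiplication by $w_\Delta$ subtracts lengths, which is what collapses the four-term product $N_{w_P}^{-1}N_{w_\Delta}$ to the single basis vector $N_{w_P w_\Delta}$.
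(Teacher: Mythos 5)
Your proposal is correct and follows essentially the same route as the paper: apply \eqref{eq:1.3} for $*_P$ and then for $*$ (the latter being $*_\Delta$), and collapse $N_{w_P}^{-1}N_{w_\Delta}$ to $N_{w_P w_\Delta}$ via the length identity $\ell(w_P)+\ell(w_P w_\Delta)=\ell(w_\Delta)$. The only cosmetic difference is that you apply $*_P$ to the whole product $N_w\theta_x$ before applying $*$, whereas the paper evaluates $*\,*_P$ on $N_w$ and $\theta_x$ separately and then uses that the composition is an algebra homomorphism; the computation is the same.
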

\begin{proof}
By definition
\[
* \, *_P (N_w) = * (N_{w^{-1}}) = N_w .
\]
From \eqref{eq:1.3} and the anti-homomorphism property of * we obtain  
\begin{equation}\label{eq:2.3}
* \, *_P (\theta_x) = *(N_{w_P}^{-1}) *(\theta_{-w_P (x)}) *(N_{w_P}) =
N_{w_P}^{-1} N_{w_\Delta} \theta_{w_\Delta w_P (x)} N_{w_\Delta}^{-1} N_{w_P} .
\end{equation}
We note that here the lengths of the involved elements of $W$ add up: 
\[
\ell (w_P w_\Delta) + \ell (w_P) = |R^+ \setminus R_P^+ | + |R_P^+| = |R^+| = \ell (w_\Delta)
\]
Therefore $N_{w_P} N_{w_P w_\Delta} = N_{w_\Delta}$, and the right-hand side of \eqref{eq:2.3}
simplifies to\\ $N_{w_P w_\Delta} \theta_{w_\Delta w_P (x)} N_{w_P w_\Delta}^{-1}$, proving
the statement for $\theta_x$.

To conclude, we use that $* \, *_P$ is an algebra homomorphism.
\end{proof}

For $h \in \mc H^\times$, let $\mf c_h : \mc H \to \mc H$ denote conjugation with $h$.
We define 
\[
\psi_{\Delta P} = \mf c_{N_{w_P w_\Delta}^{-1}} \circ * \, *_P : \mc H^P \to \mc H .
\]
Since $* \, *_P$ and $\mf c_{N_{w_P w_\Delta}^{-1}}$ are injective algebra homomorphisms,
so is $\psi_{\Delta P}$. We write 
\[
P^{op} = w_\Delta (-P). 
\]
This is a set of simple roots, it may or may not be equal to $P$. We note that
$w_\Delta W_P w_\Delta = W_{P^{op}}$. In comparison with reductive groups, $P^{op}$ replaces
the notion of an opposite parabolic subgroup.

\begin{lem}\label{lem:2.2}
\enuma{
\item For $w \in W_P$ and $x \in X$:
\[
\psi_{\Delta P} (N_w \theta_x) = N_{w_\Delta w_P w w_P w_\Delta} \theta_{w_\Delta w_P (x)} .
\]
\item $\psi_{\Delta P}$ is an *-algebra isomorphism from $\mc H^P$ to $\mc H^{P^{op}}$,
with inverse $\psi_{\Delta P^{op}}$.
}
\end{lem}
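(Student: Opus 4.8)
The plan is to prove both parts of Lemma \ref{lem:2.2} by tracking where $\psi_{\Delta P}$ sends the generators $N_w$ ($w \in W_P$) and $\theta_x$ ($x \in X$) of $\mc H^P$, using Lemma \ref{lem:2.1} as the input. For part (a), I would start from the formula $*\,*_P(N_w \theta_x) = N_w N_{w_P w_\Delta} \theta_{w_\Delta w_P(x)} N_{w_P w_\Delta}^{-1}$ supplied by Lemma \ref{lem:2.1} and then apply the conjugation $\mf c_{N_{w_P w_\Delta}^{-1}}$. Conjugating the right-hand side by $N_{w_P w_\Delta}^{-1}$ cancels the trailing $N_{w_P w_\Delta}^{-1}$ and produces a leading factor $N_{w_P w_\Delta}^{-1} N_w N_{w_P w_\Delta}$, leaving $N_{w_P w_\Delta}^{-1} N_w N_{w_P w_\Delta} \theta_{w_\Delta w_P(x)}$. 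The element $(w_P w_\Delta)^{-1} w (w_P w_\Delta) = w_\Delta w_P w w_P w_\Delta$ lies in $w_\Delta w_P W_P w_P w_\Delta = w_\Delta W_P w_\Delta = W_{P^{op}}$, so I then need the identity $N_{w_P w_\Delta}^{-1} N_w N_{w_P w_\Delta} = N_{w_\Delta w_P w w_P w_\Delta}$ in $\mc H(W,q^\lambda)$. This is the one genuine computation: it follows because $w_P w_\Delta$ is the minimal-length element of the coset $w_P w_\Delta W_P = W^{P}$-type representative, so lengths add up in the products $N_{w_P w_\Delta} \cdot N_{w_\Delta w_P w w_P w_\Delta}$ and likewise on the other side; concretely, for $w \in W_P$ one has $\ell(w_P w_\Delta \cdot w) = \ell(w_P w_\Delta) + \ell(w)$ whenever... — rather, I would invoke that $w_\Delta w_P(\cdot)w_P w_\Delta$ is an automorphism of the Coxeter system $(W_P, S_P)$ onto $(W_{P^{op}}, S_{P^{op}})$ preserving length, and that conjugation by the length-zero-on-$W_P$ factor $N_{w_P w_\Delta}$ realizes it on the level of the $N$-basis. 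I expect this length bookkeeping to be the main (though routine) obstacle.

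For part (b), I would argue in three strokes. First, \emph{$\psi_{\Delta P}$ lands in $\mc H^{P^{op}}$}: part (a) shows the generators $N_w \theta_x$ of $\mc H^P$ are sent into the subalgebra generated by $\mc A$ and $\{N_u : u \in W_{P^{op}}\}$, which is exactly $\mc H^{P^{op}}$. Second, \emph{$\psi_{\Delta P}$ is a $*$-algebra homomorphism}: it is a composition of algebra homomorphisms ($\mf c_{N_{w_P w_\Delta}^{-1}}$ is conjugation by a unit, $*\,*_P$ is an algebra homomorphism as already noted in the proof of Lemma \ref{lem:2.1}), so it is an algebra homomorphism; for the $*$-compatibility I would check $\psi_{\Delta P}(h^{*_P}) = \psi_{\Delta P}(h)^{*_{P^{op}}}$ on generators, using that $*$ on $\mc H$ and $*_{P^{op}}$ on $\mc H^{P^{op}}$ are related by conjugation with $N_{w_{P^{op}} w_\Delta}$ in the same way that $*$ and $*_P$ are related by conjugation with $N_{w_P w_\Delta}$ (Lemma \ref{lem:2.1} with $P$ replaced by $P^{op}$), together with the fact that $w_\Delta$ conjugates $w_P$ to $w_{P^{op}}$.

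Third, \emph{the inverse is $\psi_{\Delta P^{op}}$}: since $(P^{op})^{op} = w_\Delta(-w_\Delta(-P)) = P$, the map $\psi_{\Delta P^{op}} : \mc H^{P^{op}} \to \mc H^P$ is defined, and I would verify $\psi_{\Delta P^{op}} \circ \psi_{\Delta P} = \mathrm{id}$ by composing the explicit formulas from part (a). On $\theta_x$ one gets $\theta_x \mapsto \theta_{w_\Delta w_P(x)} \mapsto \theta_{w_\Delta w_{P^{op}} w_\Delta w_P(x)}$; since $w_{P^{op}} = w_\Delta w_P w_\Delta$ this exponent collapses to $w_\Delta w_\Delta w_P w_\Delta w_\Delta w_P (x) = x$. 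On $N_w$ ($w \in W_P$) one gets $w \mapsto w_\Delta w_P w w_P w_\Delta \mapsto w_\Delta w_{P^{op}} (w_\Delta w_P w w_P w_\Delta) w_{P^{op}} w_\Delta$, and substituting $w_{P^{op}} = w_\Delta w_P w_\Delta$ again reduces this to $w$. By symmetry (swapping the roles of $P$ and $P^{op}$) the other composite is also the identity, so $\psi_{\Delta P}$ is an isomorphism with inverse $\psi_{\Delta P^{op}}$, and being a bijective $*$-algebra homomorphism it is a $*$-algebra isomorphism. The only thing to keep an eye on is that all the reindexings of the $N$-basis are legitimate identities in $\mc H(W,q^\lambda)$, which again reduces to the length-additivity facts used in part (a).
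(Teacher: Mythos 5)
Your proposal is correct in substance, but at the decisive step it follows a genuinely different route from the paper. The identity you need, $N_{w_P w_\Delta}^{-1} N_w N_{w_P w_\Delta} = N_{w_\Delta w_P w w_P w_\Delta}$ for $w \in W_P$, is exactly what the paper establishes, but the paper deliberately avoids length bookkeeping: it introduces the auxiliary isomorphism $\psi_{w_P w_\Delta} : \mc H^{P^{op}} \to \mc H^P$ of \eqref{eq:2.11}, computes $\psi_{\Delta P} \circ \psi_{w_P w_\Delta}$ via Lemma \ref{lem:2.1}, and pins down its value on $N_{s_\alpha}$ ($\alpha \in P^{op}$) by commutation with $\mc A$, the quadratic eigenvalues, and continuity in $q$ together with the case $q=1$; this simultaneously identifies $\psi_{w_P w_\Delta}$ as the inverse and yields the $*$-property from the Iwahori--Matsumoto description $\psi_{w_P w_\Delta}(N_{w'}) = N_{w_P w_\Delta w' w_\Delta w_P}$ for $w' \in W_{P^{op}} \ltimes X$. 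Your direct route does work: the two facts needed are $w_\Delta w_P (R_P^+) \subset R^+$, so that $\ell (w\, w_P w_\Delta) = \ell (w) + \ell (w_P w_\Delta)$ for $w \in W_P$, and $w_P w_\Delta (R_{P^{op}}^+) = R_P^+ \subset R^+$, so that $\ell (w_P w_\Delta w') = \ell (w_P w_\Delta) + \ell (w')$ for $w' \in W_{P^{op}}$; then $N_w N_{w_P w_\Delta} = N_{w w_P w_\Delta} = N_{w_P w_\Delta w'} = N_{w_P w_\Delta} N_{w'}$ with $w' = w_\Delta w_P w w_P w_\Delta$, which is the conjugation identity. (Note the cosets you name are slightly off: the relevant minimality statements concern $W_P (w_P w_\Delta)$ and $(w_P w_\Delta) W_{P^{op}}$, not $w_P w_\Delta W_P$ -- a slip of notation, not of substance.) For part (b), your verification of the inverse by composing the explicit formulas, using $w_{P^{op}} = w_\Delta w_P w_\Delta$ and $(P^{op})^{op} = P$, is fine, and your generator check of $*$-compatibility via \eqref{eq:1.3} goes through (it suffices on generators because $h \mapsto \psi_{\Delta P}(h^{*_P})^{*_{P^{op}}}$ is again an algebra homomorphism). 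What your approach buys is a self-contained combinatorial proof needing neither the auxiliary map \eqref{eq:2.11} nor the continuity-in-$q$ trick; what the paper's buys is that it sidesteps all length verifications and delivers the inverse and the $*$-isomorphism property in one stroke.
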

\begin{proof}
(a) Consider the algebra isomorphism 
\begin{equation}\label{eq:2.11}
\begin{array}{llll}
\psi_{w_P w_\Delta} : & \mc H^{P^{op}} & \to & \mc H^P \\
& N_{w'} \theta_x & \mapsto & N_{w_P w_\Delta w' w_\Delta wP} \theta_{w_P w_\Delta (x)} 
\qquad w' \in W_{P^{op}}, x \in X .
\end{array}
\end{equation}
Then $\psi_{\Delta P} \circ \psi_{w_P w_\Delta} : \mc H^{P^{op}} \to \mc H$ is an injective
algebra homomorphism, and by Lemma \ref{lem:2.1}:
\begin{equation}\label{eq:2.4}
\begin{aligned}
\psi_{\Delta P} \circ \psi_{w_P w_\Delta} (N_{w'} \theta_x) &
= \mf c_{N_{w_P w_\Delta}^{-1}} \big( N_{w_P w_\Delta w' w_\Delta wP} N_{w_P w_\Delta} 
\theta_{x} N_{w_P w_\Delta}^{-1} \big) \\
& = N_{w_P w_\Delta}^{-1} N_{w_P w_\Delta w' w_\Delta wP} N_{w_P w_\Delta} \theta_x .
\end{aligned}
\end{equation}
Notice that $\psi_{\Delta P} \circ \psi_{w_P w_\Delta}$ is the identity on $\mc A$ and
sends $\mc H ( W_{P^{op}}, q^\lambda)$ bijectively to itself. For $\alpha \in P^{op}$,
$N_{s_\alpha}$ commutes with the same elements of $\mc A$ as $\psi_{\Delta P} \circ 
\psi_{w_P w_\Delta} (N_{s_\alpha})$. That forces
\[
\psi_{\Delta P} \circ \psi_{w_P w_\Delta} (N_{s_\alpha}) \in \C N_e + \C N_{s_\alpha}.
\]
Furthermore $\psi_{\Delta P} \circ \psi_{w_P w_\Delta} (N_{s_\alpha})$ has the same eigenvalues
$q_{s_\alpha}^{1/2}$ and $-q_{s_\alpha}^{-1/2}$ as $N_{s_\alpha}$, so it can only be $N_{s_\alpha}$
or $-N_{s_\alpha}^{-1}$. The involved constructions work for any $q \in \R_{>0}$, and depend
continuously on $q$. For $q=1$ we see directly from \eqref{eq:2.4} that 
$\psi_{\Delta P} \circ \psi_{w_P w_\Delta} (N_{s_\alpha}) = N_{s_\alpha}$. Hence 
$\psi_{\Delta P} \circ \psi_{w_P w_\Delta} (N_{s_\alpha})$ cannot be $-N_{s_\alpha}^{-1}$ for
any $q \in \R_{>0}$. 

We deduce that $\psi_{\Delta P} \circ \psi_{w_P w_\Delta} (N_{w'}) = N_{w'}$ for $w'$ any
simple reflection in $W_{P^{op}}$, and then the same follows for all $w' \in W_{P^{op}}$.
Apply that to $w' = w_\Delta w_P w w_P w_\Delta$.\\
(b) By part (a) and \eqref{eq:2.4}, $\psi_{\Delta P} \circ \psi_{w_P w_\Delta}$ is the 
identity on $\mc H^{P^{op}}$. As $\psi_{w_P w_\Delta} : \mc H^{P^{op}} \to \mc H^P$ is an
isomorphism, this shows that $\psi_{\Delta P}$ is its inverse. By construction 
$w_{P^{op}} = w_\Delta w_P w_\Delta$. From that, part (a) and \eqref{eq:2.11} we see that
$\psi_{w_P w_\Delta} = \psi_{\Delta P^{op}}$. 

Further, from \eqref{eq:2.11} and the definition of $\theta_x$ we obtain 
\[
\psi_{w_P w_\Delta} (N_{w'}) = N_{w_P w_\Delta w' w_\Delta w_P} 
\qquad \text{for all } w' \in W_{P^{op}} \rtimes X.
\]
This shows that $\psi_{w_P w_\Delta}$ is in fact a *-isomorphism, and hence so it its inverse.
\end{proof}

Now we can relate Hermitian duals and parabolic restriction.

\begin{prop}\label{prop:2.3}
Let $(\pi, V_\pi)$ be an $\mc H$-representation.
\enuma{
\item The Hermitian dual of the $\mc H^P$-representation $\Res^{\mc H}_{\mc H^P}(\pi)$
is isomorphic with\\ $\Res^{\mc H}_{\mc H^{P^{op}}} (\pi^\he) \circ \psi_{\Delta P}$.
\item $\Res^{\mc H}_{\mc H^P} (\pi^\he) \cong \Res^{\mc H}_{\mc H^{P^{op}}}(\pi)^\he
\circ \psi_{\Delta P}$.
}
\end{prop}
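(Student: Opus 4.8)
The plan is to prove part (a) first, by a direct computation with the pairing, and then to deduce part (b) from (a) by substituting $\pi \rightsquigarrow \pi^\he$ and using biduality $\pi^{\he\he}\cong\pi$ (which holds for the finite-dimensional representations under consideration, or for the appropriate topological category). The key point for (a) is to understand how the involution $*_P$ of $\mc H^P$ relates to the ambient involution $*$ of $\mc H$. This is exactly the content of Lemmas \ref{lem:2.1} and \ref{lem:2.2}: we have $\psi_{\Delta P} = \mf c_{N_{w_P w_\Delta}^{-1}} \circ * \, *_P$, so that $* \, *_P = \mf c_{N_{w_P w_\Delta}} \circ \psi_{\Delta P}$, i.e.\ for $h \in \mc H^P$,
\[
*_P(h) = *\bigl( N_{w_P w_\Delta}\, \psi_{\Delta P}(h)\, N_{w_P w_\Delta}^{-1} \bigr),
\]
expressing $*_P$ on $\mc H^P$ in terms of $*$, the isomorphism $\psi_{\Delta P}: \mc H^P \to \mc H^{P^{op}}$, and conjugation by a fixed unit of $\mc H$.

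First I would set up the spaces. As a vector space $\bigl(\Res^{\mc H}_{\mc H^P}\pi\bigr)^\he$ is just $V_\pi^\he$, with $\mc H^P$ acting via \eqref{eq:2.1} but using $*_P$: $(h\cdot\lambda)(v) = \lambda(\pi(*_P h)v)$. On the other hand $\Res^{\mc H}_{\mc H^{P^{op}}}(\pi^\he)\circ\psi_{\Delta P}$ also lives on $V_\pi^\he$, and for $h\in\mc H^P$, $\lambda\in V_\pi^\he$, $v\in V_\pi$ it acts by $(h\cdot\lambda)(v) = \lambda\bigl(\pi\bigl(*(\psi_{\Delta P}h)\bigr)v\bigr)$, since $\pi^\he$ uses $*$ on $\mc H^{P^{op}}\subset\mc H$. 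So the two $\mc H^P$-module structures on $V_\pi^\he$ agree \emph{on the nose} provided $*_P(h) = *(\psi_{\Delta P}(h))$ for all $h\in\mc H^P$ — but from the displayed identity above this is off by conjugation by $N_{w_P w_\Delta}$. Hence the natural isomorphism will not be the identity: I expect the intertwiner to be $\lambda \mapsto \pi^\he(N_{w_P w_\Delta})\lambda = [v\mapsto \lambda(\pi(N_{w_P w_\Delta}^*)v)] = [v\mapsto\lambda(\pi(N_{w_\Delta w_P}^{-1})v)]$, or its inverse; one checks directly using $*(\mf c_{N_{w_P w_\Delta}}(k)) = \mf c_{N_{w_P w_\Delta}^{-1}}(*k)$ that this conjugate-linear-free (actually $\C$-linear) map intertwines the two actions. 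This is the step that needs the most care: getting the direction of the conjugation right and verifying the intertwining identity symbol by symbol against Lemma \ref{lem:2.2}(a).

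For part (b), apply (a) with $\pi$ replaced by $\pi^\he$: this gives $\bigl(\Res^{\mc H}_{\mc H^P}(\pi^\he)\bigr)^\he \cong \Res^{\mc H}_{\mc H^{P^{op}}}(\pi^{\he\he})\circ\psi_{\Delta P} \cong \Res^{\mc H}_{\mc H^{P^{op}}}(\pi)\circ\psi_{\Delta P}$. Now take Hermitian duals of both sides and use biduality once more on the left, together with the fact that $(-)^\he$ is an involutive contravariant functor, to get $\Res^{\mc H}_{\mc H^P}(\pi^\he) \cong \bigl(\Res^{\mc H}_{\mc H^{P^{op}}}(\pi)\circ\psi_{\Delta P}\bigr)^\he$. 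Finally, since $\psi_{\Delta P}$ is a $*$-algebra isomorphism (Lemma \ref{lem:2.2}(b)), precomposition with it commutes with forming Hermitian duals — $(\rho\circ\psi_{\Delta P})^\he \cong \rho^\he\circ\psi_{\Delta P}$ because $*_{\mc H^P} = \psi_{\Delta P}^{-1}\circ *_{\mc H^{P^{op}}}\circ\psi_{\Delta P}$ — which rearranges the right-hand side into $\Res^{\mc H}_{\mc H^{P^{op}}}(\pi)^\he\circ\psi_{\Delta P}$, as desired. The main obstacle, as noted, is bookkeeping in part (a): tracking the conjugation factor $N_{w_P w_\Delta}$ and confirming the candidate intertwiner really is $\mc H^P$-linear; everything after that is formal manipulation with the involutive functor $(-)^\he$ and the $*$-isomorphism $\psi_{\Delta P}$.
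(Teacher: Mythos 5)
Your part (a) is essentially the paper's own argument: from $\psi_{\Delta P} = \mf c_{N_{w_P w_\Delta}^{-1}} \circ * \, *_P$ one gets $\Res^{\mc H}_{\mc H^P}(\pi)^\he = \pi^\he \circ * \, *_P = \pi^\he \circ \mf c_{N_{w_P w_\Delta}} \circ \psi_{\Delta P}$, and multiplication by $\pi^\he (N_{w_P w_\Delta}^{-1})$ (or its inverse, depending on which side you put the conjugation) intertwines this with $\pi^\he \circ \psi_{\Delta P} = \Res^{\mc H}_{\mc H^{P^{op}}}(\pi^\he) \circ \psi_{\Delta P}$, using Lemma \ref{lem:2.2}.b to view the image of $\psi_{\Delta P}$ inside $\mc H^{P^{op}}$. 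One small slip in your parenthetical rewriting of the intertwiner: $N_{w_P w_\Delta}^* = N_{(w_P w_\Delta)^{-1}} = N_{w_\Delta w_P}$, which is not the same element as $N_{w_\Delta w_P}^{-1}$ (in the Iwahori--Matsumoto basis $N_w^* = N_{w^{-1}}$, whereas $N_w^{-1}$ differs from $N_{w^{-1}}$ by lower-order terms); this does not affect the structure of the argument.

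For part (b), however, your route differs from the paper's and has a genuine gap at the stated level of generality. You replace $\pi$ by $\pi^\he$ in (a) and then invoke biduality $\pi^{\he\he} \cong \pi$ -- in fact you need it twice, once to identify $\Res^{\mc H}_{\mc H^{P^{op}}}(\pi^{\he\he})$ with $\Res^{\mc H}_{\mc H^{P^{op}}}(\pi)$ and once more after dualizing the resulting isomorphism. But the proposition carries no finiteness hypothesis, and for an infinite-dimensional $V_\pi$ the canonical map $V_\pi \to V_\pi^{\he \he}$ is injective but not surjective (the paper itself only uses $\pi^{\he\he}=\pi$ under the assumption $\dim V_\pi < \infty$, in Lemma \ref{lem:4.1}), so your derivation proves (b) only for finite-dimensional or suitably reflexive topological representations. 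The paper's proof avoids this entirely and is shorter: apply part (a) with $P^{op}$ in place of $P$ and the same $\pi$, giving $\Res^{\mc H}_{\mc H^{P^{op}}}(\pi)^\he \cong \Res^{\mc H}_{\mc H^{P}}(\pi^\he) \circ \psi_{\Delta P^{op}}$, then precompose both sides with $\psi_{\Delta P^{op}}^{-1} = \psi_{\Delta P}$. Your auxiliary observation that $(\rho \circ \psi_{\Delta P})^\he = \rho^\he \circ \psi_{\Delta P}$ because $\psi_{\Delta P}$ is a $*$-isomorphism is correct, but with the substitution $P \mapsto P^{op}$ it is not needed at all.
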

\begin{proof} 
By definition
\[
\Res^{\mc H}_{\mc H^P}(\pi)^\he = \pi^\he \circ * \, *_P =
\pi^\he \circ \mf c_{N_{w_P w_\Delta}} \circ \psi_{\Delta P} .
\]
Since $N_{w_P w_\Delta} \in \mc H^\times$, multiplication with $\pi^\he (N^{-1}_{w_P w_\Delta})$ 
provides an isomorphism from the right-hand side to $\pi^\he \circ \psi_{\Delta P}$.
By Lemma \ref{lem:2.2}.b, that can be regarded as\\
$\Res^{\mc H}_{\mc H^{P^{op}}} (\pi^\he) \circ \psi_{\Delta P}$.\\
(b) Start with part (a) for $P^{op}$. Composing the representations on both sides with 
$\psi_{\Delta P^{op}}^{-1} = \psi_{\Delta P}$ gives 
\[
\Res^{\mc H}_{\mc H^P} (\pi^\he) \cong \Res^{\mc H}_{\mc H^{P^{op}}}(\pi)^\he \circ \psi_{
\Delta P^{op}}^{-1} = \Res^{\mc H}_{\mc H^{P^{op}}}(\pi)^\he \circ \psi_{\Delta P} . \qedhere
\]
\end{proof}

We note that the pairing underlying Proposition \ref{prop:2.3}.a is
\[
\begin{array}{ccl}
\Res_{\mc H^{P^{op}}}^{\mc H} (V_\pi^\he) \times \Res_{\mc H^P}^{\mc H} (V_\pi) & \to & \C \\
(\lambda,v) & \mapsto & \pi^\he (N_{w_P w_\Delta}) \lambda (v) = 
\lambda (\pi (N_{w_\Delta w_P}) v) .
\end{array} 
\]
Similarly the pairing underlying Proposition \ref{prop:2.3}.b is given by
\[
\begin{array}{ccl}
\Res_{\mc H^P}^{\mc H} (V_\pi^\he) \times \Res_{\mc H^{P^{op}}}^{\mc H} (V_\pi) & \to & \C \\
(\lambda,v) & \mapsto & \pi^\he (N_{w_\Delta w_P}) \lambda (v) = 
\lambda (\pi (N_{w_P w_\Delta}) v) .
\end{array} 
\]
An important special case arise when $P = \emptyset$. Then $\psi_{\Delta \emptyset} (\theta_x) =
\theta_{w_\Delta (x)}$ and Proposition \ref{prop:2.3} provides isomorphisms
\begin{equation}\label{eq:2.5}
\Res^{\mc H}_{\mc A} (\pi)^\he \cong \Res^{\mc H}_{\mc A} (\pi^\he) \circ \psi_{\Delta \emptyset}
\quad \text{and} \quad \Res^{\mc H}_{\mc A} (\pi^\he ) \cong \Res^{\mc H}_{\mc A} (\pi )^\he 
\circ \psi_{\Delta \emptyset} .
\end{equation}
We move on to parabolic induction. Consider an $\mc H^P$-representation $(\rho,V_\rho)$ and its
induction $\ind_{\mc H^P}^{\mc H} (\rho)$. The underlying vector space is
\[
\ind_{\mc H^P}^{\mc H} (V_\rho) = \mc H \otimes_{\mc H^P} V_\rho \cong \mc H (W^P) \otimes_\C V_\pi .
\]
Here $W^P = \{w \in W : w(P) \subset R^+ \}$ denotes the set of shortest length representatives for 
$W / W_P$ and $\mc H (W^P)$ is the linear subspace of $\mc H (W,q^\lambda)$ spanned by the 
corresponding $N_w$. Following \cite[(4.24)]{Opd-Sp} we define a sesquilinear pairing
\begin{equation}\label{eq:2.6}
\begin{array}{ccl}
\mc H (W^P) \otimes_\C V_\rho^\he \; \times \; \mc H (W^P) \otimes_\C  V_\rho & \to & \C \\
\langle h' \otimes \lambda , h \otimes v \rangle & = & \tau (h' h^*) \lambda (v)
\end{array}.
\end{equation}
As preparation for a more general statement, we consider the left regular representation of 
$\mc H^P$. Clearly $\ind_{\mc H^P}^{\mc H} (\mc H^P) = \mc H$, and we already know $\mc H^\he$ from 
\eqref{eq:2.2}. Multiplication in $\mc H$ induces a linear bijection $m : \mc H (W^P) \otimes 
\mc H^P \to \mc H$. The transpose of $m$ is the linear bijection
\begin{equation}\label{eq:2.12}
m^\he : \mc H^\he \; \to \; \mc H (W^P )^\he \otimes_\C \mc H^{P\he} \cong 
\mc H (W^P) \otimes_\C \mc H^{P\he} \cong \mc H \otimes_{\mc H^P} \mc H^{P\he} .
\end{equation}
In the middle of \eqref{eq:2.12} we identified $\mc H (W^P)^\he$ with $\mc H (W^P)$ via the
inner product on $\mc H$. Notice that $\mc H^\he$ and $\mc H \otimes_{\mc H^P} \mc H^{P\he}$
independently carry $\mc H$-module structures, the latter induced from the $\mc H^P$-module 
structure of $\mc H^{P\he}$.

\begin{lem}\label{lem:2.4}
The map $m^\he : \mc H^\he \to \ind^{\mc H}_{\mc H^P} (\mc H^{P\he})$ is an isomorphism of
$\mc H$-modules. In particular $\ind^{\mc H}_{\mc H^P} (\mc H^{P\he})$ with the pairing 
\eqref{eq:2.6} is the Hermitian dual of $\ind_{\mc H^P}^{\mc H}(\mc H^P)$.
\end{lem}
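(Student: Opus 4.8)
The plan is to verify that $m^\he$ is $\mc H$-linear, and then to identify the pairing it induces with \eqref{eq:2.6}. First I would unwind the definitions: $m : \mc H(W^P) \otimes \mc H^P \to \mc H$ is the restriction of multiplication, so for $h \in \mc H$ its transpose $m^\he$ sends a functional $\mu \in \mc H^\he$ to the functional $(h', h^P) \mapsto \mu(h' h^P)$ on $\mc H(W^P) \otimes \mc H^P$. Since we have identified $\mc H^\he$ with the completion $\prod_{w} \C N_w$ via the trace $\tau$, and similarly $\mc H^{P\he}$ with the corresponding completion of $\mc H^P$ via $\tau|_{\mc H^P}$, the map $m^\he$ becomes concrete: it is essentially the ``expansion'' $h \mapsto \sum_{w \in W^P} N_w \otimes h^P_w$ of \eqref{eq:1.10}, extended to the completion $\mc H^\he$. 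The $\mc H$-module structure on the left of $m^\he$ is left multiplication on $\mc H^\he$ (as noted after \eqref{eq:2.2}), and on the right it is $\ind_{\mc H^P}^{\mc H}$ applied to left multiplication on $\mc H^{P\he}$.

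The key step is $\mc H$-linearity of $m^\he$. Here I would argue as follows: $m$ is visibly a morphism of right $\mc H^P$-modules (multiplication on the right by $\mc H^P$ commutes with $m$, since $m$ is just multiplication), so its transpose $m^\he$ is a morphism of left $\mc H^P$-modules for the dual actions; but more is true. The cleanest route is to observe that $m$ is also left $\mc H$-linear when we give $\mc H(W^P) \otimes \mc H^P$ the $\mc H$-module structure transported from $\mc H$ — but that is circular. Instead: identify $\ind_{\mc H^P}^{\mc H}(\mc H^{P\he}) = \mc H \otimes_{\mc H^P} \mc H^{P\he}$ with the space of conjugate-linear maps $\mc H^P \to$ (completion), or better, use the adjunction of Lemma \ref{lem:2.4}'s analogue: $\ind_{\mc H^P}^{\mc H}(\mc H^{P\he})$ represents the functor $Y \mapsto \Hom_{\mc H^P}(\Res^{\mc H}_{\mc H^P} Y, \mc H^{P\he}) = (\Res^{\mc H}_{\mc H^P} Y)^\he$, and $\mc H^\he$ represents $Y \mapsto Y^\he$ as an $\mc H$-module. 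Since restriction followed by $(\ \cdot\ )^\he$ on $\mc H^P$ versus $(\ \cdot\ )^\he$ on $\mc H$ followed by restriction agree on underlying vector spaces (both give conjugate-linear functionals on $Y$), the two injective modules are abstractly isomorphic, and I would check that the isomorphism of functors is implemented precisely by $m^\he$ — equivalently, that $m^\he$ intertwines the two evaluation/counit maps. Concretely this reduces to: for $h \in \mc H$, $\mu \in \mc H^\he$, one has $m^\he(h \cdot \mu) = h \cdot m^\he(\mu)$, which after pairing against $h' \otimes h^P$ becomes the identity $\mu(h^* h' h^P) = \mu(h^* h' h^P)$ once one tracks that left multiplication by $h$ on $\mc H \otimes_{\mc H^P} \mc H^{P\he}$ pushes $h$ through the $\mc H(W^P)$-factor and one re-expands via \eqref{eq:1.10} — the nontrivial content being that this re-expansion is exactly transpose-of-multiplication, hence automatically consistent.

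For the second assertion, I would compute the pairing on $\mc H \otimes_{\mc H^P} \mc H^{P\he}$ that corresponds under $m^\he$ to the canonical Hermitian pairing between $\mc H^\he$ and $\mc H$ (namely $\langle \mu, h \rangle = \mu(h)$, or in trace form $\tau(\mu h^*)$ with the identification \eqref{eq:2.2}). Transporting along $m^\he$: an element $N_{w'} \otimes \lambda^P$ of $\mc H(W^P) \otimes \mc H^{P\he}$ pairs with $m^{-1}(h)$, and using orthonormality of $\{N_w : w \in W(\mc R)\}$ together with the fact that multiplication $\mc H(W^P) \otimes \mc H^P \to \mc H$ is ``unitary'' in the appropriate sense (a consequence of Lemma \ref{lem:1.1} and the factorization \eqref{eq:1.5}), the pairing collapses to $\tau(h' h^*) \lambda^P(v^P)$ for the $\mc H^P$-components — which is exactly \eqref{eq:2.6}.

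The main obstacle I anticipate is the bookkeeping in the $\mc H$-linearity check: verifying that left multiplication by $h$ on $\mc H^\he$, transported through $m^\he$, genuinely coincides with the induced action $\ind_{\mc H^P}^{\mc H}$(left mult.\ on $\mc H^{P\he}$) — this is where one must use that the ``straightening'' of $h \cdot (N_w \otimes h^P_w)$ back into the form $\sum_{u \in W^P} N_u \otimes (\ \cdot\ )^P_u$ is governed by the same multiplication map $m$ whose transpose defines $m^\he$, so the two actions are tautologically matched. Making this tautology visible without drowning in indices is the real work; once it is in place, the pairing computation and the identification with \eqref{eq:2.6} are routine consequences of orthonormality of the $N_w$ and Lemma \ref{lem:1.1}.
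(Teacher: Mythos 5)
Your overall strategy (realize $m^\he$ as the transpose of the multiplication map, test $\mc H$-linearity against elements $N_b \otimes h_1$, then observe that $m^{-1}$ and $m^\he$ transport the canonical pairing to \eqref{eq:2.6}) is the same as the paper's. However, the decisive step is wrong as you present it: the matching of the two module structures is \emph{not} tautological. On $\mc H^\he$ the action of $h$ is the Hermitian dual of the left regular representation, so the involution $*$ of $\mc H$ enters; on $\ind_{\mc H^P}^{\mc H}(\mc H^{P\he})$, re-expanding $h N_a \otimes \lambda$ in the standard form $\sum_{v \in W^P} N_v \otimes (h N_a)^P_v \cdot \lambda$ pushes elements of $\mc H^P$ into $\mc H^{P\he}$, where they act through $*_P$. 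Since $*_P \neq * |_{\mc H^P}$ (indeed $\mc H^P$ is not a $*$-subalgebra of $\mc H$), your claimed identity ``$\mu(h^* h' h^P) = \mu(h^* h' h^P)$'' does not appear; what one actually has to compare, after pairing against $N_b \otimes h_1$, is an expression of the form $\langle \lambda, ((N_b^* h_2 N_a)^P_e)^{*_P} h_1 \rangle$ on one side against $\langle \lambda, ((N_b^* h_2 N_a)^*)^P_e h_1 \rangle$ on the other. Their equality is exactly Lemma \ref{lem:1.1}, $(h^*)^P_e = (h^P_e)^{*_P}$, which is the nontrivial input of the whole proof; you cite that lemma only in the (comparatively routine) pairing-identification step, not here where it is indispensable. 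Calling this step ``bookkeeping'' or ``automatically consistent'' is precisely the gap.

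A second problem is your fallback via representability: you want $\ind_{\mc H^P}^{\mc H}(\mc H^{P\he})$ to corepresent $Y \mapsto \Hom_{\mc H^P}(\Res^{\mc H}_{\mc H^P} Y, \mc H^{P\he})$, i.e.\ you need induction to be a \emph{right} adjoint of restriction. The adjunction available at this stage, \eqref{eq:3.1}, goes the other way; the right-adjoint statement is Theorem \ref{thm:3.1}, which in the paper is deduced from Proposition \ref{prop:2.5}, whose proof rests on this very lemma. So that route is circular unless you re-prove second adjointness independently, which again comes down to the explicit computation with Lemma \ref{lem:1.1}.
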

\begin{proof}
Let $a,b \in W^P, h_1 \in \mc H^P$ and $\lambda \in \mc H^{P\he}$. For $h_2 \in \mc H$ 
there are elements 
\[
m^\he \big( h_2 \cdot (m^\he)^{-1}(N_a \otimes \lambda) \big) \quad \text{and} \quad 
h_2 \cdot (N_a \otimes \lambda) \quad \text{in} \quad \ind^{\mc H}_{\mc H^P} (\mc H^{P\he}) .
\]
We can compare them by pairing $N_b \otimes \mc H (W^P) \otimes \mc H^P$, as in \eqref{eq:2.6}.
In the notation from \eqref{eq:1.10}, we compute
\begin{multline}\label{eq:2.8}
\langle h_2 \cdot (N_a \otimes \lambda), N_b \otimes h_1 \rangle = 
\langle h_2 N_a \cdot (1 \otimes \lambda), N_b \otimes h_1 \rangle = \\
\sum\nolimits_{v \in W^P} \langle N_v (h_2 N_a)^P_v \cdot (1 \otimes \lambda), N_b \otimes h_1 
\rangle = \langle (h_2 N_a)^P_b \cdot \lambda, h_1 \rangle .
\end{multline}
We note that, for any $w \in W_P, v \in W^P$:
\begin{equation}\label{eq:2.9}
\langle N_b^* N_v, N_w \rangle = \langle N_u , N_b N_w \rangle = \langle N_u, N_{bw} \rangle
= \left\{ \begin{array}{ll}
1 & v = b, w = e, \\
0 & \text{otherwise.} 
\end{array} \right.
\end{equation}
This implies
\begin{equation}\label{eq:2.15}
\big( N_b^* N_v (h_2 N_a )_v \big)^P_e = \left\{ \begin{array}{ll}
(h_2 N_a)_v & b = v,\\
0 & b \neq v. 
\end{array} \right.
\end{equation}
With that the right hand side of \eqref{eq:2.8} can be rewritten as
\begin{equation}\label{eq:2.16}
\langle (N_b^* h_2 N_a)^P_e \cdot \lambda, h_1 \rangle = 
\big\langle \lambda, \big( (N_b^* h_2 N_a)^P_e \big)^{*_P} h_1 \big\rangle .
\end{equation} 
On the other hand 
\begin{equation}\label{eq:2.14}
\begin{aligned}
\big\langle m^\he \big( h_2 \cdot & (m^\he)^{-1}(N_a \otimes \lambda) \big), N_b \otimes h_1 \big \rangle 
\;=\; \big\langle h_2 \cdot (m^\he)^{-1}(N_a \otimes \lambda) , N_b h_1 \big\rangle \\
& =\; \big\langle (m^\he)^{-1}(N_a \otimes \lambda) , h_2^* N_b h_1 \big\rangle \;=\;
\langle N_a \otimes \lambda , m^{-1}(h_2^* N_b) h_1 \rangle \\
& =\; \sum\nolimits_{v \in W^P} \langle N_a \otimes \lambda, N_v \otimes (h_2^* N_b)^P_v h_1 \rangle \;=\;
\langle \lambda, (h_2^* N_b)^P_a h_1 \rangle 
\end{aligned}
\end{equation}
Using \eqref{eq:2.15} we identify the last expression in \eqref{eq:2.14} with
\[
\langle \lambda, (N_a^* h_2^* N_b)^P_e h_1 \rangle =
\big\langle \lambda, \big( (N_b^* h_2 N_a)^* \big)^P_e h_1 \big\rangle .
\]
Lemma \ref{lem:1.1} guarantees that this equals \eqref{eq:2.16}, which proves that
the bijection $m^\he$ is an $\mc H$-module homomorphism. 

By construction $m^{-1} : \mc H \to \mc H (W^P) \otimes \mc H^P$ and $m^\he$ transfer the pairing 
between $\mc H$ and $\mc H^\he$ to the pairing \eqref{eq:2.6}. Thus we realized 
$\mc H^\he = \ind_{\mc H^P}^{\mc H}(\mc H^P)^\he$ as $\ind^{\mc H}_{\mc H^P} (\mc H^{P\he})$.
\end{proof}

In the special case $P = \emptyset$, Lemma \ref{lem:2.4} provides an isomorphism of $\mc H$-modules
\begin{equation}
\ind_{\mc A}^{\mc H} (\mc A^\he) \cong \mc H^\he .
\end{equation} 
Here the embedding of $\mc A^\he$ in $\mc H^\he$ comes from \eqref{eq:2.12}:
\begin{equation}\label{eq:2.19}
\begin{array}{cccc}
\imath : & \mc A^\he & \to & \mc H^\he \\
 & a & \mapsto & (m^\he)^{-1}( N_e \otimes a) 
\end{array}.
\end{equation}
The next result generalizes \cite[Theorem 2.20]{Opd1} and \cite[Proposition 4.19]{Opd-Sp}.

\begin{prop}\label{prop:2.5}
Let $(\rho,V_\rho)$ be an $\mc H^P$-representation. The pairing \eqref{eq:2.6} induces an
isomorphism $\ind_{\mc H^P}^{\mc H} (\rho^\he) \cong \ind_{\mc H^P}^{\mc H} (\rho )^\he$.
\end{prop}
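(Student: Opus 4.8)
The plan is to show that the sesquilinear form \eqref{eq:2.6} is a perfect pairing of vector spaces which is contravariant for the $\mc H$-action; this says exactly that it identifies $\ind_{\mc H^P}^{\mc H}(\rho^\he)$ with $\ind_{\mc H^P}^{\mc H}(\rho)^\he$. For the first point, recall $\ind_{\mc H^P}^{\mc H}(V_\rho) \cong \mc H(W^P) \otimes_\C V_\rho$ with $\mc H(W^P)$ finite dimensional, and by \eqref{eq:1.1} the set $\{N_w : w \in W^P\}$ is orthonormal for $(h_1,h_2) \mapsto \tau(h_1 h_2^*)$; hence \eqref{eq:2.6} restricted to the first tensor factor is non-degenerate, while on the second factor it is the tautological pairing $V_\rho^\he \times V_\rho \to \C$. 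Since $\mc H(W^P)$ is finite dimensional, the induced $\C$-linear map $\Phi_\rho : \mc H(W^P) \otimes_\C V_\rho^\he \to (\mc H(W^P) \otimes_\C V_\rho)^\he = \ind_{\mc H^P}^{\mc H}(\rho)^\he$ is a bijection.

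The work is in the second point, the $\mc H$-equivariance of $\Phi_\rho$ (i.e. $\langle h_2 \xi, \eta \rangle = \langle \xi, h_2^* \eta \rangle$ for $h_2 \in \mc H$). I would deduce this from Lemma \ref{lem:2.4}, which is precisely this assertion when $\rho$ is the left regular representation of $\mc H^P$. The key remark is that $\Phi_\rho$ is natural in $\rho$: an $\mc H^P$-morphism $f : V \to V'$ satisfies $\Phi_V \circ \ind(f^\he) = \ind(f)^\he \circ \Phi_{V'}$, which is immediate from the defining formula. Pick a surjection $\pi : F \twoheadrightarrow V_\rho$ from a free $\mc H^P$-module $F$ (say $F = \mc H^P \otimes_\C V_\rho$). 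Applying the exact functors $\ind_{\mc H^P}^{\mc H}$ ($\mc H$ being free over $\mc H^P$) and $(\cdot)^\he$, naturality gives a commuting square of $\mc H$-modules
\[
\begin{array}{ccc}
\ind_{\mc H^P}^{\mc H}(V_\rho^\he) & \xrightarrow{\ \ind(\pi^\he)\ } & \ind_{\mc H^P}^{\mc H}(F^\he) \\[4pt]
\Phi_\rho \downarrow\ & & \ \downarrow \Phi_F \\[4pt]
\ind_{\mc H^P}^{\mc H}(V_\rho)^\he & \xrightarrow{\ \ind(\pi)^\he\ } & \ind_{\mc H^P}^{\mc H}(F)^\he
\end{array}
\]
whose horizontal arrows are injective $\mc H$-module maps. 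Since $\mc H$ is a \emph{finite} free $\mc H^P$-module, $\ind_{\mc H^P}^{\mc H}$ commutes with arbitrary direct products and $(\cdot)^\he$ turns direct sums into direct products, so $\Phi_F$ is a product of copies of the isomorphism of Lemma \ref{lem:2.4} and is an $\mc H$-module isomorphism. A diagram chase then forces $\Phi_\rho$ to be $\mc H$-equivariant: for $\xi \in \ind_{\mc H^P}^{\mc H}(V_\rho^\he)$ and $h_2 \in \mc H$, the elements $\Phi_\rho(h_2 \xi)$ and $h_2 \Phi_\rho(\xi)$ have the same image in $\ind_{\mc H^P}^{\mc H}(F)^\he$ under the injective equivariant map $\ind(\pi)^\he$, because $\Phi_F$ is equivariant and the square commutes. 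Together with the first point this proves the proposition.

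I expect the only real subtlety to be the bookkeeping of conjugate-linearity and of the discrepancy between $*$ and $*_P$; but that discrepancy is already isolated in Lemma \ref{lem:1.1}, which is what makes Lemma \ref{lem:2.4}, and hence this reduction, work. A fully hands-on alternative, avoiding the naturality argument, is to repeat verbatim the computation \eqref{eq:2.8}--\eqref{eq:2.16} with $\mc H^P$ replaced by $V_\rho$ and the $\mc H^P$-pairing replaced by evaluation $V_\rho^\he \times V_\rho \to \C$: every step uses only the algebra structure (notably \eqref{eq:2.15} and Lemma \ref{lem:1.1}), so it carries over and yields $\langle h_2 \cdot (N_a \otimes \lambda), N_b \otimes v \rangle = \lambda\bigl( \rho\bigl( ((N_b^* h_2 N_a)^*)^P_e \bigr) v \bigr)$, while the parallel computation of $\langle N_a \otimes \lambda, h_2^* \cdot (N_b \otimes v) \rangle$ gives the same expression, which is exactly the required equivariance on spanning elements.
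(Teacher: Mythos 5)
Your proposal is correct and follows essentially the same route as the paper: first identify the underlying vector spaces via the orthonormality of $\{N_w : w \in W^P\}$, then transport the $\mc H$-equivariance from the regular/free case of Lemma \ref{lem:2.4} through a surjection from a free $\mc H^P$-module, using naturality of the pairing and injectivity of the dualized map. The only real difference is how the possibly infinite rank of $F$ is handled --- the paper truncates to finite-dimensional subspaces $F_y$, while you use that $\mc H$ is finite free over $\mc H^P$ so that induction commutes with the product decomposition $F^\he \cong \prod \mc H^{P\he}$ --- and both devices are valid; your ``hands-on'' alternative of rerunning \eqref{eq:2.8}--\eqref{eq:2.16} for a general $\rho$ also works, since the needed identity $\big((h_2 N_a)^P_b\big)^{*_P} = (h_2^* N_b)^P_a$ holds at the level of the algebra by Lemma \ref{lem:1.1}.
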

\begin{proof}
We abbreviate $\ind_{\mc H^P}^{\mc H}$ to ind for the duration of this proof.

Recall that $\{ N_w : w \in W \}$ is an orthonormal basis of $\mc H (W,q^\lambda)$ for the
inner product \eqref{eq:1.1}. Hence \eqref{eq:2.6} identifies 
\[
\mc H (W^P) \otimes_\C V_\rho^\he \cong \mc H \otimes_{\mc H^P} (V_\rho^\he ) \quad \text{with} 
\quad (\mc H (W^P) \otimes_\C  V_\rho )^\he \cong (\mc H \otimes_{\mc H^P} V_\rho )^\he . 
\]
It remains to show that
\begin{equation}\label{eq:2.7}
\langle \ind (\pi^\he) (h^*) x , y \rangle \quad \text{equals} \quad \langle x, \ind (\pi)(h) y \rangle
\end{equation}
for all $h \in \mc H, x \in \ind (V_\rho^\he), y \in \ind (V_\rho)$.

Choose a surjective $\mc H^P$-homomorphism $p : F \otimes \mc H^P \to V_\rho$, where $F \otimes
\mc H^P$ is a free $\mc H^P$-module. Dually, that yields an injective $\mc H^P$-homomorphism
$p^\he : V_\rho^\he \to (F \otimes \mc H^P )^\he$. For $v \in V_\rho$ with
a preimage $\tilde v \in F \otimes \mc H^P$ and $\lambda \in V_\rho^\he$ with image
$\tilde \lambda \in (F \otimes \mc H^P )^\he$, that means
$\langle \lambda , v \rangle = \langle \tilde \lambda, \tilde v \rangle$.

With the functoriality of induction we obtain a surjective $\mc H$-homomorphism
\[
\ind (p) : F \otimes \mc H = \ind (F \otimes \mc H^P) \to \ind (V_\rho) ,
\]
and an injective $\mc H$-homomorphism 
\[
\ind (p^\he) : \ind (V_\rho^\he) \to \ind \big( (F \otimes \mc H^P)^\he \big) .
\]
Now we encounter the minor complication that it is difficult to work with
$(F \otimes \mc H^P)^\he$ when $F$ has infinite dimension. We overcome that by playing it
via finitely generated submodules. 
Choose a finite dimensional linear subspace $F_y \subset F$ such that  
\[
y \in \ind (p) (F_y \otimes \mc H) = \ind \big( p (F_y \otimes \mc H^P) \big) .
\]
It follows from Lemma \ref{lem:2.4} that
\begin{equation}\label{eq:2.13}
\ind_{\mc H^P}^{\mc H} \big( F_y^\he \otimes \mc H^{P\he} \big) = F_y^\he \otimes 
\ind_{\mc H^P}^{\mc H} ( \mc H^{P\he}) \cong \ind_{\mc H^P}^{\mc H} ( F_y \otimes \mc H^P )^\he ,
\end{equation}
with the pairing \eqref{eq:2.6}. The 
transpose of the inclusion $i_y : F_y \otimes \mc H^P \to F \otimes \mc H^P$ is the projection 
\[
i_y^\he : (F \otimes \mc H^P)^\he \to (F_y \otimes \mc H^P )^\he = F_y^\he \otimes (\mc H^P )^\he .
\]
To these maps we can also apply ind. In that way \eqref{eq:2.7} can be evaluated via the
pairing of $\ind (F_y \otimes \mc H^P)$ with $\ind \big( F_y^\he \otimes (\mc H^P )^\he \big)$
given by \eqref{eq:2.6}. More explicitly:
\[
\langle x, \ind (\pi)(h) y \rangle = \langle \ind (p^\he) (x) , h \, \ind (p) (y) \rangle =
\langle \ind (i_y^\he) \ind (p^\he) (x) , h \, \ind (p) (y) \rangle .
\]
By \eqref{eq:2.13} the right-hand side equals
\begin{align*}
\langle h^* \, \ind (i_y^\he) \ind (p^\he) (x) , \ind (p) (y) \rangle & =
\langle \ind (i_y^\he) \ind (p^\he) (\ind (\pi^\he) (h^*) x) , \ind (p) (y) \rangle \\
& = \langle \ind (\pi^\he) (h^*) x , y \rangle .
\end{align*}
This establishes \eqref{eq:2.7}.
\end{proof}

\section{Second adjointness} 
\label{sec:adjoint}

For affine Hecke algebras the standard adjointness for parabolic induction reads 
\begin{equation}\label{eq:3.1}
\Hom_{\mc H} \big( \ind_{\mc H^P}^{\mc H} (\rho), \pi \big) \cong
\Hom_{\mc H^P} \big( \rho, \Res_{\mc H^P}^{\mc H} (\pi) \big) 
\quad \rho \in \Mod (\mc H^P), \pi \in \Mod (\mc H).
\end{equation}
This can be regarded as an instance of Frobenius reciprocity or of Hom-tensor duality
(since $\ind_{\mc H^P}^{\mc H} (V_\rho) = \mc H \otimes_{\mc H^P} V_\rho$). In terms of
reductive $p$-adic groups, normalized parabolic induction and normalized Jacquet restriction, 
\eqref{eq:3.1} corresponds to Bernstein's second adjointness:
\begin{equation}\label{eq:3.2}
\Hom_G \big( I_P^G (\sigma), \tau \big) \cong \Hom_M \big( \sigma, J^G_{\overline P}(\tau) \big) ,
\end{equation}
where $\sigma \in \Rep (M), \tau \in \Rep (G)$ and $P, \overline{P}$ are opposite parabolic
subgroups of $G$ with $P \cap \overline{P} = M$. The comparison between the two settings
stems from \cite[Corollary 8.4]{BuKu}, but one needs some modifications that lead to
\cite[Condition 4.1]{SolComp}. By analogy, the first adjointness for $p$-adic groups (i.e.
Frobenius reciprocity)
\begin{equation}\label{eq:3.3}
\Hom_M \big( J_P^G (\tau), \sigma \big) \cong \Hom_G \big( \tau, I_P^G (\sigma) \big)
\end{equation}
should have a counterpart for affine Hecke algebras. In other words, we may expect that some
form of parabolic restriction is left adjoint to some form of parabolic induction.
By Frobenius reciprocity for co-induced modules:
\begin{equation}\label{eq:3.4}
\Hom_{\mc H^P} \big( \Res^{\mc H}_{\mc H^P} (\pi), \rho \big) \cong 
\Hom_{\mc H} \big( \pi, \Hom_{\mc H^P} (\mc H, \rho) \big) ,
\end{equation}
where $\mc H$ acts on $\Hom_{\mc H^P} (\mc H, \pi)$ via right multiplication on $\mc H$. 
However, \eqref{eq:3.4} is not yet satisfactory because it does not provide a left adjoint
for parabolic induction. 

For $p$-adic groups, one way to prove the second adjointness relation is via contragredients and 
Jacquet modules, see \cite[\S VI.9.6]{Ren}. For graded Hecke algebras, a similar proof works
with Hermitian duals instead of contragredients \cite[Lemma 3.8.1]{BaCi}. We follow the latter.

\begin{thm}\label{thm:3.1}
Let $P \subset \Delta$ and recall that $P^{op} = w_\Delta (-P)$.
\enuma{
\item The right adjoint of $\Res^{\mc H}_{\mc H^P}$ is $\ind_{\mc H^{P^{op}}}^{\mc H} \circ
\psi_{\Delta P *}$.
\item The left adjoint of $\ind_{\mc H^P}^{\mc H}$ is $\psi_{\Delta P}^* \circ 
\Res^{\mc H}_{\mc H^{P^{op}}}$.
}
\end{thm}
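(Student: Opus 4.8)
The plan is to deduce Theorem \ref{thm:3.1} from the standard adjointness \eqref{eq:3.1} together with the compatibility between Hermitian duals and the functors $\ind$ and $\Res$ established in Proposition \ref{prop:2.3} and Proposition \ref{prop:2.5}. The guiding idea is that the Hermitian dual functor $(-)^\he$ turns a left adjoint into a right adjoint: since \eqref{eq:2.10} gives a conjugate-linear isomorphism $\Hom_{\mc H}(\pi,\rho^\he) \cong \Hom_{\mc H}(\rho,\pi^\he)$, natural in both variables, applying $(-)^\he$ to the known adjunction $(\ind_{\mc H^P}^{\mc H}, \Res^{\mc H}_{\mc H^P})$ should produce an adjunction whose two functors are the Hermitian-dual-conjugates of $\ind$ and $\Res$ — and by Propositions \ref{prop:2.3} and \ref{prop:2.5} those conjugates are (up to the twist $\psi_{\Delta P}$) again $\Res$ and $\ind$, but with $P$ replaced by $P^{op}$.

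Concretely, for part (a) I would start from \eqref{eq:3.1} applied with $P$ replaced by $P^{op}$: for $\rho \in \Mod(\mc H^{P^{op}})$ and $\pi \in \Mod(\mc H)$,
\[
\Hom_{\mc H}\big( \ind_{\mc H^{P^{op}}}^{\mc H}(\rho), \pi \big) \cong \Hom_{\mc H^{P^{op}}}\big( \rho, \Res^{\mc H}_{\mc H^{P^{op}}}(\pi) \big).
\]
Now replace $\pi$ by $\pi^\he$ and $\rho$ by $\rho^\he$, and apply the transposition isomorphism \eqref{eq:2.10} on both sides to replace each $\Hom(A, B^\he)$ by $\Hom(B, A^\he)$. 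On the left this turns $\Hom_{\mc H}(\ind(\rho^\he), \pi^\he)$ into $\Hom_{\mc H}(\pi, \ind_{\mc H^{P^{op}}}^{\mc H}(\rho^\he)^\he)$, and by Proposition \ref{prop:2.5} the inner term $\ind_{\mc H^{P^{op}}}^{\mc H}(\rho^\he)^\he$ is canonically $\ind_{\mc H^{P^{op}}}^{\mc H}(\rho)$ — wait, more carefully, Proposition \ref{prop:2.5} says $\ind(\rho^\he) \cong \ind(\rho)^\he$, hence $\ind(\rho^\he)^\he \cong \ind(\rho)^{\he\he} \cong \ind(\rho)$ for finite-dimensional-type reasons, or one simply keeps $\rho^\he$ throughout and renames at the end. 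On the right, $\Hom_{\mc H^{P^{op}}}(\rho^\he, \Res_{\mc H^{P^{op}}}(\pi^\he))$ becomes, via \eqref{eq:2.10} over $\mc H^{P^{op}}$, a Hom into $\Res_{\mc H^{P^{op}}}(\pi^\he)^\he$; by Proposition \ref{prop:2.3}.a this is isomorphic to $\Res^{\mc H}_{\mc H^P}(\pi) \circ \psi_{\Delta P}^{-1}$, i.e. exactly $(\psi_{\Delta P *} \circ \Res^{\mc H}_{\mc H^P})(\pi)$ transported to an $\mc H^{P^{op}}$-module. Chaining the isomorphisms and checking they are natural in $\pi$ and $\rho$ gives
\[
\Hom_{\mc H^{P^{op}}}\big( \psi_{\Delta P*}\Res^{\mc H}_{\mc H^P}(\pi), \rho \big) \cong \Hom_{\mc H}\big( \pi, \ind_{\mc H^{P^{op}}}^{\mc H}(\rho) \big),
\]
which, after the harmless relabelling $\rho \leftrightarrow \rho \circ \psi_{\Delta P}^{-1}$, is precisely the statement that $\ind_{\mc H^{P^{op}}}^{\mc H} \circ \psi_{\Delta P*}$ is right adjoint to $\Res^{\mc H}_{\mc H^P}$. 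Part (b) follows by the same argument with the roles of $\ind$ and $\Res$ interchanged, now using Proposition \ref{prop:2.3}.b (or equivalently part (a) applied to $P^{op}$ combined with $\psi_{\Delta P^{op}} = \psi_{\Delta P}^{-1}$ from Lemma \ref{lem:2.2}.b); alternatively one observes that a left adjoint, if it exists, is unique, and checks the adjunction directly.

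The main obstacle I anticipate is bookkeeping rather than conceptual: the transposition map \eqref{eq:2.10} is conjugate-linear, so composing two of them (once over $\mc H$ and once over $\mc H^P$) must be checked to restore linearity and, crucially, to be natural in the right variables — one has to verify that the resulting bijection $\Hom_{\mc H^{P^{op}}}(\psi_{\Delta P*}\Res(\pi),\rho) \cong \Hom_{\mc H}(\pi,\ind(\rho))$ commutes with morphisms $\pi \to \pi'$ and $\rho \to \rho'$, which amounts to unwinding the explicit formulas for $\phi \mapsto \phi^\he$ and for the pairings in Propositions \ref{prop:2.3} and \ref{prop:2.5}. A secondary subtlety is the appearance of $\psi_{\Delta P}$ versus $\psi_{\Delta P}^{-1} = \psi_{\Delta P^{op}}$ and the fact that $\Res_{\mc H^{P^{op}}}$ lands in $\mc H^{P^{op}}$-modules while we want the adjoint of $\Res_{\mc H^P}$ — keeping $P$ and $P^{op}$ straight, and confirming that $P^{op\,op} = P$, is where sign-type errors would creep in, so I would fix the convention once at the outset and apply Lemma \ref{lem:2.2}.b consistently.
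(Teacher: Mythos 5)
Your route is essentially the paper's: chain the first adjointness \eqref{eq:3.1} with the transposition isomorphism \eqref{eq:2.10} and the compatibilities of Hermitian duals with induction and restriction (Propositions \ref{prop:2.5} and \ref{prop:2.3}), keeping track of $\psi_{\Delta P}$ and $\psi_{\Delta P^{op}}=\psi_{\Delta P}^{-1}$; the point you raise about two conjugate-linear transpositions composing to a linear bijection is exactly the check the paper makes. The genuine gap is at the end. However you arrange the chain, each use of \eqref{eq:2.10} requires the relevant slot to be a Hermitian dual, so what you actually obtain is the adjunction isomorphism with $\rho$ (or $\pi$) replaced by a double dual: in your version, dualizing both variables of \eqref{eq:3.1} yields
$\Hom_{\mc H}\big(\pi, \ind_{\mc H^{P^{op}}}^{\mc H}(\rho^\he)^\he\big) \cong \Hom_{\mc H^{P^{op}}}\big(\psi_{\Delta P*}\Res^{\mc H}_{\mc H^P}(\pi), \rho^{\he\he}\big)$,
i.e.\ the statement only for modules lying in the image of the Hermitian dual functor. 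Neither of your proposed fixes closes this: $\rho^{\he\he}\cong\rho$ holds only for finite-dimensional modules, whereas Theorem \ref{thm:3.1} is a statement about the full categories $\Mod(\mc H^P)$ and $\Mod(\mc H)$ (the paper's Proposition \ref{prop:2.5} and the proof of the theorem deliberately handle arbitrary, possibly infinitely generated modules); and ``keeping $\rho^\he$ throughout and renaming at the end'' is circular, because not every module is a Hermitian dual --- no module of countably infinite dimension over $\C$ can be one, since the conjugate-linear dual of an infinite-dimensional space has strictly larger dimension; the regular module is a concrete example. Likewise, your use of Proposition \ref{prop:2.3}.a to identify $\Res_{\mc H^{P^{op}}}(\pi^\he)^\he$ with $\psi_{\Delta P*}\Res_{\mc H^P}(\pi)$ silently invokes reflexivity of $\pi$.

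This is precisely where the paper inserts an extra argument: its chain proves the adjointness for all pairs $(\pi,\rho')$ with $\rho'=\rho^\he$, and it then appeals to the argument of \cite[p.~232]{Ren} (written there for the analogous second adjointness for $p$-adic groups) to deduce the statement for arbitrary $\rho'$. Some such extension step is unavoidable with this method; one way to carry it out is to use the canonical $\mc H^P$-linear embedding $\rho'\hookrightarrow \rho'^{\he\he}$, the exactness of $\ind$ and of the genuine right adjoint $\Hom_{\mc H^P}(\mc H,-)$ from \eqref{eq:3.4}, and naturality, to compare the candidate right adjoint with coinduction on all modules. As written, your proof establishes the adjunction only on Hermitian duals (in particular on finite-dimensional modules), not in the generality claimed by the theorem.
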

\begin{proof}
(a) Let $(\pi, V_\pi) \in \Mod (\mc H)$ and $(\rho, V_\rho) \in \Mod (\mc H^P)$. By the transposition
isomorphism \eqref{eq:2.10} and Proposition \ref{prop:2.3}
\begin{equation}\label{eq:3.5}
\Hom_{\mc H^P} \big( \Res^{\mc H}_{\mc H^P} (\pi), \rho^\he \big) \cong \Hom_{\mc H^P} \big( \rho,
\Res^{\mc H}_{\mc H^P} (\pi)^\he \big) \cong \Hom_{\mc H^P} \big( \rho, 
\Res^{\mc H}_{\mc H^{P^{op}}} (\pi^\he) \circ \psi_{\Delta P} \big) .
\end{equation}
We know from Lemma \ref{lem:2.2} that $\psi_{\Delta P} : \mc H^P \to \mc H^{P^{op}}$ is invertible,
so that the right-hand side of \eqref{eq:3.5} becomes isomorphic with
\begin{equation}\label{eq:3.6}
\Hom_{\mc H^{P^{op}}} \big( \rho \circ \psi_{\Delta P}^{-1}, 
\Res^{\mc H}_{\mc H^{P^{op}}} (\pi^\he) \big) .
\end{equation}
Now we apply Frobenius reciprocity in the form \eqref{eq:3.1} and again the transposition isomorphism:
\begin{equation}\label{eq:3.7}
\cong \Hom_{\mc H} \big( \ind_{\mc H^{P^{op}}}^{\mc H} (\rho \circ \psi_{\Delta P}^{-1}), \pi^\he \big)
\cong \Hom_{\mc H} \big( \pi,  \ind_{\mc H^{P^{op}}}^{\mc H} (\rho \circ \psi_{\Delta P}^{-1})^\he \big).
\end{equation}
Using Proposition \ref{prop:2.5}, we identify that with
\begin{equation}\label{eq:3.8}
\Hom_{\mc H} \big( \pi,  \ind_{\mc H^{P^{op}}}^{\mc H} (\rho^\he \circ \psi_{\Delta P}^{-1}) \big) =
\Hom_{\mc H} \big( \pi,  \ind_{\mc H^{P^{op}}}^{\mc H} \circ \psi_{\Delta P} (\rho^\he) \big) .
\end{equation}
The first isomorphism in \eqref{eq:3.5} and the second in \eqref{eq:3.7} are conjugate-linear. The
other above isomorphisms are complex linear, so the composition of \eqref{eq:3.5}--\eqref{eq:3.8}
is again a complex linear bijection. That proves the desired adjointness relation for
$(\pi, \rho' = \rho^\he)$, so whenever $\rho'$ is the Hermitian dual of some $\mc H^P$-module. The same
argument as in the analogous situation for reductive $p$-adic groups \cite[p. 232]{Ren} shows why that
implies part (a) for all $(\pi, \rho')$.\\
(b) Reverse the roles of $P$ and $P^{op}$ and apply part (a) with 
$\rho' = \psi_{\Delta P}^* (\rho) = \rho \circ \psi_{\Delta P}$. That gives isomorphisms
\[
\Hom_{\mc H} \big( \pi, \ind_{\mc H^P}^{\mc H} (\rho') \big) \cong 
\Hom_{\mc H^{P^{op}}} \big( \Res^{\mc H}_{\mc H^{P^{op}}} (\pi), \rho \big) .
\]
Left composition with $\psi_{\Delta P}^{-1}$ on both terms of the right-hand side makes this isomorphic
with $\Hom_{\mc H^{P}} \big( \psi_{\Delta P}^* \, \Res^{\mc H}_{\mc H^{P^{op}}} (\pi), \rho' \big)$.
\end{proof}

Next we discuss a topic related to second adjointness, namely expressions for parabolic induction
followed by parabolic restriction. In the setting of reductive $p$-adic groups this is known as
Bernstein's geometric lemma \cite[\S VI.5.1]{Ren}. A version for affine Hecke algebras should
provide a filtration of the functor $\Res^{\mc H}_{\mc H^Q} \ind_{\mc H^P}^{\mc H}$. Indeed that
was achieved in \cite[\S 11]{DeOp1}, but restricted to tempered representations. 
Here we formulate that result in larger generality. 

Let $P,Q \subset \Delta$ and let 
\[
W^{P,Q} = \{ w \in W : w(Q) \subset R^+, w^{-1}(Q) \subset R^+ \}
\] 
be the set of shortest length representatives of $W_P \backslash W / W_Q$. Each $d \in W^{P,Q}$ yields 
a bijection $d^{-1}(P) \cap Q \to P \cap d(Q)$ and an algebra isomorphism
\begin{equation}\label{eq:3.9}
\begin{array}{llll}
\psi_d : & \mc H^{d^{-1}(P) \cap Q} & \to & \mc H^{P \cap d(Q)} \\
& N_w \theta_x & \mapsto & N_{d w d^{-1}} \theta_{d(x)} 
\end{array}.
\end{equation}
We choose a total ordering of $W^{P,Q}$ such that $\ell : W^{P,Q} \to \Z_{\geq 0}$ becomes a weakly
increasing function. For $d \in W^{P,Q}$ and an $\mc H^Q$-representation $(\pi,V_\pi)$, we consider 
the linear subspace
\begin{equation}\label{eq:3.11}
\big( \Res^{\mc H}_{\mc H^P} \ind_{\mc H^Q}^{\mc H} \big)_{\leq d} (V_\pi) = 
\bigoplus\nolimits_{d' \in W^{P,Q}, d' \leq d} \mc H (W_P d' W_Q) \mc A \otimes_{\mc H^Q} V_\pi 
\end{equation}
of $\ind_{\mc H^Q}^{\mc H} (V_\pi)$. To analyse these subspaces, we need a result of Kilmoyer 
\cite[Theorem 2.7.4]{Car}:
\begin{equation}\label{eq:3.10}
W_P \cap d W_Q d^{-1} = W_{P \cap d (Q)} \qquad \text{for all } d \in W^{P,Q}.
\end{equation}
Using that, the following is shown in \cite[(11.3)--(11.6)]{DeOp1}:

\begin{prop}\label{prop:3.2}
For each $d \in W^{P,Q}$, $\big( \Res^{\mc H}_{\mc H^P} \ind_{\mc H^Q}^{\mc H} \big)_{\leq d} (V_\pi)$ 
is an $\mc H^P$-submodule of $\Res^{\mc H}_{\mc H^P} \ind_{\mc H^Q}^{\mc H} (V_\pi)$. There is an
isomorphism of $\mc H^P$-modules
\[
\big( \Res^{\mc H}_{\mc H^P} \ind_{\mc H^Q}^{\mc H} \big)_{\leq d} (V_\pi) \big/ \big( 
\Res^{\mc H}_{\mc H^P} \ind_{\mc H^Q}^{\mc H} \big)_{<d} (V_\pi) \cong \ind_{\mc H^{P \cap 
d(Q)}}^{\mc H^P} \big( \psi_d \: \Res^{\mc H^Q}_{\mc H^{d^{-1}(P) \cap Q}} (V_\pi) \big) ,
\]
where $<d$ means $\leq d'$ for the largest $d' \in W^{P,Q}$ which is smaller than $d$.

In other words, we have a filtration of the functor $\Res^{\mc H}_{\mc H^P} \ind_{\mc H^Q}^{\mc H}$,
indexed by $W^{P,Q}$ and with successive subquotients $\ind_{\mc H^{P \cap d(Q)}}^{\mc H^P} \circ 
\psi_{d *} \circ \Res^{\mc H^Q}_{\mc H^{d^{-1}(P) \cap Q}}$. 
\end{prop}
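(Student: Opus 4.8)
The plan is to reduce to the statement of the cited reference \cite[(11.3)--(11.6)]{DeOp1} by checking that the arguments there do not actually use temperedness. The key structural input is the Bernstein decomposition \eqref{eq:1.5} together with the double-coset combinatorics. Concretely, I would first recall that $W = \bigsqcup_{d \in W^{P,Q}} W_P d W_Q$ and that, by repeated use of the multiplication rules \eqref{eq:1.11} and \eqref{eq:1.5}, the space $\mc H (W_P d W_Q) \mc A$ is closed under left multiplication by $\mc H^P$ (length additivity when multiplying on the left by $N_w$, $w \in W_P$, pushes a reduced word for $w d w'$ up or, when it goes down, produces lower-order terms within $\bigcup_{d' \leq d} W_P d' W_Q$, thanks to the weakly increasing ordering of $W^{P,Q}$). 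This gives the submodule claim and the filtration, independently of any analytic condition on $V_\pi$.

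For the identification of the successive subquotients, the main point is Kilmoyer's result \eqref{eq:3.10}: it guarantees that, writing elements of $W_P d W_Q$ uniquely as $u d v$ with appropriate shortest representatives, the stabilizer combinatorics match those of the parabolic pair $(P \cap d(Q), d^{-1}(P) \cap Q)$. First I would fix $d$ and exhibit an explicit $\mc H^P$-linear map from $\ind_{\mc H^{P \cap d(Q)}}^{\mc H^P}\big( \psi_d\, \Res^{\mc H^Q}_{\mc H^{d^{-1}(P) \cap Q}} (V_\pi)\big)$ to the subquotient, sending a generator $N_b \otimes v$ (with $b$ a shortest representative for $W^P / W_{P \cap d(Q)}$) to the class of $N_b N_d \otimes v$; the definition of $\psi_d$ in \eqref{eq:3.9} is exactly what makes the $\mc A$-action intertwine, and the cross relations in the Bernstein presentation handle the $N_w$'s for $w \in W_{P \cap d(Q)}$. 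Then I would count dimensions over $\mc A$ (equivalently, compare ranks as free $\mc A^{?}$-modules via \eqref{eq:1.5}) to conclude the map is an isomorphism: on the source side the rank is $|W^P / W_{P\cap d(Q)}| \cdot \mathrm{rk}_{\mc A}(V_\pi)$ restricted appropriately, and on the target side it is $|W_P d W_Q / W_Q| \cdot \mathrm{rk}$, and \eqref{eq:3.10} makes these agree.

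The part I expect to require the most care is verifying that the filtration maps are well-defined \emph{on the quotient} — i.e. that multiplying $N_b N_d \otimes v$ by $\mc H^P$ and re-expanding in the Bernstein basis only produces terms living in $\big(\Res \ind\big)_{\leq d}(V_\pi)$, with the genuinely new part ($\mc H(W_P d W_Q)$) behaving as predicted and everything with strictly smaller $d'$ landing in the previous filtration step. This is a bookkeeping argument with reduced expressions: one must check that for $w \in W_P$ and $u d v \in W_P d W_Q$ in reduced form, any $w' d' v' \leq $ something appearing in $N_w N_{udv}$ has $d' \leq d$, which follows from the exchange condition plus the fact that $d$ is the unique minimal-length element of its double coset. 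Since \cite{DeOp1} carries this out for tempered $V_\pi$ but the combinatorial lemmas used there (their (11.3)--(11.6), which rest only on \eqref{eq:1.5}, \eqref{eq:1.11}, \eqref{eq:3.10}, and the length function) never invoke temperedness, I would simply cite those computations and remark that they apply verbatim to an arbitrary $\mc H^Q$-module $V_\pi$. The final sentence of the proposition — the functorial reformulation — is then immediate from naturality of all the maps involved in each $V_\pi$.
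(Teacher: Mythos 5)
Your proposal is correct and follows essentially the same route as the paper: the paper itself proves Proposition \ref{prop:3.2} by invoking Kilmoyer's identity \eqref{eq:3.10} and the purely algebraic computations \cite[(11.3)--(11.6)]{DeOp1}, which (as you observe) never use temperedness and hence apply to arbitrary $\mc H^Q$-modules. Your additional sketch of the submodule/quotient bookkeeping and the rank count via \eqref{eq:3.10} is consistent with those computations, so no further comment is needed.
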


Notice the analogy with Mackey's restriction-induction formula for representations of finite groups.
From Proposition \ref{prop:3.2} and the two adjunctions, one can derive expressions for the
Hom-space between two parabolically induced $\mc H$-representations.

\section{Variations on the Langlands classification}
\label{sec:Langlands}

The Langlands classification for a reductive group $G$ over a local field \cite{Lan,Ren} classifies
irreducible admissible $G$-representations in terms of irreducible tempered representations of Levi
subgroups of $G$. The analogous result for affine/graded Hecke algebras can be found in \cite{Eve,SolAHA}.
Here we want to establish some useful variations, in particular with subrepresentations instead of
quotients.

The complex torus $T$ can be idenfied with the space $\Irr (\mc A)$ of irreducible representations
of $\mc A \cong \C [X] = \mc O (T)$. If $(\pi, V_\pi)$ is an $\mc H$-representation, $t \in T$
and there exists $v \in V_\pi \setminus \{0\}$ such that
\[
\pi (\theta_x) v = t(x) v \qquad \text{for all } x \in X,
\]
then $t$ is a called an $\mc A$-weight (or simply weight) of $\pi$. We denote the set of 
$\mc A$-weights of $(\pi,V_\pi)$ by Wt$(\pi)$ or Wt$(V_\pi)$.
If $V_\pi$ has finite dimension,
then there is a canonical decomposition in generalized $\mc A$-eigenspaces:
\begin{equation}\label{eq:4.2}
V_\pi = \bigoplus\nolimits_{t \in T} V_{\pi,t,\mr{gen}} .
\end{equation}
The $t \in T$ for which $V_{\pi,t,\mr{gen}} \neq 0$ are precisely the $\mc A$-weights of $\pi$.

\begin{lem}\label{lem:4.1}
Let $(\pi,V_\pi)$ be a finite dimensional $\mc H$-representation. Then\\
$\mr{Wt}(\pi^\he) = \big\{ \overline{w_\Delta t}^{-1} : t \in \mr{Wt}(\pi) \big\}$.
\end{lem}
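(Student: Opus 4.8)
The plan is to reduce the computation of $\mr{Wt}(\pi^\he)$ to the action of the commutative subalgebra $\mc A = \mc H^\emptyset$, and then to use the explicit description of $\Res^{\mc H}_{\mc A}(\pi^\he)$ furnished by the case $P = \emptyset$ of Proposition \ref{prop:2.3}. Since weights depend only on the $\mc A$-module structure, and since $V_\pi^\he$ has the same (finite) dimension as $V_\pi$, so that the generalized weight space decomposition \eqref{eq:4.2} applies to $\pi^\he$ as well, it suffices to identify $\mr{Wt}$ of the $\mc A$-module $\Res^{\mc H}_{\mc A}(\pi^\he)$. By \eqref{eq:2.5} this $\mc A$-module is isomorphic to $\Res^{\mc H}_{\mc A}(\pi)^\he \circ \psi_{\Delta \emptyset}$, where the Hermitian dual on the right is formed with respect to the involution $*_\emptyset$ of $\mc A$, for which $*_\emptyset(\theta_x) = \theta_{-x}$, and $\psi_{\Delta \emptyset}$ is the algebra automorphism $\theta_x \mapsto \theta_{w_\Delta(x)}$ of $\mc A$.

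First I would compute the weights of the $\mc A$-Hermitian dual $M^\he$ of an arbitrary finite dimensional $\mc A$-module $M$. Writing $M = \bigoplus_s M_{s,\mr{gen}}$ for the generalized weight space decomposition, the space $M^\he$ decomposes accordingly as $\bigoplus_s (M_{s,\mr{gen}})^\he$, where $(M_{s,\mr{gen}})^\he$ consists of the conjugate-linear functionals vanishing on all $M_{s',\mr{gen}}$ with $s' \neq s$. On $M_{s,\mr{gen}}$ one has $\theta_x = s(x)\,\mathrm{id} + n_x$ with $n_x$ nilpotent, hence $\theta_{-x} = s(x)^{-1}\,\mathrm{id} + n_{-x}$; plugging into $(\theta_x \cdot \lambda)(v) = \lambda(\theta_{-x} v)$ and using that $\lambda$ is conjugate-linear gives $\theta_x \cdot \lambda = \overline{s(x)}^{\,-1}\lambda + \lambda \circ n_{-x}$, in which $\lambda \mapsto \lambda \circ n_{-x}$ is again nilpotent. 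Thus $(M_{s,\mr{gen}})^\he$ is the generalized $\overline{s}^{\,-1}$-weight space of $M^\he$, so $\mr{Wt}(M^\he) = \{\overline{s}^{\,-1} : s \in \mr{Wt}(M)\}$. Next, for an algebra automorphism of the form $\theta_x \mapsto \theta_{w_\Delta(x)}$, precomposition sends an $\mc A$-module of weight $s$ to one of weight $x \mapsto s(w_\Delta(x))$, i.e.\ of weight $w_\Delta s$ (using $w_\Delta^{-1} = w_\Delta$).

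Combining these two effects with \eqref{eq:2.5} yields $\mr{Wt}(\pi^\he) = \{ w_\Delta \cdot \overline{t}^{\,-1} : t \in \mr{Wt}(\pi)\}$. Finally I would note that the $W$-action on $T$, complex conjugation, and inversion pairwise commute, whence $w_\Delta \cdot \overline{t}^{\,-1} = \overline{w_\Delta t}^{\,-1}$, which is exactly the claimed formula; the equality of sets (not merely an inclusion) is automatic since $t \mapsto \overline{w_\Delta t}^{\,-1}$ is an involution of $T$. I do not expect a serious obstacle: the one point that genuinely needs care is the bookkeeping forced by the conjugate-linearity of Hermitian functionals together with the direction of the $W$-action on $T$, where a stray inverse, or confusing $w_\Delta$ with $w_\Delta^{-1}$, would corrupt the statement (here the latter is harmless because $w_\Delta$ has order two, but the conjugation and the inversion are both genuinely present).
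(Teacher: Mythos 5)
Your proof is correct, and it takes a somewhat different route from the paper's. The paper argues directly from \eqref{eq:1.3}: for an eigenvector $\lambda$ of $\pi^\he(\theta_x)$ it unwinds $\theta_x^* = N_{w_\Delta}\theta_{-w_\Delta(x)}N_{w_\Delta}^{-1}$, passes to the twisted representation $\pi' = \pi \circ \mf c_{N_{w_\Delta}} \cong \pi$, concludes via non-surjectivity and the decomposition \eqref{eq:4.2} that $\overline{w_\Delta s}^{-1} \in \mr{Wt}(\pi)$, and then obtains the equality of sets by running the argument again for $\pi^{\he\he} = \pi$. You instead quote \eqref{eq:2.5} (Proposition \ref{prop:2.3} with $P = P^{op} = \emptyset$), which packages exactly that interplay between $*|_{\mc A}$, $*_\emptyset$ and conjugation by $N_{w_\Delta}$, and then reduce everything to an elementary statement about finite dimensional $\mc A$-modules: the Hermitian dual with respect to $*_\emptyset$ inverts and conjugates the weights (your scalar-plus-nilpotent computation on generalized weight spaces is fine, and the conjugate-linearity is handled correctly), while precomposition with $\psi_{\Delta\emptyset}$ twists them by $w_\Delta$. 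What your route buys is that the equality of weight sets falls out in one stroke from the $\mc A$-module isomorphism, so you avoid both the slightly delicate "non-surjective for every $x'$ implies weight" step and the appeal to biduality; the cost is that you lean on the heavier Proposition \ref{prop:2.3} (legitimately, since it precedes Lemma \ref{lem:4.1} and its proof does not use it). Your closing remarks on the commutation of the $W$-action, conjugation and inversion, and on $w_\Delta^2 = e$, dispose of the only bookkeeping hazards.
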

\begin{proof}
Let $s \in T$ be a weight of $\pi^\he$, with an eigenvector $\lambda \in V_\pi^\he \setminus \{0\}$.
For any $v \in V, x \in X$ we compute, using \eqref{eq:1.3},
\begin{equation}\label{eq:4.1}
s(x) \lambda (v) = (\pi^\he (\theta_x) \lambda) (v) =
\lambda (\pi (\theta_x^*) v) = \lambda (\pi (N_{w_\Delta} \theta_{-w_\Delta (x)} N_{w_\Delta}^{-1}) v) .
\end{equation}
Write $\pi' = \pi \circ \mf{c}_{N_{w_\Delta}}$, so that 
\[
\begin{array}{ccc}
(\pi, V_\pi) & \to & (\pi', V_\pi) \\
v & \mapsto & \pi (N_{w_\Delta}) v
\end{array}
\]
is an isomorphism of $\mc H$-representations. We can rewrite \eqref{eq:4.1} as
\[
\lambda (\overline{s(x)} v) = \lambda (\pi' (\theta_{-w_\Delta (x)}) v) .
\]
Equivalently, for each $x \in X, v \in V_\pi$ the kernel of $\lambda$ contains
\[
\pi' \big( \theta_{-w_\Delta (x)} - \overline{s(x)} \big) v = 
\pi' \big( \theta_{x'} - \overline{s}(-w_\Delta (x')) \big) v = 
\pi' \big( \theta_{x'} - \overline{w_\Delta s}^{-1}(x') \big) v ,
\]
where we abbreviated $x' = -w_\Delta (x)$. Thus $\pi' \big( \theta_{x'} - 
\overline{w_\Delta s}^{-1}(x') \big)$ is not surjective, for any $x' \in X$. Since $V_\pi$
has finite dimension, we can use the decomposition, which shows that $\overline{w_\Delta s}^{-1}$
is a weight of $\pi'$. Via the isomorphism $\pi' \cong \pi$, it is also a weight of $\pi$.

Hence $s \mapsto \overline{w_\Delta s}^{-1}$ maps the weights of $\pi^\he$ to the weights of
$\pi$. As $\dim V_\pi < \infty$, $\pi^{\he \he} = \pi$ and the same arguments apply with the
roles of $\pi$ and $\pi^\he$ exchanged. Therefore we have found a bijection between the set of
weights of $\pi^\he$ and of $\he$, with inverse $t \mapsto \overline{w_\Delta t}^{-1}$.
\end{proof}

For any $t \in T$ we have $|t| \in \Hom_\Z (X,\R_{>0})$ and $\log |t| \in \Hom_\Z (X,\R) =
Y \otimes_\Z \R$. Given $P \subset \Delta$ we define the positive cones
\begin{align*}
& (Y \otimes_\Z \R )^{P+} = \{ y \in Y \otimes_\Z \R : \langle \alpha ,y \rangle = 0 \; \forall 
\alpha \in P, \langle \alpha ,y \rangle >0 \; \forall \alpha \in \Delta \setminus P \} ,\\
& T^{P+} = \exp \big( (Y \otimes_\Z \R)^{P+} \big)  \qquad \subset T^P .
\end{align*}
The same can be done in $X \otimes_\Z \R$, and then taking anti-duals yields the obtuse negative
cones
\begin{align*}
& (Y \otimes_\Z \R )_P^- = \big\{ \sum\nolimits_{\alpha \in P} c_\alpha \alpha^\vee : 
c_\alpha \in \R_{\leq 0} \big\} ,\\
& T_P^- = \exp \big( (Y \otimes_\Z \R )_P^- \big) \qquad \subset T_P .
\end{align*}
By definition, a finite dimensional $\mc H^P$-module $V$ is tempered if $|t| \in T_P^-$ for all
$t \in \mr{Wt}(V)$. Similarly we say that $V$ is anti-tempered if $|t|^{-1} \in T_P^-$
for all $t \in \mr{Wt}(V)$. These two properties are preserved by taking Hermitian duals:

\begin{lem}\label{lem:4.2}
Let $(\pi,V_\pi)$ be a finite dimensional $\mc H$-representation. If $V_\pi$ is tempered (resp.
anti-tempered), then $V_\pi^\he$ is tempered (resp. anti-tempered).
\end{lem}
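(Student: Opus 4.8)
The plan is to reduce the statement about temperedness of $V_\pi^\he$ to a direct computation with the weights, using Lemma \ref{lem:4.1}. Recall that $\mr{Wt}(\pi^\he) = \{ \overline{w_\Delta t}^{-1} : t \in \mr{Wt}(\pi)\}$, so taking absolute values we get $|\mr{Wt}(\pi^\he)| = \{ w_\Delta |t|^{-1} : t \in \mr{Wt}(\pi)\}$, since $|\overline{s}| = |s|$ and $w_\Delta$ acts on $\Hom_\Z(X,\R_{>0})$ by the linear action on $Y \otimes_\Z \R$ after taking $\log$. Thus the whole statement is a question about how the negative cone $T_\Delta^- \subset T$ (here $P = \Delta$, so $\mc H^\Delta = \mc H$ and the temperedness condition refers to the full cone $(Y \otimes_\Z \R)_\Delta^- = \{\sum_{\alpha \in \Delta} c_\alpha \alpha^\vee : c_\alpha \leq 0\}$) interacts with the maps $|t| \mapsto |t|^{-1}$ and $|t| \mapsto w_\Delta|t|$.

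First I would handle temperedness. The hypothesis is $\log|t| \in (Y\otimes_\Z\R)_\Delta^-$ for all $t \in \mr{Wt}(\pi)$, i.e. $\log|t| = \sum_{\alpha\in\Delta} c_\alpha \alpha^\vee$ with all $c_\alpha \leq 0$. I want $\log|s| \in (Y\otimes_\Z\R)_\Delta^-$ for each weight $s$ of $\pi^\he$, and by the above $\log|s| = w_\Delta(-\log|t|) = w_\Delta(\sum_\alpha (-c_\alpha)\alpha^\vee) = -\sum_\alpha (-c_\alpha) w_\Delta(\alpha^\vee)$. So I must show that $-w_\Delta(\alpha^\vee)$ lies in the obtuse negative cone spanned by $\Delta^\vee$ for every $\alpha \in \Delta$; equivalently $w_\Delta(\alpha^\vee) = -\beta^\vee$ for some positive coroot... actually more simply, $w_\Delta$ sends $\Delta$ to $-\Delta$ (since $w_\Delta$ is the longest element, $w_\Delta(R^+) = R^-$ and it permutes the negatives of simple roots), so $w_\Delta(\alpha^\vee) = -(\alpha^{op})^\vee$ for some $\alpha^{op}\in\Delta$, which means $-w_\Delta(\alpha^\vee) = (\alpha^{op})^\vee \in \Delta^\vee$. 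Hence $\log|s| = -\sum_\alpha(-c_\alpha)(\alpha^{op})^\vee$ with coefficients $-c_\alpha \geq 0$, so $\log|s| \in (Y\otimes_\Z\R)_\Delta^-$, i.e. $|s|^{-1} \in T_\Delta^-$... wait, I need to double-check the sign convention: temperedness means $|t| \in T_\Delta^-$, i.e. $\log|t|$ is a non-positive combination of simple coroots, and I've just shown $\log|s|$ is exactly that. Good. The anti-tempered case ($|t|^{-1} \in T_\Delta^-$ for all weights) is completely parallel: $\log|s^{-1}| = -\log|s| = -w_\Delta(-\log|t|) = w_\Delta(\log|t|) = w_\Delta(\sum c_\alpha \alpha^\vee)$ where now $-\log|t| \in (Y\otimes_\Z\R)_\Delta^-$ means $\log|t| = -\sum_\alpha c_\alpha\alpha^\vee$ with $c_\alpha\geq 0$... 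I would just rerun the same computation, using again that $w_\Delta(\alpha^\vee) = -(\alpha^{op})^\vee$, to conclude $\log|s^{-1}|$ is a non-positive combination of simple coroots, hence $|s|^{-1} \in T_\Delta^-$, i.e. $V_\pi^\he$ is anti-tempered.

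The only mild obstacle is bookkeeping with signs and with the fact that $w_\Delta$ need not act as $-1$ on $Y\otimes_\Z\R$ (it does only when $-1 \in W$); what is always true and all that is needed is that $w_\Delta$ maps the simple coroots bijectively to the negatives of (other) simple coroots, which is the standard property $w_\Delta(\Delta^\vee) = -\Delta^\vee$. I would state this explicitly as the one structural input, cite it to a standard reference or to \cite[\S 1.8]{Hum} as already used in Lemma \ref{lem:1.1}, and then the proof is two lines invoking Lemma \ref{lem:4.1} plus this. So the proof reads: apply Lemma \ref{lem:4.1}, pass to absolute values, use $w_\Delta(\Delta^\vee) = -\Delta^\vee$ to see that $w_\Delta$ preserves the obtuse cone $(Y\otimes_\Z\R)_\Delta^-$ up to the sign flip that is already built into comparing $\mr{Wt}(\pi)$ with $\mr{Wt}(\pi^\he)$, and conclude.
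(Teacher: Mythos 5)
Your proof is correct and follows essentially the same route as the paper: apply Lemma \ref{lem:4.1}, pass to absolute values, and use that $-w_\Delta$ permutes $\Delta$ (equivalently $w_\Delta(\Delta^\vee)=-\Delta^\vee$), so that $s\mapsto w_\Delta s^{-1}$ preserves the cone $T_\Delta^-$; the paper states this in one line where you spell it out in coordinates. There are a couple of harmless sign slips in your intermediate bookkeeping (e.g. the extra minus in $w_\Delta(\sum(-c_\alpha)\alpha^\vee)$ and the transcription of the anti-tempered hypothesis), but the final computations and conclusions are right.
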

\begin{proof}
Since $-w_\Delta$ stabilizes $\Delta$, $w_{\Delta} s^{-1} \in T^{\Delta -}$ if and only if
$s \in T^{\Delta -}$. Apply that to $s = |t|$ (resp. $s = |t|^{-1}$) for a weight $t$ of $V_\pi$,
and use Lemma \ref{lem:4.1}.
\end{proof}

The following result is an obvious generalization of the Langlands classification for affine 
Hecke algebras \cite{Eve,SolAHA}.

\begin{thm}\label{thm:4.3}
Let $\pi \in \Irr (\mc H^P)$ and $t \in T^P$. Suppose that (i) or (ii) holds:
\begin{itemize}
\item[(i)] $\pi$ is tempered and $t \in T^{P+}$,
\item[(ii)] $\pi$ is anti-tempered and $t^{-1} \in T^{P+}$.
\end{itemize}
\enuma{
\item The $\mc H$-representation $\ind_{\mc H^P}^{\mc H}(\pi \otimes t)$ has a unique irreducible
quotient $L(P,\pi,t)$. It is the unique irreducible subquotient $\rho$ of 
$\ind_{\mc H^P}^{\mc H}(\pi \otimes t)$ which admits an injective $\mc H^P$-homomorphism
$\pi \otimes t \to \Res^{\mc H}_{\mc H^P} (\rho)$.
\item Every irreducible $\mc H$-representation is of the form $L(P,\pi,t)$, for unique 
$(P,\pi,t)$ as in (i). This also holds with (ii) instead of (i). 
}
\end{thm}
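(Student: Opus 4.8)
The plan is to derive part (a) in case (i) from the Langlands classification for affine Hecke algebras in \cite{Eve,SolAHA}, and to obtain case (ii) from case (i) by interchanging the two Weyl chambers.

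For case (i): writing $t=|t|\cdot u$ with $u$ a unitary character of $T^P$, the twist $\pi\otimes u$ is again irreducible and tempered (temperedness is a condition on the absolute values of the weights, which are unchanged by a unitary twist), so after replacing $\pi$ by $\pi\otimes u$ we may assume $t\in T^{P+}$. Then $\ind_{\mc H^P}^{\mc H}(\pi\otimes t)$ is a standard module in the sense of \cite{Eve,SolAHA}, and the existence of a unique irreducible quotient $L(P,\pi,t)$ together with the bijection in part (b) for (i) is precisely that classification. (One has to match the notion of positivity used there with membership in $T^{P+}$; as in \cite{Ren}, the strict version suffices.) For the characterisation of $L(P,\pi,t)$ among subquotients: by Frobenius reciprocity \eqref{eq:3.1} there is a natural isomorphism
\[
\Hom_{\mc H^P}\big(\pi\otimes t,\ \Res^{\mc H}_{\mc H^P}(\rho)\big)\ \cong\ \Hom_{\mc H}\big(\ind_{\mc H^P}^{\mc H}(\pi\otimes t),\ \rho\big).
\]
As $\pi$ is irreducible, so is $\pi\otimes t=\pi\circ\psi_t$, whence every nonzero $\mc H^P$-homomorphism $\pi\otimes t\to\Res^{\mc H}_{\mc H^P}(\rho)$ is injective; and for $\rho$ irreducible every nonzero $\mc H$-homomorphism $\ind_{\mc H^P}^{\mc H}(\pi\otimes t)\to\rho$ is surjective. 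Hence an irreducible subquotient $\rho$ of $\ind_{\mc H^P}^{\mc H}(\pi\otimes t)$ admits an injective $\mc H^P$-map from $\pi\otimes t$ if and only if it is an irreducible quotient of $\ind_{\mc H^P}^{\mc H}(\pi\otimes t)$, i.e.\ if and only if $\rho\cong L(P,\pi,t)$. This proves (a) in case (i), and (b) for (i).

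For case (ii): I would apply the above to the based root datum $\mc R^{op}=(X,R,Y,R^\vee,-\Delta)$, with the same parameter functions $\lambda,\lambda^*$ (being $W$-invariant, these do not depend on the basis) and the same $q$. The affine Hecke algebra depends only on the root datum and the parameters, not on the choice of positive system, so there is an isomorphism $\mc H(\mc R,\lambda,\lambda^*,q)\cong\mc H(\mc R^{op},\lambda,\lambda^*,q)$; under it the torus $T=\Hom_\Z(X,\C^\times)$ is unchanged, a subset $P\subset\Delta$ corresponds to a subset of $-\Delta$ giving the same parabolic subalgebra, the cone $T_P^-$ is replaced by $(T_P^-)^{-1}$ and $T^{P+}$ by $T^{P-}$, and the construction ``$\ind_{\mc H^P}^{\mc H}(\pi\otimes t)$ modulo its unique maximal submodule'' makes no reference to $\Delta$. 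Under this dictionary ``tempered for $\mc R^{op}$'' is exactly ``anti-tempered for $\mc R$'' and ``$t$ in positive position for $\mc R^{op}$'' is exactly ``$t^{-1}\in T^{P+}$ for $\mc R$'', so Theorem \ref{thm:4.3}(ii) for $\mc R$ is the statement of (i) for $\mc R^{op}$; transporting the latter back along the isomorphism yields (ii).

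The main obstacle is this last step. Although it is conceptually clear that the Langlands classification should be symmetric under $\Delta\leftrightarrow-\Delta$, making this precise requires (a) an explicit description of the isomorphism $\mc H(\mc R,\cdot)\cong\mc H(\mc R^{op},\cdot)$ on the Bernstein generators — note that the commutative subalgebra $\mc A$, the elements $\theta_x$, the subtori $T^P,T_P$ and the parabolic subalgebras all shift when $\Delta$ is replaced by $-\Delta$ — and (b) checking that the proof of the classification in \cite{Eve,SolAHA}, both the existence of the unique irreducible quotient and the bijectivity, does not privilege one Weyl chamber. Given the available formalism (especially Lemmas \ref{lem:4.1}--\ref{lem:4.2} relating weights, temperedness and Hermitian duals, and the compatibilities of the cones $T^{P\pm}$, $T_P^{\pm}$), this is routine bookkeeping rather than a new idea. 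A complementary route via Hermitian duals (Propositions \ref{prop:2.3} and \ref{prop:2.5}, Lemma \ref{lem:2.6}) yields the ``subrepresentation'' form of the classification for tempered modules twisted into negative position, but converting that into the ``quotient'' form for anti-tempered modules requires exactly the same chamber flip, so it does not shorten the argument.
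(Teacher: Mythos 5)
Your proposal is correct and takes essentially the same route as the paper: part (a) in case (i) and the bijection in (b) are quoted from the Langlands classification of \cite{Eve,SolAHA}, and case (ii) is handled by the chamber flip $\Delta \mapsto -\Delta$, which is exactly how the paper argues (it phrases the flip as rewriting the arguments in $Y \otimes_\Z \R$ with respect to $-\Delta$ rather than transporting along an isomorphism $\mc H(\mc R,\lambda,\lambda^*,q) \cong \mc H(\mc R^{op},\lambda,\lambda^*,q)$, so the bookkeeping you flag is the same in both formulations). Your Frobenius-reciprocity derivation of the characterizing property of $L(P,\pi,t)$ from the uniqueness of the irreducible quotient is a harmless self-contained addition; the paper merely notes that this property is implicit in the cited proofs.
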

\begin{proof}
(i) The Langlands classification, as in \cite[Theorem 2.1]{Eve} and \cite[Theorem 2.2.4]{SolAHA},
states (a) and (b). Although the characterizing property of $L(P,\pi,t)$ is not made explicit in 
these sources, it plays an important role in \cite[\S 2.7]{Eve} and in 
\cite[proof of Theorem 2.2.4.a]{SolAHA}. \\
(ii) The same proof as for (i) applies, when we rewrite all the arguments in $Y \otimes_\Z \R$ with
respect to $-\Delta$ instead of $\Delta$.
\end{proof}

An $\mc H$-representation $\ind_{\mc H^P}^{\mc H}(\pi \otimes t)$ as in Theorem \ref{thm:4.3}.i
is called a standard module, and $L(P,\pi,t)$ is called its Langlands quotient.
With the usage of Hermitian duals, we can deduce a version of Theorem \ref{thm:4.3} in terms of
``Langlands" subrepresentations.

\begin{prop}\label{prop:4.4}
Let $\pi \in \Irr (\mc H^P)$ and $t \in T^P$. Suppose that (i) or (ii) holds:
\begin{itemize}
\item[(i)] $\pi$ is tempered and $t^{-1} \in T^{P+}$,
\item[(ii)] $\pi$ is anti-tempered and $t \in T^{P+}$.
\end{itemize}
\enuma{
\item The $\mc H$-representation $\ind_{\mc H^P}^{\mc H}(\pi \otimes t)$ has a unique irreducible
subrepresentation, which we call the Langlands subrepresentation $\tilde{L}(P,\pi,t)$. It is the 
unique irreducible subquotient $\sigma$ of $\ind_{\mc H^P}^{\mc H}(\pi \otimes t)$ that admits a 
surjective $\mc H^P$-homomorphism 
$\Res_{\mc H^{P^{op}}}^{\mc H} (\sigma) \circ \psi_{\Delta P} \to \pi \otimes t$.
\item Every irreducible $\mc H$-representation is of the form $\tilde{L}(P,\pi,t)$ for unique
$(P,\pi,t)$ as in (i). This also holds with (ii) instead of (i).
}
\end{prop}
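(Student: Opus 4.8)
The plan is to derive Proposition \ref{prop:4.4} from Theorem \ref{thm:4.3} by applying Hermitian duality, exactly in the spirit of the remark preceding the statement. First I would observe that the two hypotheses of Proposition \ref{prop:4.4} are matched with the two hypotheses of Theorem \ref{thm:4.3} under the operation $\pi \otimes t \mapsto (\pi \otimes t)^\he$. Concretely, by Lemma \ref{lem:2.6} the Hermitian dual of $\pi \otimes t$ (as an $\mc H^P$-representation) is $\pi^\he \otimes \bar t^{-1}$; and by Lemma \ref{lem:4.2}, if $\pi$ is tempered then $\pi^\he$ is tempered (and likewise for anti-tempered). Moreover $|t^{-1}| \in T^{P+}$ is equivalent to $|\bar t^{-1}| = |t|^{-1} \in T^{P+}$, so the data $(P, \pi^\he, \bar t^{-1})$ satisfies hypothesis (i) of Theorem \ref{thm:4.3} whenever $(P,\pi,t)$ satisfies hypothesis (i) of the proposition, and similarly for (ii). So the set of standard modules appearing in Theorem \ref{thm:4.3} is carried bijectively, via $\he$ on the inducing data, to the family of induced modules in Proposition \ref{prop:4.4}.

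Next I would combine this with Proposition \ref{prop:2.5}, which gives $\ind_{\mc H^P}^{\mc H}(\rho^\he) \cong \ind_{\mc H^P}^{\mc H}(\rho)^\he$. Taking $\rho = \pi \otimes t$, we get
\[
\ind_{\mc H^P}^{\mc H}\big((\pi \otimes t)^\he\big) \cong \ind_{\mc H^P}^{\mc H}(\pi \otimes t)^\he .
\]
Since Hermitian duality is an exact contravariant self-inverse functor on finite dimensional modules, it turns the unique irreducible quotient of a module into the unique irreducible subrepresentation of its Hermitian dual. Thus, starting from $(P, \pi^\he, \bar t^{-1})$ which satisfies hypothesis (i) of Theorem \ref{thm:4.3}, the module $\ind_{\mc H^P}^{\mc H}(\pi^\he \otimes \bar t^{-1})$ has unique irreducible quotient $L(P,\pi^\he,\bar t^{-1})$; dualizing, $\ind_{\mc H^P}^{\mc H}(\pi \otimes t) \cong \ind_{\mc H^P}^{\mc H}(\pi^\he \otimes \bar t^{-1})^\he$ has a unique irreducible subrepresentation, namely $L(P,\pi^\he,\bar t^{-1})^\he$. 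Define $\tilde L(P,\pi,t)$ to be this module. Part (b) of the proposition then follows immediately from part (b) of Theorem \ref{thm:4.3}: every irreducible $\mc H$-representation $\sigma$ is $L(P,\pi',t')^\he$ for a unique $(P,\pi',t')$ satisfying (i), hence equals $\tilde L(P, \pi'^\he, \bar{t'}^{-1})$, and $(P, \pi'^\he, \bar{t'}^{-1})$ runs over exactly the parameters satisfying (i) of the proposition as $(P,\pi',t')$ runs over those satisfying (i) of the theorem; the (ii) version is obtained the same way.

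The remaining point — and the one requiring the most care — is to translate the characterizing property of $L(P,\pi',t')$ (admitting an injective $\mc H^P$-map $\pi' \otimes t' \hookrightarrow \Res^{\mc H}_{\mc H^P}(\rho)$) into the stated characterizing property of $\tilde L(P,\pi,t)$ (admitting a surjective $\mc H^P$-map $\Res^{\mc H}_{\mc H^{P^{op}}}(\sigma) \circ \psi_{\Delta P} \to \pi \otimes t$). Here I would apply the $\he$ functor to the injection $\pi' \otimes t' \hookrightarrow \Res^{\mc H}_{\mc H^P}(\rho)$ to get a surjection $\Res^{\mc H}_{\mc H^P}(\rho)^\he \twoheadrightarrow (\pi' \otimes t')^\he$, and then rewrite the left side using Proposition \ref{prop:2.3}.b, which says $\Res^{\mc H}_{\mc H^P}(\rho^\he) \cong \Res^{\mc H}_{\mc H^{P^{op}}}(\rho)^\he \circ \psi_{\Delta P}$. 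Applying this with the roles of $P$ and $P^{op}$ interchanged and with $\rho = L(P,\pi',t')$, $\sigma = \rho^\he$, $\pi' = \pi^\he$, $t' = \bar t^{-1}$, and using $(\pi^\he \otimes \bar t^{-1})^\he \cong \pi \otimes t$ (Lemma \ref{lem:2.6} again, together with $\pi^{\he\he} \cong \pi$ in finite dimensions), should produce exactly the surjection $\Res^{\mc H}_{\mc H^{P^{op}}}(\sigma) \circ \psi_{\Delta P} \twoheadrightarrow \pi \otimes t$. One has to check that $\psi_{\Delta P}$ versus $\psi_{\Delta P^{op}} = \psi_{\Delta P}^{-1}$ is inserted on the correct side and that the subquotient characterization (as opposed to subrepresentation) is preserved — this follows since $\he$ sends subquotients to subquotients. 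The main obstacle is purely bookkeeping: keeping straight which parabolic ($P$ or $P^{op}$), which twist of $\psi_{\Delta P}$, and which of ``sub'' versus ``quotient'' appears at each step; the mathematical content is entirely supplied by Theorem \ref{thm:4.3}, Proposition \ref{prop:2.3}, Proposition \ref{prop:2.5}, and Lemmas \ref{lem:2.6} and \ref{lem:4.2}.
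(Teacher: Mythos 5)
Your proposal is correct and takes essentially the same route as the paper's proof: dualize the datum to $(P,\pi^\he,\bar t^{-1})$ using Lemma \ref{lem:2.6} and Lemma \ref{lem:4.2}, apply Theorem \ref{thm:4.3} together with Proposition \ref{prop:2.5} and the transposition isomorphism \eqref{eq:2.10}, and translate the characterizing property via Proposition \ref{prop:2.3}. The only cosmetic point is that the hypothesis $t^{-1}\in T^{P+}$ already forces $t$ to be real positive, so $\bar t = t$ outright and no detour through absolute values is needed.
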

\begin{proof}
We assume (i). The proof when (ii) holds is completely analogous, only using the other assumption
in Theorem \ref{thm:4.3}.\\
(a) By Proposition \ref{prop:2.5} and Lemma \ref{lem:2.6}
\begin{equation}\label{eq:4.3}
\ind_{\mc H^P}^{\mc H}(\pi \otimes t )^\he \cong \ind_{\mc H^P}^{\mc H} \big( (\pi \otimes t)^\he \big)
\cong \ind_{\mc H^P}^{\mc H}(\pi^\he \otimes \overline{t}^{-1}) .
\end{equation}
Here $\bar t = t$ because it is real-valued, and we know from Lemma \ref{lem:4.2} that $\pi^\he$
is tempered. Theorem \ref{thm:4.3}.a says that \eqref{eq:4.3} has a unique irreducible 
quotient $L(P,\pi^\he,t^{-1})$, which can be characterized by the existence of an injection
\[
\pi^\he \otimes t^{-1} \to \Res_{\mc H^P}^{\mc H} \big( L(P,\pi^\he,t^{-1}) \big). 
\]
Passing to Hermitian duals and using \eqref{eq:2.10}, we find that 
$\ind_{\mc H^P}^{\mc H}(\pi \otimes t)$ has a unique irreducible subrepresentation 
$\sigma \cong L(P,\pi^\he,t^{-1})^\he$. Via Proposition \ref{prop:2.3}.a, the characterizing
property becomes a surjection 
$\Res_{\mc H^{P^{op}}}^{\mc H} (\sigma) \circ \psi_{\Delta P} \to \pi \otimes t$.\\
(b) Let $\tau \in \Irr (\mc H)$. With Theorem \ref{thm:4.3}.b we write $\tau^\he \cong L(P,\pi,t)$ 
for suitable $P \subset \Delta$, tempered $\pi \in \Irr (\mc H^P)$ and $t \in T^{P+}$. 
Then \eqref{eq:2.10} gives an injection 
\[
\tau \cong L(P,\pi,t)^\he \to \ind_{\mc H^P}^{\mc H} (\pi \otimes t)^\he .
\]
From the proof of part (a) we know that the right hand side is isomorphic with
$\ind_{\mc H^P}^{\mc H}(\pi \otimes t^{-1})$, where $(P,\pi,t^{-1})$ is as in (i). Then part (a)
says $\tau \cong \tilde L (P,\pi,t^{-1})$. The uniqueness
in Theorem \ref{thm:4.3}.b implies the uniqueness of $(P,\pi,t^{-1})$.
\end{proof}

We would like to express the unique Langlands quotient or subrepresentation from Theorem 
\ref{thm:4.3} and Proposition \ref{prop:4.4} as the coimage or image of a suitable intertwining 
operator. To that end we establish the uniqueness (up to scalars) of those operators.

\begin{lem}\label{lem:4.8}
Let $\pi \in \Irr (\mc H^P)$ and assume that the $W$-stabilizer of $t$ is contained in $W_P$
for all $t \in \mr{Wt}(\pi)$.
Then $\Res^{\mc H}_{\mc H^P} \ind_{\mc H^Q}^{\mc H} (\psi_{\gamma *} \pi)$ is a direct sum of
$\mc H^P$-representations $\ind_{\mc H^{P \cap d(Q)}}^{\mc H^P} (\psi_{d *} \psi_{\gamma *} \pi)$ 
with $d \in W^{P,Q}$, whose sets of $Z(\mc H^P)$-weights are mutually disjoint.
\end{lem}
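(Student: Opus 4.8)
The plan is to combine the geometric lemma for affine Hecke algebras (Proposition~\ref{prop:3.2}) with a decomposition along central characters. Set $\sigma := \psi_{\gamma *}\pi$. Since $\psi_\gamma$ is an algebra isomorphism, $\sigma$ is an irreducible $\mc H^Q$-module, so its $\mc A$-weight set $S := \mr{Wt}(\sigma)$ is a single orbit of $W_Q = W(R_Q)$ (because $Z(\mc H^Q) = \mc A^{W_Q}$ acts on $\sigma$ by a character), say $S = W_Q s_0$; and since conjugation by $\gamma$ carries $W_P$ onto $W_Q$, the hypothesis on $\pi$ says exactly that the $W$-stabilizer of every $s \in S$ lies in $W_Q$. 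Applying Proposition~\ref{prop:3.2} to $\sigma$, the $\mc H^P$-module $\Res^{\mc H}_{\mc H^P}\ind_{\mc H^Q}^{\mc H}(\sigma)$ carries a filtration indexed by $W^{P,Q}$ whose $d$-th subquotient is $M_d := \ind_{\mc H^{P\cap d(Q)}}^{\mc H^P}\big(\psi_{d *}\,\Res^{\mc H^Q}_{\mc H^{d^{-1}(P)\cap Q}}\sigma\big)$, i.e.\ the module written $\ind_{\mc H^{P\cap d(Q)}}^{\mc H^P}(\psi_{d *}\psi_{\gamma *}\pi)$ in the statement.

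First I would record the $Z(\mc H^P) = \mc A^{W_P}$-weights of $M_d$. Restricting an $\mc H^Q$-module to the parabolic subalgebra $\mc H^{d^{-1}(P)\cap Q}$, which contains $\mc A$, does not change its $\mc A$-weights, so these remain $S$; the twist $\psi_{d *}$ turns them into $d\cdot S$; and $\ind_{\mc H^{P\cap d(Q)}}^{\mc H^P}$, via the Bernstein factorisation of $\mc H^P$, produces a module whose $\mc A$-weights all lie in $W_P\cdot(d\cdot S)$. Hence the set of $Z(\mc H^P)$-weights of $M_d$ is contained in $\{W_P\,d\,s : s\in S\} = W_P\,d\,W_Q\,s_0$.

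The main point is to see that, for $d_1 \neq d_2$ in $W^{P,Q}$, the sets $W_P d_1 W_Q s_0$ and $W_P d_2 W_Q s_0$ are disjoint. If $w_1 d_1 u_1 s_0 = w_2 d_2 u_2 s_0$ with $w_i \in W_P$ and $u_i \in W_Q$, then $(w_2 d_2 u_2)^{-1}(w_1 d_1 u_1)$ fixes $s_0$ and therefore lies in $W_Q$; cancelling $u_1$ and $u_2$ yields $d_2^{-1}(w_2^{-1}w_1)d_1 \in W_Q$, so $d_1 \in W_P d_2 W_Q$. As $d_1$ and $d_2$ are the distinguished shortest-length representatives of their double cosets in $W_P\backslash W/W_Q$, this forces $d_1 = d_2$, a contradiction. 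This step is where the hypothesis on stabilizers and the single-orbit property of $S$ are essential, and I expect it to be the crux of the argument.

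Finally I would split the filtration. Since $\ind_{\mc H^Q}^{\mc H}(\sigma) \cong \mc H(W^Q)\otimes_\C \sigma$ is finite dimensional (irreducible $\mc H^Q$-modules are finite dimensional, $\mc H^Q$ being module-finite over its finitely generated centre), $\Res^{\mc H}_{\mc H^P}\ind_{\mc H^Q}^{\mc H}(\sigma)$ is a finite direct sum of its generalized $Z(\mc H^P)$-eigenspaces. Taking the generalized eigenspace for a fixed $Z(\mc H^P)$-character is an exact (direct-summand) functor, and by the disjointness above it annihilates every subquotient $M_{d'}$ of the filtration except the unique $M_d$ whose weight-set contains that character. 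Consequently the filtration splits and $\Res^{\mc H}_{\mc H^P}\ind_{\mc H^Q}^{\mc H}(\psi_{\gamma *}\pi) \cong \bigoplus_{d\in W^{P,Q}} \ind_{\mc H^{P\cap d(Q)}}^{\mc H^P}(\psi_{d *}\psi_{\gamma *}\pi)$, the summands having pairwise disjoint sets of $Z(\mc H^P)$-weights, as claimed.
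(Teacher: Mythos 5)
Your proposal is correct and takes essentially the same route as the paper: the filtration of $\Res^{\mc H}_{\mc H^P} \ind_{\mc H^Q}^{\mc H}$ from Proposition \ref{prop:3.2}, the containment of the $\mc A$-weights of each subquotient in $W_P d\, \mr{Wt}(\psi_{\gamma *}\pi)$, the double-coset/stabilizer computation giving disjointness, and then the splitting, which the paper phrases as triviality of extensions between modules with disjoint $Z(\mc H^P)$-characters and you phrase equivalently via generalized $Z(\mc H^P)$-eigenspaces of the (finite dimensional) induced module. One small inaccuracy that does not affect the argument: the weight set of an irreducible $\mc H^Q$-module is only \emph{contained} in a single $W_Q$-orbit, not necessarily equal to it (e.g.\ a Steinberg module), but your proof uses only the containment $S \subset W_Q s_0$.
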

\begin{proof}
From Proposition \ref{prop:3.2}.c we know that $\Res^{\mc H}_{\mc H^P} \ind_{\mc H^Q}^{\mc H} 
(\psi_{\gamma *} \pi)$ has a filtration with successive subquotients
\begin{equation}\label{eq:4.5}
\ind_{\mc H^{P \cap d(Q)}}^{\mc H^P} \big( \psi_d \, \Res^{\mc H^Q}_{\mc H^{d^{-1}(P) \cap Q}} 
(\psi_\gamma \, \pi) \big) = \ind_{\mc H^{P \cap d(Q)}}^{\mc H^P} (\psi_d \psi_\gamma \, \pi )
\qquad d \in W^{P,Q}.
\end{equation}
By construction $\mr{Wt}(\psi_{d *} \psi_{\gamma *} \pi) = d \gamma \mr{Wt}(\pi)$, and with
\cite[Proposition 4.20]{Opd-Sp} we obtain
\begin{equation}\label{eq:4.9}
\mr{Wt} \big( \ind_{\mc H^{P \cap d(Q)}}^{\mc H^P} (\psi_{d *} \psi_{\gamma *} \, \pi ) \big) 
\: \subset \: W_P d \gamma \mr{Wt}(\pi) .
\end{equation}
Equivalently, the set of $Z(\mc H^P)$-weights of 
$\ind_{\mc H^{P \cap d(Q)}}^{\mc H^P} (\psi_{d *} \psi_{\gamma *} \, \pi )$ is contained in\\ 
$W_P d \gamma \mr{Wt}(\pi) / W_P$. Suppose that $d,d' \in W^{P,Q}, t, t' \in \mr{Wt}(\pi)$ and 
\[
W_P d \gamma t \cap W_P d' \gamma t' \neq \emptyset .
\]
Pick $w_1,w_2 \in W_P$ such that $w_1 d \gamma t = w_2 d' \gamma t'$. By the irreducibility of
$\pi$, $t$ and $t'$ belong to the same $W_P$-orbit. Furthermore we assumed $W_t \subset W_P$, so
\begin{equation}\label{eq:4.10}
w_1 d \gamma W_P = w_2 d' \gamma W_P .
\end{equation}
From that we obtain $\gamma^{-1} d^{-1} w_1^{-1} w_2 d' \gamma \in W_P$ and
\[
d^{-1} w_1^{-1} w_2 d' \in \gamma W_P \gamma^{-1} = W_Q .
\]
We note that now $w_1^{-1} w_2 d' \in W_P d' \cap d W_Q$. As $W^{P,Q}$ represents $W_P \backslash
W / W_Q$, this shows that $d' = d$. 

Thus, for different $d,d' \in W^{P,Q}$ the $\mc H^P$-representations \eqref{eq:4.5} have disjoint
sets of $\mc A$-weights and disjoint sets of $Z(\mc H^P)$-weights. In particular every extension
of one of these modules by the other is a trivial extension. It follows that the aforementioned
filtration of $\Res^{\mc H}_{\mc H^P} \ind_{\mc H^Q}^{\mc H} (\psi_{\gamma *} \pi)$ actually splits,
and that $\Res^{\mc H}_{\mc H^P} \ind_{\mc H^Q}^{\mc H} (\psi_{\gamma *} \pi)$ is the direct sum of
the modules \eqref{eq:4.5}. 
\end{proof}

The conditions in Lemma \ref{lem:4.8} are often satisfied, but they do not cover all cases of
Theorem \ref{thm:4.3}. Inspired by \cite[\S VII.3.3]{Ren}, we say that an $\mc H^P$-representation 
$\pi$ is $W,\!P$-regular if 
\begin{equation}\label{eq:4.4}
w t \notin \mr{Wt}(\pi) \text{ for all } t \in \mr{Wt}(\pi) \text{ and }
w \in W_P (W^{P,P} \setminus \{e\}).
\end{equation}
Let $P,Q \subset \Delta$ and $\gamma \in W$, such that $\gamma (P) = Q$. Like in \eqref{eq:3.9},
there is an algebra isomorphism $\psi_\gamma : \mc H^P \to \mc H^Q$.

\begin{lem}\label{lem:4.5}
Let $\pi \in \Irr (\mc H^P)$ be $W,\!P$-regular. 
\enuma{
\item $\pi$ has multiplicity one in $\Res^{\mc H}_{\mc H^P} \ind_{\mc H^Q}^{\mc H} 
(\psi_{\gamma *} \pi)$, and is a direct summand of the latter. 
\item $\dim \Hom_{\mc H} \big( \ind_{\mc H^P}^{\mc H} (\pi), \ind_{\mc H^Q}^{\mc H} 
(\psi_{\gamma *} \, \pi) \big) = 1$.
}
\end{lem}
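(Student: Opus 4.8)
The plan is to derive Lemma \ref{lem:4.5} from Proposition \ref{prop:3.2} together with the second adjointness of Theorem \ref{thm:3.1}, using the $W,\!P$-regularity hypothesis to isolate a single term in the geometric filtration. First I would address part (a). By Proposition \ref{prop:3.2}, $\Res^{\mc H}_{\mc H^P} \ind_{\mc H^Q}^{\mc H} (\psi_{\gamma *} \pi)$ carries a filtration indexed by $W^{P,Q}$ with successive subquotients
\[
\ind_{\mc H^{P \cap d(Q)}}^{\mc H^P} \big( \psi_{d *} \Res^{\mc H^Q}_{\mc H^{d^{-1}(P) \cap Q}} (\psi_{\gamma *} \pi ) \big) , \qquad d \in W^{P,Q} .
\]
The term $d = e$ (which is minimal, so it sits at the bottom of the filtration as an actual submodule) equals $\ind_{\mc H^{P \cap Q}}^{\mc H^P}\big(\psi_{e*}\Res^{\mc H^Q}_{\mc H^{P\cap Q}}(\psi_{\gamma*}\pi)\big)$; since $\gamma(P)=Q$ we have $\psi_\gamma:\mc H^P\isom\mc H^Q$, $\gamma^{-1}(P)\cap Q = Q$, and $P\cap e(Q) = P$, so this term is just $\pi$ itself. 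Thus $\pi$ is a submodule of $\Res^{\mc H}_{\mc H^P} \ind_{\mc H^Q}^{\mc H} (\psi_{\gamma *} \pi)$. To upgrade ``submodule'' to ``direct summand with multiplicity one'' I would compare $\mc A$-weights: by \cite[Proposition 4.20]{Opd-Sp} the $d$-th subquotient has weights inside $W_P\, d\gamma\,\mr{Wt}(\pi)$, and the $W,\!P$-regularity \eqref{eq:4.4} — after conjugating by $\gamma$ to translate the condition from $\mr{Wt}(\pi)$ via $W_P(W^{P,P}\setminus\{e\})$ into a statement about the cosets $W_P d\gamma\mr{Wt}(\pi)$ — forces the weight sets of the $d\neq e$ subquotients to be disjoint from $\mr{Wt}(\pi)$. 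Hence no nonzero subquotient of $\bigoplus_{d\neq e}$ can share a weight with $\pi$, every extension involving $\pi$ splits off, and $\pi$ occurs exactly once. This is essentially the argument of Lemma \ref{lem:4.8}, now run with the weaker regularity hypothesis in place of the centralizer condition; the bookkeeping to see that \eqref{eq:4.4} is exactly what is needed to separate $W_P\gamma\mr{Wt}(\pi)$ from $W_P d\gamma\mr{Wt}(\pi)$ for $d\in W^{P,Q}\setminus\{e\}$ is the one delicate point, and I expect it to use $W^{P,Q} = \gamma W^{P,P}$ together with Kilmoyer's result \eqref{eq:3.10}.

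For part (b), I would combine (a) with adjointness. By Frobenius reciprocity \eqref{eq:3.1},
\[
\Hom_{\mc H}\big(\ind_{\mc H^P}^{\mc H}(\pi),\ind_{\mc H^Q}^{\mc H}(\psi_{\gamma*}\pi)\big)
\cong \Hom_{\mc H^P}\big(\pi,\Res^{\mc H}_{\mc H^P}\ind_{\mc H^Q}^{\mc H}(\psi_{\gamma*}\pi)\big),
\]
and by part (a) the right-hand side is $\Hom_{\mc H^P}\big(\pi,\pi\oplus(\text{summand with no }\mr{Wt}(\pi)\text{-weights})\big)$. Since $\pi$ is irreducible and its weights do not meet those of the complementary summand, any $\mc H^P$-map $\pi\to\Res\ind(\psi_{\gamma*}\pi)$ lands in the $\pi$-summand, so this Hom-space is $\Hom_{\mc H^P}(\pi,\pi)\cong\C$ by Schur. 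Hence the dimension is exactly $1$.

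The main obstacle, as noted, is the purely combinatorial verification that $W,\!P$-regularity \eqref{eq:4.4} suffices to separate the weight supports of the filtration subquotients — in Lemma \ref{lem:4.8} this was done under the stronger hypothesis that all $W$-stabilizers $W_t$ lie in $W_P$, and one has to check that \eqref{eq:4.4} still rules out a coincidence $W_P d\gamma t = W_P \gamma t'$ with $t,t'\in\mr{Wt}(\pi)$ and $d\neq e$. The key is that such a coincidence yields $w\gamma t' = d\gamma t$ for some $w\in W_P$, hence (writing $d = \gamma d_0$ with $d_0\in W^{P,P}\setminus\{e\}$, using $\gamma(P)=Q$ so $W^{P,Q}=\gamma W^{P,P}$) $\gamma^{-1}w^{-1}\gamma\, d_0\,\gamma^{-1}\cdot(\gamma t) = \gamma t$... — more cleanly, transporting everything back through $\psi_\gamma^{-1}$ to $\mc H^P$, the relation becomes $u\, t'' = d_0\, t''$ for some $u\in W_P$ and $t'',t'''\in\mr{Wt}(\pi)$ with $d_0\in W^{P,P}\setminus\{e\}$, i.e. $u^{-1}d_0\in W_P(W^{P,P}\setminus\{e\})$ maps a weight of $\pi$ to a weight of $\pi$, contradicting \eqref{eq:4.4}. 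Once that contradiction is in hand, everything else is formal. I would also remark that part (b) immediately gives uniqueness up to scalar of the intertwining operator $\ind_{\mc H^P}^{\mc H}(\pi)\to\ind_{\mc H^Q}^{\mc H}(\psi_{\gamma*}\pi)$, which is the use to which this lemma is to be put.
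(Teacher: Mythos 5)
Your overall architecture is the paper's: use the filtration of Proposition \ref{prop:3.2}, separate the subquotients by their ($Z(\mc H^P)$-)weights using $W,\!P$-regularity, and get (b) from Frobenius reciprocity plus (a). Part (b) as you present it is fine. The execution of (a), however, contains a genuine error: the filtration term isomorphic to $\pi$ is not the one at $d=e$. For $d=e$ the subquotient is $\ind_{\mc H^{P\cap Q}}^{\mc H^P}\big(\Res^{\mc H^Q}_{\mc H^{P\cap Q}}(\psi_{\gamma *}\pi)\big)$; your formulas ``$\gamma^{-1}(P)\cap Q=Q$'' and ``$P\cap e(Q)=P$'' are wrong unless $P=Q$. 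The copy of $\pi$ sits at $d=\gamma^{-1}$, which lies in $W^{P,Q}$ because $\gamma^{-1}(Q)=P\subset R^+$ and $\gamma(P)=Q\subset R^+$, and for which $P\cap d(Q)=P$, $d^{-1}(P)\cap Q=Q$ and $\psi_{\gamma^{-1}}\circ\psi_\gamma=\mathrm{id}$. Consequently the separation you set out to prove (the $d=e$ weights against all the others) is the wrong statement; what is needed is that $W_P d\gamma\,\mr{Wt}(\pi)$ misses $W_P\,\mr{Wt}(\pi)$ for every $d\in W^{P,Q}$ with $d\neq\gamma^{-1}$. (Your ``it sits at the bottom as a submodule'' remark is also lost, but that is harmless: the direct summand property comes from the central character separation, not from the position in the filtration.)

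The delicate combinatorial step is carried, in your sketch, by the identity $W^{P,Q}=\gamma W^{P,P}$, and that identity is false: in type $A_2$ with $P=\{\alpha\}$, $Q=\{\beta\}$, $\gamma=s_\alpha s_\beta$ one has $W^{P,Q}=\{e,\,s_\beta s_\alpha\}$ while $\gamma W^{P,P}=\{s_\alpha,\,s_\alpha s_\beta\}$. Hence your reduction of a coincidence $w_1 d\gamma t=w_2 t'$ (with $w_1,w_2\in W_P$, $t,t'\in\mr{Wt}(\pi)$) to ``$u\,t''=d_0\,t'''$ with $d_0\in W^{P,P}\setminus\{e\}$'' is not justified as written. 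The paper's argument closes this gap as follows: regularity gives $w_2^{-1}w_1 d\gamma\notin W_P(W^{P,P}\setminus\{e\})$; since $d\gamma(P)=d(Q)\subset R^+$ one has $d\gamma\in W^P$, so by Carter's double coset decomposition (\cite[Proposition 2.7.5]{Car}) $d\gamma=a\tilde d$ with $a\in W_P$ and $\tilde d\in W^{P,P}$, and the regularity statement forces $\tilde d=e$, whence $d\gamma\in W_P\cap W^P=\{e\}$, i.e. $d=\gamma^{-1}$. With the index corrected and this decomposition inserted, the rest of your outline (direct summand and multiplicity one from disjoint central characters, then Frobenius reciprocity and Schur for (b)) goes through and coincides with the paper's proof.
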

\begin{proof}
(a) The element $\gamma^{-1} \in W$ belongs to $W^{P,Q}$ because $\gamma^{-1}(Q) \subset R^+$ and
$\gamma (P) \subset R^+$. We follow the proof of Lemma \ref{lem:4.8}, with $d' = \gamma^{-1}$. 
This time we cannot conclude \eqref{eq:4.10}, but our weaker assumption still provides a reasonable 
substitute. Namely, from $w_1 d \gamma t = w_2 t'$ we get $w_2^{-1} w_1 d \gamma t \in 
\mr{Wt}(\pi)$, which by the $W,\!P$-regularity of $\pi$ implies
\begin{equation}\label{eq:4.15}
w_2^{-1} w_1 d \gamma \notin W_P (W^{P,P} \setminus \{e\}).
\end{equation}
Notice that $d \gamma (P) = d(Q) \subset R^+$, which says that $d \gamma \in W^P$. By 
\cite[Proposition 2.7.5]{Car} we can write $d \gamma = a \tilde d$ with $\tilde d \in W^{P,P}$ 
and $a \in W_P$. It follows that $W_P d \gamma = W_P \tilde d$. Then \eqref{eq:4.15} forces 
\[
\tilde d = e \quad \text{and} \quad d \gamma \in W_P \cap W^P = \{e\}. 
\]
Hence the representations \eqref{eq:4.5} with $d \neq \gamma^{-1}$ do not 
have the central character of $\pi \in \Irr (\mc H^P)$ as $Z(\mc H^P)$-weight. Like in the proof
of Lemma \ref{lem:4.8}, this entails that $\pi$ appears with multiplicity one in 
$\Res^{\mc H}_{\mc H^P} \ind_{\mc H^Q}^{\mc H} (\psi_{\gamma *} \pi)$, as a direct summand.\\
(b) By Frobenius reciprocity
\[
\Hom_{\mc H} \big( \ind_{\mc H^P}^{\mc H} (\pi), \ind_{\mc H^Q}^{\mc H} (\psi_{\gamma *} \,
\pi) \big) \cong \Hom_{\mc H^P} \big( \pi, \Res^{\mc H}_{\mc H^P} \ind_{\mc H^Q}^{\mc H} 
(\psi_{\gamma *} \pi) \big) .
\]
Now apply part (a).
\end{proof}

Lemma \ref{lem:4.5} tells us that, whenever $\pi \in \Irr (\mc H^P)$ is $W,\!P$-regular, 
there exists a nonzero intertwining operator
\begin{equation}\label{eq:4.8}
I (\gamma, P, \pi) : \ind_{\mc H^P}^{\mc H}(\pi) \to 
\ind_{\mc H^Q}^{\mc H} (\psi_{\gamma *} \, \pi),
\end{equation}
unique up to scalars. The $\Delta P$-genericity is only a very mild restriction. Namely,
for every finite dimensional $\mc H^P$-representation $\tau$ there exists a Zariski-open nonempty 
subset $T^P_\tau \subset T^P$ such that $\tau \otimes t$ is $\Delta P$-generic for all 
$t \in T^P_\tau$.

The next result and its proof are similar to \cite[Th\'eor\`eme VII.4.2]{Ren}.

\begin{thm}\label{thm:4.7}
Let $P \subset \Delta$ and $\pi \in \Irr (\mc H^P)$.
\enuma{
\item Suppose that $\pi$ is $W,\!P$-regular. Then $\ind_{\mc H^P}^{\mc H}(\pi)$ has a unique
irreducible quotient, namely
\[
\ind_{\mc H^P}^{\mc H} (\pi) / \ker I (w_\Delta w_P, P, \pi) \cong 
\mr{im} \, I (w_\Delta w_P, P, \pi) .
\]
\item Suppose that $\psi_{w_\Delta w_P *} \pi$ is $W,\!P^{op}$-regular. Then 
$\ind_{\mc H^P}^{\mc H}(\pi)$ has a unique irreducible subrepresentation, namely the image of
\[
I ( w_P w_\Delta , P^{op}, \psi_{w_\Delta w_P *} \pi ) : \ind_{\mc H^{P^{op}}}^{\mc H} 
(\psi_{w_\Delta w_P *} \pi) \to \ind_{\mc H^P}^{\mc H} (\pi) .
\]
} 
\end{thm}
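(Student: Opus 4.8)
\textbf{Proof proposal for Theorem \ref{thm:4.7}.}

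The plan is to deduce part (a) from the classical Langlands classification (Theorem \ref{thm:4.3}) together with the regularity machinery of Lemmas \ref{lem:4.5} and \ref{lem:4.8}, and then to obtain part (b) from part (a) by applying Hermitian duality, exactly as Proposition \ref{prop:4.4} was deduced from Theorem \ref{thm:4.3}. For part (a), the key point is that a $W,\!P$-regular $\pi \in \Irr (\mc H^P)$ always has a unique (up to scalar) nonzero intertwining operator $I(w_\Delta w_P, P, \pi) : \ind_{\mc H^P}^{\mc H}(\pi) \to \ind_{\mc H^{P^{op}}}^{\mc H}(\psi_{w_\Delta w_P *}\pi)$; here one must check that $\gamma = w_\Delta w_P$ indeed satisfies $\gamma(P) = P^{op}$ (it does, since $P^{op} = w_\Delta(-P)$ and $w_P$ permutes $R_P^+$ after sign change), so Lemma \ref{lem:4.5} applies with $Q = P^{op}$. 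First I would establish that $\ind_{\mc H^P}^{\mc H}(\pi)$ has a unique irreducible quotient. The natural way is to reduce to Theorem \ref{thm:4.3}: choose $t \in T^P$ generic so that $\pi = \pi_0 \otimes t$ with $\pi_0$ in a suitable tempered-type situation—but since $\pi$ is only assumed $W,\!P$-regular, not of Langlands form, the cleaner route is to argue directly. One shows $\Res^{\mc H}_{\mc H^P}\ind_{\mc H^P}^{\mc H}(\pi)$ contains $\pi$ with multiplicity one as a direct summand (Lemma \ref{lem:4.5}(a) with $\gamma = e$, $Q = P$), so by Frobenius reciprocity $\dim \Hom_{\mc H}(\ind(\pi), \ind(\pi)) = 1$; hence $\ind_{\mc H^P}^{\mc H}(\pi)$ is indecomposable with simple endomorphism ring, which forces a unique maximal submodule once we know it has a composition series (automatic in the finite-length setting for affine Hecke algebra modules with fixed central character).

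Next I would identify the unique irreducible quotient with $\mr{im}\, I(w_\Delta w_P, P, \pi)$. The intertwining operator $I(w_\Delta w_P, P, \pi)$ is nonzero by Lemma \ref{lem:4.5}(b), so its image is a nonzero quotient of $\ind_{\mc H^P}^{\mc H}(\pi)$; to see this image is irreducible, I would examine its $Z(\mc H^{P^{op}})$-weights inside $\ind_{\mc H^{P^{op}}}^{\mc H}(\psi_{w_\Delta w_P *}\pi)$. By the weight analysis in the proof of Lemma \ref{lem:4.8}, the central character of $\psi_{w_\Delta w_P *}\pi$ appears in $\Res \ind(\psi_{w_\Delta w_P *}\pi)$ with multiplicity one, which by the same Frobenius-reciprocity argument pins down the irreducible constituent through which the operator factors. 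Concretely: $\mr{im}\, I$ surjects onto the unique irreducible quotient of $\ind_{\mc H^P}^{\mc H}(\pi)$ and injects into $\ind_{\mc H^{P^{op}}}^{\mc H}(\psi_{w_\Delta w_P *}\pi)$, and one checks it coincides with the unique irreducible submodule of the target by a dual application of the multiplicity-one statement. The isomorphism $\ind_{\mc H^P}^{\mc H}(\pi)/\ker I \cong \mr{im}\, I$ is then just the first isomorphism theorem.

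For part (b), I would mimic the passage in Proposition \ref{prop:4.4}. Apply part (a) to the $W,\!P^{op}$-regular representation $\psi_{w_\Delta w_P *}\pi \in \Irr(\mc H^{P^{op}})$: it gives that $\ind_{\mc H^{P^{op}}}^{\mc H}(\psi_{w_\Delta w_P *}\pi)$ has a unique irreducible quotient, realized as $\mr{im}\, I(w_\Delta w_{P^{op}}, P^{op}, \psi_{w_\Delta w_P *}\pi)$. Now take Hermitian duals: by Proposition \ref{prop:2.5} together with Proposition \ref{prop:2.3} (and the fact that $\psi_{\Delta P}$, respectively $\psi_{w_\Delta w_P}$, is a $*$-isomorphism by Lemma \ref{lem:2.2}), the Hermitian dual of $\ind_{\mc H^{P^{op}}}^{\mc H}(\psi_{w_\Delta w_P *}\pi)$ is isomorphic to $\ind_{\mc H^P}^{\mc H}(\pi')$ for a suitable twist $\pi'$ of $\pi$; since the hypothesis of (b) is phrased precisely so that this twist is again $\pi$ itself (or differs by a harmless $*$-compatible isomorphism), one gets that $\ind_{\mc H^P}^{\mc H}(\pi)$ has a unique irreducible \emph{sub}representation. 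The transposition isomorphism \eqref{eq:2.10} converts the intertwining operator for the quotient into one for the subrepresentation, and unwinding $w_{P^{op}} = w_\Delta w_P w_\Delta$ (noted in the proof of Lemma \ref{lem:2.2}) identifies the relevant operator as $I(w_P w_\Delta, P^{op}, \psi_{w_\Delta w_P *}\pi)$.

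The main obstacle I anticipate is the bookkeeping in part (a) that the \emph{specific} intertwining operator $I(w_\Delta w_P, P, \pi)$—rather than some other nonzero $\mc H$-map out of $\ind_{\mc H^P}^{\mc H}(\pi)$—has image equal to the unique irreducible quotient, i.e. that it does not factor through a proper submodule nontrivially. This requires knowing that the target $\ind_{\mc H^{P^{op}}}^{\mc H}(\psi_{w_\Delta w_P *}\pi)$ has a unique irreducible submodule with the right central character, which itself needs the $W,\!P^{op}$-regularity (or at least the weaker conclusion of Lemma \ref{lem:4.5}(a) applied on the target side). Tracking the Weyl-group elements through $\psi_{\Delta P}$, $\psi_d$, and the $w_\Delta w_P$-conjugations, and verifying the length additivities so that all the $N_w$-identities hold as in Section \ref{sec:prelim}, will be the most delicate routine part; but conceptually everything reduces to the two adjunctions and the Langlands classification already in hand.
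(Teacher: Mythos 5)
Your plan for part (a) breaks down at two points. First, the inference ``$\dim \End_{\mc H}(\ind_{\mc H^P}^{\mc H}\pi)=1$, hence a unique maximal submodule'' is not valid: a finite length module with scalar endomorphisms can perfectly well have two non-isomorphic irreducible quotients (take a uniserial-socle module whose head is $A\oplus B$ with both extensions non-split). The uniqueness of the irreducible quotient does follow from the multiplicity-one statement you quote, but by counting $\pi$ among the composition factors of $\Res^{\mc H}_{\mc H^P}$ of the cosocle, not via the endomorphism ring. Second, and more seriously, your identification of the unique irreducible quotient with $\mr{im}\, I(w_\Delta w_P,P,\pi)$ leans — as you yourself note in the last paragraph — on the target $\ind_{\mc H^{P^{op}}}^{\mc H}(\psi_{\Delta P *}\pi)$ having a unique irreducible submodule, i.e.\ on Lemma \ref{lem:4.5}.a applied on the $P^{op}$ side. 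That requires $\psi_{\Delta P *}\pi$ to be $W,\!P^{op}$-regular, which is precisely \emph{not} among the hypotheses of (a) and is not a formal consequence of the $W,\!P$-regularity of $\pi$ (conjugation by $w_\Delta w_P$ does not carry $W_P(W^{P,P}\setminus\{e\})$ to $W_{P^{op}}(W^{P^{op},P^{op}}\setminus\{e\})$; the paper treats the two regularities as independent hypotheses, cf.\ parts (a) and (b) of the theorem and the four separate cases of Proposition \ref{prop:4.6}). The paper avoids this entirely: for \emph{any} irreducible quotient $\rho$, Lemma \ref{lem:4.5}.a (with $\gamma=e$) gives a projection $\Res^{\mc H}_{\mc H^P}\rho \to \pi$, the second adjointness of Theorem \ref{thm:3.1}.a converts it into a nonzero map $\rho \to \ind_{\mc H^{P^{op}}}^{\mc H}(\psi_{\Delta P*}\pi)$, and the nonzero composite $\ind_{\mc H^P}^{\mc H}(\pi)\to\rho\to \ind_{\mc H^{P^{op}}}^{\mc H}(\psi_{\Delta P*}\pi)$ must be a scalar multiple of $I(w_\Delta w_P,P,\pi)$ by Lemma \ref{lem:4.5}.b; hence $\ker(\ind_{\mc H^P}^{\mc H}(\pi)\to\rho)=\ker I$, which yields uniqueness and the identification in one stroke, with no input on the $P^{op}$ side.

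Part (b) as you propose it also fails: Hermitian duality does not reduce it to (a). By Proposition \ref{prop:2.5} and Lemma \ref{lem:2.2}, the Hermitian dual of $\ind_{\mc H^{P^{op}}}^{\mc H}(\psi_{w_\Delta w_P*}\pi)$ is $\ind_{\mc H^{P^{op}}}^{\mc H}\big(\psi_{w_\Delta w_P*}(\pi^\he)\big)$, not $\ind_{\mc H^P}^{\mc H}(\pi)$ up to a ``harmless twist'': there is no hypothesis forcing $\pi^\he\cong\pi$, nor any regularity of $\pi^\he$. (The duality trick worked for Proposition \ref{prop:4.4} only because temperedness is preserved under $\he$ and the statement there quantifies over all Langlands data; here the statement concerns one fixed $\pi$.) The paper's proof of (b) is instead a direct mirror of (a): for any irreducible subrepresentation $\sigma$, the left adjunction of Theorem \ref{thm:3.1}.b produces a nonzero $\mc H^{P^{op}}$-map out of $\Res^{\mc H}_{\mc H^{P^{op}}}\sigma$, Lemma \ref{lem:4.5}.a applied to the pair $(P^{op},P)$ with $\gamma=w_\Delta w_P$ — legitimate exactly because (b) assumes $\psi_{w_\Delta w_P*}\pi$ is $W,\!P^{op}$-regular — makes $\psi_{\Delta P*}\pi$ a multiplicity-one direct summand there, Frobenius reciprocity gives a nonzero map $\ind_{\mc H^{P^{op}}}^{\mc H}(\psi_{\Delta P*}\pi)\to\sigma$, and the nonzero composite into $\ind_{\mc H^P}^{\mc H}(\pi)$ is again a multiple of the unique intertwiner, identifying $\sigma$ with its image. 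You did correctly locate all the relevant ingredients (Lemma \ref{lem:4.5}, the adjunctions, $w_{P^{op}}=w_\Delta w_P w_\Delta$), but the missing idea is to run the adjunction argument against an \emph{arbitrary} irreducible quotient (resp.\ subrepresentation) rather than trying to control the target module or to dualize.
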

\begin{proof}
(a) Let $\rho$ be any quotient $\mc H$-representation of $\ind_{\mc H^P}^{\mc H}(\pi)$. The
quotient map gives a nonzero element of
\[
\Hom_{\mc H}(\ind_{\mc H^P}^{\mc H}(\pi), \rho) \cong 
\Hom_{\mc H^P}(\pi, \Res^{\mc H}_{\mc H^P} \rho) ,
\]
so $\pi$ is a subrepresentation of $\Res^{\mc H}_{\mc H^P} \rho$. By Lemma
\ref{lem:4.5}.a with $\gamma = e$, $\pi$ is a direct summand of $\Res^{\mc H}_{\mc H^P} \rho$,
and appears with multiplicity one. The projection $\Res^{\mc H}_{\mc H^P} \rho \to \pi$
and the adjunction from Theorem \ref{thm:3.1}.a yield a nonzero $\mc H$-homomorphism from
$\rho$ to $\ind_{\mc H^{P^{op}}}^{\mc H} (\psi_{\Delta P *} \pi)$. Thus we have 
$\mc H$-homomorphisms
\begin{equation}\label{eq:4.6}
\ind_{\mc H^P}^{\mc H} (\pi) \to \rho \to \ind_{\mc H^{P^{op}}}^{\mc H} (\psi_{\Delta P *} \pi) .
\end{equation}
Suppose now that $\rho$ is irreducible. Then the second map in \eqref{eq:4.6} is injective,
and first map is surjective by definition, so the composition of the two maps in \eqref{eq:4.6}
is nonzero. Lemma \ref{lem:4.5}.a guarantees that \eqref{eq:4.6} is a multiple of 
$I(w_\Delta w_P,P,\pi)$. In particular
\[
\ker I(w_\Delta w_P,P,\pi) = \ker \eqref{eq:4.6} = 
\ker \big( \ind_{\mc H^P}^{\mc H} (\pi) \to \rho \big) .
\]
We conclude that $\rho$ equals 
$\ind_{\mc H^P}^{\mc H} (\pi \otimes t) / \ker I(w_\Delta w_P,P,\pi)$.\\
(b) Let $\sigma$ be any subrepresentation of $\ind_{\mc H^P}^{\mc H}(\pi)$. The inclusion map
and Theorem \ref{thm:3.1}.b give a nonzero element of
\[
\Hom_{\mc H}(\sigma, \ind_{\mc H^P}^{\mc H}(\pi)) \cong
\Hom_{\mc H^P} \big( \psi_{\Delta P^{op} *} (\Res^{\mc H}_{\mc H^{P^{op}}} \sigma), \pi \big)
\cong \Hom_{\mc H^{P^{op}}} (\sigma, \psi_{\Delta P *} \pi ).
\]
In the setting of Lemma \ref{lem:4.5} we take $P^{op},P,\psi_{\Delta P^{op}},\psi_{\Delta P}\pi$
in the roles of, respectively, $P,Q,\gamma,\pi$. Then Lemma \ref{lem:4.5}.a says that
$\psi_{\Delta P *} \pi$ appears with multiplicity one in 
\begin{equation}\label{eq:4.11}
\Res^{\mc H}_{\mc H^{P^{op}}} \ind_{\mc H^P}^{\mc H} (\psi_{\Delta P^{op}} \psi_{\Delta P} \pi) = 
\Res^{\mc H}_{\mc H^{P^{op}}} \ind_{\mc H^P}^{\mc H} (\pi) ,
\end{equation}
as a direct summand. Since $\psi_{\Delta P} (\pi)$ appears in the subrepresentation 
$\Res^{\mc H}_{\mc H^{P^{op}}} \sigma$ of \eqref{eq:4.11}, it is also a direct summand thereof.
In particular there exists a nonzero element of 
\[
\Hom_{\mc H^{P^{op}}} (\psi_{\Delta P *} \pi, \Res^{\mc H}_{\mc H^{P^{op}}} \sigma) \cong
\Hom_{\mc H^P} (\ind_{\mc H^{P^{op}}}^{\mc H} (\psi_{\Delta P *} \pi), \sigma).
\]
Thus we have nonzero $\mc H$-homomorphisms
\begin{equation}\label{eq:4.12}
\ind_{\mc H^{P^{op}}}^{\mc H} (\psi_{\Delta P *} \pi) \to \sigma \to \ind_{\mc H^P}^{\mc H}(\pi) .
\end{equation}
Now we assume that $\sigma$ is irreducible. Then the first map in \eqref{eq:4.12} is surjective
and the second map is injective, so their composition is nonzero. The same argument as for
\eqref{eq:4.6} shows that $\sigma$ is isomorphic to
\[
\ind_{\mc H^{P^{op}}}^{\mc H} (\psi_{\Delta P *} \pi) / \ker I ( w_P w_\Delta , P^{op}, 
\psi_{w_\Delta w_P *} \pi ) \cong \mr{im}\, I ( w_P w_\Delta , P^{op}, \psi_{w_\Delta w_P *} \pi ) .
\qedhere
\]
\end{proof}

With Theorem \ref{thm:4.7} and Langlands' geometric lemmas \cite[\S 4]{Lan}, we can provide
alternative proofs of Theorem \ref{thm:4.3} and Proposition \ref{prop:4.4}.

\begin{prop}\label{prop:4.6}
Let $P \subset \Delta , \pi \in \Irr (\mc H^P)$  and $t \in T^P$. 
\enuma{
\item Suppose that
\begin{itemize}
\item[(i)] $\pi$ is tempered and $t \in T^{P+} \quad$ or
\item[(ii)] $\pi$ is anti-tempered and $t^{-1} \in T^{P+}$.
\end{itemize}
Then $\pi \otimes t$ is $W,\!P$-regular and the Langlands quotient $L(P,\pi \otimes t)$ 
from Theorem \ref{thm:4.3} equals 
$\ind_{\mc H^P}^{\mc H} (\pi \otimes t) / \ker I (w_\Delta w_P, P, \pi \otimes t)$.
\item Suppose that 
\begin{itemize}
\item[(iii)] $\pi$ is tempered and $t^{-1} \in T^{P+} \quad$ or
\item[(iv)] $\pi$ is anti-tempered and $t \in T^{P+}$.
\end{itemize}
Then $\psi_{w_\Delta w_P *} (\pi \otimes t)$ is $W,\!P^{op}$-regular and the Langlands 
subrepresentation\\ $\tilde{L}(P,\pi,t)$ from Proposition \ref{prop:4.4} 
is the image of $I \big( w_P w_\Delta , P^{op}, \psi_{w_\Delta w_P *} (\pi \otimes t) \big)$.
}
\end{prop}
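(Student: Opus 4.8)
The plan is to reduce the statement to its two genuine ingredients --- a regularity assertion about the module being induced, and an identification of a Langlands quotient (resp. subrepresentation) with the coimage (resp. image) of an intertwining operator --- and to note that the identification is automatic once the regularity is established. Concretely, for part (a): once $\pi\otimes t$ is known to be $W,\!P$-regular, Theorem \ref{thm:4.7}.a (applied with $\pi\otimes t$ in the role of $\pi$) exhibits $\ind_{\mc H^P}^{\mc H}(\pi\otimes t)/\ker I(w_\Delta w_P,P,\pi\otimes t)\cong\mr{im}\,I(w_\Delta w_P,P,\pi\otimes t)$ as \emph{the} unique irreducible quotient of $\ind_{\mc H^P}^{\mc H}(\pi\otimes t)$, while Theorem \ref{thm:4.3}.a (valid under (i) or (ii)) exhibits $L(P,\pi,t)$ as \emph{the} unique irreducible quotient; hence the two coincide. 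Likewise for part (b): once $\psi_{w_\Delta w_P *}(\pi\otimes t)$ is $W,\!P^{op}$-regular, Theorem \ref{thm:4.7}.b and Proposition \ref{prop:4.4}.a each single out the unique irreducible subrepresentation of $\ind_{\mc H^P}^{\mc H}(\pi\otimes t)$, so $\tilde L(P,\pi,t)$ equals the image of $I(w_P w_\Delta,P^{op},\psi_{w_\Delta w_P *}(\pi\otimes t))$. Thus the whole task is to prove the two regularity statements.

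For part (a) under hypothesis (i) I would argue as follows. The weights of $\pi\otimes t$ are the products $t r$ with $r\in\mr{Wt}(\pi)$, and since $t\in T^{P+}$ is real and positive, $\log|t r|=\log t+\log|r|$ with $\log t\in(Y\otimes_\Z\R)^{P+}$ independent of $r$ and $\log|r|\in(Y\otimes_\Z\R)_P^-$ by temperedness of $\pi$. If $w(t r)=t r'$ for some $w\in W_P(W^{P,P}\setminus\{e\})$ and $r,r'\in\mr{Wt}(\pi)$, then applying $|\cdot|$ (using $|w\cdot s|=w\cdot|s|$ and $|t|=t$) and then $\log$ yields $w(\log t+\log|r|)=\log t+\log|r'|$. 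Langlands' geometric lemma on the partition of $Y\otimes_\Z\R$ governed by the cones $(Y\otimes_\Z\R)^{P'+}+(Y\otimes_\Z\R)_{P'}^-$, $P'\subseteq\Delta$ \cite[\S 4]{Lan}, forces $w\in W_P$. But $w=w_1 d$ with $w_1\in W_P$ and $d\in W^{P,P}\setminus\{e\}$, and $d\notin W_P$ (the coset $W_P$ has unique shortest representative $e$), so $w\notin W_P$ --- a contradiction. Hence $\pi\otimes t$ is $W,\!P$-regular and Theorem \ref{thm:4.7}.a applies. Case (ii) I would obtain by running the same argument with $-\Delta$ in place of $\Delta$, which interchanges tempered with anti-tempered and $T^{P+}$ with $T^{P-}$.

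For part (b) under hypothesis (iii) I would put $\sigma=\psi_{w_\Delta w_P *}(\pi\otimes t)\in\Mod(\mc H^{P^{op}})$ and first check from the formula for $\psi_{w_\Delta w_P}$ (Lemma \ref{lem:2.2}) that $\mr{Wt}(\sigma)=w_\Delta w_P\cdot\mr{Wt}(\pi\otimes t)$. Since $w_P$ fixes $(Y\otimes_\Z\R)^P$ pointwise and carries $(Y\otimes_\Z\R)_P^-$ into $-(Y\otimes_\Z\R)_P^-$, and $w_\Delta$ carries $-(Y\otimes_\Z\R)^{P+}$ onto $(Y\otimes_\Z\R)^{P^{op}+}$ and $-(Y\otimes_\Z\R)_P^-$ onto $(Y\otimes_\Z\R)_{P^{op}}^-$, while $t^{-1}\in T^{P+}$ means $\log t\in-(Y\otimes_\Z\R)^{P+}$, I would conclude that every weight $s$ of $\sigma$ satisfies $\log|s|=w_\Delta\log t+\mu_s$ with $w_\Delta\log t\in(Y\otimes_\Z\R)^{P^{op}+}$ independent of $s$ and $\mu_s\in(Y\otimes_\Z\R)_{P^{op}}^-$. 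Then the argument of the previous paragraph, applied relative to $P^{op}$, shows $\sigma$ is $W,\!P^{op}$-regular, so Theorem \ref{thm:4.7}.b applies and Proposition \ref{prop:4.4}.a completes the identification. Case (iv) is again the $-\Delta$-analogue.

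The step I expect to be the main obstacle is the cone bookkeeping in part (b): one must transport the positivity and temperedness hypotheses on $(\pi,t)$ through the $*$-isomorphism $\psi_{w_\Delta w_P}:\mc H^P\to\mc H^{P^{op}}$ and track carefully how $w_P$ and $w_\Delta$ move the open and obtuse cones, and along the way confirm that $w_\Delta w_P$ and $w_P w_\Delta$ meet the hypotheses of Lemma \ref{lem:4.5}, so that $I(w_\Delta w_P,P,-)$ and $I(w_P w_\Delta,P^{op},-)$ are defined up to scalars, and that twisting by $t$ and applying $\psi_{w_\Delta w_P}$ preserve irreducibility. The underlying geometric input --- Langlands' partition lemma --- is classical and introduces no new difficulty.
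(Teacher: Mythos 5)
Your overall architecture is the same as the paper's: prove the four regularity assertions (with (ii), (iv) obtained from (i), (iii) by replacing $\Delta$ by $-\Delta$, and (iii), (iv) by transporting (i), (ii) through $\psi_{w_\Delta w_P}$, whose cone bookkeeping you carry out correctly, matching the computation \eqref{eq:4.7}), and then conclude by matching the unique irreducible quotient/subrepresentation of Theorem \ref{thm:4.7} with the one singled out in Theorem \ref{thm:4.3} and Proposition \ref{prop:4.4}. That part is fine.

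The genuine gap is in the core step of case (i). From $w(\log t+\log|r|)=\log t+\log|r'|$ you assert that ``Langlands' geometric lemma on the partition of $Y\otimes_\Z\R$ forces $w\in W_P$''. The partition/uniqueness statement of \cite[Lemma 4.4]{Lan} does not give this: it says each point has a unique decomposition, but it says nothing about which Weyl elements can carry a point of the piece attached to $P$ to a point of the same piece with the same $+$-part. Indeed the implication is false for general $w$: take $R=A_2$, $P=\{\alpha\}$, $\lambda=\epsilon(\omega_\beta^\vee-\alpha^\vee)$; then $\lambda_+=\epsilon\,\omega_\beta^\vee\in (Y\otimes_\Z\R)^{P+}$, $\lambda_-=-\epsilon\,\alpha^\vee\in (Y\otimes_\Z\R)_P^-$, and $s_{\alpha+\beta}\lambda=\lambda$ although $s_{\alpha+\beta}=s_\alpha s_\beta s_\alpha\notin W_P$. (This $w$ lies in $W_P W^{P,P} W_P$ but not in $W_P(W^{P,P}\setminus\{e\})$, so it does not contradict the proposition itself --- it only shows that your intermediate claim, as stated for arbitrary $w$, is wrong, and that restricted to $w\in W_P(W^{P,P}\setminus\{e\})$ it is exactly the regularity statement you are trying to prove, so the argument is circular at this point.) What is actually needed is the finer monotonicity of the $+$-part of the Langlands decomposition under the Weyl action: for $w_1\in W_P$ one has $\log|w_1^{-1}s't|_+\geq\log|s't|_+$, while for $w_2\in W^{P,P}\setminus\{e\}$ one has the \emph{strict} inequality $\log|w_2 st|_+<\log|st|_+$ (the paper cites \cite[p.~38 and (2.13)]{KrRa} for these); writing $w=w_1w_2$ and comparing $+$-parts of $w_2(st)=w_1^{-1}(s't)$ then gives the contradiction, using $\log|st|_+=\log|t|=\log|s't|_+$ from \eqref{eq:4.13}. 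Without these inequalities (or an equivalent substitute distinguishing $W_P$ from $W_P(W^{P,P}\setminus\{e\})$), the regularity in case (i) --- and hence everything built on it --- is not established.
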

\begin{proof}
First we establish the regularity in all four cases.\\
(i) Recall from \cite[Lemma 4.4]{Lan} that every $\lambda \in Y \otimes_\Z \R$ can be
expressed uniquely as 
\[
\lambda = \lambda_- + \lambda_+ , \quad \text{where} \quad \lambda \in (Y \otimes_\Z \R)_{Q-}
,\: \lambda_+ \in (Y \otimes_\Z \R)^{Q+} ,\: Q = Q(\lambda) \subset \Delta .
\]
For any $s \in \mr{Wt}(\pi)$ we have
\begin{equation}\label{eq:4.13}
\log |st|_+ = \log |t| ,\; \log |st|_- = \log |s| \text{ and } Q(\log |st|) = P .
\end{equation}
Assume that $w_1 \in W_P, w_2 \in W^{P,P} \setminus \{e\}$ and 
$w_1 w_2 s t = s' t \in \mr{Wt}(\pi \otimes t)$. Then 
\begin{equation}\label{eq:4.14}
w_2 s t = w_1^{-1} s' t \quad \text{and} \quad \log |w_2 s t|_+ = \log |w_1^{-1} s' t|_+ .
\end{equation}
By \cite[p. 38]{KrRa} $\log |w_1^{-1} s' t|_+ \geq \log |s' t|_+$ by \cite[(2.13)]{KrRa}
$\log |w_2 s t|_+ < \log |st|_+$. Together with \eqref{eq:4.13} we obtain
\[
\log |w_2 s t|_+ < \log |st|_+ = \log |s' t| \leq \log |w_1^{-1} s' t|_+ .
\]
That contradicts \eqref{eq:4.13} and hence our assumption is untenable. In other words,
$\pi \otimes t$ is $W,\!P$-regular.\\
(ii) Notice that the proof of (i) only involves root systems and
Weyl groups, no Hecke algebras. It can also be applied to the current $\pi \otimes t$,
when we replace the basis $\Delta$ of $R$ by $-\Delta$.\\
(iii) As $\psi_{w_\Delta w_P} : \mc H^P \to \mc H^{P^{op}}$ is an isomorphism that respects all
the structure of these affine Hecke algebras, $\psi_{w_\Delta w_P}(\pi)$ is tempered and
$w_\Delta w_P (t) \in T^{P^{op}}$. For $\alpha \in \Delta$ there are equalities
\begin{equation}\label{eq:4.7}
\langle \alpha , \log |w_\Delta \! w_P t| \rangle = \langle w_\Delta \! (\alpha), \log |w_P t| \rangle
= \langle w_\Delta \! (\alpha), \log |t| \rangle = \langle -w_\Delta \! (\alpha), \log |t^{-1}| \rangle ,
\end{equation}
where we used $t \in T^P$ in the second step. If $\alpha \in \Delta \setminus P^{op}$, then
$-w_\Delta \alpha \in \Delta \setminus P$ and \eqref{eq:4.7} is strictly positive because 
$t^{-1} \in T^{P+}$. Therefore $w_\Delta w_P (t) \in T^{P^{op}+}$. Now part (i) says that
\[
\psi_{w_\Delta w_P *}(\pi \otimes t) =  \psi_{w_\Delta w_P *}(\pi) \otimes w_\Delta w_P (t)
\]
is $W,\!P^{op}$-regular.\\
(iv) With the same method as for (iii) this can be reduced to part (ii).\\
In the cases (i) and (ii), Theorem \ref{thm:4.7}.a says that $\ind_{\mc H^P}^{\mc H} (\pi \otimes t)$ 
has a unique irreducible quotient, which moreover has the given shape. In the cases (iii) and (iv)
Theorem \ref{thm:4.7}.b says that $\ind_{\mc H^P}^{\mc H} (\pi \otimes t)$ has a unique irreducible
subrepresentation, namely the image of the indicated intertwining operator.
\end{proof}

\section{Comparison with Hermitian duals for reductive $p$-adic groups}
\label{sec:padic}

Consider a non-archimedean local field $F$ and a reductive group $G$ over $F$, connected in the
Zariski topology. We briefly call $G$ a reductive $p$-adic group. As is well-known, affine Hecke 
algebras often arise from Bernstein blocks in the category Rep$(G)$ of smooth complex 
$G$-representations. In such a situation there are two notions of a Hermitian dual: in Rep$(G)$ and 
in the module category of the appropriate affine Hecke algebra. We will show that in many such 
cases the two Hermitian duals agree. 

Let $M$ be a Levi factor of a parabolic subgroup $P$ of $G$, and let $\sigma \in \Irr (G)$ be
supercuspidal. The inertial equivalence class $\mf s = [M,\sigma]_G$ determines a Bernstein block
of $\Rep (G)^{\mf s}$ in $\Rep (G)$. By tensoring with a suitable unramified character we may assume
that $\sigma$ is unitary. Then its smooth Hermitian dual $\sigma^\he$ can be identified with $\sigma$
itself. It is not hard to show that the smooth Hermitian dual functor stabilizes every Bernstein
block in $\Rep (G)$, see \cite[Lemma 2.2]{SolQS}. 

One way to relate $\Rep (G)^{\mf s}$ to Hecke algebras stems from \cite{Hei}. Let $M^1 \subset M$
be the subgroup generated by all compact subgroups of $M$, and let $\sigma_1$ be an irreducible
subrepresentation of $\Res^M_{M^1} (\sigma)$. Let ind denote smooth induction with compact supports,
in contrast to Ind, which will denote smooth induction without any support condition. 

Then $\Pi_{\mf s_M} = \ind_{M^1}^M (\sigma_1 )$ is a progenerator of $\Rep (M)^{\mf s_M}$, where 
$\mf s_M = [M,\sigma]_M$. Moreover $\Pi_{\mf s} = I_P^G \Pi_{\mf s_M}$ is a progenerator of
$\Rep (G)^{\mf s}$, see \cite[\S VI.10.1]{Ren}. As worked out in \cite[Theorem 1.8.2.1]{Roc}, there
is an equivalence of categories
\begin{equation}\label{eq:5.1}
\begin{array}{ccc}
\Rep (G)^{\mf s} & \to & \Mod (\End_G (\Pi_{\mf s})^{op} ) \\
\pi & \mapsto & \Hom_G (\Pi_{\mf s}, \pi) 
\end{array}.
\end{equation}
It is known from \cite{SolEnd} that $\End_G (\Pi_{\mf s})^{op}$ is always very similar to an affine
Hecke algebra. To stay within the setting of the paper we assume in the remainder of this section:

\begin{cond}\label{cond:5.1}
$\mr{Res}^M_{M^1}(\sigma)$ is multiplicity-free and $\End_G (\Pi_{\mf s})^{op}$ is isomorphic to an 
affine Hecke algebra $\mc H$ with $q$-parameters in $\R_{\geq 1}$, via an isomorphism as in 
\cite{Hei} or \cite[\S 10.2]{SolEnd}. 
\end{cond}
This condition and \cite[\S 10]{SolEnd} imply that $\End_M (\Pi_{\mf s_M})^{op}$ is isomorphic to
the minimal parabolic subalgebra $\mc H^\emptyset \cong \C [X]$ of $\mc H$. 
By \cite[(IV.2.1.2)]{Ren} there are isomorphisms
\begin{equation}\label{eq:5.16}
\Pi_{\mf s}^\he = I_P^G \big( \ind_{M^1}^M (\sigma_1) \big)^\he \cong 
I_P^G \big( \ind_{M^1}^M (\sigma_1)^\he \big) \cong I_P^G \big( \mr{Ind}_{M^1}^M (\sigma_1^\he) \big) 
\cong I_P^G \big(\mr{Ind}_{M^1}^M (\sigma_1) \big) .
\end{equation}
In particular $\Pi_{\mf s}^\he$ contains $\Pi_{\mf s}$ as a dense submodule. The action of 
$\mc H^{op} = \End_G (\Pi_{\mf s})$ on $\Pi_{\mf s}$ extends to an action on $\Pi_{\mf s}^\he$
in the following way. Write $v \in \Pi_{\mf s}^\he$ as a limit of elements $v_n \in \Pi_{\mf s}$.
For $h \in \mc H^{op}$ we define $h \cdot v = \lim_{n \to \infty} h \cdot v_n$. 

Then $\Hom_G (\Pi_{\mf s}, \Pi_{\mf s}^\he)$ becomes an $\mc H \times \mc H^{op}$-module with action
\begin{equation}\label{eq:5.14}
h \cdot f \cdot h' = h' \circ f \circ h \qquad 
h,h' \in \mc H \cong \End_G (\Pi_{\mf s})^{op}, f : \Pi_{\mf s} \to \Pi_{\mf s}^\he.
\end{equation}

\begin{prop}\label{prop:5.2}
Assuming Condition \ref{cond:5.1}, $\Hom_G (\Pi_{\mf s}, \Pi_{\mf s}^\he)$ is isomorphic to
$\mc H^\he$ as $\mc H$-bimodules.
\end{prop}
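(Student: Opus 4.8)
The plan is to identify $\Hom_G(\Pi_{\mf s},\Pi_{\mf s}^\he)$ with a completion of $\mc H$ as a bimodule over $\mc H$, by transporting structure through the equivalence \eqref{eq:5.1} and the description \eqref{eq:5.16} of $\Pi_{\mf s}^\he$. First I would record that, by \eqref{eq:5.1}, the functor $\pi \mapsto \Hom_G(\Pi_{\mf s},\pi)$ is an equivalence $\Rep(G)^{\mf s} \cong \Mod(\mc H^{op})$; applying it to the (smooth part of the) object $\Pi_{\mf s}$ itself gives $\Hom_G(\Pi_{\mf s},\Pi_{\mf s}) \cong \End_G(\Pi_{\mf s})^{op} = \mc H$ as an $\mc H$-bimodule, where the left action is via $\mc H \cong \End_G(\Pi_{\mf s})^{op}$ and the right action is the one in \eqref{eq:5.14}. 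The task is then to see what happens when the target $\Pi_{\mf s}$ is enlarged to $\Pi_{\mf s}^\he$.

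Next I would use \eqref{eq:5.16}: since $\Pi_{\mf s}^\he \cong I_P^G(\mr{Ind}_{M^1}^M(\sigma_1))$ contains $\Pi_{\mf s} = I_P^G(\ind_{M^1}^M(\sigma_1))$ as a dense submodule, every $G$-homomorphism $f : \Pi_{\mf s} \to \Pi_{\mf s}^\he$ is determined by, and extends, a map into successively larger finitely generated pieces. The key point is that $\Pi_{\mf s}$ is a progenerator: $\Hom_G(\Pi_{\mf s},-)$ is exact and faithful, and $\Pi_{\mf s}^\he$ is the colimit (or rather the inverse limit, on the level of $K$-fixed vectors) of its finitely generated submodules, each of which lies in $\Rep(G)^{\mf s}$. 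Concretely, one checks that $\Hom_G(\Pi_{\mf s},\Pi_{\mf s}^\he)$ is the completion of $\Hom_G(\Pi_{\mf s},\Pi_{\mf s}) \cong \mc H$ with respect to the filtration dual to a filtration of $\Pi_{\mf s}$ by $\mc H^{op}$-submodules, and that this completion is exactly $\prod_{w \in W(\mc R)} \C N_w = \mc H^\he$ as in \eqref{eq:2.2}. For this I would want to argue that, under Condition \ref{cond:5.1} and the normalizations of \cite{Hei} or \cite[\S 10.2]{SolEnd}, the pairing $\Pi_{\mf s}^\he \times \Pi_{\mf s} \to \C$ matches, after applying $\Hom_G(\Pi_{\mf s},-)$, the pairing $\mc H^\he \times \mc H \to \C$ coming from the trace $\tau$ of \eqref{eq:1.4}; equivalently, that the $G$-invariant pairing between $I_P^G(\mr{Ind}_{M^1}^M \sigma_1)$ and $I_P^G(\ind_{M^1}^M \sigma_1)$ corresponds to the canonical $\mc H^{op}$-invariant pairing making $\mc H^\he$ the Hermitian dual of the left regular representation, as in \eqref{eq:1.1}--\eqref{eq:1.2}.

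The main obstacle I anticipate is precisely the compatibility of pairings: one must verify that the isomorphism $\End_G(\Pi_{\mf s})^{op} \cong \mc H$ from Condition \ref{cond:5.1} is, up to the prescribed normalization, a $*$-isomorphism with respect to the natural conjugate-linear involution on $\End_G(\Pi_{\mf s})$ induced by smooth Hermitian duality of $G$-representations on one side and the involution $N_w^* = N_{w^{-1}}$ on $\mc H$ on the other. This is the heart of the comparison and is where the explicit form of the isomorphisms in \cite{Hei} and \cite[\S 10.2]{SolEnd} (the behaviour of the generators $T_s$ and $\theta_x$, and the positivity of the $q$-parameters) must be invoked; once it is established, the identification $\Hom_G(\Pi_{\mf s},\Pi_{\mf s}^\he) \cong \mc H^\he$ as $\mc H$-bimodules follows formally by transporting the bimodule structure \eqref{eq:5.14} and the density of $\Pi_{\mf s}$ in $\Pi_{\mf s}^\he$ through the categorical equivalence. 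I would also note that the left $\mc H$-action and the right $\mc H^{op}$-action in \eqref{eq:5.14} become, on $\mc H^\he$, left and right multiplication respectively, which is consistent with \eqref{eq:2.2} and the remark there that $\mc H^\he$ is the Hermitian dual of $\mc H$ with $\mc H$ acting by left multiplication on both factors.
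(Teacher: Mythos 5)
Your outline captures the formal shell of the argument (restriction of the bimodule structure \eqref{eq:5.14} to $\Hom_G(\Pi_{\mf s},\Pi_{\mf s})\cong \mc H$, and a density/limit argument to transport the right action once a left $\mc H$-module identification is in place — this last point is indeed how the paper finishes), but the central step is asserted rather than proved. The claim that $\Hom_G(\Pi_{\mf s},\Pi_{\mf s}^\he)$ is ``the completion of $\mc H$ with respect to the filtration dual to a filtration of $\Pi_{\mf s}$'' and that this completion is exactly $\prod_{w\in W(\mc R)}\C N_w$ is essentially the proposition itself; you never say which filtration, nor why the answer is a product indexed by $W(\mc R)$ rather than something larger or smaller. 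The paper obtains this by genuinely structural input: using \eqref{eq:5.16} to write $\Pi_{\mf s}^\he\cong I_P^G\big(\ind_{M^1}^M(\sigma_1)^\he\big)$, invoking \cite[Proposition 1.8.5.1]{Roc} to identify $\Hom_G\big(\Pi_{\mf s},I_P^G(-)\big)$ with $\ind_{\mc A}^{\mc H}\Hom_M(\Pi_{\mf s_M},-)$, computing the supercuspidal case $\Hom_M(\Pi_{\mf s_M},\Pi_{\mf s_M}^\he)\cong\mc A^\he$ by Frobenius reciprocity and Mackey theory (ind turning into Ind under Hermitian duality), and finally applying Lemma \ref{lem:2.4}, i.e. $\ind_{\mc A}^{\mc H}(\mc A^\he)\cong\mc H^\he$, which itself rests on the nontrivial Lemma \ref{lem:1.1}. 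None of these ingredients appears in your plan, and without them the ``one checks'' step has no content.

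Moreover, the step you single out as the heart of the matter — that the isomorphism $\End_G(\Pi_{\mf s})^{op}\cong\mc H$ of Condition \ref{cond:5.1} is a $*$-isomorphism matching the $G$-invariant pairing on $\Pi_{\mf s}^\he\times\Pi_{\mf s}$ with the trace pairing \eqref{eq:1.1} — is both unverified in your proposal and not needed for this statement. The bimodule structure \eqref{eq:5.14} involves no $*$, and the paper's proof never uses any compatibility of the Heiermann/Solleveld isomorphism with involutions or pairings: it only needs that the composed isomorphism restricts to the given algebra isomorphism on $\Hom_G(\Pi_{\mf s},\Pi_{\mf s})$, after which the right-module property follows from commutation with the limits \eqref{eq:5.18}. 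So your route replaces the actual work (reduction to the supercuspidal level, the Mackey computation, and Lemma \ref{lem:2.4}) by a stronger unproven compatibility claim; as written, the proof has a genuine gap.
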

\begin{proof}
First we consider the supercuspidal case, so with $\Pi_{\mf s_M} = \ind_{M^1}^M (\sigma_1)$.
With Frobenius reciprocity we compute
\begin{equation}\label{eq:5.3}
\Hom_M \big( \Pi_{\mf s_M}, \Pi_{\mf s_M}^\he \big) = \Hom_M \big( \ind_{M^1}^M (\sigma),  
\ind_{M^1}^M (\sigma)^\he \big) \cong \Hom_{M^1} \big( \sigma, \ind_{M^1}^M (\sigma_1)^\he \big) .
\end{equation}
Hermitian duals turn ind into Ind, so the right-hand side of \eqref{eq:5.3} is also
\begin{equation}\label{eq:5.4}
\Hom_{M^1} \big( \sigma_1, \mr{Ind}_{M^1}^M (\sigma_1^\he) \big) \cong 
\Hom_{M^1} \big( \sigma_1, \mr{Ind}_{M^1}^M (\sigma_1) \big) .
\end{equation}
An analogous computation (with ind instead of Ind) applies to $\End_M (\Pi_{\mf s_M})$. 
We note that the set 
\[
M^\sigma = \{m \in M : m \cdot \sigma_1 \cong \sigma_1\}
\] 
is a finite index subgroup of $M$ which does not depend on the choice of $\sigma_1$, see
\cite[\S 1.6]{Roc}. With the Mackey decomposition we obtain
\[
\mc A \cong \End_M (\Pi_{\mf s_M})^{op} \cong 
\bigoplus\nolimits_{m \in M^\sigma / M^1} \Hom_{M^1}(\sigma_1, m \cdot \sigma_1) ,
\]
so in particular $X = M^\sigma / M^1$. Similarly \eqref{eq:5.3} and \eqref{eq:5.4} become
\begin{equation}\label{eq:5.5}
\Hom_{M^1} \big( \sigma_1, \mr{Ind}_{M^1}^M (\sigma_1) \big) \cong
\prod\nolimits_{m \in M^\sigma / M^1} \Hom_{M^1}(\sigma_1, m \cdot \sigma_1) .
\end{equation}
This is isomorphic to $\mc A^\he \cong \prod_{x \in X} \C \{x\}$ as $\C [X]$-bimodule, so
\begin{equation}\label{eq:5.17}
\Hom_M (\Pi_{\mf s_M}, \Pi_{\mf s_M}^\he ) \cong \mc A^\he .
\end{equation}
In the non-supercuspidal case \eqref{eq:5.16} gives an $\mc H$-isomorphism
\begin{equation}\label{eq:5.6}
\Hom_G (\Pi_{\mf s}, \Pi_{\mf s}^\he) \cong 
\Hom_G \big( \Pi_{\mf s}, I_P^G (\ind_{M^1}^M (\sigma_1)^\he ) \big) .
\end{equation}
By \cite[Proposition 1.8.5.1]{Roc} the right-hand side is naturally isomorphic with
\[
\ind_{\mc A}^{\mc H} \: \Hom_M \big( \Pi_{\mf s_M}, \ind_{M^1}^M (\sigma_1)^\he \big) .
\]
From the supercuspidal case we know that this $\mc H$-module is isomorphic with \\
$\ind_{\mc A}^{\mc H} (\mc A^\he)$, which by Lemma \ref{lem:2.4} is isomorphic with $\mc H^\he$.
It remains to see that the resulting $\mc H$-module isomorphism
\begin{equation}\label{eq:5.7}
\phi : \mc H^\he \to \Hom_G (\Pi_{\mf s}, \Pi_{\mf s}^\he) 
\end{equation}
is an isomorphism of $\mc H$-bimodules. We already knew from \eqref{eq:2.2} that $\mc H \subset \mc H^\he$.
On the $\mc H$-submodule $\Hom_G (\Pi_{\mf s}, \Pi_{\mf s})$ of $\Hom_G (\Pi_{\mf s}, 
\Pi_{\mf s}^\he)$, the isomorphisms \eqref{eq:5.6}, \eqref{eq:5.3}, \eqref{eq:5.4} and \eqref{eq:5.5} 
become just the identity. Hence \eqref{eq:5.7} extends the given algebra isomorphism 
$\mc H \cong \End_G (\Pi_{\mf s})^{op}$.

By Proposition \ref{prop:2.5}, any element $h^+$ of $\mc H^\he$ admits a unique expression as
\begin{equation}\label{eq:5.18}
h^+ = \sum\nolimits_{w \in W} \sum\nolimits_{x \in X} c_{w,x} N_w \imath (\theta_x) \qquad c_{w,x} \in \C,
\end{equation}
with $\imath$ as in \eqref{eq:2.19}. The element $h_w^+ := \sum_{x \in X} c_{w,x} \theta_x$ belongs to 
$\mc A^\he$, which via \eqref{eq:5.17} and $I_P^G$ embeds naturally in 
$\Hom_G (\Pi_{\mf s}, \Pi_{\mf s}^\he )$. From \eqref{eq:5.5} we see that
\[
\phi \imath (h_w^+) = \phi \imath \big( \sum\nolimits_{x \in X} c_{w,x} \theta_x \big) = 
\sum\nolimits_{x \in X} c_{w,x} \phi \imath (\theta_x) .
\]
Using the $\mc H$-linearity of $\phi$ we find
\begin{equation}\label{eq:5.8}
\begin{aligned}
\phi (h^+) & = \phi \big( \sum_{w \in W} N_w \imath (h_w^+) \big) = 
\sum_{w \in W} N_w \cdot \phi \imath (h_w^+) =
\sum_{w \in W} N_w \cdot \sum_{x \in X} c_{w,x} \phi \imath (\theta_x) \\
& = \sum_{w \in W} \phi (N_w) \sum_{x \in X} c_{w,x} \phi \imath (\theta_x) = 
\sum_{w \in W} \sum_{x \in X} c_{w,x} \phi (N_w \imath (\theta_x)) .
\end{aligned}
\end{equation}
This shows that $\phi$ commutes with infinite sums of elements of $\mc H$ in the form \eqref{eq:5.18}.
Hence $\phi$ commutes with limits of sequences in $\mc H$ that converge in $\mc H^\he$. Let 
$(h_n^+)_{n=1}^\infty$ be a sequence in $\mc H$ with limit $h^+ \in \mc H^\he$. 
For any $h \in \mc H$, \eqref{eq:5.8} yields
\[
\phi (h^+ h) = \phi \big( \lim_{n \to \infty} h_n^+ h \big) = \lim_{n \to \infty} \phi (h_n^+ h) . 
\]
As $\phi |_{\mc H}$ is an algebra homomorphism, the right-hand side equals
\[
\lim_{n \to \infty} \phi (h_n^+) \phi (h) = 
\lim_{n \to \infty} \phi (h_n^+) \cdot h = \phi (h^+) \cdot h , 
\]
where the dot indicates the right action of $\mc H$ on $\Hom_G (\Pi_{\mf s}, \Pi_{\mf s}^\he)$
from \eqref{eq:5.14}.
\end{proof}

Proposition \ref{prop:5.2} serves as the starting point for the next result. It says that the
Hermitian dual functors in $\Rep (G)^{\mf s}$ and in $\Mod (\mc H)$ match via \eqref{eq:5.1}.

\begin{thm}\label{thm:5.3}
Let $\pi \in \Rep (G)^{\mf s}$ and assume Condition \ref{cond:5.1}. Then the $\mc H$-modules 
$\Hom_G (\Pi_{\mf s}, \pi^\he)$ and $\Hom_G (\Pi_{\mf s},\pi )^\he$ are isomorphic.
\end{thm}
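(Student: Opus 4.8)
The plan is to deduce Theorem \ref{thm:5.3} from Proposition \ref{prop:5.2} by a formal argument through the Morita equivalence \eqref{eq:5.1}. Set $M = \Hom_G(\Pi_{\mf s}, \pi) \in \Mod(\mc H)$. Since $\Pi_{\mf s}$ is a progenerator of $\Rep(G)^{\mf s}$ with $\End_G(\Pi_{\mf s})^{op} \cong \mc H$, there is a natural isomorphism of $G$-representations $\pi \cong \Pi_{\mf s} \otimes_{\mc H} M$, so it is enough to produce, naturally in $M$, an isomorphism of $\mc H$-modules $\Hom_G(\Pi_{\mf s}, (\Pi_{\mf s}\otimes_{\mc H} M)^\he) \cong M^\he$.

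The mechanism I would use is the pairing sending $f \in \Hom_G(\Pi_{\mf s},\pi^\he)$ and $g \in M = \Hom_G(\Pi_{\mf s},\pi)$ to $\tau(g^\he \circ f)$, where $g^\he \circ f \in \Hom_G(\Pi_{\mf s},\Pi_{\mf s}^\he)$ is carried to $\mc H^\he$ by Proposition \ref{prop:5.2} and $\tau:\mc H^\he\to\C$ is the continuous extension of \eqref{eq:1.4}. This is linear in $f$ and conjugate-linear in $g$ (because $g\mapsto g^\he$ is conjugate-linear), hence defines a complex-linear map $\Hom_G(\Pi_{\mf s},\pi^\he) \to M^\he$; its $\mc H$-equivariance comes from the fact that \eqref{eq:5.14} matches the $\mc H$-bimodule structure on $\Hom_G(\Pi_{\mf s},\Pi_{\mf s}^\he)$ with left and right multiplication on $\mc H^\he$, together with the invariance \eqref{eq:1.2} of $\tau$. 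To prove bijectivity I would re-read the map as a composite of canonical identifications: a smooth conjugate-linear functional on $\Pi_{\mf s}\otimes_{\mc H}M$ is, for $m$ fixed, the assignment $v\mapsto\lambda(v\otimes m)\in\Pi_{\mf s}^\he$, yielding $(\Pi_{\mf s}\otimes_{\mc H}M)^\he \cong \Hom_{\mc H}(M,\Pi_{\mf s}^\he)$ (conjugate-linear maps compatible with the module structures, and conversely one produces a uniform open stabiliser when $M$ is finitely generated, exactly as in the proof of Proposition \ref{prop:5.2}); then $\Hom_G(\Pi_{\mf s},-)$ commutes past the $\mc H$-variable, since the $G$-action and the $\mc H$-balancing act on disjoint sides, giving $\Hom_{\mc H}(M, \Hom_G(\Pi_{\mf s},\Pi_{\mf s}^\he))$; Proposition \ref{prop:5.2} replaces the inner term by $\mc H^\he$; and \eqref{eq:2.10} applied with the left regular representation, together with the identification of $\mc H^\he$ as the Hermitian dual of $\mc H$ recorded after \eqref{eq:2.10}, identifies $\Hom_{\mc H}(M,\mc H^\he)$ with $M^\he$. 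This settles the finitely generated case; the general case follows because every object of $\Rep(G)^{\mf s}$ is a direct limit of finitely generated subobjects and the functors involved convert the relevant (co)limits into one another.

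I expect the genuine work to be bookkeeping rather than a new idea. The two points needing care are the smoothness matching in the opening identification -- one must check that the object inside $\Hom_G(\Pi_{\mf s},-)$ is the \emph{smooth} Hermitian dual $\Pi_{\mf s}^\he$ and not the full algebraic conjugate-dual of $\Pi_{\mf s}$, which is precisely the kind of argument made in the proof of Proposition \ref{prop:5.2} and where Condition \ref{cond:5.1} enters -- and the placement of conjugations and of the involutions $*$, $*_P$: several intermediate maps are conjugate-linear or twisted by $*$, and one must confirm these occur an even number of times, so that the composite is complex-linear and intertwines the $\mc H$-action coming from \eqref{eq:5.1} with the one coming from \eqref{eq:2.1}.
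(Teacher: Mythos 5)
Your proposal is correct in outline but follows a genuinely different route from the paper. You work directly with the quasi-inverse of the Morita equivalence, writing $\pi \cong \Pi_{\mf s} \otimes_{\mc H} M$ for $M = \Hom_G(\Pi_{\mf s},\pi)$, and then chain together a currying identification $(\Pi_{\mf s}\otimes_{\mc H}M)^\he \cong \Hom_{\mc H}(M,\Pi_{\mf s}^\he)$, a tensor-hom swap, Proposition \ref{prop:5.2}, and the identification of $\mc H^\he$ as the Hermitian dual of the regular representation to land on $M^\he$; finite generation enters to produce the uniform open stabiliser in the currying step, and the general case follows by limits. The paper instead keeps everything in the free-module case: it takes a resolution of $M$ by finitely generated free modules $\mc H\otimes F_i$ (using Noetherianity), dualizes to injective resolutions on both sides of the equivalence \eqref{eq:5.1}, and observes that after applying Proposition \ref{prop:5.2} the two injective resolutions of $\Hom_G(\Pi_{\mf s},\pi^\he)$ and $\Hom_G(\Pi_{\mf s},\pi)^\he$ have literally the same terms and differentials (both determined by $d_i|_{F_i}$), so the kernels agree; the passage to general $\pi$ by direct limits is the same as yours. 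What the paper's route buys is that the only bimodule/conjugate-linearity bookkeeping needed is exactly Proposition \ref{prop:5.2} itself, applied to free modules; what your route buys is the avoidance of resolutions and a more explicitly functorial statement for arbitrary finitely generated $M$, at the price of the bookkeeping you flag. One concrete caution there: your explicit pairing $\tau(g^\he\circ f)$ is not quite normalized correctly as written — checking equivariance against the action \eqref{eq:5.14} and the Hermitian-dual action shows one must pair as in \eqref{eq:1.1}, i.e. with a $*$ (or complex conjugation) inserted, just as the identification of $\mc H^\he$ with the dual of the left regular representation uses $h\mapsto\tau(x h^*)$ rather than $h\mapsto\tau(xh)$ — but since you ultimately establish bijectivity through the canonical identifications rather than through this formula, this is a fixable normalization, not a gap in the argument.
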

\begin{proof}
First we consider the special case where $\Hom_G (\Pi_{\mf s},\pi)$ is a finitely generated
$\mc H$-module. Since $\mc H$ is Noetherian, there exists a projective resolution
\begin{equation}\label{eq:5.9}
\Hom_G (\Pi_{\mf s},\pi) \xleftarrow{d_0} \mc H \otimes F_0 \xleftarrow{d_1} \mc H \otimes F_1 
\leftarrow \cdots 
\end{equation}
where each $\mc H \otimes F_i$ is a free $\mc H$-module with a finite dimensional multiplicity
space $F_i$. We note that here $d_i (i>0)$ is determined entirely by the map 
\[
d_i |_{F_i} : F_i \cong \C 1 \otimes F_i \to \mc H \otimes F_{i-1} .
\]
The conjugate-transpose of \eqref{eq:5.9} is an injective resolution
\begin{equation}\label{eq:5.10}
\Hom_G (\Pi_{\mf s},\pi)^\he \xrightarrow{d_0^\he} \mc H^\he \otimes F_0^\he \xrightarrow{d_1^\he} 
\mc H^\he \otimes F_1^\he \rightarrow \cdots 
\end{equation}
Via the equivalence of categories \eqref{eq:5.1}, \eqref{eq:5.9} becomes a projective resolution
of $G$-representations
\begin{equation}\label{eq:5.11}
\pi \xleftarrow{d_0} \Pi_{\mf s} \otimes F_0 \xleftarrow{d_1} \Pi_{\mf s} \otimes F_1 \leftarrow \cdots
\end{equation}
Here $d_i (i>0)$ is determined by the map $d_i |_{F_i} : F_i \to \End_G (\Pi_{\mf s}) \otimes F_{i-1}$
given by $d_i |_{F_i}$ above composed with $\mc H \cong \End_G (\Pi_{\mf s})^{op}$.
The conjugate-transpose of \eqref{eq:5.11} is the injective resolution
\begin{equation}\label{eq:5.15}
\pi^\he \xrightarrow{d_0^\he} \Pi_{\mf s}^\he \otimes F_0^\he \xrightarrow{d_1^\he} 
\Pi_{\mf s}^\he \otimes F_1^\he \rightarrow \cdots 
\end{equation}
in $\Rep (G)^{\mf s}$. Again applying \eqref{eq:5.1}, we obtain an injective resolution
\begin{multline}\label{eq:5.12}
\Hom_G (\Pi_{\mf s},\pi^\he) \xrightarrow{\Hom_G(\Pi_{\mf s},d_0^\he)} \Hom_G (\Pi_{\mf s}, \Pi_{\mf s}^\he) 
\otimes F_0^\he \\ \xrightarrow{\Hom_G (\Pi_{\mf s},d_1^\he)} 
\Hom_G (\Pi_{\mf s}, \Pi_{\mf s}^\he) \otimes F_1^\he \rightarrow \cdots 
\end{multline}
The maps $\Hom_G (\Pi_{\mf s},d_i^\he)$ are still determined by $d_i |_{F_i}$. To \eqref{eq:5.12}
we can apply Proposition \ref{prop:5.2}, that yields
\begin{equation}\label{eq:5.13}
\Hom_G (\Pi_{\mf s},\pi^\he ) \rightarrow \mc H^\he \otimes F_0^\he \rightarrow
\mc H^\he \otimes F_1^\he \rightarrow \cdots 
\end{equation}
The maps in this sequence (except the leftmost) are induced by $d_i |_{F_i}$, so they equal the maps
in \eqref{eq:5.10}. We deduce isomorphisms of $\mc H$-modules
\[
\Hom_G (\Pi_{\mf s},\pi)^\he \cong \ker \big( \mc H^\he \otimes F_0^\he \rightarrow
\mc H^\he \otimes F_1^\he \big) \cong \Hom_G (\Pi_{\mf s},\pi^\he ) .
\] 
Now we consider the general case. Write $\Hom_G (\Pi_{\mf s},\pi)$ as the direct limit of its finitely
generated submodules $\Hom_G (\Pi_{\mf s},\pi_i)$, where $i$ runs through some index set. Then 
$\pi \cong \varinjlim \pi_i$ and $\pi^\he \cong \varprojlim \pi_i^\he$, which gives
\[
\Hom_G (\Pi_{\mf s},\pi^\he) \cong \varprojlim \Hom_G (\Pi_{\mf s},\pi_i^\he) .
\]
By \eqref{eq:5.13}, the right-hand side is isomorphic to
\[
\varprojlim \Hom_G (\Pi_{\mf s},\pi_i )^\he \cong \big( \varinjlim \Hom_G (\Pi_{\mf s},\pi_i) \big)^\he
\cong \Hom_G (\Pi_{\mf s},\pi)^\he . \qedhere
\]
\end{proof}

Theorem \ref{thm:5.3} implies among others that the equivalence of categories \eqref{eq:5.1} sends
Hermitian representations (i.e. $\pi^\he \cong \pi$) to Hermitian representations.

\section{Equivalent characterizations of genericity}
\label{sec:equiv}

Recall that a $G$-representation $\pi$ is called generic if there exist 
\begin{itemize}
\item a nondegenerate character $\xi$ of the unipotent radical $U$ of a minimal parabolic 
subgroup $B$ of $G$,
\item a nonzero $U$-homomorphism from $\pi$ to $\xi$.
\end{itemize}
More precisely, $\pi$ is $(U,\xi)$-generic if 
\[
\Hom_G \big( \pi, \Ind_U^G (\xi) \big) \cong \Hom_U (\pi,\xi) \quad \text{is nonzero.}
\]
Following \cite{BuHe}, we say that $\pi$ is simply generic if $\dim \Hom_U (\pi,xi) = 1$.
For irreducible representations of quasi-split reductive $p$-adic groups, simple genericity
is equivalent to genericity \cite{Shal,Rod}.

From \cite[(2.1.1)]{BuHe} we know that $\ind_U^G (\xi)^\he \cong \mr{Ind}_U^G (\xi)$, and hence
there is a natural conjugate-linear isomorphism
\begin{equation}\label{eq:6.1}
\Hom_G \big( \pi, \Ind_U^G (\xi) \big) \cong \Hom_G (\ind_U^G (\xi), \pi^\he) .
\end{equation}
By \eqref{eq:5.1} the right-hand side of \eqref{eq:6.1} is isomorphic to
\begin{equation}\label{eq:6.2}
\Hom_{\End_G (\Pi_{\mf s})} \big( \Hom_G (\Pi_{\mf s}, \ind_U^G (\xi)), 
\Hom_G (\Pi_{\mf s},\pi^\he) \big)
\end{equation}
Recall that $\mf s = [M,\sigma]_G$ and notice that $\xi$ restricts to a nondegenerate character 
of $U \cap M$.
Suppose now that $\pi \in \Rep (G)^{\mf s}$ and assume Condition \ref{cond:5.1}.
By Theorem \ref{thm:5.3}, \eqref{eq:6.2} is isomorphic to
\[
\Hom_{\mc H} \big( \Hom_G (\Pi_{\mf s}, \ind_U^G (\xi)), \Hom_G (\Pi_{\mf s},\pi)^\he  \big). 
\]
The explicit structure of affine Hecke algebras (in comparison with reductive $p$-adic groups)
will make it possible to characterize genericity of $\pi$ much more simply than above.
Recall that the Steinberg representation of $\mr{St} : \mc H (W,q^\lambda) \to \C$ is given
by $\mr{St}(N_s) = - q_s^{-1/2}$ for every simple reflection $s \in W$.

Part (a) of the next result stems from \cite{BuHe}. We include it because it compares well with
part (b), which generalizes \cite{ChSa,MiPa}.

\begin{thm}\label{thm:6.1}
\enuma{
\item Suppose that $\sigma$ is not $(U \cap M, \xi)$-generic. Then \\
$\Hom_G (\Pi_{\mf s},\ind_U^G (\xi)) = 0$ and no object of $\Rep (G)^{\mf s}$ is $(U,\xi)$-generic.
\item Assume that $\sigma$ is simply $(U \cap M, \xi)$-generic and assume Condition \ref{cond:5.1}. 
Then 
\[
\Hom_G (\Pi_{\mf s}, \ind_U^G (\xi)) \cong \ind_{\mc H (W,q^\lambda)}^{\mc H} (\mr{St})
\quad \text{as} \quad \mc H\text{-modules.}
\]
}
\end{thm}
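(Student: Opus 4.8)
The plan is to realise $W_{\mf s} := \Hom_G(\Pi_{\mf s},\ind_U^G(\xi))$ as a cyclic $\mc H$-module whose cyclic vector transforms under the finite Hecke subalgebra $\mc H(W,q^\lambda)$ through the character $\mr{St}$, and, independently, to show that $W_{\mf s}$ is free of rank one over $\mc A = \mc H^\emptyset$; these two facts force the resulting surjection $\ind_{\mc H(W,q^\lambda)}^{\mc H}(\mr{St}) \to W_{\mf s}$ to be an isomorphism.

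First I would reduce $W_{\mf s}$ to the cuspidal Levi $M$. Since $\Pi_{\mf s} = I_P^G \Pi_{\mf s_M}$, Bernstein's second adjointness gives $W_{\mf s} \cong \Hom_M\big(\Pi_{\mf s_M}, J^G_{\overline P}(\ind_U^G\xi)\big)$, an isomorphism natural in $\Pi_{\mf s_M}$ and hence $\mc A$-linear for $\mc A \cong \End_M(\Pi_{\mf s_M})^{op}$. The geometric lemma applied to the Jacquet module of the representation $\ind_U^G(\xi)$, which is compactly induced from the minimal parabolic $B$, yields a filtration of $J^G_{\overline P}(\ind_U^G\xi)$ by $M$-submodules whose subquotients are parabolically induced inside $M$ from Weyl conjugates of the Whittaker datum. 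For every index except the open-cell one the relevant conjugate of $\xi$ is a degenerate character, so by Rodier's description of degenerate Whittaker modules that subquotient is parabolically induced in $M$ from a proper Levi; such a subquotient has vanishing $\mf s_M$-component, hence is annihilated by $\Hom_M(\Pi_{\mf s_M},-)$ because $\Pi_{\mf s_M}$ is a projective generator of $\Rep(M)^{\mf s_M}$. The surviving, open-cell subquotient is, up to a Weyl twist, the Gelfand--Graev representation $\ind_{U\cap M}^M(\xi_M)$ of $M$, with $\xi_M := \xi|_{U\cap M}$ nondegenerate, so $W_{\mf s} \cong \Hom_M\big(\Pi_{\mf s_M},\ind_{U\cap M}^M(\xi_M)\big)$ as $\mc A$-modules. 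Because $\sigma$ is simply $(U\cap M,\xi_M)$-generic, the $\mf s_M$-component of $\ind_{U\cap M}^M(\xi_M)$ is a projective generator of $\Rep(M)^{\mf s_M}$ --- this is the simply generic case, due to \cite{BuHe} --- so under $\Rep(M)^{\mf s_M}\cong\Mod(\mc A)$ it is the free module $\mc A$, and $\Hom_M(\Pi_{\mf s_M},\ind_{U\cap M}^M(\xi_M))\cong\End_M(\Pi_{\mf s_M})^{op}\cong\mc A$, free of rank one. Thus $W_{\mf s}$ is free of rank one over $\mc A$, generated by the image $v_0$ of $\mathrm{id}_{\Pi_{\mf s_M}}$, the canonical Whittaker functional on $\Pi_{\mf s}$.

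It remains to compute the action of $\mc H(W,q^\lambda)$ on $v_0$, i.e. to show $N_s\cdot v_0 = -q_s^{-1/2}v_0 = \mr{St}(N_s)\,v_0$ for every simple reflection $s$. Under the isomorphism $\mc H \cong \End_G(\Pi_{\mf s})^{op}$ of \cite{Hei} and \cite[\S 10]{SolEnd}, the generators $N_s$ correspond to the normalised self-intertwining operators of $\Pi_{\mf s} = I_P^G\Pi_{\mf s_M}$ attached to $W$, and I would track their effect on $v_0$; via Rodier's heredity this reduces to rank-one Levi subgroups, where it is the Casselman--Shalika / Shahidi local-coefficient computation and gives the stated eigenvalue. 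Since $v_0$ already generates the rank-one $\mc A$-module $W_{\mf s}$, it generates $W_{\mf s}$ over $\mc H$, so $1\otimes 1\mapsto v_0$ defines a surjective $\mc H$-homomorphism $\ind_{\mc H(W,q^\lambda)}^{\mc H}(\mr{St})\twoheadrightarrow W_{\mf s}$. Finally, by the Bernstein presentation \eqref{eq:1.5} the multiplication map $\mc A\otimes\mc H(W,q^\lambda)\to\mc H$ is bijective, so $\ind_{\mc H(W,q^\lambda)}^{\mc H}(\mr{St}) = \mc H\otimes_{\mc H(W,q^\lambda)}\mr{St}$ is likewise free of rank one over $\mc A\cong\C[X]$; as a surjective homomorphism of finitely generated modules over a Noetherian ring is bijective, this surjection is an isomorphism, proving (b).

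The main obstacle is the identity $N_s\cdot v_0 = \mr{St}(N_s)\,v_0$: one must match the combinatorial normalisation of the isomorphism $\End_G(\Pi_{\mf s})^{op}\cong\mc H$ of \cite{Hei,SolEnd} --- for which the $N_s$ obey a quadratic relation with $q$-independent eigenvalues $q_s^{1/2}$ and $-q_s^{-1/2}$ --- with the analytic normalisation of the intertwining operators acting on Whittaker functionals, whose local coefficients are a priori nonconstant rational functions of the unramified parameter. This is where the genericity of $\sigma$ and the hypothesis that the $q$-parameters lie in $\R_{\geq 1}$ are really used, and it is the affine Hecke algebra incarnation of \cite{ChSa,MiPa}. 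The remaining ingredients --- second adjointness, the geometric lemma, the structure of degenerate Whittaker modules, and the bijectivity of surjective endomorphisms of finitely generated modules over Noetherian rings --- are standard.
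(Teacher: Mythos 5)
Your reduction to the Levi $M$ and the rank-one statement over $\mc A$ follow the same route as the paper: the paper obtains \eqref{eq:6.3} from Bernstein's second adjointness together with \cite[Theorem 2.2]{BuHe}, which gives $J^G_{\overline P}\,\ind_U^G(\xi)\cong \ind_{U\cap M}^M(\xi)$ on the nose (the non-open cells vanish outright, because coinvariants along unipotent subgroups on which the character is nontrivial are zero), and then \cite[Proposition 9.2]{BuHe} identifies the $\mf s_M$-component of $\ind_{U\cap M}^M(\xi)$ with $\ind_{M^1}^M(\sigma_1)=\Pi_{\mf s_M}$, whence \eqref{eq:6.5}. Your block-theoretic vanishing of the other cells is a weaker but sufficient substitute; however, your inference ``the $\mf s_M$-component is a projective generator, hence is the free module $\mc A$'' is not valid as stated ($\mc A\oplus\mc A$ is also a progenerator) --- you really need the stronger statement of \cite[Proposition 9.2]{BuHe}, i.e.\ the isomorphism with $\Pi_{\mf s_M}$ itself, which is where simple genericity and \eqref{eq:6.6} enter. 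Moreover, part (a) of the theorem is not addressed anywhere in your proposal; it is the easy half (the same reduction \eqref{eq:6.3} plus \cite[Corollary 4.2]{BuHe} and \eqref{eq:6.1}), but it is part of the statement.

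The substantive gap is the step you yourself single out: the identity $N_s\cdot v_0=\mr{St}(N_s)\,v_0$ for the generators $N_s$ furnished by the fixed isomorphism $\End_G(\Pi_{\mf s})^{op}\cong\mc H$ of Condition \ref{cond:5.1}. Your endgame is sound (cyclicity of $v_0$ over $\mc A$, freeness of $\ind_{\mc H(W,q^\lambda)}^{\mc H}(\mr{St})$ of rank one over $\mc A$ via the Bernstein decomposition, and bijectivity of a surjection between two modules each isomorphic to $\mc A$), but the eigenvalue claim is precisely where the content of part (b) lies, and ``track the effect of the normalized intertwining operators, reduce to rank one, invoke the Casselman--Shalika/Shahidi local coefficient'' is a plan rather than a proof: the operators produced by \cite{Hei,SolEnd} are normalized through the Harish-Chandra $\mu$-function, not through the Whittaker datum, so a priori the relevant local coefficient is a nonconstant rational function of the unramified twist, and neither the constancy of the eigenvalue nor the fact that one lands on $\mr{St}$ rather than the trivial character is established by your sketch. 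The paper closes exactly this step by checking that \eqref{eq:6.5} places the situation in the setting of \cite[\S 2]{SolQS}, where the operators are renormalized using the Whittaker datum, and then citing \cite[Lemma 3.1]{SolQS} (the generalization of \cite{ChSa,MiPa}); your proposal would in effect have to re-prove that lemma, and that computation is missing.
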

\begin{proof}
Recall that $\Pi_{\mf s } = I_P^G (\ind_{M^1}^M (\sigma_1))$ for an irreducible subrepresentation
$\sigma_1$ of $\Res^M_{M^1} (\sigma_1)$.
By \cite[Theorem 2.2]{BuHe} there is a natural isomorphism $J^G_{\overline P} \ind_U^G (\xi)
\cong \ind_{U \cap M}^M (\xi)$. With Bernstein's second adjointness we find
\begin{equation}\label{eq:6.3}
\begin{aligned}
\Hom_G (\Pi_{\mf s}, \ind_U^G (\xi)) & = \Hom_G \big( I_P^G (\ind_{M^1}^M (\sigma_1)), 
\ind_U^G (\xi) \big) \\ & \cong \Hom_M \big( \ind_{M^1}^M (\sigma_1), \ind_{U \cap M}^M (\xi) \big).
\end{aligned}
\end{equation}
(a) The non-genericity of $\sigma$ implies, by \cite[Corollary 4.2]{BuHe}, that the component
of $\ind_{U \cap M}^M (\xi)$ in $\Rep (M)^{\mf s_M}$ is zero. Hence \eqref{eq:6.3} reduces to 0.
That and \eqref{eq:6.1}--\eqref{eq:6.2} imply the second claim of part (a).\\
(b) Since $U \cap M \subset M^1$, there is a unique irreducible constituent $\sigma_1$ of 
$\Res^M_{M^1} (\sigma)$ such that $\Hom_{U \cap M} (\sigma_1,\xi)$ is nonzero. Then
\begin{equation}\label{eq:6.6}
\dim_\C \Hom_{U \cap M} (\sigma_1, \xi) = 1 \text{ and } \sigma_1 \text{ appears with multiplicity
1 in } \Res^M_{M^1} (\sigma).
\end{equation}
Now \cite[Proposition 9.2]{BuHe} says that
\[
\ind_{U \cap M}^{M} (\xi) \cong \ind_{M^1}^M (\sigma_1) .
\]
By Condition \ref{cond:5.1} $\End_M (\mf s_M) \cong \mc A \cong \C [X]$.
Hence \eqref{eq:6.3} is isomorphic to
\begin{equation}\label{eq:6.5}
\End_M (\ind_{M^1}^M (\sigma_1)) \cong \End_M (\Pi_{\mf s_M}) \cong \C [X] 
\end{equation}
as modules for $\End_M (\mf s_M) \cong \C [X]$. That makes our setup is almost the same 
as in \cite[\S 2]{SolQS}, which means that the arguments from there remain valid in our setting. 
Then \cite[Lemma 3.1]{SolQS} proves the theorem.
\end{proof}

The Hermitian duals of the representations in Theorem \ref{thm:6.1} can be described in various
ways, which has interesting consequences.

\begin{prop}\label{prop:6.3}
Assume Condition \ref{cond:5.1} and that $\sigma$ is simply $(U \cap M,\xi)$-generic.
\enuma{
\item There are isomorphisms of $\mc H$-modules
\[
\Hom_G (\Pi_{\mf s}, \mr{Ind}_U^G (\xi)) \cong \Hom_G (\Pi_{\mf s}, \ind_U^G (\xi))^\he \cong
\ind_{\mc H(W,q^\lambda)}^{\mc H} (\mr{St})^\he 
\cong \mc H^\he \underset{\mc H (W,q^\lambda)}{\otimes} \mr{St}.
\]
\item For $\pi \in \Rep (G)^{\mf s}$ there are isomorphisms of complex vector spaces
\begin{align*}
\Hom_U (\pi,\xi) \cong \Hom_G (\pi, \mr{Ind}_U^G (\xi)) & \cong 
\Hom_{\mc H} \big( \Hom_G (\Pi_{\mf s},\pi), \mc H^\he 
\underset{\mc H (W,q^\lambda)}{\otimes} \mr{St} \big) \\ 
& \cong \Hom_{\mc H (W,q^\lambda)} (\Hom_G (\Pi_{\mf s},\pi), \mr{St}) .
\end{align*}
\item A representation $\pi \in \Rep (G)^{\mf s}$ is $(U,\xi)$-generic if and only if the 
$\mc H (W,q^\lambda)$-module $\Hom_G (\Pi_{\mf s},\pi)$ contains $\mr{St}$.
}
\end{prop}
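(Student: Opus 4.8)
The plan is to derive all three parts by chaining together results already established --- Theorem~\ref{thm:5.3} (matching of Hermitian duals across the equivalence), Theorem~\ref{thm:6.1} (the computation of $\Hom_G(\Pi_{\mf s},\ind_U^G(\xi))$) and Lemma~\ref{lem:6.2} --- so that essentially no new argument is needed; the one point demanding care is that $\ind_U^G(\xi)$ and $\mr{Ind}_U^G(\xi)$ need not lie in $\Rep(G)^{\mf s}$, so every application of Theorem~\ref{thm:5.3} and of the equivalence \eqref{eq:5.1} must be read as applied to the $\mf s$-component of those representations (this is harmless since $\Hom_G(\Pi_{\mf s},-)$ only sees that summand and, by \cite[Lemma 2.2]{SolQS}, Hermitian duality preserves it). For part~(a) I would start from $\ind_U^G(\xi)^\he \cong \mr{Ind}_U^G(\xi)$, recalled just before \eqref{eq:6.1}, giving $\Hom_G(\Pi_{\mf s},\mr{Ind}_U^G(\xi)) \cong \Hom_G(\Pi_{\mf s},\ind_U^G(\xi)^\he)$; Theorem~\ref{thm:5.3} turns this into $\Hom_G(\Pi_{\mf s},\ind_U^G(\xi))^\he$, Theorem~\ref{thm:6.1}(b) rewrites the latter as $\ind_{\mc H(W,q^\lambda)}^{\mc H}(\mr{St})^\he$, and Lemma~\ref{lem:6.2}(a) identifies that with $\mc H^\he \otimes_{\mc H(W,q^\lambda)} \mr{St}^\he$. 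To close the chain I would observe that $\mr{St}^\he \cong \mr{St}$: the Steinberg module is one-dimensional, $N_s^* = N_s$ for a simple reflection $s$, and $q_s \in \R_{>0}$, so $\mr{St}^\he(N_s) = \overline{\mr{St}(N_s)} = \mr{St}(N_s)$.

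For part~(b) the first isomorphism $\Hom_U(\pi,\xi) \cong \Hom_G(\pi,\mr{Ind}_U^G(\xi))$ is Frobenius reciprocity for smooth induction, already recalled in the definition of genericity. Since $\Rep(G)$ factors as a product over inertial classes, $\Hom_G(\pi,\mr{Ind}_U^G(\xi)) = \Hom_G(\pi,(\mr{Ind}_U^G(\xi))^{\mf s})$ for $\pi \in \Rep(G)^{\mf s}$, so the equivalence \eqref{eq:5.1} identifies it with $\Hom_{\mc H}\big(\Hom_G(\Pi_{\mf s},\pi),\Hom_G(\Pi_{\mf s},\mr{Ind}_U^G(\xi))\big)$, which by part~(a) equals $\Hom_{\mc H}\big(\Hom_G(\Pi_{\mf s},\pi),\mc H^\he \otimes_{\mc H(W,q^\lambda)} \mr{St}\big)$. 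The last displayed isomorphism then comes from the adjunction in Lemma~\ref{lem:6.2}(b): $\mc H^\he \otimes_{\mc H(W,q^\lambda)}(-)$ is right adjoint to $\Res^{\mc H}_{\mc H(W,q^\lambda)}$, whence for $N = \Hom_G(\Pi_{\mf s},\pi)$ one gets $\Hom_{\mc H}(N,\mc H^\he \otimes_{\mc H(W,q^\lambda)} \mr{St}) \cong \Hom_{\mc H(W,q^\lambda)}(\Res^{\mc H}_{\mc H(W,q^\lambda)} N,\mr{St})$.

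Part~(c) is then immediate: $\pi$ is $(U,\xi)$-generic iff $\Hom_U(\pi,\xi) \neq 0$, which by part~(b) holds iff $\Hom_{\mc H(W,q^\lambda)}(\Hom_G(\Pi_{\mf s},\pi),\mr{St}) \neq 0$; and since $\mc H(W,q^\lambda)$ is finite-dimensional and semisimple, the restriction of $\Hom_G(\Pi_{\mf s},\pi)$ decomposes as a direct sum of irreducibles, so this Hom-space is nonzero exactly when $\mr{St}$ occurs among them, i.e.\ when $\Hom_G(\Pi_{\mf s},\pi)$ contains $\mr{St}$. I do not expect a serious obstacle anywhere: the substantive content is carried by Theorems~\ref{thm:5.3} and~\ref{thm:6.1}, and the only delicate points in the write-up are the block-projection bookkeeping in parts~(a) and~(b) and the (essentially trivial) verification that $\mr{St}$ is self-Hermitian-dual.
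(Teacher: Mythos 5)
Your proposal is correct and follows essentially the same route as the paper: part (a) is the chain Theorem \ref{thm:5.3} (applied via $\ind_U^G(\xi)^\he \cong \mr{Ind}_U^G(\xi)$), Theorem \ref{thm:6.1}.b and Lemma \ref{lem:6.2}.a with $\mr{St} = \mr{St}^\he$; part (b) is Frobenius reciprocity, the equivalence \eqref{eq:5.1} combined with part (a), and the adjunction of Lemma \ref{lem:6.2}.b; part (c) is the semisimplicity of $\mc H(W,q^\lambda)$. Your explicit remarks on restricting to the $\mf s$-component and on the self-duality of $\mr{St}$ only spell out details the paper leaves implicit.
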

\begin{proof}
(a) The first isomorphism is an instance of Theorem \ref{thm:5.3}, the second is Theorem \ref{thm:6.1}
and the third is Lemma \ref{lem:6.2}.a for $V = \mr{St} = \mr{St}^\he$. \\
(b) The first isomorphism is a version of Frobenius reciprocity and the third is Lemma \ref{lem:6.2}.b.
The second isomorphism follows from the equivalence of categories \eqref{eq:5.1} and part (a).\\
(c) This follows from part (b) and the semisimplicity of $\mc H (W,q^\lambda)$.
\end{proof}

We note that Proposition \ref{prop:6.3}.c is almost the same as \cite[Theorem 3.4]{SolQS}. The latter was
only proven for representations of finite length, and did not include Proposition \ref{prop:6.3}.a,b.

\section{Generic representations of affine Hecke algebras}
\label{sec:generic}

Let us return to a more general setting, where $\mc H$ is an affine Hecke algebra with $q$-parameters in
$\R_{\geq 1}$, but $\mc H$ does not have to come from a reductive $p$-adic group. Motivated by
Proposition \ref{prop:6.3}, we put
 
\begin{defn}\label{def:6.4}
An $\mc H$-module $V$ is generic if and only if $\Res^{\mc H}_{\mc H (W,q^\lambda)} V$ contains St. 
\end{defn}

From this definition the multiplicity one property of generic constituents of standard modules, as in 
\cite{Shal,Rod} for quasi-split reductive $p$-adic groups, follows quickly.

\begin{lem}\label{lem:6.5}
Let $P \subset \Delta$ and let $V \in \mr{Mod}(\mc H^P)$. 
\enuma{
\item $V$ is generic if and only if $\ind_{\mc H^P}^{\mc H} V$ is generic.
\item Suppose $V$ is irreducible and generic. Then
$\dim \Hom_{\mc H (W,q^\lambda)} (\ind_{\mc H^P}^{\mc H} V, \mr{St}) = 1$
and $\ind_{\mc H^P}^{\mc H} V$ has a unique generic irreducible subquotient.
This constituent appears with multiplicity one in $\ind_{\mc H^P}^{\mc H} V$.
}
\end{lem}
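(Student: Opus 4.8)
The plan is to reduce everything to the finite-dimensional \emph{semisimple} algebras $\mc H(W,q^\lambda)$ and $\mc H(W_P,q^\lambda)=\mc H(W,q^\lambda)\cap\mc H^P$ (semisimple because the $q$-parameters are positive reals), together with the fact that $\mr{St}$ and $\mr{St}_P$ are one-dimensional. Let $e\in\mc H(W,q^\lambda)$ and $e_P\in\mc H(W_P,q^\lambda)$ be the central idempotents onto these Steinberg blocks. Since $\mr{St}(N_s)=-q_s^{-1/2}$ for every simple $s\in W$, one has $\Res^{\mc H(W,q^\lambda)}_{\mc H(W_P,q^\lambda)}\mr{St}=\mr{St}_P$; by semisimplicity, for any module $M$ the property ``$M$ contains $\mr{St}$'' is equivalent to $eM\neq0$ and to $\Hom_{\mc H(W,q^\lambda)}(M,\mr{St})\neq0$, and the multiplicity of $\mr{St}$ in $M$ equals $\dim_\C eM=\dim_\C\Hom_{\mc H(W,q^\lambda)}(M,\mr{St})$; likewise for $\mr{St}_P$. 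The crucial input is the Mackey-type isomorphism of $\mc H(W,q^\lambda)$-modules
\[
\Res^{\mc H}_{\mc H(W,q^\lambda)}\ind_{\mc H^P}^{\mc H}(V)\;\cong\;\ind_{\mc H(W_P,q^\lambda)}^{\mc H(W,q^\lambda)}\Res^{\mc H^P}_{\mc H(W_P,q^\lambda)}(V),
\]
which in this situation involves no nontrivial double cosets. It comes from the bimodule isomorphism $\mc H(W,q^\lambda)\otimes_{\mc H(W_P,q^\lambda)}\mc H^P\xrightarrow{\sim}\mc H$ given by multiplication: both $\mc H$ over $\mc H^P$ and $\mc H(W,q^\lambda)$ over $\mc H(W_P,q^\lambda)$ are free right modules on $\{N_w:w\in W^P\}$ (the first by \eqref{eq:1.5}, the second because $W=W^PW_P$ with lengths adding), and on each summand $N_w\otimes_{\mc H(W_P,q^\lambda)}\mc H^P$ multiplication restricts to the isomorphism onto $N_w\mc H^P$; tensoring over $\mc H^P$ with $V$ and using associativity of the tensor product yields the display.

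Part (a) is then immediate (for arbitrary $V$): combining the Mackey isomorphism, Frobenius reciprocity $\Hom_{\mc H(W,q^\lambda)}(\ind_{\mc H(W_P,q^\lambda)}^{\mc H(W,q^\lambda)}U,\mr{St})\cong\Hom_{\mc H(W_P,q^\lambda)}(U,\mr{St}_P)$, and the translations above, $\ind_{\mc H^P}^{\mc H}(V)$ is generic iff $\Hom_{\mc H(W,q^\lambda)}(\ind_{\mc H^P}^{\mc H}V,\mr{St})\neq0$ iff $\Hom_{\mc H(W_P,q^\lambda)}(\Res^{\mc H^P}_{\mc H(W_P,q^\lambda)}V,\mr{St}_P)\neq0$ iff $V$ is generic.

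For part (b), I first recall that irreducible modules over an affine Hecke algebra are finite-dimensional (the algebra is module-finite over its centre), so $V$ and hence $\ind_{\mc H^P}^{\mc H}V=\mc H(W^P)\otimes_\C V$ are finite-dimensional. By the chain in part (a) and semisimplicity, $\dim_\C\Hom_{\mc H(W,q^\lambda)}(\ind_{\mc H^P}^{\mc H}V,\mr{St})$ equals the multiplicity of $\mr{St}_P$ in $\Res^{\mc H^P}_{\mc H(W_P,q^\lambda)}V$, i.e. $\dim_\C e_PV$; genericity gives $e_PV\neq0$, so it remains to prove $\dim_\C e_PV\le1$. For this, note that $e_P$ is central in $\mc H(W_P,q^\lambda)$ and that each $N_w e_P$ ($w\in W_P$) is a nonzero scalar multiple of $e_P$ (since $N_w$ is invertible and the Steinberg block is one-dimensional); as $\mc H^P=\bigoplus_{w\in W_P}\mc A N_w$ is free as a left $\mc A$-module, this gives $\mc H^P e_P=\mc A e_P$, and $a\mapsto ae_P$ is an isomorphism $\mc A\xrightarrow{\sim}\mc H^P e_P$ of left $\mc A$-modules (injectivity because $\mc A$ is a domain and $e_P\neq0$). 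Consequently $e_P\mc H^P e_P\cong\End_{\mc H^P}(\mc H^P e_P)^{op}$ embeds into $\End_{\mc A}(\mc H^P e_P)\cong\End_{\mc A}(\mc A)=\mc A$, hence is commutative. Now $e_PV$, being the image of the simple $\mc H^P$-module $V$ under the idempotent $e_P$, is a simple module over the commutative ring $e_P\mc H^P e_P$; being also finite-dimensional over $\C$, it is one-dimensional. Therefore $\dim_\C e_PV=1$, and $\dim_\C\Hom_{\mc H(W,q^\lambda)}(\ind_{\mc H^P}^{\mc H}V,\mr{St})=1$.

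For the remaining assertion, choose a Jordan--H\"older series of $\ind_{\mc H^P}^{\mc H}V$ with factors $L_1,\dots,L_k$. Since $\Res^{\mc H}_{\mc H(W,q^\lambda)}$ and multiplication by $e$ are exact, $1=\dim_\C e(\ind_{\mc H^P}^{\mc H}V)=\sum_i\dim_\C eL_i$, so exactly one factor $L$ satisfies $eL\neq0$, i.e. is generic, and it occurs with multiplicity one; any generic irreducible subquotient of $\ind_{\mc H^P}^{\mc H}V$ is a composition factor, hence isomorphic to $L$. The step I expect to carry genuine content is the commutativity of $e_P\mc H^P e_P$ — equivalently, that the Steinberg left ideal $\mc H^P e_P$ is free of rank one over $\mc A$ — while the remaining arguments are formal, once the trivial-double-coset Mackey isomorphism is available.
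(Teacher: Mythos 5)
Your proof is correct, and for part (a) it follows the paper's route exactly: the trivial-double-coset Mackey isomorphism $\Res^{\mc H}_{\mc H(W,q^\lambda)}\ind_{\mc H^P}^{\mc H}(V)\cong \ind_{\mc H(W_P,q^\lambda)}^{\mc H(W,q^\lambda)}\Res^{\mc H^P}_{\mc H(W_P,q^\lambda)}(V)$ coming from the Bernstein presentation (the paper states this in one line; you flesh out the bimodule argument), followed by Frobenius reciprocity and semisimplicity of the finite Hecke algebras. The genuine divergence is in part (b): for the key bound $\dim \Hom_{\mc H(W,q^\lambda)}(\ind_{\mc H^P}^{\mc H}V,\mr{St})\leq 1$ the paper simply cites \cite[Lemma 3.5]{SolQS}, whereas you prove it internally, identifying this dimension with $\dim_\C e_P V$ and showing $e_P\mc H^P e_P$ is commutative because $\mc H^P e_P=\mc A e_P\cong \mc A$ as a left $\mc A$-module (so $e_P\mc H^P e_P\cong \End_{\mc H^P}(\mc H^P e_P)^{op}$ embeds in $\End_{\mc A}(\mc A)\cong\mc A$), and then invoking that a finite-dimensional simple module over a commutative $\C$-algebra is one-dimensional, together with finite-dimensionality of irreducible $\mc H^P$-modules. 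This buys a self-contained, purely algebraic proof of the multiplicity-one input at the cost of a slightly longer argument; the paper's citation outsources exactly this point. Your terse steps (injectivity of $a\mapsto a e_P$ via the coefficient of some $N_u$ in $e_P$ being nonzero and freeness of $\mc H^P$ over $\mc A$; the standard fact that $e_P V$ is simple over $e_P\mc H^P e_P$) all check out, and your Jordan--H\"older argument for the uniqueness and multiplicity-one of the generic constituent is an equivalent, if slightly different, packaging of the paper's description of that constituent as $\mc H v$ modulo the maximal submodule not containing the unique Steinberg vector $v$.
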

\begin{proof}
(a) The Bernstein presentation of $\mc H$ shows that
\[
\Res^{\mc H}_{\mc H (W,q^\lambda)} \big( \ind_{\mc H^P}^{\mc H} V \big) = \ind_{\mc H 
(W_P ,q^\lambda)}^{\mc H (W,q^\lambda)} \big( \Res^{\mc H^P}_{\mc H (W_P,q^\lambda)} V \big) .
\]
Then by Frobenius reciprocity 
\begin{equation}\label{eq:6.4}
\Hom_{\mc H (W,q^\lambda)} (\ind_{\mc H^P}^{\mc H} V, \mr{St}) \cong 
\Hom_{\mc H (W_P,q^\lambda)} (V, \mr{St}) .
\end{equation}
(b) By \cite[Lemma 3.5]{SolQS}
\[
\dim \Hom_{\mc H (W,q^\lambda)} (\ind_{\mc H^P}^{\mc H} V, \mr{St}) \leq 1 ,
\]
and by part (a) it is not 0. In view of the semisimplicity of $\mc H (W,q^\lambda)$,
this shows that $\ind_{\mc H^P}^{\mc H} V$ contains a unique copy of St, say $\C v$.
It follows that $\ind_{\mc H^P}^{\mc H} V$ has a generic irreducible subquotient,
which appears with multiplicity one. It can be described as $\mc H v$ modulo the
maximal submodule that does not contain $v$.
\end{proof}

We will investigate when the generic constituent of $\ind_{\mc H^P}^{\mc H} V$ is
a quotient or a subrepresentation. That is related to the generalized injectivity
conjecture \cite{CaSh} about representations of reductive $p$-adic groups. It asserts
that the generic irreducible subquotient of a generic standard representation is always a 
subrepresentation.

The last isomorphism in Proposition \ref{prop:6.3}.b provides a useful alternative (but
equivalent) condition for genericity of an $\mc H$-representation $\pi$, namely that 
\[
\Hom_{\mc H} (\pi, \mc H^\he \otimes_{\mc H (W,q^\lambda)} \mr{St}) \text{ is nonzero.}
\]
By Proposition \ref{prop:2.5} there are isomorphisms of $\mc A$-modules
\begin{equation}
\begin{aligned}
\Res_{\mc A}^{\mc H} (\mc H^\he \otimes_{\mc H (W,q^\lambda)} \mr{St}) & \cong
\Res_{\mc A}^{\mc H} (\ind_{\mc A}^{\mc H}(\mc A^\he) \otimes_{\mc H (W,q^\lambda)} \mr{St} ) \\
& \cong \mc A^\he \otimes_\C \mc H (W,q^\lambda) \otimes_{\mc H (W,q^\lambda)} \mr{St} 
\cong \mc A^\he .
\end{aligned}
\end{equation}
The composed isomorphism is given explicitly by
\[
\begin{array}{ccc}
\mc A^\he & \to & \mc H^\he \otimes_{\mc H (W,q^\lambda)} \mr{St} \\
h & \mapsto & \imath (h) \otimes 1
\end{array},
\]
where $\imath$ is as in \eqref{eq:2.19}. For $t \in T$ we write 
\[
f_t = \imath \big( \sum\nolimits_{x \in X} x(t)^{-1} \theta_x \big) \in \mc H^\he .
\]
In these terms 
\begin{equation}\label{eq:7.7}
\Hom_{\mc H} \big( \ind_{\mc A}^{\mc H}(t), 
\mc H^\he \otimes_{\mc H (W,q^\lambda)} \mr{St} \big)
\cong \Hom_{\mc A} \big( t, \mc A^\he \big) \cong \C f_t .
\end{equation}

\begin{lem}\label{lem:7.1}
Let $P \subset \Delta$ and let $\pi$ be an irreducible generic $\mc H^P$-representation.
\enuma{
\item For any $t \in T^P$: $\dim \Hom_{\mc H} \big( \ind_{\mc H^P}^{\mc H} (\pi \otimes t),  
\mc H^\he \otimes_{\mc H (W,q^\lambda)} \mr{St}  \big) = 1$.
\item Let $s \in T$ be a weight of $\pi$. Then the image of $\ind_{\mc H^P}^{\mc H} 
(\pi \otimes t)$ in $\mc H^\he \otimes_{\mc H (W,q^\lambda)} \mr{St}$, via a nonzero
$\mc H$-homomorphism as in part (a), is generated as $\mc H$-module by $f_{st} \otimes 1$.
}
\end{lem}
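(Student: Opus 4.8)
The plan is to deduce both statements from the explicit description of $\Hom_{\mc H}\big(\ind_{\mc A}^{\mc H}(t),\mc H^\he\otimes_{\mc H(W,q^\lambda)}\mr{St}\big)\cong\C f_t$ in \eqref{eq:7.7}, combined with Lemma \ref{lem:6.5} and the Bernstein presentation. For part (a), since $\pi$ is irreducible and generic, Lemma \ref{lem:6.5}.b already gives that $\dim\Hom_{\mc H(W,q^\lambda)}(\ind_{\mc H^P}^{\mc H}(\pi\otimes t),\mr{St})=1$; then by Proposition \ref{prop:6.3}.b applied with $\pi\otimes t$ in place of the $p$-adic representation, or directly via Lemma \ref{lem:6.2}.b, this dimension equals $\dim\Hom_{\mc H}\big(\ind_{\mc H^P}^{\mc H}(\pi\otimes t),\mc H^\he\otimes_{\mc H(W,q^\lambda)}\mr{St}\big)$. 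So part (a) is essentially a repackaging of the multiplicity-one statement already in hand, once one notes that $\pi\otimes t$ is again irreducible and generic (genericity of $\pi\otimes t$ follows from genericity of $\pi$ because $\Res^{\mc H^P}_{\mc H(W_P,q^\lambda)}(\pi\otimes t)=\Res^{\mc H^P}_{\mc H(W_P,q^\lambda)}(\pi)$, the character $\psi_t$ being the identity on $\mc H(W_P,q^\lambda)$).

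For part (b), first I would use Frobenius reciprocity and transitivity of induction through $\mc A\subset\mc H^P\subset\mc H$. Since $\pi$ is generic, $\Hom_{\mc H^P}(\ind_{\mc A}^{\mc H^P}(s),\pi)\neq 0$ for a weight $s$ of $\pi$, equivalently there is a nonzero $\mc A$-map $s\to\Res^{\mc H^P}_{\mc A}(\pi)$; inducing up gives a nonzero $\mc H$-map $\ind_{\mc A}^{\mc H}(s)=\ind_{\mc H^P}^{\mc H}\ind_{\mc A}^{\mc H^P}(s)\to\ind_{\mc H^P}^{\mc H}(\pi)$. Twisting by $t\in T^P$ (and using $\ind_{\mc A}^{\mc H}(st)\cong\ind_{\mc A}^{\mc H}(s)\otimes t$ appropriately, i.e. that inducing $st$ from $\mc A$ factors through inducing $s\otimes t$ from $\mc H^P$) one gets a nonzero $\mc H$-homomorphism $\Phi:\ind_{\mc A}^{\mc H}(st)\to\ind_{\mc H^P}^{\mc H}(\pi\otimes t)$. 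Composing with the nonzero map $F:\ind_{\mc H^P}^{\mc H}(\pi\otimes t)\to\mc H^\he\otimes_{\mc H(W,q^\lambda)}\mr{St}$ from part (a) yields an element of $\Hom_{\mc H}\big(\ind_{\mc A}^{\mc H}(st),\mc H^\he\otimes_{\mc H(W,q^\lambda)}\mr{St}\big)$, which by \eqref{eq:7.7} is a scalar multiple of the map sending $1\otimes 1\mapsto f_{st}\otimes 1$.

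The remaining point is to show this composite $F\circ\Phi$ is \emph{nonzero}, so that the scalar is nonzero and hence $f_{st}\otimes 1$ lies in (and generates, as an $\mc H$-module) the image of $F$; this is the step I expect to be the main obstacle. The image of $\Phi$ is a nonzero $\mc H$-submodule of $\ind_{\mc H^P}^{\mc H}(\pi\otimes t)$ containing the weight $st$, so it suffices to know that any nonzero $\mc H$-submodule $U$ of $\ind_{\mc H^P}^{\mc H}(\pi\otimes t)$ has nonzero image under $F$ — equivalently, that the kernel of $F$ contains no nonzero submodule, i.e. $\ker F$ is the maximal submodule not meeting the line $\C v$ of Lemma \ref{lem:6.5}.b. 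Indeed, $F$ is (up to scalar) the canonical map realizing $\ind_{\mc H^P}^{\mc H}(\pi\otimes t)\to\mc H^\he\otimes_{\mc H(W,q^\lambda)}\mr{St}$ dual to the inclusion $\mr{St}\hookrightarrow\Res^{\mc H}_{\mc H(W,q^\lambda)}\ind_{\mc H^P}^{\mc H}(\pi\otimes t)$, and by the adjunction of Lemma \ref{lem:6.2}.b its kernel is precisely the largest submodule whose restriction to $\mc H(W,q^\lambda)$ avoids that copy of $\mr{St}$; since $v$ spans the unique $\mr{St}$-isotypic line, any nonzero submodule either contains $v$ or — if it avoided $v$ it would lie in $\ker F$, but then by maximality the submodule equals $\ker F\cap U$, contradiction unless it is already inside $\ker F$. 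To finish, I would verify that $\mr{im}\,\Phi$ is not contained in $\ker F$: if it were, then $\ind_{\mc H^P}^{\mc H}(\pi\otimes t)/\ker F$ — which by Lemma \ref{lem:6.5}.b is the unique generic irreducible subquotient with its $\mr{St}$-line — would not see the weight $st$ in its image, yet the composition $F\circ\Phi$ being zero is exactly the assertion we are trying to refute; instead I argue directly that $v\in\mr{im}\,\Phi$ by tracing the $\mc H(W_P,q^\lambda)$-generation: the element $1\otimes(\text{St-vector of }\pi\otimes t)\in\ind_{\mc H^P}^{\mc H}(\pi\otimes t)$ spans the $\mr{St}$-line $\C v$ and lies in the image of the induced map from the $\mr{St}$-line inside $\ind_{\mc A}^{\mc H^P}(s)$, hence in $\mr{im}\,\Phi$. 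Therefore $F(v)\neq 0$ forces $F\circ\Phi\neq 0$, the scalar is nonzero, and $f_{st}\otimes 1=$ (up to scalar) $F(\Phi(1\otimes 1))$ lies in the image of $F$ and generates it as an $\mc H$-module because $1\otimes 1$ generates $\ind_{\mc A}^{\mc H}(st)$ over $\mc H$ and $F$ is $\mc H$-linear with $F|_{\mr{im}\,\Phi}$ onto the image of $F$ (the image being irreducible-generated). This completes part (b).
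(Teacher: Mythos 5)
Part (a) of your proposal is correct and coincides with the paper's argument: combine the adjunction of Lemma \ref{lem:6.2}.b with the multiplicity one statement of Lemma \ref{lem:6.5}.b, after noting that $\pi \otimes t$ is again irreducible and generic.

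For part (b) your overall route -- produce a map $\Phi : \ind_{\mc A}^{\mc H}(st) \to \ind_{\mc H^P}^{\mc H}(\pi \otimes t)$, compose with a nonzero $F$ from part (a), and pin down $F \circ \Phi$ via \eqref{eq:7.7} -- is the paper's route, but the step you flag as the main obstacle is handled incorrectly. Your claim that every nonzero $\mc H$-submodule of $\ind_{\mc H^P}^{\mc H}(\pi \otimes t)$ has nonzero image under $F$ (equivalently, that $\ker F$ contains no nonzero submodule, i.e.\ $\ker F = 0$) is false in general: $\ker F$ is itself an $\mc H$-submodule, and any non-generic subrepresentation $\delta$ of the induced module lies in it, since $\Hom_{\mc H}(\delta, \mc H^\he \otimes_{\mc H(W,q^\lambda)} \mr{St}) \cong \Hom_{\mc H(W,q^\lambda)}(\delta, \mr{St}) = 0$; Example \ref{ex:7} exhibits exactly such a situation. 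Your second claim, that $1 \otimes (\text{St-vector of } \pi \otimes t)$ spans the line $\C v$, is also wrong: that vector is an eigenvector only for $\mc H(W_P,q^\lambda)$, whereas $v$ is its image under the Steinberg idempotent of $\mc H(W,q^\lambda)$ (for $P = \emptyset$ your claim would make $N_e \otimes 1 \in \ind_{\mc A}^{\mc H}(t)$ a Steinberg eigenvector, which it is not). The missing observation, which is what the paper uses, is much simpler: the map $\ind_{\mc A}^{\mc H^P}(st) \to \pi \otimes t$ given by Frobenius reciprocity is surjective because $\pi \otimes t$ is irreducible (its value $v_s$ on $N_e \otimes 1$ generates), and applying $\ind_{\mc H^P}^{\mc H} = \mc H \otimes_{\mc H^P}(-)$, which preserves surjections, shows that your $\Phi$ is onto. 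Hence $F \circ \Phi \neq 0$ automatically, and moreover $\mr{im}\, F = \mr{im}(F \circ \Phi) = \mc H \cdot F\Phi (N_e \otimes 1) = \mc H \cdot (f_{st} \otimes 1)$ because $N_e \otimes 1$ generates $\ind_{\mc A}^{\mc H}(st)$; this also repairs your final generation argument, which as written rests on the unproved assertion that $F|_{\mr{im}\,\Phi}$ is onto $\mr{im}\, F$. With this fix your proof becomes essentially the paper's, which phrases the same computation at the level of $\mc H^P$: the composite $\ind_{\mc A}^{\mc H^P}(st) \to \pi \otimes t \to \mc H^\he \otimes_{\mc H(W,q^\lambda)} \mr{St}$ is nonzero, hence equals the $f_{st}$-map, so $v_s \mapsto f_{st} \otimes 1$, and Frobenius reciprocity then yields the operator $\mf{Wh}(P, \pi \otimes t, v_s)$ whose image is generated by $f_{st} \otimes 1$.
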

\begin{proof}
(a) This follows from Lemmas \ref{lem:6.2}.b and \ref{lem:6.5}.b. \\
(b) For any weight $s$ of $\pi$, Frobenius reciprocity yields a nonzero (and hence surjective) 
$\mc H^P$-homomorphism $\ind_{\mc A}^{\mc H^P} (s) \to \pi$. Hence the unique (up to scalars)
$\mc H^P$-homomorphism $\pi \to \mc H^\he \otimes_{\mc H (W,q^\lambda)} \mr{St}$ can be inflated 
to a nonzero $\mc H$-homomorphism $\phi_s : \ind_{\mc A}^{\mc H^P} (s) \to 
\mc H^\he \otimes_{\mc H (W,q^\lambda)} \mr{St}$. Similarly the unique (up to scalars) homomorphism 
$\pi \otimes t \to \mc H^\he \otimes_{\mc H (W,q^\lambda)} \mr{St}$ inflates to 
\[
\phi_{st} : \ind_{\mc A}^{\mc H^P} (st) \to \mc H^\he \otimes_{\mc H (W,q^\lambda)} \mr{St} .
\]
By \eqref{eq:7.7} we may assume that
\[
\phi_{st}(h \otimes 1) = h (f_{st} \otimes 1).
\]
Let $v_s$ be the image of $N_e \otimes 1 \in \ind_{\mc A}^{\mc H^P}(s)$ in $\pi$, by irreducibility 
it generates $\pi$. Then $\phi_{st}$ factors through a homomorphism $\pi \otimes t \to \mc H^\he 
\otimes_{\mc H (W,q^\lambda)} \mr{St}$ that sends $v_s$ to $f_{st} \otimes 1$.
Frobenius reciprocity produces a homomorphism
\begin{equation}\label{eq:7.8}
\begin{array}{cccc}
\mf{Wh} (P,\pi \otimes t,v_s) : &
\ind_{\mc H^P}^{\mc H} (\pi \otimes t) & \to & \mc H^\he \otimes_{\mc H (W,q^\lambda)} \mr{St} \\
& \ind_{\mc H^P}^{\mc H} (\pi \otimes t)(h) v_s & \mapsto & h (f_{st} \otimes 1) 
\end{array}.
\end{equation}
Clearly the image of $\mf{Wh} (P,\pi \otimes t,v_s)$ is generated by $f_{st} \otimes 1$.
\end{proof}

The $\mf{Wh} (P,\pi \otimes t,v_s)$ with $t \in T^P$ form an algebraic family of $\mc H$-homomorphisms, 
in the sense that for any fixed $h \in \mc H$ the image $\mf{Wh}(P,\pi \otimes t,v_s)(h \otimes v_s)$ 
is a regular function of $t$.

To analyse the unique irreducible generic subquotient of $\ind_{\mc H^P}^{\mc H}(\pi \otimes t)$,
which exists by Lemma \ref{lem:6.5}.b, we use a version of Shahidi's local constant \cite{Sha1}. 
Let us set up the intertwining operators $I(\gamma,\pi \otimes t)$ more systematically.
For $\alpha \in \Delta$ we define $\imath_{s_\alpha}^\circ \in \C (T)^W \otimes_{\mc O (T)^W} \mc H$ by
\begin{equation}\label{eq:7.9}
1 + q^{\lambda (\alpha) /2 } N_{s_\alpha} = (1 + \imath_{s_\alpha}^\circ)
\frac{\theta_\alpha q^{(\lambda (\alpha) + \lambda^* (\alpha))/2} -1}{\theta_\alpha -1}
\frac{\theta_\alpha q^{(\lambda (\alpha) - \lambda^* (\alpha))/2} + 1}{\theta_\alpha +1} .
\end{equation}
By \cite[\S 5]{Lus-Gr}, $s_\alpha \mapsto \imath_{s_\alpha}^\circ$ extends to a group homomorphism
\begin{equation}\label{eq:7.11}
W \to \big( \C (T)^W \otimes_{\mc O (T)^W} \mc H \big)^\times : w \mapsto \imath_w^\circ .
\end{equation}
These elements provide an algebra isomorphism
\[
\begin{array}{ccc}
\C (T) \rtimes W & \to & \C (T)^W \otimes_{\mc O (T)^W} \mc H \\
f w & \mapsto & f \imath_w^\circ 
\end{array}.
\]
Assume that $P, w(P) \subset \Delta$. It follows from \eqref{eq:7.9} that
\[
\imath_w^\circ N_{s_\alpha} \imath^\circ_{w^{-1}} = N_{s_{w(\alpha)}} \text{ for all } \alpha \in P. 
\]
Hence the isomorphism $\psi_w : \mc H^P \to \mc H^{w(P)}$ equals conjugation by $\imath^\circ_w$ in
$\C (T)^W \otimes_{\mc O (T)^W} \mc H$. Let $t \in T^P$ and $(\pi,V_\pi) \in \Mod (\mc H^P)$.
Consider the bijection
\[
\begin{array}{ccc}
\C (T)^W \otimes_{\mc O (T)^W} \mc H \otimes_{\mc H^P} V_\pi & \to &
\C (T)^W \otimes_{\mc O (T)^W} \mc H \otimes_{\mc H^{w(P)}} V_\pi \\
h \otimes v & \mapsto & h \imath^\circ_{w^{-1}} \otimes v 
\end{array},
\]
where $V_\pi$ is endowed with the representation $\pi \otimes t$. For $t$ in a Zariski-open dense 
subset of $T^P$, this defines an intertwining operator
\[
I(w,P,\pi,t) : \ind_{\mc H^P}^{\mc H}(\pi \otimes t) \to 
\ind_{\mc H^{w(P)}}^{\mc H}(\psi_w (\pi \otimes t)),
\]
which is rational as function of $t \in T^P$. By \eqref{eq:7.11}, whenever 
$\gamma w (P) \subset \Delta$:
\begin{equation}\label{eq:7.14}
I(\gamma, w(P), \psi_w (\pi), w(t)) \circ I(w,P,\pi,t) = I (\gamma w, P, \pi, t) .
\end{equation}
The Whittaker functionals from \eqref{eq:7.8} satisfy
\[
\mf{Wh}(w(P),\psi_w (\pi \otimes t),v_s) \circ I(w,P,\pi , t) \in \Hom_{\mc H} \big( 
\ind_{\mc H^P}^{\mc H}(\pi \otimes t), \mc H^\he \otimes_{\mc H (W,q^\lambda)} \mr{St} \big) ,
\]
at least away from the poles of $I(w,P,\pi,t)$. By Lemma \ref{lem:7.1} there exists a unique
$C(w,P,\pi,t) \in \C \cup \{\infty\}$ such that
\begin{equation}\label{eq:7.10}
\mf{Wh}(P,\pi \otimes t,v_s) = 
C(w,P,\pi,t) \mf{Wh}(w(P),\psi_w (\pi \otimes t),v_s) \circ I(w,P,\pi ,t) .    
\end{equation}
For $\gamma = w^{-1}$, \eqref{eq:7.14} implies 
\begin{equation}\label{eq:7.2}
C(w^{-1}, w(P), \psi_w (t), w(t)) = C(w,P,\pi,t)^{-1} . 
\end{equation}
Notice that $C(w,P,\pi,?)$ is a rational function on $T^P$, because all the other terms in
\eqref{eq:7.10} are so. This $C(w,P,\pi,t)$ is the local constant for affine Hecke algebras,
analogous to \cite{Sha1}. We note that this is based on the normalized intertwining operators
that involve $\imath_w^\circ$. An even stronger analogy with \cite{Sha1} can be obtained 
by using intertwining operators based on the elements $\imath_w$ from \cite[(4.1)]{Opd-Sp}.
The normalization of the intertwining operators does not affect the poles of the local 
constants.

\begin{lem}\label{lem:7.2}
Let $\pi \in \Irr (\mc H^P)$ be generic. Then $C(w,P,\pi,?)$ is regular at $t \in T^P$
if and only if $\ker I (w,P,\pi,t) \subset \ker \mf{Wh}(P,\pi \otimes t, v_s)$.
\end{lem}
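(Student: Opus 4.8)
The plan is to play the defining relation \eqref{eq:7.10} against two structural facts: the one-dimensionality of the generic $\Hom$-spaces from Lemma \ref{lem:7.1}.a, and the injectivity of the module $\mc H^\he\otimes_{\mc H(W,q^\lambda)}\mr{St}$. For the injectivity I would note that $\mr{St}$ is a direct summand of $\mc H(W,q^\lambda)$ (that algebra being semisimple), so $\mc H^\he\otimes_{\mc H(W,q^\lambda)}\mr{St}$ is a direct summand of $\mc H^\he\otimes_{\mc H(W,q^\lambda)}\mc H(W,q^\lambda)\cong\mc H^\he$, which is injective. I would take $t\in T^P$ to be a point where $I(w,P,\pi,\cdot)$ is regular (so that $\ker I(w,P,\pi,t)$ is literally defined), and record that $\psi_w(\pi\otimes t)$ is again irreducible and generic over $\mc H^{w(P)}$, so that Lemma \ref{lem:7.1}.a applies on both sides: $\mf{Wh}(P,\pi\otimes t,v_s)$ spans $\Hom_{\mc H}\big(\ind_{\mc H^P}^{\mc H}(\pi\otimes t),\mc H^\he\otimes_{\mc H(W,q^\lambda)}\mr{St}\big)$, and $\mf{Wh}(w(P),\psi_w(\pi\otimes t),v_s)$ spans the analogous space for $\ind_{\mc H^{w(P)}}^{\mc H}(\psi_w(\pi\otimes t))$; both are nonzero by their construction in \eqref{eq:7.8}.

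For the ``only if'' direction, I would assume $C(w,P,\pi,\cdot)$ regular at $t$. Then all terms of \eqref{eq:7.10} are regular at $t$, so the identity specializes there to $\mf{Wh}(P,\pi\otimes t,v_s)=C(w,P,\pi,t)\,\mf{Wh}(w(P),\psi_w(\pi\otimes t),v_s)\circ I(w,P,\pi,t)$ as honest homomorphisms. The left side is nonzero, so $C(w,P,\pi,t)\neq 0$, and hence $\ker I(w,P,\pi,t)\subseteq\ker\big(\mf{Wh}(w(P),\psi_w(\pi\otimes t),v_s)\circ I(w,P,\pi,t)\big)=\ker\mf{Wh}(P,\pi\otimes t,v_s)$.

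For the ``if'' direction, I would assume $\ker I(w,P,\pi,t)\subseteq\ker\mf{Wh}(P,\pi\otimes t,v_s)$, so that $\mf{Wh}(P,\pi\otimes t,v_s)$ factors through $\ind_{\mc H^P}^{\mc H}(\pi\otimes t)/\ker I(w,P,\pi,t)\cong\mr{im}\,I(w,P,\pi,t)$ as $\bar A\circ I(w,P,\pi,t)$ for some $\mc H$-homomorphism $\bar A\colon\mr{im}\,I(w,P,\pi,t)\to\mc H^\he\otimes_{\mc H(W,q^\lambda)}\mr{St}$. Using injectivity of the target, $\bar A$ extends along $\mr{im}\,I(w,P,\pi,t)\hookrightarrow\ind_{\mc H^{w(P)}}^{\mc H}(\psi_w(\pi\otimes t))$ to some $\hat A$, which by Lemma \ref{lem:7.1}.a equals $c\,\mf{Wh}(w(P),\psi_w(\pi\otimes t),v_s)$ for a scalar $c$; here $c\neq 0$, since $c=0$ would give $\bar A=0$ and then $\mf{Wh}(P,\pi\otimes t,v_s)=0$, which is false. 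It follows that $\mf{Wh}(w(P),\psi_w(\pi\otimes t),v_s)\circ I(w,P,\pi,t)=c^{-1}\mf{Wh}(P,\pi\otimes t,v_s)\neq 0$. I would then restrict \eqref{eq:7.10} to an arbitrary curve $z\mapsto t(z)$ through $t=t(0)$: near $z=0$ the two Whittaker families and $I(w,P,\pi,t(z))$ are regular, and by the previous sentence their composite does not vanish at $z=0$; dividing the regular family $\mf{Wh}(P,\pi\otimes t(z),v_s)$ by this nonvanishing regular family (concretely, after evaluating both on the generator $N_e\otimes v_s$) shows $C(w,P,\pi,t(z))$ is regular at $z=0$, whence $C(w,P,\pi,\cdot)$ is regular at $t$.

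I expect the crux to be the extension step in the ``if'' direction --- promoting the partially-defined $\bar A$ to a scalar multiple of $\mf{Wh}(w(P),\psi_w(\pi\otimes t),v_s)$, which is where injectivity of $\mc H^\he\otimes_{\mc H(W,q^\lambda)}\mr{St}$ and the one-dimensionality in Lemma \ref{lem:7.1}.a are used together. The remaining care is bookkeeping: reducing ``regular at $t$'' to a one-variable statement along curves and treating the composite $\mf{Wh}(w(P),\ldots)\circ I$ as a regular family near $t$, which is why one stays at a point where $I(w,P,\pi,\cdot)$ is regular.
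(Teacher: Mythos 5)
Your argument is sound on the locus where the rational family $I(w,P,\pi,\cdot)$ is regular, and there it gives a genuinely workable alternative to the paper's case analysis: the ``if'' direction via injectivity of $\mc H^\he \otimes_{\mc H (W,q^\lambda)} \mr{St}$ (a summand of the injective module $\mc H^\he$) plus the one-dimensionality in Lemma \ref{lem:7.1}.a, where the paper instead counts the unique Steinberg vector through the short exact sequence $0 \to \ker I \to \ind_{\mc H^P}^{\mc H}(\pi \otimes t) \to \mr{im}\, I \to 0$. But your opening move --- ``take $t$ to be a point where $I(w,P,\pi,\cdot)$ is regular, so that $\ker I(w,P,\pi,t)$ is literally defined'' --- excises exactly the case the paper treats last, and that case is not vacuous. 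When $I(w,P,\pi,\cdot)$ is regular at $t$, the specialized identity \eqref{eq:7.10} forces $C(w,P,\pi,t) \in \C^\times \cup \{\infty\}$; the value $C(w,P,\pi,t)=0$ occurs precisely when $I(w,P,\pi,\cdot)$ has a pole at $t$ (caused by a pole of $\imath^\circ_{w^{-1}}$). The paper handles this by choosing a holomorphic $f$ with $f(t')I(w,P,\pi,t')$ regular and nonzero near $t$, rewriting \eqref{eq:7.10} with the regularized operator, and deducing $\ker I(w,P,\pi,t) \subset \ker f(t) I(w,P,\pi,t) \subset \ker \mf{Wh}(P,\pi\otimes t,v_s)$. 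Your proof has no mechanism for this, so as written it proves a strictly weaker statement than the lemma.

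This matters downstream: in Theorem \ref{thm:7.3}.a the only thing verified is that $C(w_\Delta w_P,P,\pi,\cdot)$ has no pole at $t$ (via \eqref{eq:7.2} and the nonvanishing of the reciprocal constant); the possibility $C(w_\Delta w_P,P,\pi,t)=0$, i.e.\ a pole of the rational intertwining family at $t$, is not excluded, and there the operator whose kernel cuts out the Langlands quotient is the regularized one. So the pole case of Lemma \ref{lem:7.2} is actually used, and your proof would need an extra step --- essentially the paper's regularization by $f$, after which your injectivity/multiplicity-one argument could be run on $f(t)I(w,P,\pi,t)$ in place of $I(w,P,\pi,t)$ --- to close the gap.
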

\begin{proof}
Suppose that $C(w,P,\pi,t) = \infty$. Then 
\[
\mf{Wh}(w(P),\psi_w (\pi \otimes t),v_s) \circ I(w,P,\pi ,t) = 0,
\]
so $\text{im}\, I(w,P,\pi,t)$ is not generic. In the short exact sequence
\[
0 \to \ker I(w,P,\pi,t) \to \ind_{\mc H^P}^{\mc H}(\pi \otimes t) \to 
\text{im}\, I(w,P,\pi,t) \to 0,
\]
the middle term is generic by Lemma \ref{lem:7.1}. Hence $\mf{Wh}(w,P,\pi \otimes t,v_s)$
does not vanish on $\ker I(w,P,\pi,t)$, and the latter is generic.

When $C(w,P,\pi,t) \in \C^\times$, the equivalence is clear from \eqref{eq:7.10}.

Suppose that $C(w,P,\pi,t) = 0$. Then $I(w,P,\pi,t)$ has a pole at $t \in T^P$, caused
by a pole of $\imath^\circ_{w^{-1}}$. Let $f$ be a holomorphic function on a neighborhood $U$
of $t$ in $T^P$, such that $f(t') I(w,P,\pi,t')$ is regular and nonzero on $U$.
We can replace \eqref{eq:7.10} by
\begin{equation}\label{eq:7.1}
\mf{Wh}(P,\pi \otimes t,v_s) = 
C \, \mf{Wh}(w(P),\psi_w (\pi \otimes t),v_s) \circ f (t) I(w,P,\pi, t) ,
\end{equation}
for some $C \in \C \cup \{\infty\}$. As $\mf{Wh}(P,\pi \otimes t,v_s)$ is nonzero,
so is $C$. Then \eqref{eq:7.1} shows that $\mf{Wh}(w(P),\psi_w (\pi \otimes t), v_s)$
is nonzero on $\mr{im}\, f(t) I(w,P,\pi, t)$. Therefore $C \neq \infty$, and we conclude that 
$\mf{Wh}(P,\pi \otimes t,v_s)$ factors through $f(t) I(w,P,\pi, t)$. In particular 
\[
\ker I(w,P,\pi,t) \:\subset\: \ker f(t) I(w,P,\pi,t) \:\subset\: 
\ker \mf{Wh}(P,\pi \otimes t, v_s). \qedhere
\]
\end{proof}

Next we prove some cases of the generalized injectivity conjecture for affine Hecke algebras.
For that we need $q_s \geq 1$ for all $s \in S_\mr{aff}$, otherwise the statement would be 
false (the Steinberg representation of $\mc H$ would violate it).

\begin{thm}\label{thm:7.3}
Assume that $\lambda (\alpha) \geq \lambda^* (\alpha) \geq 0$ for all $\alpha \in R$.
Let $t \in T^P$ and let $\pi \in \Irr (\mc H^P)$ be generic, tempered and anti-tempered.
\enuma{
\item When $t^{-1} \in T^{P+}$, the unique generic irreducible subquotient of 
$\ind_{\mc H^P}^{\mc H}(\pi \otimes t)$ is 
\[
L(P,\pi \otimes t) = \ind_{\mc H^P}^{\mc H}(\pi \otimes t) \big/
\ker I (w_\Delta w_P, P, \pi \otimes t) .
\]
\item When $t \in T^{P+}$, the unique generic irreducible subquotient of $\ind_{\mc H^P}^{\mc H}
(\pi \otimes t)$ is its unique irreducible subrepresentation $\tilde L (P,\pi, t)$.
}
\end{thm}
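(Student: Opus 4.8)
The plan is to combine the Langlands-type classification results (Theorem \ref{thm:4.7} and Proposition \ref{prop:4.6}) with the genericity analysis via Whittaker functionals (Lemmas \ref{lem:7.1} and \ref{lem:7.2}) and the local constant $C(w,P,\pi,t)$. Since $\pi$ is generic, by Lemma \ref{lem:6.5}.b the module $\ind_{\mc H^P}^{\mc H}(\pi \otimes t)$ has a unique generic irreducible subquotient, appearing with multiplicity one; what must be identified is \emph{where} that subquotient sits. The hypothesis $\lambda(\alpha) \geq \lambda^*(\alpha) \geq 0$ guarantees $q_s \geq 1$ for all $s \in S_\mr{aff}$, which is what makes the local constants regular in the relevant ranges.

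First, for part (a): when $t^{-1} \in T^{P+}$ and $\pi$ is tempered, Proposition \ref{prop:4.6}.a (case (i), since anti-temperedness is not needed here but temperedness with $t \in T^{P+}$ is — wait, we have $t^{-1} \in T^{P+}$, so we instead use that $\pi$ is anti-tempered with $t^{-1}\in T^{P+}$, case (ii)) tells us that $\pi \otimes t$ is $W,\!P$-regular and that $L(P,\pi\otimes t) = \ind_{\mc H^P}^{\mc H}(\pi\otimes t)/\ker I(w_\Delta w_P,P,\pi\otimes t)$ is the Langlands quotient. It therefore suffices to show that this Langlands quotient is the generic constituent, equivalently (by Lemma \ref{lem:7.2} applied with $w = w_\Delta w_P$) that $C(w_\Delta w_P, P, \pi, ?)$ is regular at $t$. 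I would compute this local constant explicitly from \eqref{eq:7.9}, \eqref{eq:7.11} and \eqref{eq:7.10}: the normalized intertwining operator $I(w_\Delta w_P,P,\pi,t)$ is built from the rational factors $\imath^\circ_{s_\alpha}$, and one tracks the poles and zeros of the scalar relating the two Whittaker functionals. The key point is that for $\alpha \in \Delta\setminus P$ the relevant $\theta_\alpha$-eigenvalue on $\pi\otimes t$ has absolute value determined by $t^{-1}\in T^{P+}$ and the temperedness of $\pi$, and with $\lambda(\alpha)\geq\lambda^*(\alpha)\geq 0$ the numerator factors $\theta_\alpha q^{(\lambda+\lambda^*)/2}-1$ and $\theta_\alpha q^{(\lambda-\lambda^*)/2}+1$ do not vanish on the relevant weights; hence no pole of $C$. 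This shows $\ker I(w_\Delta w_P,P,\pi,t) \subset \ker \mf{Wh}(P,\pi\otimes t,v_s)$, so the generic vector survives in the quotient, i.e. $L(P,\pi\otimes t)$ is generic.

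For part (b): when $t\in T^{P+}$ and $\pi$ is tempered, Proposition \ref{prop:4.6}.b (case (iii)) says $\psi_{w_\Delta w_P *}(\pi\otimes t)$ is $W,\!P^{op}$-regular and the Langlands subrepresentation $\tilde L(P,\pi,t)$ is the image of $I(w_P w_\Delta, P^{op}, \psi_{w_\Delta w_P *}(\pi\otimes t))$. By the realization in Theorem \ref{thm:4.7}.b, $\ind_{\mc H^P}^{\mc H}(\pi\otimes t)$ has a unique irreducible subrepresentation, namely this image. To conclude it is the generic constituent, I would argue via \eqref{eq:7.2}: the local constant $C(w_P w_\Delta, P^{op}, \psi_{w_\Delta w_P *}(\pi\otimes t), ?)$ is the reciprocal (up to the $\imath^\circ$-factors) of the one appearing in part (a), now with the roles of $t$ and $t^{-1}$ exchanged and $P$ replaced by $P^{op}$. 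Under $t\in T^{P+}$ this is precisely the "opposite" regime, and the same sign/positivity bookkeeping — using again $\lambda(\alpha)\geq\lambda^*(\alpha)\geq 0$ — shows $C$ is regular and nonzero there; then Lemma \ref{lem:7.2}, applied to the intertwining operator $\ind_{\mc H^{P^{op}}}^{\mc H}(\psi_{w_\Delta w_P *}(\pi\otimes t))\to\ind_{\mc H^P}^{\mc H}(\pi\otimes t)$, forces the generic vector to lie in the image. Alternatively, and perhaps more cleanly, I would pass to Hermitian duals: by Proposition \ref{prop:2.5} and Lemma \ref{lem:2.6}, $\ind_{\mc H^P}^{\mc H}(\pi\otimes t)^\he \cong \ind_{\mc H^P}^{\mc H}(\pi^\he\otimes t^{-1})$ with $\pi^\he$ again tempered, anti-tempered and generic (by Lemmas \ref{lem:4.2} and the fact that the Steinberg representation is self-Hermitian-dual, so Hermitian duality preserves genericity in the sense of Definition \ref{def:6.4}); applying part (a) to $\pi^\he\otimes t^{-1}$ and then dualizing back, using \eqref{eq:2.10} to turn "unique irreducible quotient" into "unique irreducible subrepresentation", identifies the generic constituent as $\tilde L(P,\pi,t)$.

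The main obstacle I expect is the explicit pole/zero analysis of the local constant $C(w,P,\pi,t)$ in the two positivity regimes — i.e. verifying that, for $\pi$ tempered and anti-tempered and $t$ (or $t^{-1}$) in the positive cone, together with $\lambda(\alpha)\geq\lambda^*(\alpha)\geq 0$, none of the rational factors in \eqref{eq:7.9} contribute an unwanted pole or zero along the weights of $\pi\otimes t$. This is where the hypothesis on the $q$-parameters is genuinely used, and it requires carefully relating $|{\rm Wt}(\pi\otimes t)|$ to the cones $T^{P+}$, $T_P^-$ and their $w_\Delta w_P$-translates. Once that positivity estimate is in hand, everything else is an assembly of results already proved in the paper; so I would aim to isolate the estimate as a preliminary lemma (analogous to the role of \cite[(2.13)]{KrRa} in the proof of Proposition \ref{prop:4.6}) and then deduce Theorem \ref{thm:7.3} in a few lines via Lemmas \ref{lem:7.1}, \ref{lem:7.2} and Proposition \ref{prop:4.6}.
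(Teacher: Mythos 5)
Your overall architecture matches the paper's: uniqueness of the generic constituent from Lemma \ref{lem:6.5}.b/\ref{lem:7.1}, identification of the Langlands quotient (resp.\ subrepresentation) via Proposition \ref{prop:4.6}, and reduction of the genericity claim to regularity of the local constant via Lemma \ref{lem:7.2}. However, the one step where the hypotheses are actually used -- showing that $C(w_\Delta w_P,P,\pi,\cdot)$ has no pole at $t$ -- is not established, and the specific claim you make there does not work as stated. You propose to read off the poles of $C$ directly from the factors in \eqref{eq:7.9} and assert that $\theta_\alpha q^{(\lambda(\alpha)+\lambda^*(\alpha))/2}-1$ and $\theta_\alpha q^{(\lambda(\alpha)-\lambda^*(\alpha))/2}+1$ do not vanish on the relevant weights. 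But these factors vanish at $\alpha(s)=q^{-(\lambda(\alpha)+\lambda^*(\alpha))/2}$, resp.\ $\alpha(s)=-q^{-(\lambda(\alpha)-\lambda^*(\alpha))/2}$, i.e.\ at points with $|\alpha(s)|\leq 1$ -- exactly the region where the weights of $\pi\otimes t$ lie when $t^{-1}\in T^{P+}$ and $\pi$ is tempered. So nonvanishing is not automatic in the direction you are looking; the parameter hypothesis $\lambda\geq\lambda^*\geq 0$ pushes the dangerous hypersurfaces of the \emph{forward} operator into, not out of, the relevant region. The paper avoids this by never analysing the forward constant directly: it uses the inversion relation \eqref{eq:7.2} to convert ``no pole of $C(w_\Delta w_P,P,\pi,t)$'' into ``$C(w_P w_\Delta,P^{op},\psi_{w_\Delta w_P}(\pi),w_\Delta w_P(t))\neq 0$'', uses the proof of Lemma \ref{lem:7.2} to turn a hypothetical zero of that constant into a pole of the reverse intertwining operator $I(w_P w_\Delta,P^{op},\ldots)$, and then invokes \cite[Theorem 4.33.i]{Opd-Sp}, which locates such poles on the hypersurfaces $\beta(rt)=q^{(\lambda(\alpha)+\lambda^*(\alpha))/2}$ or $\beta(rt)=-q^{(\lambda(\alpha)-\lambda^*(\alpha))/2}$ with $\beta\in R^+\setminus R_P^+$. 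Only these are excluded by the estimate $|\beta(rt)|<1\leq q^{(\lambda\pm\lambda^*)/2}$, where $|r|=1$ uses temperedness \emph{and} anti-temperedness and $\beta(t)\in(0,1)$ uses $t^{-1}\in T^{P+}$. Without this inversion plus the external input on where intertwining operators have poles (or an equivalent careful computation keeping track of which direction's factors occur), your positivity bookkeeping does not close the argument; this is a genuine gap, not just an omitted routine verification.

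For part (b), your second route (dualize part (a) applied to $\pi^\he\otimes t^{-1}$ via Proposition \ref{prop:2.5}, Lemma \ref{lem:2.6} and \eqref{eq:2.10}) is a legitimate alternative to the paper, which instead transports the situation to $P^{op}$ by $\psi_{w_\Delta w_P}$ (checking, as in the proof of Proposition \ref{prop:4.6}.iii, that temperedness, anti-temperedness and the positivity of the twist are preserved) and applies part (a) there. Your route requires the additional observation that Hermitian duality preserves genericity of finite-dimensional modules; this does follow from $\mr{St}^\he=\mr{St}$, \eqref{eq:2.10} and the semisimplicity of $\mc H(W,q^\lambda)$, so that step is fine, but note that both routes for (b) are conditional on first repairing (a).
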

\begin{proof}
(a) In view of Proposition \ref{prop:4.6}.ii and Lemma \ref{lem:7.2}, it suffices to check that 
$C(w_\Delta w_P,P,\pi,?)$ does not have a pole at $t$. By \eqref{eq:7.2}, it is 
equivalent to show that 
\begin{equation}\label{eq:7.3}
C(w_P w_\Delta,P^{op},\psi_{w_\Delta w_P}(\pi), w_\Delta w_P (t)) \text{ is nonzero.}
\end{equation}
Suppose that \eqref{eq:7.3} is zero. By \eqref{eq:7.10},
$I(w_P w_\Delta,P^{op},\psi_{w_\Delta w_P}(\pi), w_\Delta w_P (t))$ has a pole at $t$.
It is known from \cite[Theorem 4.33.i]{Opd-Sp} that every such $t$ is a zero of
\[
\big( \alpha (w_\Delta w_P (rt)) - q^{(\lambda (\alpha) + \lambda^* (\alpha))/2} \big)
\big( \alpha (w_\Delta w_P (rt)) + q^{(\lambda (\alpha) - \lambda^* (\alpha))/2} \big),
\]
for some weight $r$ of $\pi$ and $-\alpha, w_P w_\Delta (\alpha) \in R^+$. Equivalently,
such a $t$ satisfies
\begin{equation}\label{eq:7.12}
\big( \beta (rt) - q^{(\lambda (\alpha) + \lambda^* (\alpha))/2} \big)
\big( \beta (rt) + q^{(\lambda (\alpha) - \lambda^* (\alpha))/2} \big) = 0
\end{equation}
for some weight $r$ of $\pi$ and $\beta, -w_\Delta w_P (\beta) \in R^+$. The eligible
$\beta$ are precisely the roots in $R^+ \setminus R_P^+$. From $t^{-1} \in T^{P+}$ we
get $\beta (t) \in (0,1)$ for all $\beta \in R^+ \setminus R_P^+$. By the temperedness
and anti-temperedness of $\pi$, $|r| = 1$. Hence $|\beta (rt)| < 1$, which in
combination with $\lambda (\alpha) \geq \lambda^* (\alpha) \geq 0$ implies that 
\eqref{eq:7.12} never holds under our assumptions. Hence \eqref{eq:7.3} is valid.\\
(b) In the proof of Proposition \ref{prop:4.6}.iii we checked that 
$\psi_{w_\Delta w_P}(\pi)$ is again tempered and anti-tempered, and that 
$w_\Delta w_P (t)^{-1} \in T^{P^{op}+}$. By part (a)
\[
\ind_{\mc H^{P^{op}}}^{\mc H} \psi_{w_\Delta w_P} (\pi \otimes t) / \ker
I (w_P w_\Delta, \psi_{w_\Delta w_P} (\pi \otimes t)) \cong \mr{im}\, 
I (w_P w_\Delta, \psi_{w_\Delta w_P} (\pi \otimes t)) 
\]
is generic. This is an irreducible subrepresentation of 
$\ind_{\mc H^P}^{\mc H}(\pi \otimes t)$, and from Proposition \ref{prop:4.6}.iv we
know that there exists only one such subquotient.
\end{proof}

One interesting application of Theorem \ref{thm:7.3} concerns the induction of suitable
characters of $\mc A$. 

\begin{prop}\label{prop:7.4}
Suppose that $\lambda (\alpha) \geq \lambda^* (\alpha) \geq 0$ for all $\alpha \in R$, and let $t \in T$.
\enuma{
\item Suppose $|t^{-1}|$ lies in the closure of $T^{\emptyset +}$. Then the unique generic irreducible 
constituent of $\ind_{\mc A}^{\mc H}(t)$ is a quotient.
\item Suppose $|t|$ lies in the closure of $T^{\emptyset +}$. Then the unique generic irreducible 
constituent of $\ind_{\mc A}^{\mc H}(t)$ is a subrepresentation.
}
\end{prop}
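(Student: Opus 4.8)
The plan is to reduce both statements to Theorem \ref{thm:7.3}, which requires the twisting character to sit in the \emph{interior} of a positive cone, by passing to a suitable parabolic subalgebra. Write $t = u\,s$ with $u\in T$ unitary and $s = |t|\in\Hom_\Z(X,\R_{>0})$, and put $P = \{\alpha\in\Delta:\langle\alpha,\log s\rangle = 0\}$. In case (b) dominance of $s$ gives $\langle\alpha,\log s\rangle > 0$ for $\alpha\in\Delta\setminus P$ and $=0$ for $\alpha\in P$, which is exactly the assertion $\log s\in(Y\otimes_\Z\R)^{P+}$, i.e.\ $s\in T^{P+}$; applying the same computation to $s^{-1}$ in case (a) yields $s^{-1}\in T^{P+}$. (If $s$ is strictly dominant, resp.\ strictly antidominant, then $P=\emptyset$ and what follows is simply Theorem \ref{thm:7.3} for the character $u$ of $\mc A$.)

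Next I would study the unitary principal series $\sigma := \ind_{\mc A}^{\mc H^P}(u)$ of $\mc H^P$. Since $u$ is unitary, the standard invariant Hermitian form on $\sigma$ (the $\mc H^P$-analogue of \eqref{eq:2.6}) is positive definite, so $\sigma$ is a finite-dimensional unitary, hence semisimple, $\mc H^P$-module; its $\mc A$-weights lie in $W_P u$ by \cite[Proposition 4.20]{Opd-Sp}, so $\sigma$ is tempered and anti-tempered. By the Bernstein basis theorem \eqref{eq:1.5} applied inside $\mc H^P$, $\Res^{\mc H^P}_{\mc H(W_P,q^\lambda)}\sigma$ is the left regular representation of $\mc H(W_P,q^\lambda)$, which contains $\mr{St}$ with multiplicity one; hence, up to isomorphism, $\sigma$ has a unique generic irreducible summand $\tau$, appearing with multiplicity one, and $\tau\in\Irr(\mc H^P)$ is generic, tempered and anti-tempered. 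Combining transitivity of induction with the isomorphism $\ind_{\mc A}^{\mc H^P}(u)\otimes s\cong\ind_{\mc A}^{\mc H^P}(u\,s)=\ind_{\mc A}^{\mc H^P}(t)$ (valid because $s\in T^P$) gives
\[
\ind_{\mc A}^{\mc H}(t)\;\cong\;\ind_{\mc H^P}^{\mc H}(\sigma\otimes s)\;\cong\;\ind_{\mc H^P}^{\mc H}(\tau\otimes s)\;\oplus\;\textstyle\bigoplus_i\ind_{\mc H^P}^{\mc H}(\tau_i\otimes s),
\]
with $\tau_i$ the non-generic summands of $\sigma$. Twisting by $s\in T^P$ does not alter restriction to $\mc H(W_P,q^\lambda)$, so each $\tau_i\otimes s$ is non-generic, hence so is each $\ind_{\mc H^P}^{\mc H}(\tau_i\otimes s)$ by Lemma \ref{lem:6.5}.a; therefore the unique generic irreducible constituent $\pi_t$ of $\ind_{\mc A}^{\mc H}(t)$ is the unique generic irreducible constituent of the direct summand $\ind_{\mc H^P}^{\mc H}(\tau\otimes s)$.

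It then remains to apply Theorem \ref{thm:7.3} to $\tau$ and $s\in T^P$. In case (a), where $s^{-1}\in T^{P+}$, $\pi_t$ is the Langlands quotient $\ind_{\mc H^P}^{\mc H}(\tau\otimes s)/\ker I(w_\Delta w_P,P,\tau\otimes s)$, hence a quotient of that summand, and composing with the projection $\ind_{\mc A}^{\mc H}(t)\to\ind_{\mc H^P}^{\mc H}(\tau\otimes s)$ shows $\pi_t$ is a quotient of $\ind_{\mc A}^{\mc H}(t)$; in case (b), where $s\in T^{P+}$, $\pi_t$ is the unique irreducible subrepresentation of that summand, and composing with the inclusion shows $\pi_t$ is a subrepresentation of $\ind_{\mc A}^{\mc H}(t)$. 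The hard part is precisely this reduction to the interior: one must single out the right parabolic $\mc H^P$ and verify that the generic piece $\tau$ of its unitary principal series is irreducible, generic, tempered, anti-tempered, \emph{and} both a sub- and a quotient-module of $\sigma$ — which is where semisimplicity of the unitary principal series is used — so that the hypotheses of Theorem \ref{thm:7.3} are met without any change.
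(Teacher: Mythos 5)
Your argument is correct and is essentially the paper's own proof: the paper also sets $P=\{\alpha\in\Delta: |\alpha(t)|=1\}$, decomposes $\ind_{\mc A}^{\mc H^P}(t|t|^{-1})$ (tempered and anti-tempered, completely reducible) into irreducibles, applies Theorem \ref{thm:7.3} to the unique generic summand twisted by $|t|$, and concludes via \eqref{eq:7.13} and transitivity of induction. The only (harmless) deviations are that you justify complete reducibility by unitarity of the invariant form from \eqref{eq:2.6} instead of citing \cite[Proposition 3.1.4.a]{SolAHA}, and you get uniqueness of the generic summand from the multiplicity of St in the regular representation of $\mc H(W_P,q^\lambda)$ rather than from Lemma \ref{lem:7.1}.a.
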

\begin{proof}
Write $P = \{ \alpha \in \Delta : |\alpha (t)| = 1 \}$. Then $\ind_{\mc A}^{\mc H^P} ( t |t|^{-1})$
is an $\mc H^P$-representation all whose $\mc A$-weights belong to $\Hom_\Z (X,S^1)$, so it is both
tempered and anti-tempered. By \cite[Proposition 3.1.4.a]{SolAHA} $\ind_{\mc A}^{\mc H^P} ( t |t|^{-1})$
is completely reducible, say a direct sum of irreducible subrepresentations $\rho_i$. As $|t| \in T^P$:
\begin{equation}\label{eq:7.13}
\ind_{\mc A}^{\mc H^P}(t) = \ind_{\mc A}^{\mc H^P} ( t |t|^{-1}) \otimes |t| =
\bigoplus\nolimits_i \rho_i \otimes |t| ,
\end{equation}
where all the weights of $\rho_i \otimes |t|$ have absolute value $|t|$. Lemma \ref{lem:7.1}.a guarantees 
that exactly one of the $\rho_i \otimes |t|$ is generic, say it is $\rho_1 \otimes |t|$. Now
the arguments for the two parts diverge:\\
(a) By Theorem \ref{thm:7.3}.a $\ind_{\mc H^P}^{\mc H}(\rho_1 \otimes |t|)$ has a generic irreducible
quotient $\pi$. In view of \eqref{eq:7.13} and the transitivity of parabolic induction, $\pi$ is also
a quotient of $\ind_{\mc A}^{\mc H}(t)$. \\
(b) By Theorem \ref{thm:7.3}.b $\ind_{\mc H^P}^{\mc H}(\rho_1 \otimes |t|)$ has a generic irreducible
subrepresentation $\pi$. By \eqref{eq:7.13} $\pi$ is also a subrepresentation of $\ind_{\mc A}^{\mc H}(t)$.
\end{proof}

Proposition \ref{prop:7.4} is a Hecke algebra version of the generalized injectivity
conjecture for inductions of supercuspidal representations of reductive $p$-adic groups
\cite[Theorem 1]{CaSh}. For the current status of the generalized injectivity conjecture we refer
to \cite{Dij}. Possibly our Hecke algebra interpretation can be useful to establish more cases.
However, the generalized injectivity conjecture does not hold for all affine Hecke algebras with
parameters $\lambda (\alpha) \geq \lambda^* (\alpha) \geq 0$, as witnessed by the next example.

\begin{ex}\label{ex:7}
Consider the based root datum 
\[
\mc R = \big( \Z^2, B_2, \Z^2, C_2, \{\alpha = e_1 - e_2, \beta = e_2\} \big).
\]
We take $\lambda (\beta) = \lambda^* (\beta) = 1$ and $\lambda (\alpha) = \lambda^* (\alpha) = 6$.
(It would also work with any number $>2$ instead of 6.) 
The algebra $\mc H = \mc H (\mc R,\lambda,\lambda^*,q)$ has a one-dimensional discrete series
representation $\delta$ given by:
\begin{itemize}
\item $\mc A$ acts on $\delta$ via the weight $t_\delta = (q^{-5},q)$,
\item $\delta (N_{s_\alpha}) = - q^{-3}$ and $\delta (N_{s_\beta}) = q^{1/2}$.
\end{itemize}
As $W_{t_\delta} = \{e\}$, $\delta$ is the unique irreducible $\mc H$-representation with 
$\mc A$-weight $t_\delta$. Notice that $\delta$ is not generic. We will show that $\delta$ occurs
as the unique irreducible subrepresentation of a standard module.

Let $\mr{St}_\alpha$ be the Steinberg representation of $\mc H^{\{\alpha\}}$, a generic discrete
series representation with $\mc A$-weight $(q^{-3},q^3)$. For $(q^2,q^2) \in T^{\{\alpha\}+}$,
$\mr{St}_\alpha \otimes (q^2,q^2)$ has the unique $\mc A$-weight $t = (q^{-1},q^5)$.
By \cite[Lemma 3.3]{SolHecke} the standard module 
\[
\pi = \ind_{\mc H^{\{\alpha\}}}^{\mc H} \big( \mr{St}_\alpha \otimes (q^2,q^2) \big)
\]
has set of $\mc A$-weights 
\[
W^{\{\alpha\}} t = \{t, s_\beta t, s_\alpha s_\beta t, s_\beta s_\alpha s_\beta t = t_\delta \} .
\]
By considering the invertibility of intertwining operators, one sees that $t, s_\beta t$ and
$s_\alpha s_\beta t$ sit together in one irreducible subquotient of $\pi$. That representation
involves the maximal weight $t$ of $\pi$, so by Theorem \ref{thm:4.3}.a it is the Langlands 
quotient $L \big( \{\alpha\},\mr{St}_\alpha, (q^2,q^2) \big)$. Further $t_\delta$ is not a weight of  
$L \big( \{\alpha\},\mr{St}_\alpha, (q^2,q^2) \big)$, because the only irreducible 
$\mc H$-representation with that property is $\delta$. Thus $\pi$ is reducible and has a 
subquotient $\delta$, which is in fact a subrepresentation because it equals the kernel of $\pi \to
L \big( \{\alpha\},\mr{St}_\alpha, (q^2,q^2) \big)$. Lemma \ref{lem:6.5}.b says that $\pi$ has a 
unique generic irreducible constituent and it is not $\delta$, so it must be the Langlands quotient
$L \big( \{\alpha\},\mr{St}_\alpha, (q^2,q^2) \big)$.
\end{ex}

A weaker version of the generalized injectivity conjecture is known as the standard module conjecture
\cite{CaSh}. It asserts that the Langlands quotient of a generic standard representation is generic
if and only if that standard module is irreducible. This has been proven for all quasi-split
reductive $p$-adic groups \cite{HeMu,HeOp}. Using Section \ref{sec:equiv}, one
can deduce the standard module conjecture for all affine Hecke algebras whose parameters 
come from a generic Bernstein component for a quasi-split reductive $p$-adic group. 

Nevertheless, our above counterexample to the generalized injectivity conjecture is also
a counterexample to the standard module conjecture for affine Hecke algebras with arbitrary
parameters $\geq 1$.

\section{Affine Hecke algebras extended with finite groups}
\label{sec:extend}

For comparison with reductive $p$-adic groups it is useful to consider a slightly larger class of
algebras. Let $\Gamma$ be a finite group acting on the based root datum $\mc R = (X,R,Y,R^\vee,\Delta)$.
Then $\Gamma$ acts on $W$ by 
\[
\gamma (s_\alpha) = s_{\gamma \alpha} = \gamma s_\alpha \gamma^{-1}  \qquad \alpha \in R , 
\]
where the conjugation takes place in $\mr{Aut}_\Z (X)$. This yields a semidirect product
$(X \rtimes W) \rtimes \Gamma$. We also suppose that $\Gamma$ acts on $\mc A \cong \C [X] \cong
\mc O (T)$, such that the induced action on $\mc A^\times / \C^\times \cong X$ recovers the given
action on $X$. Thus $\Gamma$ acts on $T = \Irr (\mc A)$, but it need not fix a point of $T$. 

Further we assume that the label functions $\lambda, \lambda^* : R \to \R$ are $\Gamma$-invariant.
Then $\Gamma$ acts on $\mc H$ by the algebra automorphisms
\[
\gamma (N_w \theta_x) = N_{\gamma (w)} \gamma (\theta_x) \qquad \gamma \in \Gamma, w \in W, x \in X.
\]
The algebra $\mc H \rtimes \Gamma = \Gamma \ltimes \mc H$ has an Iwahori--Matsumoto basis
$\{ N_w : w \in (X \rtimes W) \rtimes \Gamma \}$ and a Bernstein basis
$\{ \theta_x N_w : x \in X, w \in W \rtimes \Gamma \}$. The length function of $X \rtimes W$ extends
naturally to $X \rtimes (W \rtimes \Gamma)$, and then it becomes zero on $\Gamma$.
The involution * of $\mc H$ extends to $\mc H \rtimes \Gamma$ by $N_\gamma^* = N_{\gamma^{-1}}$ for
$\gamma \in \Gamma$. We extend the trace $\tau$ of $\mc H$ to $\mc H \rtimes \Gamma$ by defining
$\tau |_{\mc H N_\gamma} = 0$ for all $\gamma \in \Gamma \setminus \{e\}$.

More generally we can involve a 2-cocycle $\natural : \Gamma^2 \to \C^\times$. It gives rise to a
twisted group algebra $\C [\Gamma,\natural]$, with multiplication rules
\[
N_\gamma \cdot N_{\gamma'} = \natural (\gamma,\gamma') N_{\gamma \gamma'} .
\]
From that we can build the twisted affine Hecke algebra $\mc H \rtimes \C [\Gamma,\natural]$,
which is like $\mc H \rtimes \Gamma$, only with $\C [\Gamma]$ replaced by $\C [\Gamma,\natural]$.
These twisted algebras can also be constructed with central idempotents. Namely, let
\begin{equation}\label{eq:8.19}
1 \to Z_\Gamma^+ \to \Gamma^+ \to \Gamma \to 1
\end{equation}
be a finite central extension, such that the pullback of $\natural$ to $\Gamma^+$ splits. Then
there exist a minimal idempotent $p_\natural \in \C [Z_\Gamma^+]$ and an algebra isomorphism
\[
\phi_\natural : p_\natural \C [\Gamma^+] \to \C[\Gamma,\natural] .
\]
For each lift $\gamma^+ \in \Gamma^+$ of $\gamma \in \Gamma$, $\phi_\natural 
(p_\natural N_{\gamma^+}) \in \C^\times N_\gamma$. Then $p_\natural$ is also a central idempotent
in $\mc H \rtimes \Gamma^+$ and
\begin{equation}\label{eq:8.20}
\mc H \rtimes \C [\Gamma,\natural] \cong p_\natural (\mc H \rtimes \Gamma^+) =
(\mc H \rtimes \Gamma^+) p_\natural .
\end{equation}
Since $p_\natural$ comes from a unitary character of $Z_\Gamma^+$, it stable under the natural 
*-operation on $\C [\Gamma^+]$. We define the * on $\C [\Gamma,\natural]$ by
\[
\phi (p_\natural N_{\gamma^+})^* = \phi (p_\natural N_{\gamma^+}^*) = 
\phi (p_\natural N_{\gamma^+}^{-1}) .  
\]
In combination with the * on $\mc H$, this endows \eqref{eq:8.20} with a *-operation. We define the 
trace on $\mc H \rtimes \C [\Gamma,\natural]$ just like for $\mc H \rtimes \Gamma$.

To deal with parabolic induction, we use a subgroup $\Gamma_P \subset \Gamma$ for each 
$P \subset \Delta$. 

\begin{cond}\label{cond:8.1}
\begin{enumerate}[(i)]
\item $\Gamma_P \subset \Gamma_Q$ whenever $P \subset Q$,
\item the action of $\Gamma_P$ on $T$ stabilizes $P, T_P$ and $T^P$ (and hence normalizes $W_P$),
\item $\Gamma_P$ acts on $T^P$ by multiplication with elements of the finite group $T_P \cap T^P$,
\item if $\gamma \in W \rtimes \Gamma, P \subset \Delta$ and $\gamma (P) \subset \Delta$, then 
$\gamma \Gamma_P \gamma^{-1} = \Gamma_{\gamma (P)}$,
\item $\natural$ is trivial on $\Gamma_\emptyset^2$.
\end{enumerate}
\end{cond}
Let $\Gamma_P^+$ be the inverse image of $\Gamma_P$ in $\Gamma^+$ for the map \eqref{eq:8.19}, then
Condition \ref{cond:8.1} holds for $\Gamma^+$ as well.
We say that $\mc H^P \rtimes \C [\Gamma_P,\natural]$ is a parabolic subalgebra of 
$\mc H \rtimes \C[\Gamma,\natural]$. Notice that $\mc H^\Delta \rtimes \C[\Gamma_\Delta,\natural]
= \mc H \rtimes \C[\Gamma_\Delta,\natural]$ (but $\Gamma_\Delta$ need not be the whole of $\Gamma$).
By Condition \ref{cond:8.1}.iii, $\Gamma_\emptyset$ acts trivially on $T^\emptyset = T$, so
$\Gamma_\emptyset$ acts trivially on $\mc H$. Together with Condition \ref{cond:8.1}.v that implies
\begin{equation}\label{eq:8.22}
\mc H^\emptyset \rtimes \C[\Gamma_\emptyset,\natural] = \mc A \otimes \C[\Gamma_\emptyset] . 
\end{equation}
By Condition \ref{cond:8.1}.ii $\Gamma_P$ stabilizes $P, X \cap \Q P$ and $X \cap (P^\vee)^\perp$.
Then Condition \ref{cond:8.1}.iii says that $\Gamma_P$ fixes $\Q X \cap (P^\vee)^\perp \cong
\Q X / \Q P$ pointwise. 
Let us write the action of $\gamma \in \Gamma$ on $\mc A \cong \C [X]$ as
\[
\gamma (\theta_x) = z_\gamma (x) \theta_{\gamma(x)} \qquad \text{where } z_\gamma \in T.
\]
For $t \in T^P = \Hom (X / X \cap \Q P, \C^\times), w \in W_P, x \in X$ we compute
\begin{equation}\label{eq:8.9}
\begin{aligned}
& \gamma (\psi_t (\theta_x N_w)) = \gamma (t(x) \theta_x N_w) = 
t(x) z_\gamma (x) \theta_{\gamma (x)} N_{\gamma (w)} ,\\
& \psi_t (\gamma (\theta_x N_w)) = \psi_t (z_\gamma (x) \theta_{\gamma (x)} N_{\gamma (w)}) =
t(\gamma (x)) z_\gamma (x) \theta_{\gamma (x)} N_{\gamma (w)} .
\end{aligned}
\end{equation}
These two lines are equal because $\gamma$ fixes $X / X \cap \Q P$ pointwise, so that $t(\gamma (x))
= t(x)$. Thus $\psi_t \in \mr{Aut} (\mc H^P)$ is $\Gamma_P$-equivariant and extends to an automorphism of 
$\mc H^P \rtimes \C[\Gamma_P,\natural]$. That enables us to define $\pi \otimes t$ for 
$\pi \in \Mod (\mc H^P \rtimes \C[\Gamma_P,\natural])$ and $t \in T^P$.\\

Assuming all the above, we will check what is needed to make the results from the previous sections
valid for $\mc H \rtimes \Gamma$ and for $\mc H \rtimes \C[\Gamma,\natural]$. To ease the notation
we will sometimes write things down for $\mc H \rtimes \Gamma$ and then indicate how they can be
generalized to $\mc H \rtimes \C[\Gamma,\natural]$. Of course this means that everywhere we should 
also replace $\mc H^P$ by $\mc H^P \rtimes \C[\Gamma_P,\natural]$ and $\mc H (W,q^\lambda)$ by 
$\mc H (W,q^\lambda) \rtimes \C[\Gamma,\natural]$.
The role of $W^P$ can be played by $\Gamma^P W^P$, where $\Gamma^P \subset \Gamma$ is a set of 
representatives for $\Gamma / \Gamma_P$. Notice that $\Gamma^P W^P$ is a set of shortest length 
representatives for $W \rtimes \Gamma / W_P \rtimes \Gamma_P$, because 
\[
\Gamma^P W^P (P) \subset \Gamma^P (R^+) = R^+ .
\]
In Lemma \ref{lem:1.1} we replace $h = N_w \theta_x$ by $N_{\gamma w} \theta_x$ and $N_{w^{-1}}$ by
$N_{\gamma w}^* = N_{w^{-1}} N_\gamma^* \in \C^\times N_{w^{-1}} N_{\gamma^{-1}}$. For $\gamma \in 
\Gamma \setminus \Gamma_P$ both $(h^*)^P_e$ and $(h^P_e)^{*_P}$ are zero, while for $\gamma \in \Gamma_P$
the calculations from the proof of Lemma \ref{lem:1.1} remain valid with an extra factor $N_\gamma^*$
at the right.

In Section \ref{sec:herm} and Theorem \ref{thm:3.1} there are few additional complications, almost 
everything holds just as well for $\mc H \rtimes \C[\Gamma,\natural]$. Only
in Lemma \ref{lem:2.6} we need to be careful: the same argument works for $\mc H \rtimes \Gamma^+$,
and from there we can restrict to $\mc H \rtimes \C[\Gamma,\natural]$ via \eqref{eq:8.20}.

To generalize Proposition \ref{prop:3.2} we need some preparations. Let $P,Q \subset \Delta$ and let
$D^{P,Q}$ be a set of shortest length representatives for $W_P \Gamma_P \backslash W \Gamma / 
W_Q \Gamma_Q$. In contrast with $W^{P,Q}$, $D^{P,Q}$ need not be unique. Like in \eqref{eq:3.9}, every
$d \in D^{P,Q}$ gives rise to an algebra isomorphism
\[
\begin{array}{cccc}
\psi_d : & \mc H^{d^{-1}(P) \cap Q} \rtimes (d^{-1} \Gamma_P d \cap \Gamma_Q) & \to &
\mc H^{P \cap d(Q)} \rtimes (\Gamma_P \cap d \Gamma_Q d^{-1}) \\
& \theta_x N_w N_\gamma & \mapsto & \theta_{d(x)} N_{d w d^{-1}} N_{d \gamma d^{-1}}
\end{array}.
\]
Kilmoyer's result \eqref{eq:3.10} can be generalized as follows:

\begin{lem}\label{lem:8.5}
Let $d \in D^{P,Q}$.
\enuma{
\item $d^{-1} W_P d \cap W_Q$ equals $W_{d^{-1}(P) \cap Q}$.
\item $d^{-1} (W_P \rtimes \Gamma_P) d \cap (W_Q \rtimes \Gamma_Q)$ equals 
$W_{d^{-1}(P) \cap Q} \rtimes (d^{-1} \Gamma_P d \cap \Gamma_Q)$.
}
\end{lem}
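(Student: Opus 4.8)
The plan is to strip off the $\Gamma$-part of $d$ and reduce everything to Kilmoyer's result \eqref{eq:3.10}. Write $d = u\sigma$ with $u \in W$ and $\sigma \in \Gamma$; then $\ell(d)=\ell(u)$ since $\sigma$ has length zero. First I would show that $u$ is a shortest-length representative of the ordinary double coset $W_P u W_{\sigma(Q)}$, i.e.\ $u \in W^{P,\sigma(Q)}$. Indeed, if $a \in W_P$ and $b \in W_{\sigma(Q)}$ satisfy $\ell(aub) < \ell(u)$, then, using $\sigma^{-1} W_{\sigma(Q)} \sigma = W_Q$, the element $aub\sigma = a \, d \, (\sigma^{-1} b \sigma)$ lies in $W_P \Gamma_P d W_Q \Gamma_Q$ and has length $\ell(aub) < \ell(d)$, contradicting the minimality of $d$.

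For part (a), conjugating by $\sigma$ gives
\[
d^{-1} W_P d \cap W_Q \;=\; \sigma^{-1}\bigl(u^{-1} W_P u \cap W_{\sigma(Q)}\bigr)\sigma ,
\]
and since $u \in W^{P,\sigma(Q)}$, \eqref{eq:3.10} yields $W_P \cap u W_{\sigma(Q)} u^{-1} = W_{P \cap u(\sigma(Q))}$. For $\alpha \in P \cap u(\sigma(Q))$ the root $u^{-1}(\alpha)$ is simple (it is the element of $\sigma(Q)$ which $u$ sends to $\alpha$), so $u^{-1}\bigl(P \cap u(\sigma(Q))\bigr) = u^{-1}(P) \cap \sigma(Q)$ is a subset of $\Delta$ and $u^{-1} W_P u \cap W_{\sigma(Q)} = W_{u^{-1}(P) \cap \sigma(Q)}$. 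Applying $\sigma^{-1}(\cdot)\sigma$ and using $\sigma^{-1} u^{-1} = d^{-1}$ and $\sigma^{-1}(\sigma(Q)) = Q$ then gives $d^{-1} W_P d \cap W_Q = W_{d^{-1}(P) \cap Q}$.

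For part (b), the inclusion $\supseteq$ follows from (a) once one checks that $d^{-1} \Gamma_P d \cap \Gamma_Q$ normalises $W_{d^{-1}(P) \cap Q} = d^{-1} W_P d \cap W_Q$: any $\gamma$ in it conjugates $W_P$ into itself (via $d\gamma d^{-1} \in \Gamma_P$) and conjugates $W_Q$ into itself directly, both by Condition~\ref{cond:8.1}(ii), so the right-hand side is a subgroup (an internal semidirect product, since $W \cap \Gamma = \{e\}$) of $W\Gamma$ contained in the left-hand side. For $\subseteq$, take $x$ in the left-hand side and write $x = w_Q \gamma_Q$ and $d x d^{-1} = w_P \gamma_P$ in the normal form of $W \rtimes \Gamma$, with $w_P \in W_P$, $\gamma_P \in \Gamma_P$, $w_Q \in W_Q$, $\gamma_Q \in \Gamma_Q$. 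Expanding $d x d^{-1} = u \sigma w_Q \gamma_Q \sigma^{-1} u^{-1}$ into normal form, I would read off $\gamma_P = \sigma \gamma_Q \sigma^{-1}$ and $w_P = u (\sigma w_Q \sigma^{-1})(\gamma_P u \gamma_P^{-1})^{-1}$, with $\sigma w_Q \sigma^{-1} \in W_{\sigma(Q)}$. The crucial point is that $\gamma_P \in \Gamma_P$ and, by Condition~\ref{cond:8.1}(iv), also $\gamma_P = \sigma \gamma_Q \sigma^{-1} \in \Gamma_{\sigma(Q)}$; hence conjugation by $\gamma_P$ normalises both $W_P$ and $W_{\sigma(Q)}$ and permutes $\Delta$ (as $\gamma_P$ acts on the based root datum), so $\gamma_P u \gamma_P^{-1}$ is again a shortest-length representative of $W_P u W_{\sigma(Q)}$. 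By uniqueness of such a representative, $\gamma_P u \gamma_P^{-1} = u$, i.e.\ $\gamma_P$ commutes with $u$. Consequently $\gamma_Q = \sigma^{-1}\gamma_P\sigma \in d^{-1}\Gamma_P d \cap \Gamma_Q$, and $w_P = u(\sigma w_Q\sigma^{-1})u^{-1} \in W_P \cap u W_{\sigma(Q)} u^{-1} = W_{P \cap u(\sigma(Q))}$, so $w_Q = d^{-1} w_P d \in W_{d^{-1}(P) \cap Q}$ by the computation in part (a); therefore $x = w_Q \gamma_Q$ lies in the right-hand side.

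The step I expect to be the main obstacle is the uniqueness argument $\gamma_P u \gamma_P^{-1} = u$: one must ensure that $\gamma_P$ normalises $W_{\sigma(Q)}$ and not merely $W_P$ — this is exactly where Condition~\ref{cond:8.1}(iv) enters — and that conjugation by $\gamma_P$ preserves the length function, which uses that $\Gamma$ acts on the \emph{based} root datum. The remaining manipulations are routine bookkeeping with the $W \rtimes \Gamma$ normal form and the classical double-coset combinatorics.
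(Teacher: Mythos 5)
Your proof is correct and takes essentially the same route as the paper: split off the length-zero part of $d$, get the $W$-part into $W^{P,\sigma(Q)}$ by the minimality of $d$, apply Kilmoyer's result \eqref{eq:3.10} for (a), and for (b) match the $\Gamma$-components and invoke uniqueness of minimal-length double coset representatives (preserved by the diagram automorphism) to force the $\Gamma$-part to commute with the $W$-part of $d$ — the paper argues identically, just with the decomposition $d=\gamma_d w_d$ in place of $u\sigma$. The only point to state explicitly is that $\gamma_P u\gamma_P^{-1}$ lies in the \emph{same} double coset $W_P u W_{\sigma(Q)}$ as $u$ (not merely in some double coset of minimal length), which is immediate from your displayed identity $w_P = u(\sigma w_Q\sigma^{-1})(\gamma_P u \gamma_P^{-1})^{-1}$.
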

\begin{proof}
Write $d = \gamma_d w_d$ with $\gamma_d \in \Gamma$ and $w_d \in W$. 
For $\alpha \in P$ we have $\ell (d s_\alpha) < \ell (d)$, so $d (\alpha) \in R^+$. 
As $\gamma_d (R^+) = R^+$, also $w_d (\alpha) \in R^+$.
For $\alpha \in P$ we have $\ell (s_\beta d) < \ell (d)$, so 
$R^+ \ni d^{-1}(\beta) = w_d^{-1} \gamma_d^{-1} (\beta)$. Thus $w_d (Q) \subset R^+$ and
$w_d^{-1}( \gamma_d^{-1} P) \subset R^+$, which means that $w_d \in W^{\gamma_d^{-1}(P),Q}$.\\
(a) We compute
\[
d^{-1} W_P d \cap W_Q = w_d^{-1} \gamma_d^{-1} W_P \gamma_d w_d \cap W_Q =
w_d^{-1} W_{\gamma_d^{-1}(P)} w_d \cap W_Q .
\]
By \eqref{eq:3.10} the right hand side equals 
$W_{w_d^{-1} \gamma_d^{-1}(P) \cap Q} = W_{d^{-1}(P) \cap Q}$.\\
(b) First we note that by Condition \ref{cond:8.1}.iv 
\begin{align*}
d^{-1} (W_P \rtimes \Gamma_P) d & = w_d^{-1} \gamma_d^{-1} (W_P \rtimes \Gamma_P) \gamma_d w_d =
w_d^{-1} (\gamma_d^{-1} W_{P} \gamma_d \rtimes \gamma_d^{-1} \Gamma_P \gamma_d ) w_d \\
& = w_d^{-1} (W_{\gamma_d^{-1}(P)} \rtimes \Gamma_{\gamma_d^{-1}(P)} ) w_d .
\end{align*}
Consider $w_1 \in W_Q, \gamma_1 \in \Gamma_Q, w_2 \in W_{\gamma_d^{-1}(P)}, \gamma_2 \in 
\Gamma_{\gamma_d^{-1}(P)}$ such that 
\begin{equation}\label{eq:8.13}
w_1 \gamma_1 = w_d^{-1} w_2 \gamma_2 w_d . 
\end{equation}
Via the isomorphism $W \rtimes \Gamma / W \cong \Gamma$ we see that 
$\gamma_1 = \gamma_2 \in \Gamma_Q \cap \Gamma_{\gamma_d^{-1}(P)}$. Then 
\begin{align*}
& \gamma_2 w_d \gamma^{-1}(Q) = \gamma_2 w_d (Q) \subset \gamma_2 (R^+) = R^+ ,\\
& (\gamma_2 w_d \gamma_1)^{-1} (P) = \gamma_1 w_d^{-1} \gamma_2^{-1} (P) = \gamma_1 w_d^{-1} (P)
\subset \gamma_1 (R^+) = R^+ ,
\end{align*}
so $\gamma_2 w_d \gamma_1^{-1} = \gamma_1 w_d \gamma^{-1} \in W^{P,Q}$. Now 
\[
w_1 = w_d^{-1} w_2 (\gamma_2 w_d \gamma_1^{-1}) \in W_Q \cap w_d^{-1} w_d^{-1} W_P D^{P,Q} ,
\]
which by \cite[Lemma 2.7.2]{Car} is only possible when $\gamma_2 w_d \gamma^{-1} = w_d$. Hence
\begin{equation}\label{eq:8.14}
w_1 = w_d^{-1} w_2 w_d \in W_Q \cap w_d^{-1} W_{\gamma_d^{-1}(P)} w_d ,
\end{equation}
and from \eqref{eq:3.10} we know that the right hand side equals $W_{Q \cap d^{-1}(P)}$. From
\eqref{eq:8.13} and \eqref{eq:8.14} we obtain $\gamma_1 = w_d^{-1} \gamma_2 w_d$, so 
\begin{multline*}
(W_Q \rtimes \Gamma_Q) \cap w_d^{-1} ( W_{\gamma_d^{-1}(P)} \rtimes \Gamma_{\gamma_d^{-1}(P)}) w_d
= W_{Q \cap \gamma_d^{-1}(P)} \rtimes (\Gamma_Q \cap w_d^{-1} \Gamma_{\gamma_d^{-1}(P)} w_d) \\
= W_{Q \cap d^{-1}(P)} \rtimes (\Gamma_Q \cap w_d \gamma_d^{-1} \Gamma_P \gamma_d w_d) =
W_{Q \cap d^{-1}(P)} \rtimes (\Gamma_Q \cap d^{-1} \Gamma_P d) . \qedhere
\end{multline*}
\end{proof}

Let $(\pi,V_\pi) \in \Mod (\mc H^Q \rtimes \Gamma_Q)$. Analogous to \eqref{eq:3.11}, 
$\ind_{\mc H^Q \rtimes \Gamma_Q}^{\mc H \rtimes \Gamma} (V_\pi)$ has linear subspaces
\[
(\Res^{\mc H \rtimes \Gamma}_{\mc H^P \rtimes \Gamma_P} \ind_{\mc H^Q \rtimes \Gamma_Q}^{
\mc H \rtimes \Gamma} )_{\leq d} (V_\pi) = \bigoplus_{d' \in D^{P,Q}, d' \leq d}
\mc H (W_P \Gamma_P d' W_Q \Gamma_Q) \mc A \otimes_{\mc H^Q \rtimes \Gamma_Q} V_\pi .
\]
With Lemma \ref{lem:8.5} at hand, the proof of Proposition \ref{prop:3.2} becomes valid for
$\mc H \rtimes \Gamma$. The above also works for $\mc H \rtimes \C[\Gamma,\natural]$, that is only
a notational difference. The result is:

\begin{prop}\label{prop:8.6}
For each $d \in D^{P,Q}$, $(\Res^{\mc H \rtimes \Gamma}_{\mc H^P \rtimes \Gamma_P} \ind_{\mc H^Q 
\rtimes \Gamma_Q}^{\mc H \rtimes \Gamma} )_{\leq d} (V_\pi)$ is an $\mc H^P \rtimes \Gamma_P$-submodule
of $\ind_{\mc H^Q \rtimes \Gamma_Q}^{\mc H \rtimes \Gamma} (V_\pi)$. There is an isomorphism of
$\mc H^P \rtimes \Gamma_P$-modules
\begin{multline*}
\big( \Res^{\mc H \rtimes \Gamma}_{\mc H^P \rtimes \Gamma_P} \ind_{\mc H^Q \rtimes \Gamma_Q}^{\mc H 
\rtimes \Gamma} \big)_{\leq d} (V_\pi) \big/ \big( \Res^{\mc H \rtimes \Gamma}_{\mc H^P \rtimes \Gamma_P}
\ind_{\mc H^Q \rtimes \Gamma_Q}^{\mc H \rtimes \Gamma} \big)_{<d} (V_\pi) \cong \\
\ind_{\mc H^{P \cap d(Q)} \rtimes (\Gamma_P \cap d \Gamma_Q d^{-1})}^{\mc H^P \rtimes \Gamma_P} \big( 
\psi_{d *} \: \Res^{\mc H^Q \rtimes \Gamma_Q}_{\mc H^{d^{-1}(P) \cap Q} \rtimes (d^{-1} \Gamma_P d \cap 
\Gamma_Q)} (V_\pi) \big) .
\end{multline*}
Thus the functor $\Res^{\mc H \rtimes \Gamma}_{\mc H^P \rtimes \Gamma_P} \ind_{\mc H^Q 
\rtimes \Gamma_Q}^{\mc H \rtimes \Gamma}$ has a filtration with successive subquotients
\[
\ind_{\mc H^{P \cap d(Q)} \rtimes (\Gamma_P \cap d \Gamma_Q d^{-1})}^{\mc H^P \rtimes \Gamma_P} 
\circ \psi_{d *} \circ 
\Res^{\mc H^Q \rtimes \Gamma_Q}_{\mc H^{d^{-1}(P) \cap Q} \rtimes (d^{-1} \Gamma_P d \cap \Gamma_Q)},
\]
where $d$ runs through $D^{P,Q}$. 

The same holds with $\C[\Gamma,\natural]$ instead of $\C[\Gamma]$.
\end{prop}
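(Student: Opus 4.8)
The plan is to follow the proof of Proposition \ref{prop:3.2} given in \cite[\S 11]{DeOp1}, inserting the finite groups $\Gamma_P,\Gamma_Q$ throughout and replacing Kilmoyer's identity \eqref{eq:3.10} by Lemma \ref{lem:8.5}. First I would fix a total ordering of $D^{P,Q}$ for which $\ell : D^{P,Q}\to\Z_{\geq 0}$ is weakly increasing, so that the subspaces $\big(\Res^{\mc H\rtimes\Gamma}_{\mc H^P\rtimes\Gamma_P}\ind_{\mc H^Q\rtimes\Gamma_Q}^{\mc H\rtimes\Gamma}\big)_{\leq d}(V_\pi)$ form an increasing filtration as $d$ runs over $D^{P,Q}$. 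Using that $\Gamma^P W^P$ is a set of shortest-length representatives for $(W\rtimes\Gamma)/(W_P\rtimes\Gamma_P)$ (as already observed in the extended setting above), the Bernstein-type decomposition of $\mc H\rtimes\Gamma$ over the parabolic subalgebra identifies $\ind_{\mc H^Q\rtimes\Gamma_Q}^{\mc H\rtimes\Gamma}(V_\pi)$, as a vector space, with $\bigoplus_{d\in D^{P,Q}}\mc H(W_P\Gamma_P d W_Q\Gamma_Q)\mc A\otimes_{\mc H^Q\rtimes\Gamma_Q}V_\pi$. This reduces the statement to two claims: that the partial sums over $d'\leq d$ are $\mc H^P\rtimes\Gamma_P$-submodules, and that the successive quotients are the asserted induced-restricted modules.

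For the first claim I would check that left multiplication by each generator of $\mc H^P\rtimes\Gamma_P$ cannot raise the double-coset level. For $N_\gamma$ with $\gamma\in\Gamma_P$ this is immediate, since $\ell(\gamma)=0$ and $\gamma W_P\Gamma_P = W_P\Gamma_P$, so $N_\gamma$ permutes the relevant basis elements within each double coset; for $N_s$ with $s$ a simple reflection in $W_P$ it follows from the length recursion \eqref{eq:1.11} together with $sW_P\Gamma_P d' W_Q\Gamma_Q\subset W_P\Gamma_P d'W_Q\Gamma_Q$; and for $\theta_x$ one uses the Bernstein cross relations to rewrite $\theta_x N_w$ as a sum of terms $N_v a_v$ with $v\leq w$ in the Bruhat order and $a_v\in\mc A$, together with the fact (established while proving Proposition \ref{prop:3.2}) that $v\leq w\in W_P\Gamma_P d'W_Q\Gamma_Q$ forces the shortest-length representative of the double coset of $v$ to have length $\leq\ell(d')$. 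This is verbatim the argument of \cite[(11.3)--(11.4)]{DeOp1}, now carrying the extra factors $N_\gamma,N_{\gamma'}$ and the twisting scalars $z_\gamma$ along without change.

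For the second claim, fix $d\in D^{P,Q}$ and write $W_P\Gamma_P d W_Q\Gamma_Q$ in terms of shortest-length representatives; the group governing the resulting parametrization is the ``inertia'' group $d^{-1}(W_P\rtimes\Gamma_P)d\cap(W_Q\rtimes\Gamma_Q)$, which by Lemma \ref{lem:8.5} equals $W_{d^{-1}(P)\cap Q}\rtimes(d^{-1}\Gamma_P d\cap\Gamma_Q)$, and whose $d$-conjugate is $W_{P\cap d(Q)}\rtimes(\Gamma_P\cap d\Gamma_Q d^{-1})$. These are exactly the two parabolic subalgebra labels in the statement, and $\psi_d$ identifies the corresponding parabolic subalgebras. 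Feeding this into the computation of \cite[(11.5)--(11.6)]{DeOp1} yields the isomorphism $(\ldots)_{\leq d}/(\ldots)_{<d}\cong\ind_{\mc H^{P\cap d(Q)}\rtimes(\Gamma_P\cap d\Gamma_Q d^{-1})}^{\mc H^P\rtimes\Gamma_P}\big(\psi_{d*}\Res^{\mc H^Q\rtimes\Gamma_Q}_{\mc H^{d^{-1}(P)\cap Q}\rtimes(d^{-1}\Gamma_P d\cap\Gamma_Q)}(V_\pi)\big)$. Finally, for the twisted algebras I would either rerun the argument with $\C[\Gamma,\natural]$ in place of $\C[\Gamma]$---which, as the cocycle $\natural$ merely rescales structure constants and is transported by $\psi_d$ together with the group elements, is only a notational change---or reduce to the untwisted case over the central extension $\Gamma^+$ of \eqref{eq:8.19} and cut down by the idempotent $p_\natural$ via \eqref{eq:8.20}. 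The main obstacle is purely bookkeeping: keeping track of $\Gamma_P$- and $\Gamma_Q$-equivariance at every step of the \cite{DeOp1} argument and checking that neither the non-canonical choice of $D^{P,Q}$ nor that of the total ordering affects the outcome; the single genuinely new combinatorial ingredient, Lemma \ref{lem:8.5}, has already been provided.
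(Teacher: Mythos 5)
Your proposal is correct and matches the paper's own treatment: the paper likewise defines the filtration by double cosets $W_P\Gamma_P d' W_Q\Gamma_Q$ with $d'\in D^{P,Q}$ ordered by length, observes that with Lemma \ref{lem:8.5} replacing Kilmoyer's identity \eqref{eq:3.10} the argument of Proposition \ref{prop:3.2}, i.e. \cite[(11.3)--(11.6)]{DeOp1}, goes through verbatim, and dismisses the twisted case as a notational change (your alternative reduction via $\Gamma^+$ and $p_\natural$ is a fine substitute). No gaps.
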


In Section \ref{sec:Langlands} the elementary Lemmas \ref{lem:4.1} and \ref{lem:4.2} also hold for 
$\mc H \rtimes \Gamma$. However, the Langlands classification and its variations (Theorem 
\ref{thm:4.3} and Propositions \ref{prop:4.4}, \ref{prop:4.6}) are just not valid any more in this 
form. An extension of Theorem \ref{thm:4.3} to $\mc H \rtimes \Gamma$ was established in 
\cite[Corollary 2.2.5]{SolAHA}, but it is more involved.

The main issue with the Langlands classification for $\mc H \rtimes \Gamma$ is the uniqueness, as 
witnessed by the following example. Let $R = A_2, \Delta = \{\alpha,\beta\}, X = \Z R$ and let 
$\Gamma = \{e,\gamma\}$ with $\gamma$ the unique nontrivial automorphism of $(X,\Delta)$. The 
parabolic subalgebras of $\mc H \rtimes \Gamma$ are $\mc A, \mc H^{\{\alpha\}}, \mc H^{\{\beta\}}$ 
and $\mc H \rtimes \Gamma$. Pick a $t \in T^{\emptyset+}$ which is fixed by $\gamma$. 
Then $\ind_{\mc A}^{\mc H}(t)$ has a unique irreducible quotient but 
$\ind_{\mc A}^{\mc H \rtimes \Gamma}(t)$ has two inequivalent irreducible quotients.

Lemma \ref{lem:4.8} and its proof still work with our standard modifications. However, to generalize 
Lemma \ref{lem:4.5} and Theorem \ref{thm:4.7} we first have to extend the notion of $W,\!P$-regularity.
We say that an $\mc H^P \rtimes \C[\Gamma_P,\natural]$-representation $\pi$ is $W\Gamma,\!P$-regular 
if $wt \notin \mr{Wt}(\pi)$ for all $t \in \mr{Wt}(\pi)$ and all $w \in W_P \Gamma_P D_+^{P,P}$, where
\[
D_+^{P,P} = \{ d \in W\Gamma : d (P) \subset R^+, d^{-1}(P) \subset R^+, d \notin \Gamma_P \} .
\]

\begin{lem}\label{lem:8.7}
Let $P,Q \subset \Delta$ and $\gamma \in W \Gamma$ such that $\gamma (P) = Q$ and let\\ 
$\pi \in \Irr (\mc H^P \rtimes \C[\Gamma_P,\natural])$ be $W\Gamma,\!P$-regular. 
\enuma{
\item The representation $\pi$ appears with multiplicity one in\\ 
$\Res^{\mc H \rtimes \C[\Gamma,\natural]}_{\mc H^P \rtimes \C[\Gamma_P,\natural]} 
\ind_{\mc H^Q \rtimes \C[\Gamma_Q,\natural]}^{\mc H\rtimes \C[\Gamma,\natural]} (\psi_{\gamma *} \pi)$, 
as a direct summand.
\item $\dim \Hom_{\mc H \rtimes \C[\Gamma,\natural]} \big( \ind_{\mc H^P \rtimes \C[\Gamma_P,\natural]}^{
\mc H \rtimes \C[\Gamma,\natural]} (\pi), \ind_{\mc H^Q \rtimes \C[\Gamma_Q,,\natural]}^{\mc H \rtimes 
\C[\Gamma,\natural]} (\psi_{\gamma *} \pi) \big) = 1$.
}
\end{lem}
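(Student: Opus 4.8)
The plan is to generalize the proof of Lemma~\ref{lem:4.5} step by step, using Proposition~\ref{prop:8.6} in place of Proposition~\ref{prop:3.2} and the extended Kilmoyer lemma (Lemma~\ref{lem:8.5}) where \eqref{eq:3.10} was used. First I would treat the ordinary-group case $\mc H\rtimes\Gamma$ and then note that passing to $\C[\Gamma,\natural]$ via $\Gamma^+$ and the central idempotent $p_\natural$ from \eqref{eq:8.20} is only a notational change, since $D^{P,Q}$, the isomorphisms $\psi_d$, and the filtration of Proposition~\ref{prop:8.6} all descend through $p_\natural$.

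For part (a): by Proposition~\ref{prop:8.6} the module $\Res^{\mc H\rtimes\Gamma}_{\mc H^P\rtimes\Gamma_P}\ind_{\mc H^Q\rtimes\Gamma_Q}^{\mc H\rtimes\Gamma}(\psi_{\gamma *}\pi)$ has a filtration whose successive subquotients are $\ind_{\mc H^{P\cap d(Q)}\rtimes(\Gamma_P\cap d\Gamma_Q d^{-1})}^{\mc H^P\rtimes\Gamma_P}(\psi_{d *}\psi_{\gamma *}\pi)$, indexed by $d\in D^{P,Q}$. As in Lemma~\ref{lem:4.8}, using \cite[Proposition~4.20]{Opd-Sp} the set of $\mc A$-weights of the $d$-th subquotient lies in $W_P\,d\gamma\,\mr{Wt}(\pi)$. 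I would single out the term $d=\gamma^{-1}$, for which $\psi_{d *}\psi_{\gamma *}\pi=\pi$ itself: one checks $\gamma^{-1}(Q)\subset R^+$ and $\gamma(P)\subset R^+$, and that $P\cap\gamma^{-1}(Q)=P$, $\Gamma_P\cap\gamma^{-1}\Gamma_Q\gamma=\Gamma_P$ by Condition~\ref{cond:8.1}(iv), so this subquotient is genuinely $\pi$. For any other $d$, if $\pi$ (equivalently its central character) occurred in the $d$-th subquotient, then $W_P d\gamma t\cap W_P t'\ne\emptyset$ for weights $t,t'$ of $\pi$; writing $w_2^{-1}w_1 d\gamma t\in\mr{Wt}(\pi)$ and invoking $W\Gamma,\!P$-regularity forces $w_2^{-1}w_1 d\gamma\notin W_P\Gamma_P D_+^{P,P}$. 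Since $d\gamma(P)=d(Q)\subset R^+$ shows $d\gamma\in W^P$ (and $d\gamma\in W\Gamma$), a decomposition $d\gamma=a\tilde d$ with $a\in W_P\Gamma_P$ and $\tilde d\in D_+^{P,P}\cup\Gamma_P$—the extended analogue of \cite[Proposition~2.7.5]{Car} combined with Lemma~\ref{lem:8.5}—forces $\tilde d\in\Gamma_P$, hence $d\gamma\in W_P\Gamma_P\cap W^P$; I would check this intersection is $\Gamma_P^{W^P}$, small enough to conclude $d\gamma$ maps $Q$ to $P$ compatibly so that $d=\gamma^{-1}$ in $D^{P,Q}$. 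Thus only the $d=\gamma^{-1}$ subquotient carries the central character of $\pi$; as in Lemma~\ref{lem:4.8}/\ref{lem:4.5}, disjointness of $Z(\mc H^P\rtimes\Gamma_P)$-weights splits $\pi$ off as a direct summand with multiplicity one. Part (b) is then immediate from Frobenius reciprocity, exactly as in Lemma~\ref{lem:4.5}(b).

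The main obstacle I anticipate is the double-coset/parabolic combinatorics in the extended setting: the decomposition $d\gamma=a\tilde d$ with $a\in W_P\Gamma_P$, $\tilde d\in D_+^{P,P}\cup\Gamma_P$, and the identification of $W_P\Gamma_P\cap W^P\Gamma^P$-type intersections. In the plain case this is handled cleanly by Kilmoyer's theorem and \cite[Proposition~2.7.5]{Car}; here I would need to combine Lemma~\ref{lem:8.5} with a careful bookkeeping of the $\Gamma$-part via the projection $W\rtimes\Gamma\to\Gamma$, checking at each point that the relevant $\Gamma_P$-normalization hypotheses from Condition~\ref{cond:8.1} apply (especially (ii) and (iv)). Once that combinatorial lemma is in place, the rest of the argument is a routine transcription of the proofs of Lemmas~\ref{lem:4.8} and~\ref{lem:4.5}, and the twisted case $\C[\Gamma,\natural]$ follows formally from \eqref{eq:8.20}.
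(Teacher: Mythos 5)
Your outline reproduces the paper's own proof almost step for step: choose $D^{P,Q}$ so that it contains $\gamma^{-1}$, use Proposition \ref{prop:8.6} in place of Proposition \ref{prop:3.2}, identify the $d=\gamma^{-1}$ subquotient with $\pi$ itself via Condition \ref{cond:8.1}.iv, exclude the other subquotients by comparing central characters and invoking $W\Gamma,\!P$-regularity, split $\pi$ off as a direct summand, deduce (b) from Frobenius reciprocity, and treat $\C[\Gamma,\natural]$ as a notational variant via \eqref{eq:8.20}. Two small corrections: the central characters of $\mc H^P\rtimes\C[\Gamma_P,\natural]$ are $W_P\Gamma_P$-orbits of weights, so the coincidence you need is $W_P\Gamma_P d\gamma t\cap W_P\Gamma_P t'\neq\emptyset$ with $w_1,w_2\in W_P\Gamma_P$ (not just $W_P$); and the intersection you denote ``$\Gamma_P^{W^P}$'' is simply $\Gamma_P$: any $\gamma'\in\Gamma_P$ stabilizes $P$ and permutes $R^+$, so $w\gamma'(P)=w(P)\subset R^+$ forces $w\in W_P\cap W^P=\{e\}$, whence $d\gamma\in\Gamma_P$, $W_P\Gamma_P d W_Q\Gamma_Q=W_P\Gamma_P\gamma^{-1}W_Q\Gamma_Q$ and $d=\gamma^{-1}$ in $D^{P,Q}$.

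The genuine issue is that the step you flag as your ``main obstacle'' is precisely where all the new content of this lemma (relative to Lemma \ref{lem:4.5}) lies, and you leave it unproven: namely that every $d\gamma$ with $d\in D^{P,Q}$ and $d\gamma(P)=d(Q)\subset R^+$ decomposes as $d\gamma=a\tilde d$ with $a\in W_P$ and $\tilde d\in D_+^{P,P}\cup\Gamma_P$. The paper does not appeal to an extended analogue of \cite[Proposition 2.7.5]{Car}; it proves this directly by length reduction. If $d\gamma$ is not of minimal length in $W_P d\gamma$, there is $\alpha\in P$ with $(d\gamma)^{-1}(\alpha)\in -R^+$; since $\gamma^{-1}(Q)\subset R^+$ one has $d^{-1}(\alpha)\notin Q$, hence
\[
(s_\alpha d\gamma)(P)=s_\alpha d(Q)\subset s_\alpha (R^+\setminus\{\alpha\})\subset R^+
\quad\text{and}\quad \ell(s_\alpha d\gamma)<\ell(d\gamma),
\]
so the hypothesis persists and one can iterate until a $w_4\in W_P$ is found with $w_4 d\gamma$ of minimal length in $W_P d\gamma$, which then satisfies $w_4 d\gamma (P)\subset R^+$ and $(w_4 d\gamma)^{-1}(P)\subset R^+$, i.e. $w_4 d\gamma\in D_+^{P,P}\cup\Gamma_P$. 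Combined with the regularity statement this yields $d\gamma\in W_P\Gamma_P$, and the observation above finishes the identification $d=\gamma^{-1}$. With this lemma supplied, the rest of your plan is indeed the routine transcription of Lemmas \ref{lem:4.8} and \ref{lem:4.5} that you describe.
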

\begin{proof}
(a) Since $\gamma (P) \subset R^+$ and $\gamma^{-1}(Q) \subset R^+$, $\gamma^{-1}$ has minimal
length in\\ $W_P \Gamma_P \gamma^{-1} W_Q \Gamma_Q$. Hence we may choose $D^{P,Q}$ so that it contains
$\gamma^{-1}$. We follow the proof of Lemma \ref{lem:4.8} with $d' = \gamma^{-1}$. Instead of
\eqref{eq:4.10} we find $w_1, w_2 \in W_P \rtimes \Gamma_P$ and $t \in \mr{Wt}(\pi)$ such that
$w_2^{-1} w_1 d \gamma t \in \mr{Wt}(\pi)$. The $W \Gamma,\!P$-regularity of $\pi$ says that
\begin{equation}\label{eq:8.15} 
w_3 w_2^{-1} w_1 d \gamma \notin W_P \Gamma_P D_+^{P,P} \qquad \text{for all } w_3 \in W_P \Gamma_P .
\end{equation}
Notice that $d \gamma (P) = d(Q) \subset R^+$, which means that $d \gamma \in \Gamma W^P$. Suppose
that $d \gamma$ does not have minimal length in $W_P d \gamma$. There exists $\alpha \in P$ with 
$\gamma^{-1} d^{-1} (\alpha) \in -R^+$. Then $\gamma^{-1} d^{-1} s_\alpha (\alpha) \in R^+$ and
\[
\ell (s_\alpha d \gamma ) = \ell (\gamma^{-1} d^{-1} s_\alpha) < \ell (\gamma^{-1} d^{-1}) =
\ell (d \gamma) .
\]
As $\gamma^{-1}(Q) \subset R^+$, $d^{-1}(\alpha) \notin Q$ and $\alpha \notin d^{-1}(Q)$. That gives
\[
(s_\alpha d \gamma) (P) = s_\alpha d (Q) \subset s_\alpha (R^+ \setminus \{\alpha\}) \subset R^+.
\]
The reasoning can be applied to $s_\alpha d \gamma$. Repeating that if necessary, we find $w_4 \in 
W_P$ such that $w_4 d \gamma (P) \subset R^+$ and $w_4 d \gamma$ has minimal length in
$W_P d \gamma$. Thus $(w_4 d \gamma)^{-1}(P) \subset R^+$, $w_4 d \gamma \in D_+^{P,P} \cup \Gamma_P$ 
and $d \gamma \in W_P (D_+^{P,P} \cup \Gamma_P)$. Combining that with \eqref{eq:8.15}, we find 
$d \gamma \in W_P \Gamma_P = \Gamma_P W_P$. Also $d \gamma \in \Gamma W^P$, so in fact
$d \gamma \in \Gamma_P$. Then $\Gamma_P d = \Gamma_P \gamma^{-1}$, and using $d, \gamma^{-1} \in
D^{P,Q}$ we obtain $d = \gamma^{-1}$. From this point on, we can conclude in the same way as in the
proof of Lemma \ref{lem:4.5}.a.\\
(b) This can be shown exactly as in the proof of Lemma \ref{lem:4.5}.
\end{proof}

Lemma \ref{lem:8.7}.c yields a nonzero intertwining operator
\[
I(\gamma,P,\pi) : \ind_{\mc H^P \rtimes \C[\Gamma_P,\natural]}^{\mc H \rtimes \C[\Gamma,\natural]} 
(\pi) \to \ind_{\mc H^Q \rtimes \C[\Gamma_Q,\natural]}^{\mc H \rtimes \C[\Gamma,\natural]} 
(\psi_{\gamma *} \pi) ,
\]
unique up to scalars. With those operators Theorem \ref{thm:4.7} and its proof become valid for
$\mc H \rtimes \C[\Gamma,\natural]$. That provides $\mc H \rtimes \C[\Gamma,\natural]$ with a 
substitute for the uniqueness of Langlands quotients and Langlands representations (for $\mc H$). 
We warn that Proposition \ref{prop:4.6} fails for $\mc H \rtimes \Gamma$: 
$\pi \in \Irr (\mc H^P \rtimes \Gamma_P)$ tempered
and $t \in T^{P+}$ does not enforce $W\Gamma,\!P$-regularity of $\pi \otimes t$.\\

We may relax Condition \ref{cond:5.1} by replacing $\mc H$ with $\mc H \rtimes \C[\Gamma,\natural]$, 
let us call that Condition \ref{cond:5.1}'. The advantage is that it becomes valid for more Bernstein 
components of representations of $p$-adic groups. For instance, Condition \ref{cond:5.1}' applies to 
all smooth representations of classical groups \cite{Hei,AMS4} and in those cases 
Condition \ref{cond:8.1} follows from the same checks as in \cite[\S 5]{SolComp}.
Under Condition \ref{cond:5.1}', the indecomposability of $\Rep (M)^{\mf s_M}$ forces 
$\Mod (\mc H^\emptyset \rtimes \C[\Gamma_\emptyset,\natural])$ to be indecomposable. Hence 
the algebra \eqref{eq:8.22} is also decomposable, which forces $\Gamma_\emptyset = \{1\}$.
All the arguments and results in Section \ref{sec:padic} remain valid with 
$\mc H \rtimes \C[\Gamma,\natural]$ instead of $\mc H$, no further adjustments are necessary.

In the setting of Section \ref{sec:equiv}, Condition \ref{cond:5.1}' turns out to hold automatically
with trivial 2-cocycle, see Theorem \ref{thm:8.8}.
The crucial part of the proof of Theorem \ref{thm:6.1} is the reference to \cite[\S 2]{SolQS}.
Since that work was conceived for algebras of the form $\mc H \rtimes \Gamma$, 
Theorem \ref{thm:6.1} applies to all extended affine Hecke algebras that satisfy Condition 
\ref{cond:5.1}' with $\natural = 1$. More precisely, we extend the Steinberg 
representation of $\mc H (W,q^\lambda)$ to $\mc H (W,q^\lambda) \rtimes \Gamma$ by
\[
\mr{St}(N_w N_\gamma) = \mr{St}(N_w) \mr{det}_X (\gamma) \qquad w \in W, \gamma \in \Gamma ,
\]
where $\det_X$ means the determinant of the action of $\gamma$ on $X$. The more general version
of Theorem \ref{thm:6.1}.b says: 
\begin{equation}\label{eq:8.1}
\Hom_G (\Pi_{\mf s}, \ind_U^G (\xi)) \cong \ind_{\mc H (W,q^\lambda) \rtimes \Gamma}^{\mc H \rtimes 
\Gamma} (\mr{St}) \qquad \text{as } \mc H \rtimes \Gamma\text{-representations.}
\end{equation}
That and Proposition \ref{prop:6.3} prompt us to define:
\begin{equation}\label{eq:8.2}
\text{an } \mc H \rtimes \Gamma\text{-module } V \text{ is generic if and only if }
\Res^{\mc H \rtimes \Gamma}_{\mc H (W,q^\lambda) \rtimes \Gamma} (V) \text{ contains St.}
\end{equation}
With this definition, the part from Proposition \ref{prop:6.3} up to and including Lemma 
\ref{lem:7.2} generalizes readily to $\mc H \rtimes \Gamma$. For representations of 
$\mc H \rtimes \C[\Gamma,\natural]$ with $\natural$ nontrivial in $H^2 (\Gamma,\C^\times)$,
genericity is not defined. In such cases $\C[\Gamma,\natural]$ does not possess onedimensional
representations, so we do not have a good analogue of $\det_X$.\\

Let us discuss the relation between generic representations of $\mc H$ and of $\mc H \rtimes \Gamma$.
The definition of the Steinberg representation of $\mc H (W,q^\lambda)$ shows that 
$\gamma (\mr{St}) = \mr{St}$ for all $\gamma \in \Gamma$. It follows that
\begin{equation}\label{eq:8.3}
\text{the action of } \Gamma \text{ on Mod}(\mc H) \text{ preserves genericity.}
\end{equation}
Suppose now that $(\pi,V_\pi)$ is an irreducible generic $\mc H$-representation. By Lemma 
\ref{lem:6.5}.b there exists a unique (up to scalars) vector $v_{\mr{St}} \in V_\pi \setminus \{0\}$
on which $\mc H (W,q^\lambda)$ acts according to St. Let $\Gamma_\pi$ be the stabilizer (in 
$\Gamma$) of $\pi \in \Irr (\mc H)$. Schur's lemma says there exists a unique (up to scalars)
linear bijection 
\[
\pi (\gamma) : V_\pi \to V_\pi \text{ such that } \pi (\gamma (h)) = 
\pi (\gamma) \pi (h) \pi (\gamma)^{-1} \text{ for all } h \in \mc H.
\]
As $\gamma (\mr{St}) = \mr{St}$, $\pi (\gamma) v_{\mr{St}}$ must belong to $\C v_{\mr{St}}$.
We normalize $\pi (\gamma)$ by the condition $\pi (\gamma) v_{\mr{St}} = v_{\mr{St}}$. In this way
$(\pi,V_\pi)$ extends to a representation of $\mc H \rtimes \Gamma_\pi$. 

Clifford theory \cite[Appendix]{RaRa} tells us how any irreducible $\mc H \rtimes 
\Gamma$-representation containing $\pi$ can be constructed. Namely, let $(\rho,V_\rho) \in 
\Irr (\Gamma_\pi)$ and let $\mc H \rtimes \Gamma_\pi$ act on $V_\pi \otimes V_\rho$ by
\[
(h N_\gamma (v_1 \otimes v_2) = \pi (h N_\gamma) v_1 \otimes \rho (\gamma) v_2. 
\]
Then $\pi \rtimes \rho := \mr{ind}_{\mc H \rtimes \Gamma_\pi}^{\mc H \rtimes \Gamma} 
(V_\pi \otimes V_\rho)$ is irreducible and
\begin{equation}\label{eq:8.4}
\begin{array}{ccc}
\Irr (\Gamma_\pi) & \to & \Irr (\mc H \rtimes \Gamma) ,\\
\rho & \mapsto & \pi \rtimes \rho
\end{array} 
\end{equation}
is injection with as image 
\begin{equation}\label{eq:8.5}
\{ V \in \Irr (\mc H \rtimes \Gamma) : 
\Res_{\mc H}^{\mc H \rtimes \Gamma}(V) \text{ contains } V_\pi \}.
\end{equation}
As for the genericity of $\pi \rtimes \rho$:
\begin{align*}
\Hom_{\mc H (W,q^\lambda) \rtimes \Gamma} (\pi \rtimes \rho, \mr{St}) & 
= \Hom_{\mc H (W,q^\lambda) \rtimes \Gamma} (\mr{ind}_{\mc H \rtimes \Gamma_\pi}^{\mc H 
\rtimes \Gamma} (V_\pi \otimes V_\rho), \mr{St}) \\
& \cong \Hom_{\mc H (W,q^\lambda) \rtimes \Gamma_\pi} (\pi \otimes \rho, \mr{St}) .
\end{align*}
By Lemma \ref{lem:6.5}.b and because $\pi (\Gamma_\pi)$ fixes $v_{\mr{St}}$, the last 
expression is isomorphic with $\Hom_{\Gamma_\pi} (\rho, \det_X)$. We conclude that
\begin{equation}\label{eq:8.6}
\pi \rtimes \rho \text{ is } \left\{ \begin{array}{l}
\text{generic if } \rho = \det_X ,\\
\text{not generic otherwise.}
\end{array} \right.
\end{equation}
Conversely, consider an irreducible generic $\mc H \rtimes \Gamma$-representation
$(\sigma,V_\sigma)$. Let $\pi$ be an irreducible $\mc H$-subrepresentation of $\sigma$.
Then $\ind_{\mc H}^{\mc H \rtimes \Gamma}(\pi)$ surjects onto $\pi$, so every irreducible
$\mc H$-subquotient of $\sigma$ is isomorphic to $\gamma (\pi)$ for some 
$\gamma \in \Gamma$. As $\Res_{\mc H (W,q^\lambda)}^{\mc H \rtimes \Gamma} \sigma$
contains St, at least one of the $\gamma (\pi)$ is generic. In view of \eqref{eq:8.1},
actually all of them are generic, and in particular $\pi$. 
Then \eqref{eq:8.4}--\eqref{eq:8.6} show that 
\begin{equation}\label{eq:8.7}
\sigma \cong \pi \rtimes \mr{det}_X = 
\ind_{\mc H \rtimes \Gamma_\pi}^{\mc H \rtimes \Gamma} (\pi \otimes \mr{det}_X) .
\end{equation}
Next we generalize Theorem \ref{thm:7.3} and Proposition \ref{prop:7.4}. Since the
statements really change, we formulate them as new results.

\begin{thm}\label{thm:8:2}
Assume that $\lambda (\alpha) \geq \lambda^* (\alpha) \geq 0$ for all $\alpha \in R$.
Let $t \in T^P$ and let $\pi \in \Irr (\mc H^P \rtimes \Gamma_P)$ be tempered,
anti-tempered and generic. The unique generic irreducible constituent of 
$\ind_{\mc H^P \rtimes \Gamma_P}^{\mc H \rtimes \Gamma} (\pi \otimes t)$:
\enuma{
\item is a quotient when $t^{-1} \in T^{P+}$,
\item is a subrepresentation when $t \in T^{P+}$.
}
\end{thm}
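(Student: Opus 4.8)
The plan is to reduce Theorem~\ref{thm:8:2} to its non-extended counterpart Theorem~\ref{thm:7.3}, using the Clifford-theoretic description of irreducible generic modules from \eqref{eq:8.3}--\eqref{eq:8.7} both inside the parabolic subalgebra $\mc H^P \rtimes \Gamma_P$ and inside $\mc H \rtimes \Gamma$. I would begin with the harmless preliminary reductions. Since $\psi_t$ fixes $N_w$ for $w \in W_P$ and $N_\gamma$ for $\gamma \in \Gamma_P$, it is the identity on $\mc H(W_P,q^\lambda) \rtimes \Gamma_P$, so $\pi \otimes t$ is again generic, tempered and anti-tempered, and the $\mc H \rtimes \Gamma$-version of Lemma~\ref{lem:6.5} produces the unique generic irreducible constituent $\sigma$ of $\ind_{\mc H^P \rtimes \Gamma_P}^{\mc H \rtimes \Gamma}(\pi \otimes t)$. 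Next, choose an irreducible $\mc H^P$-submodule $\pi_0 \subset \Res^{\mc H^P \rtimes \Gamma_P}_{\mc H^P}(\pi)$ that is generic (one exists by the argument behind \eqref{eq:8.7}); its $\mc A$-weights lie among those of $\pi$, so $\pi_0$ is again tempered and anti-tempered. Writing $\Gamma_{P,\pi_0} \subset \Gamma_P$ for its stabiliser and $\pi_0^+$ for the extension of $\pi_0$ to $\mc H^P \rtimes \Gamma_{P,\pi_0}$ normalised to fix the Steinberg line, the parabolic version of \eqref{eq:8.7} gives $\pi \cong \ind_{\mc H^P \rtimes \Gamma_{P,\pi_0}}^{\mc H^P \rtimes \Gamma_P}(\pi_0^+ \otimes \det_X)$.

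Then I would twist by $t$ and apply transitivity of induction. Using that $\psi_t$ is $\Gamma_{P,\pi_0}$-equivariant on $\mc H^P$ by \eqref{eq:8.9} and that parabolic induction commutes with $\otimes t$ for $t \in T^P$ (as in \eqref{eq:7.13}), one gets $\pi \otimes t \cong \ind_{\mc H^P \rtimes \Gamma_{P,\pi_0}}^{\mc H^P \rtimes \Gamma_P}\big((\pi_0 \otimes t)^+ \otimes \det_X\big)$ and hence
\[
\ind_{\mc H^P \rtimes \Gamma_P}^{\mc H \rtimes \Gamma}(\pi \otimes t) \cong \ind_{\mc H \rtimes \Gamma_{P,\pi_0}}^{\mc H \rtimes \Gamma}(N), \qquad N := \ind_{\mc H^P \rtimes \Gamma_{P,\pi_0}}^{\mc H \rtimes \Gamma_{P,\pi_0}}\big((\pi_0 \otimes t)^+ \otimes \det_X\big).
\]
Unwinding the Bernstein-type decomposition $\mc H \rtimes \Gamma_{P,\pi_0} = \bigoplus_{w \in W^P} N_w (\mc H^P \rtimes \Gamma_{P,\pi_0})$ shows $\Res^{\mc H \rtimes \Gamma_{P,\pi_0}}_{\mc H}(N) \cong \ind_{\mc H^P}^{\mc H}(\pi_0 \otimes t) =: \Pi_0$. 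Now $\pi_0 \otimes t$ is a generic, tempered, anti-tempered $\mc H^P$-module, so Theorem~\ref{thm:7.3} applies: $\Pi_0$ has a unique generic irreducible constituent $\sigma_0$, which is a quotient of $\Pi_0$ when $t^{-1} \in T^{P+}$ and a subrepresentation of $\Pi_0$ when $t \in T^{P+}$.

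It then remains to transport this quotient (resp.\ sub) relation up through $\mc H \rtimes \Gamma_{P,\pi_0}$ and $\mc H \rtimes \Gamma$. The $\Gamma_{P,\pi_0}$-twist of $\sigma_0$ is again the generic constituent of $\Pi_0 \cong \gamma(\Pi_0)$, so $\Gamma_{P,\pi_0}$ stabilises $\sigma_0$, which therefore extends to $\sigma_0^+$ on $\mc H \rtimes \Gamma_{P,\pi_0}$, normalised on the Steinberg line. By the uniqueness in Lemma~\ref{lem:6.5}.b the $\mc H$-map $\Pi_0 \to \sigma_0$ (resp.\ $\sigma_0 \hookrightarrow \Pi_0$) is pinned down by its effect on the Steinberg lines, which are fixed by all of $\Gamma_{P,\pi_0}$; this forces it to be $\mc H \rtimes \Gamma_{P,\pi_0}$-equivariant for the structures $(\pi_0 \otimes t)^+ \otimes \det_X$ and $\sigma_0^+ \otimes \det_X$, hence to extend to a surjection $N \twoheadrightarrow \sigma_0^+ \otimes \det_X$ (resp.\ an injection $\sigma_0^+ \otimes \det_X \hookrightarrow N$). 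Applying $\ind_{\mc H \rtimes \Gamma_{P,\pi_0}}^{\mc H \rtimes \Gamma}$, and noting by Frobenius reciprocity in the relevant finite groups that $\ind_{\mc H \rtimes \Gamma_{P,\pi_0}}^{\mc H \rtimes \Gamma}(\sigma_0^+ \otimes \det_X)$ has $\sigma_0 \rtimes \det_X$ both as a quotient and as a submodule, one finds that $\sigma_0 \rtimes \det_X$ is a quotient (resp.\ subrepresentation) of $\ind_{\mc H^P \rtimes \Gamma_P}^{\mc H \rtimes \Gamma}(\pi \otimes t)$. By \eqref{eq:8.6} this module is irreducible and generic, so it coincides with $\sigma$, which would finish the proof.

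I expect the bookkeeping in the last paragraph to be the main obstacle: tracking the $\det_X$-twists and Steinberg-line normalisations through the several induction steps, and in particular verifying that the Clifford-theoretic presentation $\pi \cong \ind(\pi_0^+ \otimes \det_X)$ is compatible with twisting by $t \in T^P$. All the genuine analytic work — the location of the poles of the local constants — is hidden inside the cited Theorem~\ref{thm:7.3}. I should stress that a direct imitation of the proof of Theorem~\ref{thm:7.3} is not available here: as the paper warns, $\pi \otimes t$ need not be $W\Gamma,\!P$-regular even when $\pi$ is tempered and anti-tempered, so Theorem~\ref{thm:4.7} cannot be invoked directly for $\mc H \rtimes \Gamma$, and the detour through Clifford theory is essential.
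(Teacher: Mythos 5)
Your proposal is correct and follows essentially the same route as the paper's proof: Clifford theory inside $\mc H^P \rtimes \Gamma_P$ via \eqref{eq:8.7}, the transitivity-of-induction manipulation of \eqref{eq:8.8}, Theorem \ref{thm:7.3} at the level of $\mc H$, and \eqref{eq:8.6} to return to $\mc H \rtimes \Gamma$ -- the paper merely names the generic constituent as $L(P,\tau,t)$ resp.\ $\tilde L(P,\tau,t)$ and glosses the equivariance transfer that you spell out via multiplicity one (Lemma \ref{lem:6.5}.b) and the Steinberg-line normalisation. The only (harmless, unused) slip is the incidental claim that $\pi \otimes t$ is again tempered and anti-tempered, which fails for general $t \in T^P$.
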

\begin{proof}
With \eqref{eq:8.7} we can write 
\[
\pi \cong \ind_{\mc H^P \rtimes \Gamma_{P,\tau}}^{\mc H^P \rtimes \Gamma_P} 
(\tau \otimes \mr{det}_X),
\]
where $\tau$ is an irreducible generic $\mc H^P$-subrepresentation of $\pi$. Notice
that Wt$(\tau) \subset \mr{Wt}(\pi)$, so that $\tau$ is also tempered and anti-tempered.
By Condition \ref{cond:8.1}.ii,iii:
\begin{equation}\label{eq:8.8}
\begin{aligned}
& \ind_{\mc H^P \rtimes \Gamma_P}^{\mc H \rtimes \Gamma} (\pi \otimes t) \; \cong \; 
\ind_{\mc H^P \rtimes \Gamma_P}^{\mc H \rtimes \Gamma} \big( \ind_{\mc H^P \rtimes 
\Gamma_{P,\tau}}^{\mc H^P \rtimes \Gamma_P} (\tau \otimes \mr{det}_X) \otimes t \big) \\
& \cong \; \ind_{\mc H^P \rtimes \Gamma_P}^{\mc H \rtimes \Gamma} \big( \ind_{\mc H^P \rtimes 
\Gamma_{P,\tau}}^{\mc H^P \rtimes \Gamma_P} (\tau \otimes t \otimes \mr{det}_X) \big) \\
& \cong \; \ind_{\mc H^P \rtimes \Gamma_{P,\tau}}^{\mc H \rtimes \Gamma} (\tau \otimes t \otimes 
\mr{det}_X) \; \cong \; \ind_{\mc H \rtimes \Gamma_{P,\tau}}^{\mc H \rtimes \Gamma} 
\big( \ind_{\mc H^P}^{\mc H}(\tau \otimes t) \otimes \mr{det}_X \big) .
\end{aligned}
\end{equation}
(a) Theorem \ref{thm:7.3}.a says that the quotient $L(P,\tau \otimes t)$ of 
$\ind_{\mc H^P}^{\mc H}(\tau \otimes t)$ is generic. By the uniqueness in Theorem \ref{thm:4.3}.b,
$\Gamma_{L(P,\tau,t)} \cap \Gamma_P = \Gamma_{P,\tau \otimes t}$, which by the remarks following
\eqref{eq:8.1} equals $\Gamma_{P,\tau}$. From \eqref{eq:8.6} we know that
\begin{equation}\label{eq:8.10}
L(P,\tau,t) \rtimes \mr{det}_X = \ind_{\mc H \rtimes \Gamma_{P,\tau}}^{\mc H \rtimes \Gamma}
(L(P,\tau,t) \otimes \mr{det}_X) \quad \text{is generic.}
\end{equation}
Clearly \eqref{eq:8.10} is a quotient of the final term in \eqref{eq:8.8}.\\
(b) This is analogous to part (a), instead of $L(P,\tau,t)$ we use $\tilde L (P,\tau,t)$ from
Proposition \ref{prop:4.4}.
\end{proof}

\begin{prop}\label{prop:8.3}
Assume that $\lambda (\alpha) \geq \lambda^* (\alpha) \geq 0$ for all $\alpha \in R$, and let 
$t \in T$. The unique generic irreducible constituent of $\ind_{\mc A}^{\mc H \rtimes \Gamma}(t)$:
\enuma{
\item is a quotient if $|t^{-1}|$ lies in the closure of $T^{\emptyset +}$,
\item is a subrepresentation if $|t|$ lies in the closure of $T^{\emptyset +}$.
}
\end{prop}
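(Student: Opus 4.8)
The plan is to imitate the proof of Proposition~\ref{prop:7.4} step by step, with $\mc H$, $\mc H^P$, $\mc H(W,q^\lambda)$ replaced throughout by $\mc H\rtimes\Gamma$, $\mc H^P\rtimes\Gamma_P$, $\mc H(W,q^\lambda)\rtimes\Gamma$, invoking Theorem~\ref{thm:8:2} in place of Theorem~\ref{thm:7.3} and the extensions of Lemmas~\ref{lem:6.5} and~\ref{lem:7.1} recorded just after~\eqref{eq:8.2}. Those extensions apply equally to the extended affine Hecke algebra $\mc H^P\rtimes\Gamma_P$, of which $\mc A=\mc H^\emptyset\rtimes\Gamma_\emptyset$ is the minimal parabolic subalgebra (here $\Gamma_\emptyset=\{1\}$, and $\Gamma_\emptyset\subset\Gamma_P$ by Condition~\ref{cond:8.1}.i); in particular every $\mc A$-module is generic over $\mc A$, since $\mc H(W_\emptyset,q^\lambda)\rtimes\Gamma_\emptyset=\C$ and $\mr{St}$ is its trivial character.

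First I would set $P=\{\alpha\in\Delta:|\alpha(t)|=1\}$ and examine the $\mc H^P\rtimes\Gamma_P$-module $\ind_{\mc A}^{\mc H^P\rtimes\Gamma_P}(t|t|^{-1})$. By the extended analogue of \cite[Proposition~4.20]{Opd-Sp} its $\mc A$-weights lie in $W_P\Gamma_P(t|t|^{-1})$; since $W_P$ acts on $T$ through its action on $X$ and $\Gamma_P$ preserves the unitary part of $T$ (Condition~\ref{cond:8.1}.ii,iii), all these weights are unitary, so the module is both tempered and anti-tempered. Complete reducibility of tempered-and-anti-tempered modules extends from \cite[Proposition~3.1.4.a]{SolAHA} to $\mc H^P\rtimes\Gamma_P$ by Clifford theory ($\mc H^P\rtimes\Gamma_P$ is free of finite rank over $\mc H^P$), so I may write $\ind_{\mc A}^{\mc H^P\rtimes\Gamma_P}(t|t|^{-1})=\bigoplus_i\rho_i$ with $\rho_i\in\Irr(\mc H^P\rtimes\Gamma_P)$. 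By the extended Lemma~\ref{lem:6.5}.b this module has a unique generic irreducible constituent, of multiplicity one, so exactly one $\rho_i$ --- call it $\rho_1$ --- is generic. Tensoring by $|t|\in T^P$ yields $\ind_{\mc A}^{\mc H^P\rtimes\Gamma_P}(t)=\bigoplus_i\rho_i\otimes|t|$, and since $\psi_{|t|}$ acts trivially on the finite-dimensional subalgebra $\mc H(W_P,q^\lambda)\rtimes\Gamma_P$, this twist does not affect genericity; thus $\rho_1\otimes|t|$ is the unique generic summand, and it is irreducible, tempered, anti-tempered and generic.

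For~(a), the hypothesis that $|t^{-1}|$ lies in the closure of $T^{\emptyset+}$ forces $|\alpha(t)|\leq1$ for all $\alpha\in\Delta$, which combined with the definition of $P$ gives $|\alpha(t)|=1$ for $\alpha\in P$ and $|\alpha(t)|<1$ for $\alpha\in\Delta\setminus P$; equivalently $|t|\in T^P$ and $|t|^{-1}\in T^{P+}$. Theorem~\ref{thm:8:2}.a, applied to $\rho_1$ with parameter $|t|$, then shows that the unique generic irreducible constituent $\pi$ of $\ind_{\mc H^P\rtimes\Gamma_P}^{\mc H\rtimes\Gamma}(\rho_1\otimes|t|)$ is a quotient of that module. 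By transitivity of parabolic induction $\ind_{\mc A}^{\mc H\rtimes\Gamma}(t)=\bigoplus_i\ind_{\mc H^P\rtimes\Gamma_P}^{\mc H\rtimes\Gamma}(\rho_i\otimes|t|)$, and by the extended Lemma~\ref{lem:6.5}.a only the $i=1$ summand is generic, so $\pi$ is the unique generic irreducible constituent of $\ind_{\mc A}^{\mc H\rtimes\Gamma}(t)$; being a quotient of a direct summand, it is a quotient of $\ind_{\mc A}^{\mc H\rtimes\Gamma}(t)$. Part~(b) is the same argument with $|t|$ in the closure of $T^{\emptyset+}$ (so $|t|\in T^{P+}$) and Theorem~\ref{thm:8:2}.b, which produces a generic irreducible subrepresentation in place of a quotient.

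The main obstacle is bookkeeping rather than anything deep: one must confirm that the extended forms of Lemmas~\ref{lem:6.5} and~\ref{lem:7.1}, the weight estimate, and the complete-reducibility statement are all genuinely available for the parabolic subalgebra $\mc H^P\rtimes\Gamma_P$, not merely for $\mc H\rtimes\Gamma$; the single point most worth checking against Condition~\ref{cond:8.1} is that $\Gamma_P$ preserves unitarity of $\mc A$-weights, which is exactly what makes $\ind_{\mc A}^{\mc H^P\rtimes\Gamma_P}(t|t|^{-1})$ tempered and anti-tempered.
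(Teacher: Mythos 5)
Your proposal is correct, but it follows a genuinely different route from the paper. The paper does not redo the unitary-part decomposition at the extended level: it simply takes the generic irreducible quotient (resp.\ subrepresentation) $\pi$ of $\ind_{\mc A}^{\mc H}(t)$ supplied by Proposition \ref{prop:7.4}, observes via \eqref{eq:8.6}--\eqref{eq:8.7} that $\pi \rtimes \det_X$ is the unique generic irreducible $\mc H \rtimes \Gamma$-representation containing $\pi$, and then produces the required nonzero map $\ind_{\mc A}^{\mc H\rtimes\Gamma}(t) \to \pi\rtimes\det_X$ (resp.\ $\sigma\rtimes\det_X \to \ind_{\mc A}^{\mc H\rtimes\Gamma}(t)$) by Frobenius reciprocity, using for part (b) the Clifford-theoretic decomposition of $\ind_{\mc H}^{\mc H\rtimes\Gamma_\sigma}(\sigma)$ from \cite[Theorem 11.2]{SolGHA}. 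You instead replay the proof of Proposition \ref{prop:7.4} inside $\mc H\rtimes\Gamma$, with Theorem \ref{thm:8:2} replacing Theorem \ref{thm:7.3}; the Clifford theory is then hidden inside Theorem \ref{thm:8:2} rather than applied directly. Your route costs you the extra verifications you list (weight estimate, complete reducibility and multiplicity-one of $\mr{St}$ over $\mc H^P\rtimes\Gamma_P$, and preservation of unitary weights), while the paper's route avoids them entirely at the price of reproving existence of the generic quotient/subobject only at the $\mc H$-level and transferring it. The one point you rightly single out does hold: for $\gamma\in\Gamma_P$ write $\gamma(\theta_x)=z_\gamma(x)\theta_{\gamma(x)}$; then the induced action on $T$ is the linear action composed with translation by $z_{\gamma^{-1}}=\gamma\cdot 1$, and Condition \ref{cond:8.1}.ii,iii force $\gamma\cdot 1\in T_P\cap T^P$, a finite (hence unitary) element, so $\Gamma_P$ indeed preserves $\Hom_\Z(X,S^1)$ and your temperedness/anti-temperedness claim for $\ind_{\mc A}^{\mc H^P\rtimes\Gamma_P}(t|t|^{-1})$ is justified. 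With that check made explicit, your argument is a valid alternative proof.
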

\begin{proof}
(a) Proposition \ref{prop:7.4}.a says that $\ind_{\mc A}^{\mc H}(t)$ has a generic irreducible 
quotient, say $\pi$. By \eqref{eq:8.6}, $\pi \rtimes \det_X$ is the unique generic irreducible
$\mc H \rtimes \Gamma$-representation that contains $\pi$. With Frobenius reciprocity we compute
\begin{equation}\label{eq:8.11}
\Hom_{\mc H \rtimes \Gamma} \big( \ind_{\mc A}^{\mc H \rtimes \Gamma}(t), \pi \rtimes 
\mr{det}_X \big) \cong \Hom_{\mc A} (t, \pi \rtimes \mr{det}_X) = 
\Hom_{\mc A}(t, \ind_{\mc A \rtimes \Gamma_\pi}^{\mc A \rtimes \Gamma} \pi ) .
\end{equation}
The right hand side of \eqref{eq:8.11} contains
\[
\Hom_{\mc A} (t,\pi) \cong \Hom_{\mc H} (\ind_{\mc A}^{\mc H}(t), \pi) \neq 0 .
\]
Hence \eqref{eq:8.11} is nonzero, which means that $\pi \rtimes \det_X$ is a quotient of
$\ind_{\mc A}^{\mc H \rtimes \Gamma}(t)$.\\
(b) Proposition \ref{prop:7.4}.b yields a generic irreducible subrepresentation of 
$\ind_{\mc A}^{\mc H}(t)$, say $\sigma$. From \eqref{eq:8.6} is a generic irreducible
$\mc H \rtimes \Gamma$-representation. We compute
\begin{multline}\label{eq:8.12}
\Hom_{\mc H \rtimes \Gamma} \big( \sigma \rtimes \mr{det}_X, \ind_{\mc A}^{\mc H \rtimes 
\Gamma}(t) \big) = \Hom_{\mc H \rtimes \Gamma} \big( \ind_{\mc H \rtimes \Gamma_\sigma}^{\mc H 
\rtimes \Gamma} (\sigma \otimes \mr{det}_X), \ind_{\mc A}^{\mc H \rtimes \Gamma}(t) \big) \\
\cong \Hom_{\mc H \rtimes \Gamma_\sigma} \big( \sigma \otimes \mr{det}_X, \ind_{\mc A}^{\mc H 
\rtimes \Gamma}(t) \big) \supset \Hom_{\mc H \rtimes \Gamma_\sigma} \big( \sigma \otimes 
\mr{det}_X, \ind_{\mc A}^{\mc H \rtimes \Gamma_\sigma}(t) \big) .
\end{multline}
It follows from Clifford theory, in the version \cite[Theorem 11.2]{SolGHA}, that
\[
\ind_{\mc H}^{\mc H \rtimes \Gamma_\sigma} (\sigma) \cong 
\bigoplus\nolimits_{\rho \in \Irr (\Gamma_\sigma)} (\sigma \otimes \rho)^{\oplus \dim \rho} .
\]
Hence there exist injective $\mc H \rtimes \Gamma_\sigma$-homomorphisms
\[
\sigma \otimes \mr{det}_X \to \ind_{\mc H}^{\mc H \rtimes \Gamma_\sigma} (\sigma) \to
\ind_{\mc H}^{\mc H \rtimes \Gamma} (\ind_{\mc A}^{\mc H} t) = \ind_{\mc A}^{\mc H}(t) .
\]
Thus all terms in \eqref{eq:8.12} are nonzero, which by irreducibility means that
$\sigma \rtimes \det_X$ is a subrepresentation of $\ind_{\mc A}^{\mc H \rtimes \Gamma} (t)$.
\end{proof}

Let us summarize the findings of this section.

\begin{cor}\label{cor:8.4}
Suppose that $\Gamma$ is as at the start of Section \ref{sec:extend}, and assume in particular
Condition \ref{cond:8.1}. All the results of Sections \ref{sec:herm}--\ref{sec:padic} 
generalize to $\mc H \rtimes \C[\Gamma,\natural]$, except Theorem \ref{thm:4.3} and 
Propositions \ref{prop:4.4}, \ref{prop:4.6}. Sections \ref{sec:equiv} and \ref{sec:generic}
generalize to $\mc H \rtimes \Gamma$.
\end{cor}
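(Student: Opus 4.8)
The plan is to assemble the verifications already carried out in this section, walking through Sections \ref{sec:herm}--\ref{sec:generic} in order and, for each result, pointing either to the place where its $\Gamma$-version was established or to the obstruction that rules it out.

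First I would dispose of Sections \ref{sec:herm} and \ref{sec:adjoint}. With the modified form of Lemma \ref{lem:1.1} recorded at the beginning of this section (carrying an extra factor $N_\gamma^*$ on the right), the proofs of Lemmas \ref{lem:2.1}, \ref{lem:2.2}, \ref{lem:2.4} and Propositions \ref{prop:2.3}, \ref{prop:2.5} go through verbatim once $\mc H^P$ is read as $\mc H^P \rtimes \C[\Gamma_P,\natural]$, $W^P$ as $\Gamma^P W^P$, and $w_\Delta, w_P$ are kept inside $W$; the one delicate point, Lemma \ref{lem:2.6}, I would settle first for $\mc H \rtimes \Gamma^+$, where the pulled-back cocycle splits, and then push down to $\mc H \rtimes \C[\Gamma,\natural]$ via the idempotent isomorphism \eqref{eq:8.20}. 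Theorem \ref{thm:3.1} then needs no change, as it only invokes the generalized Propositions \ref{prop:2.3}, \ref{prop:2.5} and Frobenius reciprocity, while Proposition \ref{prop:3.2} gets upgraded to Proposition \ref{prop:8.6} on the back of the Kilmoyer-type Lemma \ref{lem:8.5}.

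Next comes Section \ref{sec:Langlands}, where the exception list has to be pinned down carefully. Lemmas \ref{lem:4.1}, \ref{lem:4.2}, \ref{lem:4.8} concern only weights and survive the standard substitutions; Lemma \ref{lem:4.5} and Theorem \ref{thm:4.7} also survive, but only after $W,\!P$-regularity is replaced by $W\Gamma,\!P$-regularity and Lemma \ref{lem:4.5} is redone as Lemma \ref{lem:8.7}. By contrast Theorem \ref{thm:4.3} and Propositions \ref{prop:4.4}, \ref{prop:4.6} genuinely fail for $\mc H \rtimes \Gamma$: the $A_2$-example with $\Gamma = \Z/2\Z$ given in this section produces a parabolically induced module with two inequivalent irreducible quotients, destroying the uniqueness statement, so these three are precisely the items to be excluded. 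For Section \ref{sec:padic} I would replace Condition \ref{cond:5.1} by Condition \ref{cond:5.1}', note that indecomposability of $\Rep(M)^{\mf s_M}$ forces $\Gamma_\emptyset = \{1\}$ so that \eqref{eq:8.22} still collapses to $\mc A$, and then observe that Proposition \ref{prop:5.2}, Theorem \ref{thm:5.3} and the surrounding arguments never used $\Gamma$ trivial, hence go through unchanged.

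Finally, for Sections \ref{sec:equiv} and \ref{sec:generic} I would emphasize that one must remain with the untwisted $\mc H \rtimes \Gamma$: a twisted group algebra $\C[\Gamma,\natural]$ with $\natural$ nontrivial in $H^2(\Gamma,\C^\times)$ has no one-dimensional representations, so there is no analogue of $\det_X$ with which to define $\mathrm{St}$ and hence genericity. Granting that, Theorem \ref{thm:6.1} generalizes because the cited material \cite[\S 2]{SolQS} was written for algebras of the form $\mc H \rtimes \Gamma$ to begin with, yielding \eqref{eq:8.1}; Proposition \ref{prop:6.3} and Lemmas \ref{lem:6.5}, \ref{lem:7.1}, \ref{lem:7.2} then carry over once genericity is taken in the sense of \eqref{eq:8.2}; and Theorem \ref{thm:7.3} and Proposition \ref{prop:7.4} become Theorem \ref{thm:8:2} and Proposition \ref{prop:8.3}, whose proofs reduce the $\Gamma$-case to the plain $\mc H$-case through the Clifford-theory description \eqref{eq:8.7}. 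The only genuinely new mathematical inputs are Lemmas \ref{lem:8.5} and \ref{lem:8.7}, and within the latter the step I expect to cost the most care is the reduction $d\gamma \in \Gamma_P$, obtained by successively shortening $d\gamma$ by simple reflections; beyond that, the main obstacle is purely organizational, namely keeping straight which algebra ($\mc H \rtimes \Gamma$ versus $\mc H \rtimes \C[\Gamma,\natural]$) each statement is claimed for and getting the exception list exactly right.
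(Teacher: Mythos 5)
Your proposal is correct and follows essentially the same route as the paper: the corollary is a summary of the section's findings, and you assemble exactly the ingredients the paper uses (the modified Lemma \ref{lem:1.1}, the $\Gamma^+$/\eqref{eq:8.20} detour for Lemma \ref{lem:2.6}, Lemmas \ref{lem:8.5} and \ref{lem:8.7} with $W\Gamma,\!P$-regularity, the $A_2$ counterexample isolating Theorem \ref{thm:4.3} and Propositions \ref{prop:4.4}, \ref{prop:4.6}, Condition \ref{cond:5.1}' with $\Gamma_\emptyset=\{1\}$ for Section \ref{sec:padic}, and the restriction to untwisted $\mc H \rtimes \Gamma$ in Sections \ref{sec:equiv}--\ref{sec:generic} because $\C[\Gamma,\natural]$ has no one-dimensional modules, with Theorems \ref{thm:8:2} and \ref{prop:8.3} obtained via Clifford theory \eqref{eq:8.7}).
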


\appendix

\renewcommand{\theequation}{A.\arabic{equation}}
\counterwithin*{equation}{section}

\section{Hecke algebras for simply generic Bernstein blocks}

Let $G$ be a reductive $p$-adic group and let $U$ be the unipotent radical of a minimal 
parabolic subgroup of $G$. Let $\xi$ be a nondegenerate character of $U$. Let $P = M U_P$ be a 
parabolic subgroup of $G$ containing $U$. Let $(\sigma,E)$ be an irreducible unitary supercuspidal 
$M$-representation which is simply $(U \cap M,\xi)$-generic, that is,
\[
\dim \Hom_{U \cap M}(\sigma,\xi) = 1 .
\]
We call $\Rep (G)^{\mf s}$ with $\mf s = [M,\sigma]$ a simply generic Bernstein block for $G$, 
because most irreducible representations in there are simply $(U,\xi)$-generic. In this appendix we 
show that $\Rep (G)^{\mf s}$ is equivalent to the module category of an extended affine Hecke algebra.

Let $(\sigma_1,E_1)$ be the unique irreducible generic $M^1$-subrepresentation from \eqref{eq:6.6}.
Recall from \cite[\S VI.10.1]{Ren} that $\Pi_{\mf s} = I_P^G (\ind^M_{M^1} (\sigma_1))$ is a
progenerator of $\Rep (G)^{\mf s}$. By abstract category theory \cite[Theorem 1.8.2.1]{Roc},
$\Rep (G)^{\mf s}$ is naturally equivalent with $\Mod (\End_G (\Pi_{\mf s})^{op})$.

\begin{thm}\label{thm:8.8}
In the above simply generic setting, $\End_G (\Pi_{\mf s})^{op}$ is isomorphic to an extended 
affine Hecke algebra $\mc H \rtimes \Gamma$ with $q$-parameters in $\R_{\geq 1}$. 
Conditions \ref{cond:5.1}' and \ref{cond:8.1} are satisfied.
\end{thm}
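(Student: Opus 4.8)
The plan is to build the isomorphism $\End_G(\Pi_{\mf s})^{op} \cong \mc H \rtimes \Gamma$ by combining the general structure theorem for $\End_G(\Pi_{\mf s})^{op}$ from \cite{SolEnd} with the genericity input that pins down the relevant parameters and 2-cocycle. First I would recall from \cite[\S 10]{SolEnd} that $\End_G(\Pi_{\mf s})^{op}$ is always isomorphic to an algebra of the shape $\mc H \rtimes \C[\Gamma,\natural]$, where $\mc H$ is an affine Hecke algebra attached to a based root datum coming from the structure of $N_G(M)/M$ acting on the inertial class, $\Gamma$ is a finite group of ``diagram automorphisms'', and $\natural \in H^2(\Gamma,\C^\times)$ is a certain cocycle. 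The point of the theorem is to upgrade this to the sharper statement: under the simply generic hypothesis, $\natural$ can be taken trivial (so we genuinely get $\mc H \rtimes \Gamma$), the $q$-parameters lie in $\R_{\geq 1}$, and Conditions \ref{cond:5.1}' and \ref{cond:8.1} hold.

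For the triviality of $\natural$, the key step is to exhibit a one-dimensional representation of the relevant finite-dimensional subalgebra $\mc H(W,q^\lambda) \rtimes \C[\Gamma,\natural]$: if $\C[\Gamma,\natural]$ supported no character then Clifford theory would forbid such a thing, whereas the simply generic hypothesis produces one via Whittaker functionals. Concretely, I would use Proposition \ref{prop:6.3} together with the structure of $\Hom_G(\Pi_{\mf s},\ind_U^G(\xi))$: simple $(U\cap M,\xi)$-genericity of $\sigma$ forces (by \eqref{eq:6.6} and \cite[Proposition 9.2]{BuHe}) that $\Hom_M(\Pi_{\mf s_M}, \ind_{U\cap M}^M(\xi)) \cong \C[X]$ as a module over $\End_M(\Pi_{\mf s_M})^{op}$, and hence $\Hom_G(\Pi_{\mf s},\ind_U^G(\xi))$ is a rank-one $\mc A$-module on which the whole extended algebra acts. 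Restricting that action to the finite-dimensional part $\mc H(W,q^\lambda)\rtimes\C[\Gamma,\natural]$, one gets a one-dimensional representation, which can only exist if $\natural$ is trivial in $H^2(\Gamma,\C^\times)$. This one-dimensional representation then also identifies with the Steinberg character $\mr{St}$ extended by $\det_X$, which is exactly what Section \ref{sec:extend} needs.

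For the $q$-parameters: these are governed by reducibility points of parabolically induced representations from $M$ (Harish-Chandra $\mu$-functions / Plancherel measures), and positivity $q_s \geq 1$ for all $s \in S_{\af}$ is equivalent to the poles of the relevant $\mu$-function, or the $R$-group data, lying in the ``expected'' half-space; this is the content of the normalizations in \cite[\S 2]{SolQS} and is checked there for simply generic blocks using that the point where the induced representation becomes reducible and generic-ity of the unique generic constituent forces the Langlands quotient to sit on the correct side. I would simply invoke \cite[\S 2]{SolQS} for this, since the excerpt already relies on those normalizations. Finally, for Conditions \ref{cond:5.1}' and \ref{cond:8.1}: Condition \ref{cond:5.1}' is now literally the conjunction of what we have just established (multiplicity-freeness of $\mr{Res}^M_{M^1}\sigma$ comes from \eqref{eq:6.6}, and $\End_G(\Pi_{\mf s})^{op}\cong\mc H\rtimes\C[\Gamma,1]$ with parameters $\geq 1$), while Condition \ref{cond:8.1} follows from the explicit description in \cite{SolEnd} of how parabolic subalgebras $\mc H^P\rtimes\Gamma_P$ arise from Levi subgroups, mimicking the verification done in \cite[\S 5]{SolComp}; in particular part (v), $\natural|_{\Gamma_\emptyset^2}=1$, is automatic since $\natural=1$, and the fact that $\Gamma_\emptyset=\{1\}$ follows from indecomposability of $\Rep(M)^{\mf s_M}$ as noted after \eqref{eq:8.22}.

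The main obstacle I expect is the triviality of $\natural$: one has to be careful that the one-dimensional module produced from the Whittaker side really is a module over $\mc H(W,q^\lambda)\rtimes\C[\Gamma,\natural]$ and not merely over $\mc H(W,q^\lambda)\rtimes\Gamma_\pi$ for some proper subgroup, and that having a one-dimensional module over the twisted group algebra $\C[\Gamma,\natural]$ genuinely forces $\natural$ to be a coboundary (this uses that a twisted group algebra has a one-dimensional representation iff the cocycle is trivial, applied after checking the restriction of the module to $\C[\Gamma,\natural]\subset\mc H(W,q^\lambda)\rtimes\C[\Gamma,\natural]$ stays irreducible of dimension one). Threading the genericity/Whittaker argument all the way down from $G$ through $\Pi_{\mf s}$ to this finite-dimensional subalgebra, keeping track of the $\det_X$-twist, is the delicate part; everything else is bookkeeping or citation.
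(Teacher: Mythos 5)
Your skeleton (invoke the structure theory of \cite[\S 10]{SolEnd}, use the Whittaker/genericity input to remove the twist, quote \cite[\S 2]{SolQS} for the normalization and \cite[\S 5]{SolComp} for Condition \ref{cond:8.1}) matches the paper's, but the central step — trivializing the 2-cocycle — has a genuine gap, in two places. First, you start from ``$\End_G(\Pi_{\mf s})^{op}\cong\mc H\rtimes\C[\Gamma,\natural]$ in general''; in \cite[\S 10]{SolEnd} the twisting datum is a priori a cocycle $\natural_J$ with values in $\C^\times\times M_\sigma/M^1$, i.e.\ the operators $T'_w$ may normalize $\mc A$ only up to translations by elements $m_w$. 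The first use of simple genericity in the paper is precisely that the multiplicity-one statement \eqref{eq:6.6} forces the operators $\rho_{\sigma,w}$ to stabilize $E_1$, so one may take $m_w=1$ and the cocycle becomes scalar-valued; your proposal skips this reduction, without which ``$\natural\in H^2(\Gamma,\C^\times)$'' is not yet meaningful.

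Second, and more seriously, your mechanism for killing the scalar cocycle is circular as stated. You want to produce a one-dimensional module of $\mc H(W,q^\lambda)\rtimes\C[\Gamma,\natural]$ from $\Hom_G(\Pi_{\mf s},\ind_U^G(\xi))$ via Proposition \ref{prop:6.3}; but Proposition \ref{prop:6.3}, Theorem \ref{thm:6.1}.b and their extended version \eqref{eq:8.1} are proved under Condition \ref{cond:5.1} (resp.\ \ref{cond:5.1}$'$ with $\natural=1$), i.e.\ they presuppose exactly the conclusion of Theorem \ref{thm:8.8}. Knowing only that the Gelfand--Graev-type module is free of rank one over $\mc A$ does not by itself force its restriction to the finite-dimensional twisted subalgebra to contain a one-dimensional constituent (for generic central characters that restriction is essentially the regular module, which contains a character if and only if $\natural$ is already a coboundary), so the abstract ``twisted group algebra with a character has trivial cocycle'' argument has nothing to bite on until the module structure is computed. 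The non-circular way to compute it is to use the Whittaker datum to normalize the operators $T'_w$ directly, via Shahidi-type local coefficients; this is \cite[Theorem 2.7]{SolQS}, and it is how the paper proceeds: after the $m_w=1$ reduction one gets $\End_G(\Pi_{\mf s})\cong\mc H(\mc O,G)\rtimes\C[R(\mc O),\natural_J]$, and the Whittaker normalization yields a canonical isomorphism with the untwisted $\mc H(\mc O,G)\rtimes R(\mc O)$ (and with its opposite), after which Conditions \ref{cond:5.1}$'$ and \ref{cond:8.1} follow as you say. So your plan is repairable, but only by importing the normalization argument itself, not by deducing cocycle triviality from Proposition \ref{prop:6.3}. (Your quadratic-relation and $\det_X$ bookkeeping, and the sourcing of $q_\alpha\geq q_{\alpha *}\geq 1$, which in fact already comes from \cite[Theorem 10.9]{SolEnd}, are fine.)
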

\begin{proof}
We follow \cite[\S 10]{SolEnd}, with some improvements that are made possible by the simple genericity 
of $\sigma$. Notice that \cite[Working hypothesis 10.2]{SolEnd} holds by \eqref{eq:6.6}. On the 
supercuspidal level with $\Pi_{\mf s_M} = \ind^M_{M^1}(\sigma_1)$, \cite[Lemma 10.1]{SolEnd} says that
\begin{equation}\label{eq:8.16}
\End_M (\Pi_{\mf s_M}) = \End_M (\Pi_{\mf s_M})^{op} = \C [\mc O_3] = \C [M_\sigma / M^1 ] ,
\end{equation}
where $M_\sigma$ is stabilizer of $E_1$ in $M$. In \cite[Lemma 10.3]{SolEnd} the multiplicity one of
$\sigma_1$ in $\sigma$ implies that the operator $\rho_{\sigma,w} : E \to E$ automatically stabilizes
$E_1$. Therefore we may choose as the element $m_w \in M$ from \cite[Lemma 10.3.a]{SolEnd} just the
identity element. We do that that for all $w$ in the group
\[
W (M,\mc O) = W(\Sigma_{\mc O,\mu}) \rtimes R(\mc O)
\]
from \cite{SolEnd}, which will play the role of $W \rtimes \Gamma$. With that simplification, the
2-cocycle $\natural_J : W(M,\mc O)^2 \to \C^\times \times M_\sigma / M^1$ takes values in $\C^\times$.
Then \cite[Theorem 10.9]{SolEnd} gives:
\begin{itemize}
\item an affine Hecke algebra $\mc H = \mc H (\mc O,G)$, with lattice $M_\sigma / M^1 = X^* (\mc O_3)$
and a reduced root system $\Sigma_{\mc O,\mu}$, 
\item parameters $q_\alpha = q_F^{(\lambda (\alpha) + \lambda^* (\alpha))/2}$ and 
$q_{\alpha*} = q_F^{(\lambda (\alpha) - \lambda^* (\alpha))/2}$ with\\
$1 \neq q_\alpha \geq q_{\alpha*} \geq 1$ for all $\alpha \in \Sigma_{\mc O,\mu}$,
\item elements $T'_r$ for $r \in R(\mc O)$, such that as vector spaces
\[
\End_G (\Pi_{\mf s}) = \bigoplus\nolimits_{r \in R(\mc O)} \mc H \, T'_r .
\]
\end{itemize}
From \cite[(10.20) and Lemma 10.4.a]{SolEnd} we see that these $T'_r$ multiply as in the twisted group
algebra $\C [R(\mc O), \natural_J]$. Conjugation by $T'_r$ is an automorphism of $\mc H (\mc O,G)$, which
by \cite[Theorem 10.6.a]{SolEnd} has the desired effect on $\mc A \cong \C [\mc O_3]$. For a simple root
$\alpha$, \cite[(10.24)]{SolEnd} shows that 
\[
T_r^{'-1} T'_{s_\alpha} T_r \in \C 1 + \C T'_{r^{-1} s_\alpha r} .
\]
From that and the quadratic relations that $T_{s_\alpha}$ and $T'_{r^{-1} s_\alpha r} = 
T'_{s_{r^{-1} \alpha}}$ satisfy, we deduce that $T_r^{'-1} T'_{s_\alpha} T_r$ must equal 
$T'_{r^{-1} s_\alpha r}$. Altogether this shows that $\End_G (\Pi_{\mf s})$ is the twisted affine Hecke 
algebra $\mc H (\mc O,G) \rtimes \C [R(\mc O),\natural_J]$. There is an isomorphism
\[
(\mc H (\mc O,G) \rtimes \C [R(\mc O),\natural_J])^{op} \to 
\mc H (\mc O,G) \rtimes \C [R(\mc O),\natural_J^{-1}]
\]
which is the identity on $\mc A$ and sends each $T'_w$ with $w \in W(M,\mc O)$ to $T_w^{'-1}$. Thus
\begin{equation}\label{eq:8.17}
\End_G (\Pi_{\mf s})^{op} \cong \mc H (\mc O,G) \rtimes \C [R(\mc O),\natural_J^{-1}] .
\end{equation}
By \eqref{eq:6.3}, \eqref{eq:6.5} and \eqref{eq:8.16} 
\begin{equation}\label{eq:8.18}
\Hom_M ( \Pi_{\mf s_M}, \ind_U^G (\xi) ) \cong \End_M (\ind^M_{M^1} (\sigma_1)) \cong
\C [M_\sigma / M^1] .
\end{equation}
That brings us almost to the setting of \cite[\S 2]{SolQS}, with \eqref{eq:8.17} and \eqref{eq:8.18}
the arguments from there work. In particular the Whittaker datum $(U,\xi)$ can be used to normalize
the operators $T'_w$ with $w \in W(M,\mc O)$, and \cite[Theorem 2.7]{SolQS} provides canonical
algebra isomorphisms
\[
\End_G (\Pi_{\mf s}) \cong \mc H (\mc O,G) \rtimes R(\mc O) \cong 
(\mc H (\mc O,G) \rtimes R(\mc O))^{op} \cong \End_G (\Pi_{\mf s})^{op} .
\]
That also finishes the verification of Condition \ref{cond:5.1}'. 
Condition \ref{cond:8.1} was checked in \cite[\S 5]{SolComp}.
\end{proof}

We specialize to the cases where $G$ is quasi-split. It turns out that the $q$-parameters from
Theorem \ref{thm:8.8} have an interesting property, which means that $\mc H$ is close to 
an affine Hecke algebra with equal parameters.

We may assume that $\sigma$ corresponds to the basepoint of $\mc O_3$ in the proof of Theorem
\ref{thm:8.8}, so that all $\alpha \in \Sigma_{\mc O,\mu}$ take the value 1 at $\sigma$.
Let $\sigma' = \sigma \otimes \chi$ be a twist of $\sigma$ by a unitary unramified character of
$\chi$ of $M$. Via $M_\sigma \subset M$ we can consider $\chi$ as a character of the lattice
$M_\sigma / M^1$ involved in $\mc H$. We define a set of roots (in fact a root system)
$\Sigma_{\sigma'} \subset \Sigma_{\mc O,\mu}$ and a parameter function $k^{\sigma'}$ by
\begin{itemize} 
\itemindent = -5mm
\item if $s_\alpha (\sigma') = \sigma'$ and $\chi (\alpha) = 1$, then $\alpha \in \Sigma_{\sigma'}$
and $k^{\sigma'}_\alpha = \log (q_\alpha) / \log (q_F)$,
\item if $s_\alpha (\sigma') = \sigma'$, $\chi (\alpha) = -1$ and $q_{\alpha*} \neq 1$, then 
$\alpha \in \Sigma_{\sigma'}$ and $k^{\sigma'}_\alpha = \log (q_{\alpha*}) / \log (q_F)$,
\item $\alpha \notin \Sigma_{\sigma'}$ for other $\alpha \in \Sigma_{\mc O,\mu}$.
\end{itemize}
With \cite[Lemma 3.15]{Lus-Gr} is not difficult to see that
\[
\Sigma_{\sigma'}^e = \{ \alpha \in \Sigma_{\mc O,\mu} : s_\alpha (\sigma') = \sigma' \}
\]
is a root system and that $\chi (\alpha) \in \{\pm 1\}$ for every $\alpha \in \Sigma_{\sigma'}^e$.
By the $W (\Sigma_{\mc O,\mu})$-invariance of $\lambda$ and $\lambda^*$, the function $k^{\sigma'}$ 
is $W(\Sigma_{\sigma'}^e)$-invariant. The set $\Sigma_{\sigma'}$ is obtained from 
$\Sigma_{\sigma'}^e$ by omitting the $W(\Sigma_{\sigma'}^e)$-stable collection of roots with 
$\chi (\alpha) = -1$ and $q_{\alpha*} = 1$. All such roots are short in a type $B$ irreducible 
component of $\Sigma_{\sigma'}^e$. Thus, for each irreducible component $R^e$ of 
$\Sigma_{\sigma'}^e$, the part in $\Sigma_{\sigma'}$ is either $R^e$ or the set of long roots 
in $R^e$. This shows that $\Sigma_{\sigma'}$ is really a root system. 

By $W(\Sigma_{\sigma'})$-invariance, the function $k^{\sigma'}$ takes the same value on all roots
of a fixed length in one irreducible component.

\begin{prop}\label{prop:8.9}
Let $G$ be quasi-split and recall the notations from Theorem \ref{thm:8.8} and above.
Let $R$ be an irreducible component of $\Sigma_{\sigma'}$, let $\alpha \in R$ be short
and let $\beta \in R$ be long. Then $k^{\sigma'}_\alpha / k^{\sigma'}_\beta$ equals either 1 
or the square of the ratio of the lengths of the coroots $\alpha^\vee$ and $\beta^\vee$ 
(so equals 1, 2 or 3).
\end{prop}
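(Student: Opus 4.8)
The plan is to reduce the statement to a comparison of $q$-parameters inside a rank-two sub-root-system coming from a quasi-split Levi subgroup, and then to read off the answer from the (finite) classification of the two-length affine Hecke algebras that can occur there.

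\textbf{Step 1: reduction to two roots.} If $R$ is simply laced there is nothing to prove, so I would assume $R$ is of type $B_n$ or $C_n$ $(n\ge 2)$, $F_4$, or $G_2$. By the remarks following Theorem \ref{thm:8.8}, a component of $\Sigma_{\sigma'}$ having two root lengths must already coincide with its ambient component $R^e$ of $\Sigma_{\sigma'}^e$: the short roots of a type-$B$ component of $\Sigma_{\sigma'}^e$ are deleted only all at once, and that destroys the second length. Hence $R=R^e$, and $W(R)$ has exactly the two orbits of short and of long roots on $R$. Since $\lambda,\lambda^*$ are $W(\Sigma_{\mc O,\mu})$-invariant while $\chi$ restricted to $R$ is $W(R)$-invariant, $k^{\sigma'}$ is constant on short and on long roots of $R$, so it suffices to compare one short root $\alpha$ and one long root $\beta$; for a $G_2$-component I would take $R$ itself, for the others I would pass to a $B_2$-subsystem spanned by a short and a long root (after conjugating by $W(\Sigma_{\mc O,\mu})$, the subsystem attached to two simple roots of $\Sigma_{\mc O,\mu}$). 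In all cases $\alpha^\vee$ is a long and $\beta^\vee$ a short coroot, with $|\alpha^\vee|^2/|\beta^\vee|^2\in\{2,3\}$, so the claim becomes: $k^{\sigma'}_\alpha/k^{\sigma'}_\beta$ equals $1$, or else equals $2$ (types $B,C,F_4$) or $3$ (type $G_2$), the value $>1$ being realised with $k^{\sigma'}_\alpha\ge k^{\sigma'}_\beta$.

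\textbf{Step 2: reduction to rank two and the classification.} By \cite[\S 10]{SolEnd} (see also \cite{Hei}) the numbers $q_\alpha,q_{\alpha*},q_\beta,q_{\beta*}$ are intrinsic to the sub-datum of $[M,\sigma]$ cut out by $\langle\alpha,\beta\rangle$: they agree with the corresponding parameters of the extended affine Hecke algebra attached by Theorem \ref{thm:8.8} to $\Rep(L)^{[M,\sigma]_L}$, where $L\subseteq G$ is a Levi of relative semisimple rank $\mathrm{rk}(M)+2$, necessarily quasi-split. Thus it remains to classify the two-length affine Hecke algebras, with their parameters, arising from quasi-split $p$-adic groups of relative rank two with a generic supercuspidal datum, and to read off $k^{\sigma'}_\alpha/k^{\sigma'}_\beta$ in each. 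For this I would use that $q_\gamma$ and $q_{\gamma*}$ are governed by the reducibility of the rank-one induced representations $I^{L_\gamma}_{P_\gamma}(\sigma\otimes\nu)$ attached to the two maximal Levis $L_\alpha,L_\beta$ of $L$; by genericity of $\sigma$ and quasi-splitness of $L_\gamma$, Shahidi's theory of local coefficients \cite{Sha1}, in the Whittaker normalisation of \cite[\S 2]{SolQS}, restricts these reducibility exponents to a short explicit list, and the compatibility of the two rank-one data inside $L$ (a rank-two $B_2/C_2$- or $G_2$-constraint) then leaves only finitely many admissible parameter pairs, which one inspects. Throughout one has to track the switch, in the definition of $k^{\sigma'}$, between $\log_{q_F}q_\gamma$ and $\log_{q_F}q_{\gamma*}$ dictated by $\chi(\gamma)=\pm1$; this is $W(R)$-equivariant and is pinned down using $q_\gamma\ge q_{\gamma*}\ge1$ and the constraint ``$\gamma^\vee\notin 2Y\Rightarrow\lambda^*(\gamma)=\lambda(\gamma)$'' built into $\mc H$.

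\textbf{Main obstacle.} The substance lies in the rank-two classification of Step 2: one must determine precisely which two-length parameter pairs occur for quasi-split groups with generic supercuspidal support, ruling out both the larger ratios ($5,7,\dots$) and the ``inverted'' configurations that genuinely occur for general (non-quasi-split, or non-generic) $p$-adic groups, and for Lusztig's abstract geometric graded Hecke algebras, cf.\ \cite[\S 5.3]{SolHecke}. It is exactly here that the quasi-split hypothesis and the genericity of $\sigma$ enter essentially; the remaining bookkeeping with the twist $\chi$ is delicate but formal.
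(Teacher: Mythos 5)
Your Step 1 is fine (and essentially trivial: a two-length component of $\Sigma_{\sigma'}$ must be a full component of $\Sigma_{\sigma'}^e$, and $W$-invariance reduces everything to one short and one long root), but Step 2 contains a genuine gap: the decisive assertion — that for quasi-split $G$ and generic supercuspidal support the admissible parameter pairs force $k^{\sigma'}_\alpha / k^{\sigma'}_\beta \in \{1, |\alpha^\vee|^2/|\beta^\vee|^2\}$ — is never actually established. You describe a programme (Shahidi's local coefficients bound the rank-one reducibility exponents, a rank-two compatibility "leaves only finitely many admissible parameter pairs, which one inspects") and you yourself flag this inspection as the ``main obstacle''; but that inspection \emph{is} the proposition. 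Ruling out ratios like $3/2$, $5$, or inverted configurations across a $B_2/C_2$- or $G_2$-constellation of two rank-one data is a nontrivial case analysis over the classification of quasi-split groups and their generic supercuspidal data, and nothing in your sketch carries it out or reduces it to a quotable result.

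The paper avoids exactly this by a different (and much shorter) route: it identifies the Hecke-algebra parameters with the data entering the tempered L-function conjecture. From \cite[(3.7)]{SolEnd} the parameters $q_\alpha, q_{\alpha*}$ are poles of Harish-Chandra's $\mu^\alpha$, and the choice between them built into $k^{\sigma'}$ (via $\chi(\alpha)=\pm 1$) matches the choice of the relevant factor of $\mu^\alpha$ made in \cite[\S 3]{HeOp}; this yields $k^{\sigma'}_\alpha = 1/\epsilon_{\bar\alpha}$ in the notation of \cite{HeOp}. The required constraint on $\epsilon_{\bar\alpha}/\epsilon_{\bar\beta}$ is then precisely the hypothesis of \cite[Theorem 4.1]{HeOp}, which Heiermann and Opdam verified for all generic Bernstein blocks of quasi-split groups in \cite[\S 5--6]{HeOp}. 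So the content you defer to an unperformed rank-two classification is exactly the content of \cite[\S 5--6]{HeOp}; unless you invoke that result (you do not cite it), your proposal would have to reproduce that work, and as written it does not constitute a proof.
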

\begin{proof}
We recall from \cite[(3.7)]{SolEnd} that the parameters $q_\alpha$ and $q_{\alpha*}$ in the proof 
of Theorem \ref{thm:8.8} come from poles of Harish-Chandra's function $\mu^\alpha$. In the
notation from \cite{SolEnd}, $\mu^\alpha$ has factors 
\begin{equation}\label{eq:8.21}
\frac{(1 - X_\alpha)}{(1 - q_\alpha^{-1} X_\alpha)} 
\frac{(1 + X_\alpha)}{(1 + q_{\alpha*}^{-1} X_\alpha)} ,
\end{equation} 
where $X_\alpha$ corresponds to evaluation at a certain element $h_\alpha^\vee \in M / M^1$.
In \cite{HeOp} one specializes to twists of $\sigma'$ by unramified characters with
values in $\R_{>0}$, which means the only the left half or the right half in
\eqref{eq:8.21} remains interesting, the other half is put in a holomorphic function
and then ignored. Which of the two halves to chose agrees with how we selected $q_\alpha$ or 
$q_{\alpha*}$ for $k^{\sigma'}$. 
Thus, in the notation of \cite[\S 3]{HeOp}, \eqref{eq:8.21} becomes a factor
\begin{equation}\label{eq:8.23}
\big( 1 - q_F^{\langle \nu, \alpha^\vee \rangle} \big) \big/ 
\big( 1 - q_F^{-1 / \epsilon_{\bar \alpha} + \langle \nu, \alpha^\vee \rangle} \big) .
\end{equation}
Hence $q_\alpha$ or $q_{\alpha*}$ from \cite{SolEnd} equals $q_F^{1 / \epsilon_{\bar \alpha}}$ 
from \cite{HeOp}, and $k^{\sigma'}_\alpha = 1 / \epsilon_{\bar \alpha}$. Now we need to prove
that $\epsilon_{\bar \alpha} / \epsilon_{\bar \beta}$ equals 1 or the square of the ratio
of the lengths of $\alpha$ and $\beta$. That is precisely the condition needed in 
\cite[Theorem 4.1]{HeOp}. It was shown to hold for all generic Bernstein blocks of quasi-split 
reductive $p$-adic groups in \cite[\S 5--6]{HeOp}.
\end{proof}

Proposition \ref{prop:8.9} enables one to reduce the representation theory of $\mc H \rtimes \Gamma$
(as in Theorem \ref{thm:8.8}) to extended graded Hecke algebras with equal parameters, via 
\cite[\S 8--9]{Lus-Gr} or \cite[\S 2.1]{SolAHA}.

\end{document}